\colorlet{vectorspace}{RoyalBlue}
\colorlet{abeliangroup}{ForestGreen}
\colorlet{group}{BurntOrange}
\colorlet{pointedset}{Red}
\DeclareSymbolFont{sfoperators}{OT1}{cmss}{m}{n}%
\DeclareSymbolFontAlphabet{\mathsf}{sfoperators}%
\def\operator@font{\mathgroup\symsfoperators}%
\setlist[description]{font=\normalfont\space}
\title{Obstruction theory for $A_\infty$-bimodules}
\subjclass[2020]{Primary: 18M65; secondary: 18N40, 16E45, 55S35, 18G40, 18G80}
\keywords{Differential graded algebras; differential graded bimodules; $\A$-algebras; $\A$-bimodules; Hochschild cohomology; universal Massey product; operads}
\author[G.~Jasso]{Gustavo Jasso}%
\address[G.~Jasso]{%
  Mathematisches Institut, %
  Universität zu Köln, %
  Weyertal 86-90, %
  50931 Köln, %
  Germany}%
\email{gjasso@math.uni-koeln.de}%
\urladdr{https://gustavo.jasso.info}%
\author[F.~Muro]{Fernando Muro}%
\address[F.~Muro]{%
  Universidad de Sevilla, 
  Facultad de Matemáticas, %
  Departamento de Álgebra, %
  Calle Tarfia s/n, %
  41012 Sevilla, %
  Spain%
} \email{fmuro@us.es} \urladdr{https://personal.us.es/fmuro/}
\DeclareRobustCommand{\SkipTocEntry}[5]{}
\begin{document}

\begin{abstract}
  We develop an obstruction theory for the extension of truncated minimal $\A$-bimodule
structures over truncated minimal $\A$-algebras. Obstructions live in far-away pages of
a (truncated) fringed spectral sequence of Bousfield--Kan type. The second page
of this spectral sequence is mostly given by a new cohomology theory
associated to a pair consisting of a graded algebra and a graded bimodule over
it. This new cohomology theory fits in a long exact sequence involving the
Hochschild cohomology of the algebra and the self-extensions of the bimodule. We
show that the second differential of this spectral sequence is given by the Gerstenhaber bracket with a
bimodule analogue of the universal Massey product of a minimal $\A$-algebra. We
also develop a closely-related obstruction theory for truncated minimal
$\A$-bimodule structures
over (the truncation of) a fixed minimal $\A$-algebra; the second page of the corresponding spectral
sequence is now mostly given by the vector spaces of self-extensions of the
underlying graded bimodule
and the second differential is described analogously to the previous one. We
also establish variants of the above for graded algebras and graded bimodules
that are $d$-sparse, that is they are concentrated in degrees that are multiples
of a fixed integer $d\geq1$. These obstruction theories are used to establish
intrinsic formality and almost formality theorems for differential graded
bimodules over differential graded
algebras. Our results hold, more generally, in the context of graded
operads with multiplication equipped with an associative operadic ideal, examples of which are the endomorphism operad of a
graded algebra and the linear endomorphism operad of a pair consisting of a
graded algebra and a graded bimodule over it.


\end{abstract}

\maketitle

\setcounter{tocdepth}{1}
\tableofcontents

\section{Introduction}

Let $\kk$ be a field that, for simplicity, we assume to be of characteristic $0$
throughout the introduction.\footnote{All of our results hold over an arbitrary
  field.} In this article we develop an obstruction theory for the extension of
truncated minimal $\A$-bimodule structures over truncated minimal $\A$-algebras, and leverage this obstruction theory in
order to establish \emph{intrinsic formality} and \emph{almost formality}
theorems for differential graded (=DG) bimodules over DG algebras.

\subsection{Intrinsic formality theorems for algebras and bimodules}

Our starting point is the following classical theorem of Kadeishvili.

\begin{theorem}[{\cite[Corollary~4]{Kad88}}]\label{Kadeishvili_algebras} Let $A$
  be a graded algebra. Suppose that the Hochschild cohomology of $A$ vanishes in
  the following range:
	\[
    \HH[n+2][-n]{A}=0,\qquad n\geq1.
  \]
	Then, every DG algebra $B$ such that $\dgH{B}\cong A$ as graded algebras is
  formal, that is $B$ is quasi-isomorphic to $A$ where the latter is viewed as a
  DG algebra with vanishing differential.
\end{theorem}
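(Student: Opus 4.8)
The plan is to pass to a minimal model of $B$, reformulate formality as the existence of an $\A$-isomorphism between two $\A$-algebra structures on the graded algebra $A$ itself, and then construct such an isomorphism one arity at a time, using the hypothesis to kill the obstruction at every stage.

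First I would invoke Kadeishvili's minimal model theorem: since $\kk$ is a field, $A\cong\dgH{B}$ underlies a minimal $\A$-algebra $(A,\mathbf m)$, $\mathbf m=(m_n)_{n\ge2}$, with $m_1=0$, with $m_2$ the given graded product on $A$ (associativity of $m_2$ being the arity-$3$ Stasheff relation, using $m_1=0$), and with higher operations $m_n\colon A^{\otimes n}\to A$ of degree $2-n$; moreover there is an $\A$-quasi-isomorphism $B\xrightarrow{\ \sim\ }(A,\mathbf m)$. An $\A$-quasi-isomorphism between DG algebras can be realised by a zig-zag of honest DG algebra quasi-isomorphisms, so $B$ is formal as soon as $(A,\mathbf m)$ is $\A$-isomorphic, via a morphism that is the identity in arity $1$, to the \emph{formal} structure $(A,m_2,0,0,\dots)$.

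The core is an induction on $p\ge3$ that, after applying $\A$-isomorphisms of a special shape, arranges $m_3=m_4=\dots=m_p=0$. Assume $m_3=\dots=m_{p-1}=0$. In the arity-$(p+1)$ Stasheff relation every quadratic term $m_i\circ m_j$ with $i+j=p+2$ either involves $m_1=0$ or has $3\le i\le p-1$ and hence $m_i=0$, except for the two terms built from $m_2$ and $m_p$; what survives is exactly $\delta(m_p)=0$, where $\delta=[m_2,-]$ is the Hochschild codifferential of the graded algebra $A$. Thus $m_p$ is a Hochschild cocycle, and as an element of $\mathrm{Hom}(A^{\otimes p},A)$ of internal degree $2-p$ it represents a class in $\HH[p][2-p]{A}=\HH[n+2][-n]{A}$ with $n=p-2\ge1$, which vanishes by hypothesis. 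Choosing $f_{p-1}\colon A^{\otimes(p-1)}\to A$ of degree $2-p$ with $\delta(f_{p-1})=m_p$, the $\A$-isomorphism whose arity-$1$ component is the identity, whose arity-$(p-1)$ component is $f_{p-1}$, and whose other higher components vanish, transports $\mathbf m$ to a structure in which $m_2,\dots,m_{p-1}$ are unchanged and $m_p$ becomes $m_p-\delta(f_{p-1})=0$; only components of arity $>p$ are further modified.

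Finally I would compose all of these $\A$-isomorphisms. In each fixed arity $N$ only the factors coming from steps $p\le N+1$ act non-trivially, so the composite stabilises and yields an $\A$-isomorphism $(A,m_2,0,0,\dots)\to(A,\mathbf m)$ that is the identity in arity $1$; with the reduction of the second paragraph this shows that $B$ is formal. I expect the one genuinely delicate point to be the bookkeeping in the inductive step — verifying that, under $m_3=\dots=m_{p-1}=0$, the arity-$(p+1)$ relation is precisely the cocycle condition for $m_p$ in Hochschild bidegree $(p,2-p)$, and that transporting along $f_{p-1}$ leaves every operation of arity $<p$ untouched — together with the (routine but necessary) convergence of the infinite composite. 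This single-step obstruction picture, with obstructions organised by arity and the Hochschild differential playing the role of the relevant differential, is exactly the prototype of the (truncated, fringed) spectral sequence developed in this paper; here it collapses after one step at each stage because every group $\HH[n+2][-n]{A}$ is assumed to vanish.
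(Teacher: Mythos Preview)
Your argument is correct and is essentially Kadeishvili's original proof: pass to a minimal model, observe inductively that the first surviving higher operation is a Hochschild cocycle in the relevant bidegree, kill it by a gauge transformation, and check that the infinite composite of such transformations stabilises arity-wise. The bookkeeping you flag as delicate is indeed routine once the sign conventions are fixed.

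The paper, however, does not prove the theorem this way. It deduces the statement from a general result about graded operads with multiplication (\Cref{Kadeishvili_operads_A-infinity}), whose proof runs through the Bousfield--Kan machinery of \Cref{operad_obstruction_theory}: one bases the tower $\Map{\A[s+2]}{\O}$ at the trivial structure $f$, observes that the hypothesis forces $\BK[2][s][s]=\OH[s+2][-s]{\os\O}=0$ for $s>0$ and hence $\BK[r][s][s]=0$ for all $r\geq 2$, uses \Cref{operad_obstruction_theory}\eqref{lower-diagonal} to conclude that the restrictions of $f$ and $g$ to every $\A[s+2]$ are homotopic, and then invokes the Milnor short exact sequence together with the Complete Convergence Lemma of \cite[IX.5.4]{BK72} to kill the $\lim^1$ term and conclude that $f$ and $g$ are homotopic in $\Map{\A}{\O}$. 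The specialisation to $\O=\E{A}$ and the passage back to DG algebras is then handled by \Cref{minimal_models_quasi_isomorphic_algebras} and \Cref{quasi-iso_unit_algebras}. What your elementary approach buys is transparency and self-containment; what the paper's approach buys is that the identical argument applies verbatim to any graded operad with multiplication, yielding at once the simultaneous and fibre-wise versions (\Cref{Kadeishvili_simultaneous_A-infinity,Kadeishvili_fibre-wise_A-infinity}) and, with essentially no extra work, the almost-formality refinements where one only assumes vanishing from the \emph{third} page onward. Your final paragraph already recognises this: the hand-built inductive scheme is precisely the degenerate case of the spectral sequence where everything dies on $\BK[2]$.
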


Recall that a graded algebra $A$ is \emph{intrinsically formal} if every DG
algebra $B$ with $\dgH{B}\cong A$ is formal. Kadeishvili's Theorem hence
provides a sufficient condition---expressed in terms of vanishing of Hochschild
cohomology---for a graded algebra to be intrinsically formal as a DG algebra.

The following is the first main result in this article; it provides a necessary
condition for a graded bimodule over a graded algebra to be
intrinsically formal as a DG bimodule, and hence it can be regarded as bimodule
analogue of \Cref{Kadeishvili_algebras}.

\begin{theorem}[{\Cref{Kadeishvili_bimodules-main-text}}]\label{Kadeishvili_bimodules}
	Let $A$ be a graded algebra and $M$ a graded $A$-bimodule. Suppose that the
  vector spaces of self-extensions of $M$ vanish in the following
  range:\footnote{Notice the difference in the range with respect to
    \Cref{Kadeishvili_algebras}.}
  \[
    \Ext[n+1][-n]{A^e}{M}{M}=0,\qquad n\geq1.
  \]
	Then, every DG $A$-bimodule $N$ such that $\dgH{N}\cong M$ as graded
  $A$-bimodules is formal, that is $N$ quasi-isomorphic to $M$ where the latter
  is viewed as a DG $A$-bimodule with vanishing differential.
\end{theorem}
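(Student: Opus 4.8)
The plan is to mimic the standard Kadeishvili-style argument, but in the setting of $A_\infty$-bimodules rather than $A_\infty$-algebras. First I would transport the DG $A$-bimodule $N$ along a quasi-isomorphism to an $A_\infty$-bimodule structure on the graded bimodule $M = \dgH{N}$, using the homotopy transfer theorem (homological perturbation lemma). This yields a \emph{minimal} $A_\infty$-bimodule structure $\{m_n\}_{n\geq2}$ on $M$ over the (ordinary, strictly associative) algebra $A$, with $m_2$ the given bimodule multiplication, and an $A_\infty$-bimodule quasi-isomorphism $N \simeq (M, \{m_n\})$. It therefore suffices to show that this minimal $A_\infty$-bimodule structure is \emph{formal}, i.e.\ isomorphic (as an $A_\infty$-bimodule over $A$) to the one with all higher operations $m_n = 0$ for $n\geq3$.

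Next I would invoke the obstruction theory developed in the body of the paper (the second of the two obstruction theories mentioned in the abstract: the one for truncated minimal $A_\infty$-bimodule structures over a \emph{fixed} minimal $A_\infty$-algebra, here the strictly associative $A$). The idea is to build the formality isomorphism degree by degree. Suppose inductively that $m_3 = \cdots = m_{n} = 0$ have been achieved after applying a suitable $A_\infty$-isomorphism; the obstruction to also killing $m_{n+1}$ is a class in the relevant second-page group of the fringed spectral sequence, which by the results quoted in the excerpt lives in (a subquotient of) $\Ext[n+1][-n]{A^e}{M}{M}$. Since by hypothesis $\Ext[n+1][-n]{A^e}{M}{M}=0$ for all $n\geq1$, every such obstruction vanishes, the relevant differentials are forced to be zero, and the spectral sequence degenerates; hence each stage can be corrected and one obtains, in the (co)limit, an $A_\infty$-isomorphism from $(M,\{m_n\})$ to $(M, m_2)$.

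Finally I would assemble: the zig-zag $N \simeq (M,\{m_n\}) \cong (M, m_2)$ of $A_\infty$-bimodule quasi-isomorphisms over $A$ shows $N$ is quasi-isomorphic, as a DG $A$-bimodule, to $M$ with zero differential, i.e.\ $N$ is formal. (One should note the shift in the numerology relative to \Cref{Kadeishvili_algebras}: because $m_2$ is part of the \emph{fixed} algebra data and only the \emph{bimodule} operations $m_n$, $n\geq3$, are being deformed, the obstruction to $m_{n+1}$ is a cochain of \emph{internal} degree $-n$ and \emph{cohomological} degree $n+1$ with values in $M$, which accounts for the range $\Ext[n+1][-n]{A^e}{M}{M}$ rather than $\HH[n+2][-n]{A}$ — this is exactly the footnoted discrepancy.)

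The main obstacle I anticipate is making precise the identification of the obstruction classes with elements of $\Ext[n+1][-n]{A^e}{M}{M}$ and checking that no genuine higher differentials in the fringed spectral sequence can interfere — that is, verifying that the vanishing of the $E_2$-type terms in the stated range is genuinely enough to force degeneration through \emph{all} pages (the "far-away pages" alluded to in the abstract), rather than merely killing the first obstruction. This is precisely where the structural results about the spectral sequence proved earlier in the paper must be brought to bear, and where the convergence/exhaustiveness of the (truncated) Bousfield--Kan-type spectral sequence has to be invoked carefully.
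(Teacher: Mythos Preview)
Your proposal is correct and follows essentially the same approach as the paper: pass to a minimal $A_\infty$-bimodule model of $N$ over the fixed graded algebra $A$, and then use the fibre-wise obstruction theory (\Cref{fibre-wise_obstruction_theory}, applied with $\O=\E{A,M}$ and $\I=\E*{A,M}$) to identify the obstructions with classes in $\Ext[n+1][-n]{A^e}{M}{M}$, which vanish by hypothesis. The paper packages the induction slightly differently---rather than killing the $m_n$ one at a time, it observes that the diagonal terms $\BK[2][s][s]=\Ext[s+1][-s]{A^e}{M}{M}$ vanish for $s\geq 1$, hence so do all $\BK[r][s][s]$ for $r\geq 2$, and then handles the passage to the limit via the Milnor short exact sequence and the Complete Convergence Lemma of Bousfield--Kan; this is exactly the convergence issue you flagged as the main obstacle, and it is resolved there.
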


We now address the question of when a graded algebra $A$ and a graded
$A$-bimodule $M$ are \emph{simultaneously} intrinsically formal. For this, we
let $\BimHC{M}$ be the bimodule complex of $M$ (whose cohomology is
isomorphic to the bigraded vector space of self-extensions of $M$)
and notice that $\BimHC{M}$ is a DG bimodule over the Hochschild complex
$\HC{A}$ of $A$ (\Cref{sec:hochschild}). We introduce the cochain complex
\[
  \RelBimHC[n]<r>{A}{M}\coloneqq \HC[n][r]{A}\oplus\BimHC[n-1]<r>{M},\qquad
  n\geq 0,\quad r\in\ZZ,
\]
called the \emph{bimodule Hochschild (cochain) complex} of the pair $(A,M)$, and
which is defined as the homotopy fibre (=mapping cocone) of the cochain map
\[
  \delta\colon\HC{A}\longrightarrow\BimHC{M},\qquad c\longmapsto
  \id*[M]\cdot c-c\cdot\id*[M],
\]
that, roughly, measures how far is $\BimHH{M}$ from being symmetric as a graded
$\HH{A}$-bimodule (see \Cref{connecting_morphism} and
\Cref{cor:symmetric_bimodule}). The cohomology
\[
  \RelBimHH{A}{M}\coloneqq\H[\bullet,*]{\RelBimHC{A}{M}}
\]
is the \emph{bimodule Hochschild cohomology} of the pair $(A,M)$.
By construction, there are long exact sequences of vector spaces $(r\in\ZZ)$
\[
  \begin{tikzcd}[column sep=small,row sep=small]
    \cdots\rar{\delta}&\BimHH[n-1]<r>{M}\rar&\RelBimHH[n]<r>{A}{M}\rar&\HH[n][r]{A}\ar[out=-10,in=170,overlay]{dll}[description]{\delta}\\
    &\BimHH[n]<r>{M}\rar&\RelBimHH[n+1]<r>{A}{M}\rar&\HH[n+1][r]{A}\rar{\delta}&\cdots
  \end{tikzcd}
\]
that relate the new cohomology $\RelBimHH[n]<r>{A}{M}$ to the cohomologies in
\Cref{Kadeishvili_algebras,Kadeishvili_bimodules}. With this new cohomology
theory at hand we may state the following \emph{simultaneous} intrinsic
formality theorem. For the precise definition of quasi-isomorphism of pairs we
refer the reader to \Cref{def:quasi-isomorphic_algebra-bimodule_pairs} (see also
\Cref{minimal_models_quasi_isomorphic_algebras-bimodules}).

\begin{theorem}[{\Cref{Kadeishvili_simultaneous}}]\label{thm:Kadeishvili_algebras_bimodules}
  Let $A$ be a graded algebra, $M$ a graded $A$-bimodule, and suppose that the
  bimodule Hochschild cohomology of the pair $(A,M)$ vanishes in the following
  range:
  \[
    \RelBimHH[n+2]<-n>{A}{M}=0,\qquad n\geq 1.
  \]
  Then, every pair $(B,N)$ consisting of a DG algebra $B$ such that $\H{B}\cong
  A$ as graded algebras and a DG $B$-bimodule $N$ such that $\H{N}\cong M$ as
  graded $A$-bimodules is quasi-isomorphic to the pair $(A,M)$.
\end{theorem}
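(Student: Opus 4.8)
\emph{Strategy.} The plan is to reduce the statement to an assertion purely about $\A$-structures on the graded pair $(A,M)$, and then to carry out the order-by-order obstruction argument that the (truncated, fringed) spectral sequence of the body of the paper packages; concretely, the hypothesis will annihilate every obstruction that arises.

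\emph{Reduction to $\A$-structures.} First I would pass to minimal models. By homotopy transfer, as available in the $\A$-formalism recalled earlier, the DG pair $(B,N)$ admits a minimal model: a minimal $\A$-algebra structure on $A=\H{B}$ together with a compatible minimal $\A$-bimodule structure on $M=\H{N}$, and a quasi-isomorphism of pairs relating this $\A$-pair to $(B,N)$. Because quasi-isomorphism of pairs is an equivalence relation and the DG pair $(A,M)$, with vanishing differentials, is its own minimal model, it suffices to produce an $\A$-isomorphism of pairs --- inducing the identity on the underlying graded pair $(A,M)$ --- from the transferred structure onto the \emph{trivial} one, namely the $\A$-structure on $(A,M)$ whose operations of order $\geq 3$ all vanish. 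In other words, it suffices to show: every minimal $\A$-structure on the graded pair $(A,M)$ is isomorphic to the trivial one.

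\emph{The induction.} I would prove this by induction on $n\geq 1$, the inductive statement being that, after applying an $\A$-isomorphism of pairs that is the identity on $(A,M)$, the operations of orders $3,\dots,n+1$ all vanish (a vacuous requirement when $n=1$; the order-$\leq2$ part is the multiplication of $A$ and the action of $A$ on $M$, fixed throughout). Granting this for some $n$, I claim one can further arrange that the order-$(n+2)$ operations $(m_{n+2},\mu^{(n+2)})$ vanish. Since all operations in orders $3,\dots,n+1$ are zero, every quadratic correction term in the pertinent $\A$-relations disappears, so the order-$(n+2)$ $\A$-relations for the algebra, for the bimodule, and for their compatibility say \emph{precisely} that $(m_{n+2},\mu^{(n+2)})$ is a cocycle in the bimodule Hochschild complex $\RelBimHC[n+2]<-n>{A}{M}=\HC[n+2][-n]{A}\oplus\BimHC[n+1]<-n>{M}$: the first coordinate records that $m_{n+2}$ is a Hochschild cocycle, the second that the deviation of $\mu^{(n+2)}$ from being a bimodule cocycle equals $\pm(\id*[M]\cdot m_{n+2}-m_{n+2}\cdot\id*[M])$, which is the content of the cochain map $\delta$. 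Dually, the freedom in $(m_{n+2},\mu^{(n+2)})$ under $\A$-isomorphisms of pairs that are the identity in orders $\leq n$ is exactly a coboundary in that complex. Hence the obstruction to trivializing in order $n+2$ is a well-defined class in $\RelBimHH[n+2]<-n>{A}{M}$, which vanishes by hypothesis; a choice of primitive provides the required $\A$-isomorphism of pairs, and it is the identity in orders $\leq n$. As $n$ grows these isomorphisms are trivial in an increasing range of orders, so their infinite composite is a well-defined $\A$-isomorphism of pairs carrying the transferred structure onto the trivial one. This closes the induction and hence proves the theorem.

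\emph{Remarks and the hard part.} In the language of the body, this induction says that all the obstruction entries on the relevant diagonal of the Bousfield--Kan-type spectral sequence lie in the groups $\RelBimHH[n+2]<-n>{A}{M}$, $n\geq 1$, which the hypothesis already annihilates on $E_2$ and hence on every later page; consequently there is no obstruction either to extending a truncated minimal $\A$-structure on the pair or to connecting two such. Via the long exact sequence relating $\RelBimHH{A}{M}$ to the Hochschild cohomology of $A$ and the self-extensions of $M$, the hypothesis here is implied by the conjunction of those of \Cref{Kadeishvili_algebras} and \Cref{Kadeishvili_bimodules}, so this result refines both. The step I expect to demand the most care is the identification in the inductive step: checking, with correct signs, that the order-$(n+2)$ $\A$-relations for the algebra, for the bimodule, and for their mutual compatibility assemble into the cocycle condition in the mapping cocone of $\delta\colon\HC{A}\to\BimHC{M}$, and that the parametrizing $\A$-isomorphisms of pairs act on the order-$(n+2)$ data exactly by the mapping-cocone differential. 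Everything else --- homotopy transfer for pairs, convergence of the infinite composite of order-increasing isomorphisms, and the formal properties of quasi-isomorphism of pairs --- is routine given the earlier sections.
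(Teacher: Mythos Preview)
Your proof is correct and takes essentially the same approach as the paper: reduce to minimal models of pairs, then show by an order-by-order obstruction argument that any minimal $\A$-structure on $(A,M)$ is gauge-isomorphic to the trivial one, the obstructions lying in $\RelBimHH[n+2]<-n>{A}{M}$ and vanishing by hypothesis. The paper packages the inductive step in the spectral-sequence language for the operad $\E{A,M}$ and handles the passage to the limit via a Milnor short exact sequence together with Bousfield--Kan's complete convergence lemma, whereas you carry out the same induction by hand and compose the resulting gauge isomorphisms directly; the cocycle identification you flag as the hard part is precisely the paper's description of the bimodule Hochschild complex as the mapping cocone of $\delta$.
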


\subsection{Almost formality theorems for algebras and bimodules}

In order to extend
\Cref{Kadeishvili_bimodules,thm:Kadeishvili_algebras_bimodules} to DG bimodules
over DG algebras (neither of which need be formal) we need a further cohomology
theory that can be thought of as a `refinement' of the bimodule Hochschild
cohomology of a pair. For this, we first recall the well-known relationship
between DG algebras and $\A$-algebras~\cite{Kad82}.

Recall that an \emph{$\A$-algebra} is a pair $(A,\Astr)$ consisting of a DG
vector space $A=(A,\Astr<1>)$ and a sequence $\Astr$ of homogeneous
\emph{operations}
\[
  \Astr<n>\colon A^{\otimes n}\longrightarrow A,\qquad n\geq 2,
\]
of degree $2-n$ satisfying a certain countable system of quadratic equations,
called the \emph{$\A$-equations}, that imply, in particular, that the binary
operation $\Astr<2>$ induces a graded algebra structure on $\H{A}$. For example,
DG algebras identify with $\A$-algebras with $\Astr<n>[]=0$, $n\geq3$. Every
$\A$-algebra is $\A$-quasi-isomorphic to both a DG algebra, its
\emph{strictification}, and to a \emph{minimal} $\A$-algebra, that is to an
$\A$-algebra with $\Astr<1>[]=0$, called its \emph{minimal model}. As a
consequence, minimal $\A$-algebras can be leveraged in the study of DG algebras.
Indeed, if $(A,\Astr)$ is a minimal $\A$-algebra, then the first possibly
non-vanishing higher operation $\Astr<3>$ is a cocycle in the Hochschild cochain
complex of the underlying graded algebra $A=\H{A,\Astr<1>}$; its cohomology
class\footnote{Throughout this article we only consider Hochschild cohomology of
  \emph{graded} algebras, even if such a graded algebra is equipped with a
  minimal $\A$-algebra structure. In particular all the Hochschild cohomologies
  that we consider are \emph{bi}graded.}
\[
  \Hclass{\Astr<3>}\in\HH[3][-1]{A}
\]
is an invariant called the \emph{universal Massey product} of the minimal
$\A$-algebra $(A,\Astr)$.\footnote{The universal Massey product of a a minimal $\A$-algebra was first
  investigated in \cite{BKS04}, although it goes back to the more general notion of universal Toda bracket in \cite{BD89}. It also plays a central role in \cite{Mur20}; we
  refer the reader to the latter article for further information on this
  invariant and references to related literature.} Recall that the Hochschild
cohomology $\HH{A}$ is a Gerstenhaber algebra and hence, in particular, a shifted
graded Lie algebra. The (minimal) $\A$-equations imply that
\[
  \tfrac{1}{2}[\Hclass{\Astr<3>},\Hclass{\Astr<3>}]=0,
\]
and the Gerstenhaber relations in Hochschild cohomology then show that the
bigraded vector space
\[
  \HMC[n]<r>{A}[\Hclass{\Astr<3>}]\coloneqq\HH[n][r]{A},\qquad n\geq 2,\qquad
  r\in\ZZ,
\]
can be endowed with the bidegree $(2,-1)$ differential
\[
  d\colon x\longmapsto [\Hclass{\Astr<3>},x]
\]
given by the Gerstenhaber bracket with the universal Massey product. We call the resulting cochain complex
$\HMC{A}[\Hclass{\Astr<3>}]$ the \emph{Hochschild--Massey (cochain) complex} of the
pair $(A,\Hclass{\Astr<3>})$, and its cohomology
\[
  \HMH{A}[\Hclass{\Astr<3>}]\coloneqq\H[\bullet,*]{\HMC{A}[\Hclass{\Astr<3>}]}
\]
the \emph{Hochschild--Massey cohomology} of the pair $(A,\Hclass{\Astr<3>})$. If
$A$ is instead a DG algebra, we define the universal Massey product of $A$ to be
the universal Massey product of any of its minimal models. The following
theorem, which is a generalisation of \Cref{Kadeishvili_algebras} for DG
algebras, is the case $d=1$ of \cite[Theorem~B]{JKM22}, see also~\cite{Mur22} where
this result is implicit in a special case.

\begin{theorem}[{\Cref{thm:B-main_text}}]
  \label{thm:B}
  Let $A$ be a DG algebra. Choose a minimal model $(\H{A},\Astr)$ of $A$ and
  suppose that the Hochschild--Massey cohomology of the pair
  $(\H{A},\Hclass{\Astr<3>[A]})$ vanishes in the following
  range:\footnote{Notice the strict inequality.}
  \[
    \HMH[n+2]<-n>{\H{A}}[\Hclass{\Astr<3>}],\qquad n>1.
  \]
  Then, every DG algebra $B$ such that $\H{B}\cong\H{A}$ as graded algebras and
  whose universal Massey product satisfies
  \[
    \Hclass{\Astr<3>[B]}=\Hclass{\Astr<3>[A]}\in\HH[3][-1]{\H{A}}
  \]
  is quasi-isomorphic to $A$.
\end{theorem}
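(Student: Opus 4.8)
The plan is to reduce the statement to a question about minimal $\A$-algebras and then run the obstruction theory of this article, specialised to the endomorphism operad of the graded algebra $\H{A}$. Recall that every DG algebra admits a minimal model and that two DG algebras are quasi-isomorphic if and only if their minimal models are isomorphic as $\A$-algebras. Write $A\coloneqq\H{A}$ and choose a minimal model $(\H{B},\Astr[B])$ of $B$; using the given isomorphism $\H{B}\cong A$ of graded algebras to identify the underlying graded algebras, it therefore suffices to construct an $\A$-isomorphism $(A,\Astr)\cong(A,\Astr[B])$. Both of these minimal $\A$-algebras have underlying graded algebra $A$ with vanishing first operation, and by hypothesis $\Hclass{\Astr<3>[B]}=\Hclass{\Astr<3>}$ in $\HH[3][-1]{A}$.

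I would build an $\A$-isomorphism $f\colon(A,\Astr)\to(A,\Astr[B])$ with identity linear part by choosing its higher components $f_2,f_3,\dots$ one arity at a time, the bookkeeping being organised by the fringed spectral sequence of the paper. The arity-$3$ component of the $\A$-morphism equations amounts to solving $\delta f_2=\Astr<3>-\Astr<3>[B]$ (up to sign) in the Hochschild cochain complex of $A$, which is possible precisely because the two universal Massey products agree; this takes care of the obstruction living in bidegree $(3,-1)$, which is why the hypothesis need only range over $n>1$. Inductively, suppose $f_2,\dots,f_n$ have been chosen so that the $\A$-morphism equations hold through arity $n+1$, for some $n\geq2$; the arity-$(n+2)$ equation then reads $\delta f_{n+1}=O_{n+2}$, where $O_{n+2}$ is an explicit Hochschild cocycle in $\HC[n+2][-n]{A}$ assembled from $\Astr$, $\Astr[B]$ and $f_2,\dots,f_n$, and a choice of $f_{n+1}$ exists as soon as the class $\Hclass{O_{n+2}}\in\HH[n+2][-n]{A}$ dies in the relevant page of the spectral sequence. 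By the computations in the article, this spectral sequence has $E_2\cong\HH{A}$ in the relevant range, with second differential $d_2=[\Hclass{\Astr<3>},-]$ the Gerstenhaber bracket with the universal Massey product, so that $E_3\cong\HMH{A}[\Hclass{\Astr<3>}]$; moreover $\Hclass{O_{n+2}}$ is a $d_2$-cycle, and the indeterminacy coming from re-choosing the lower components of $f$ alters it only by $d_2$-boundaries, so the genuine obstruction is its image in $E_\infty^{n+2,-n}$. Since $E_3^{n+2,-n}\cong\HMH[n+2]<-n>{A}[\Hclass{\Astr<3>}]=0$ for $n>1$, we get $E_r^{n+2,-n}=0$ for every $r\geq3$ and hence $E_\infty^{n+2,-n}=0$; so every obstruction in the relevant range vanishes, the inductive construction goes through, and passing to the limit of the successive truncated isomorphisms produces the required $\A$-isomorphism $(A,\Astr)\cong(A,\Astr[B])$. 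Consequently $B$ is quasi-isomorphic to $A$.

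The routine ingredients are the recollection of minimal-model theory and the explicit description of the cocycles $O_{n+2}$. The real work --- and the structural input imported from the body of the article --- is the analysis of the fringed spectral sequence: identifying its second page with Hochschild cohomology, proving that its second differential is the Gerstenhaber bracket with the universal Massey product (so that its third page is the Hochschild--Massey cohomology), and verifying that the successive obstructions are permanent cycles detected on $E_\infty$. Given these facts, vanishing of the third page in bidegree $(n+2,-n)$ for $n>1$ --- together with the equality of universal Massey products that handles the case $n=1$ --- is exactly what is needed, with no further low-degree hypotheses; the theorem then follows from the obstruction bookkeeping above.
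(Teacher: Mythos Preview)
Your overall strategy---reduce to minimal models and run the spectral-sequence obstruction theory for $\O=\E{\H{A}}$---is the same as the paper's, and your identification of the $E_3$-page with Hochschild--Massey cohomology and of the case $n=1$ with the equality of universal Massey products is correct.

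There is, however, a genuine gap in the final step. You construct truncated gauge isomorphisms stage by stage, acknowledging that at each stage you may have to modify earlier components of $f$, and then write ``passing to the limit of the successive truncated isomorphisms produces the required $\A$-isomorphism.'' But if earlier components are repeatedly modified, the partial data need not stabilise and no limit need exist. This is precisely the point handled by the paper via the Milnor short exact sequence
\[
  *\rightarrow {\lim_s}^1\, \pi_1\Map{\A[s+2]}{\E{\H{A}}}\rightarrow\pi_0\Map{\A}{\E{\H{A}}}\rightarrow\lim_s\pi_0\Map{\A[s+2]}{\E{\H{A}}}\rightarrow *,
\]
together with the Complete Convergence Lemma of Bousfield--Kan: the vanishing of the diagonal terms $\BK[r][s][s]$ for $r\geq 3$ and $s>1$ forces the $\lim^1$ term to collapse, which is what lets you pass from ``homotopic on every truncation'' to ``globally homotopic.'' Your sketch omits this argument entirely.

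A smaller imprecision: the claim that re-choosing lower components alters the obstruction class ``only by $d_2$-boundaries'' understates the freedom---modifying $f_j$ for $j$ well below $n$ corresponds to higher differentials $d_r$, not just $d_2$. The paper sidesteps this by working directly with the diagonal terms $\BK[r][s][s]$ (which, by item~(7) of the obstruction theorem, parametrise homotopy classes of truncated maps extending to a given depth) rather than tracking explicit obstruction cocycles $O_{n+2}$. Under the hypothesis $E_3^{s+2,-s}=0$ for $s>1$ this does not change the outcome, but it does mean your description of the mechanism is not quite accurate.
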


We regard \Cref{thm:B} as an \emph{almost formality} theorem in the sense that
it gives a sufficient condition for the quasi-isomorphism type of a DG algebra to
be determined by its cohomology graded algebra and a finite collection of Hochschild-theoretic
data (in this case the universal Massey product of the DG algebra). Other theorems of
similar kind include~\cite[Theorem~5]{LP11} and~\cite[Theorem~B]{HK24}.

In order to explain our variant of \Cref{thm:B} for DG bimodules over DG
algebras we need further preparation. Recall that, given an $\A$-algebra
$(A,\Astr)$, an $\A$-bimodule over it is a pair $(M,\Astr[M])$
consisting of a DG vector space $M=(M,\Astr<1>[M])$ and homogeneous operations
\[
  \Astr<p,q>[M]\colon A^{\otimes p}\otimes M\otimes A^{\otimes q}\longrightarrow
  M,\ \qquad n\geq 2,\quad p+1+q=n,
\]
of degree $2-n$ such that the cochain
\[
  \Astr[A\ltimes M]\coloneqq\Astr[A]+\Astr[M]
\]
endows
the DG vector space $A\oplus M$ with the structure of an
$\A$-algebra,\footnote{Notice, however, that not all $\A$-algebra structures on
  $A\oplus M$ are of this form.} where
\[
  \Astr<n>[M]\coloneqq(\Astr<p,q>[M])_{p+1+q=n},\qquad n\geq 2.
\]
An $\A$-bimodule $M$ is \emph{minimal} if the differential
$\Astr<1>[M]=\Astr<0,0>[M]$ vanishes. Suppose now that $(A,\Astr)$ is a minimal
$\A$-algebra and $(M,\Astr[M])$ is a minimal $\A$-bimodule. We consider
$\Astr[A\ltimes M]$ as a cochain in the bimodule Hochschild complex
$\RelBimHC{A}{M}$ of the pair $(A,M)$. Similar to the Hochschild cohomology of a
graded algebra, the bimodule Hochschild cohomology $\RelBimHH{A}{M}$ is a
Gerstenhaber algebra. Also, the cochain $\Astr<3>[A\ltimes M]$ is a cocycle whose
class
\[
  \Hclass{\Astr<3>[A\ltimes M]}\in\RelBimHH[3]<-1>{A}{M}
\]
yields a novel invariant that we call the \emph{bimodule universal Massey
  product} of the minimal $\A$-bimodule $(M,\Astr[A\ltimes M])$. The \emph{bimodule Hochschild--Massey (cochain) complex} of the triple
$(A,M,\Hclass{\Astr<3>[A\ltimes M]})$ is the bigraded vector space
\[
  \AlgBimHMC{M}[3]\coloneqq\RelBimHH[n]<r>{A}{M}\qquad n\geq 2,\quad r\in\ZZ,
\]
endowed with the bidegree $(2,-1)$ differential
\[
  x\longmapsto[\Hclass{\Astr<3>[A\ltimes M]},x]
\]
given by the Gerstenhaber bracket with the bimodule universal Massey product of
the minimal $\A$-bimodule $(M,\Astr[A\ltimes M])$, and its cohomology
\[
  \AlgBimHMH{M}[3]\coloneqq\H[\bullet,*]{\AlgBimHMC{M}[3]}
\]
is the \emph{bimodule Hochschild--Massey cohomology} of the triple
$(A,M,\Hclass{\Astr<3>[A\ltimes M]})$. If $A$ is instead a DG algebra and $M$ is a
DG $A$-bimodule, we define the bimodule universal Massey product of $M$ by first
choosing a minimal model $(\H{A},\Astr)$ of $A$ and then considering the
bimodule universal Massey product of any minimal model of $M$ over
$(\H{A},\Astr[A])$. We then have the following simultaneous almost formality
theorem for DG algebras and DG bimodules that is a generalisation of
\Cref{thm:Kadeishvili_algebras_bimodules}.

\begin{theorem}[{\Cref{secondary_Kadeishvili_simultaneous_DG_uniqueness-main_text}}]
  \label{secondary_Kadeishvili_simultaneous_DG_uniqueness}
  Let $A$ be a DG algebra and $M$ a DG $A$-bimodule. Choose a minimal model
  $(\H{A},\Astr[A])$ of $A$, a compatible minimal model $(\H{M},\Astr[M])$ of
  $M$ over $(\H{A},\Astr)$.
  Let
  \[
    \Astr<3>[A\ltimes M]\coloneqq\Astr<3>[A]+\Astr<3>[M]\in\RelBimHH[3]<-1>{\H{A}}{\H{M}}
  \]
  and suppose that the bimodule
  Hochschild--Massey cohomology of the triple
  \[
    (\H{A},\H{M},\Hclass{\Astr<3>[A\ltimes M]})
  \]
  vanishes in the following range:
  \[
    \AlgBimHMH!\H{A}![n+2]<-n>{\H{M}}[3][A\ltimes M]=0,\qquad n>1.
  \]
  Then, every pair $(B,N)$ consisting of
  \begin{itemize}
  \item a DG algebra $B$ such that $\H{B}\cong\H{A}$ as graded algebras and
  \item a DG $B$-bimodule $N$ such that $\H{N}\cong\H{M}$ as graded
    $\H{A}$-bimodules and
  \item whose bimodule universal
    Massey product satisfies
    \[
      \Hclass{\Astr<3>[B\ltimes N]}=\Hclass{\Astr<3>[A\ltimes
        M]}\in\RelBimHH[3]<-1>{\H{A}}{\H{M}}
    \]
  \end{itemize}
  is quasi-isomorphic to the pair $(A,M)$.
\end{theorem}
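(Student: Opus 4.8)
The plan is to reduce, via minimal models, to a comparison of two minimal $\A$-bimodule structures over minimal $\A$-algebras on one and the same pair, and then to run the obstruction theory developed in this article. By the minimal model theorem for $\A$-algebras and $\A$-bimodules we may replace $(B,N)$ by a minimal model $(\H{B},\Astr[B])$ together with a compatible minimal $\A$-bimodule $(\H{N},\Astr[N])$ over it. Since quasi-isomorphism of pairs of DG algebras and DG bimodules is detected on minimal models (see \Cref{def:quasi-isomorphic_algebra-bimodule_pairs} and \Cref{minimal_models_quasi_isomorphic_algebras-bimodules}), and since by hypothesis $\H{B}\cong\H{A}$ as graded algebras and $\H{N}\cong\H{M}$ as graded $\H{A}$-bimodules, after transporting the $\A$-structures along these isomorphisms it suffices to prove: two minimal $\A$-algebra structures $\Astr[A]$, $\Astr[B]$ on a fixed graded algebra $A$ and two minimal $\A$-bimodule structures $\Astr[M]$, $\Astr[N]$ on a fixed graded $A$-bimodule $M$---with $\Astr<2>[A]=\Astr<2>[B]$ and $\Astr<2>[M]=\Astr<2>[N]$ inducing the prescribed graded structures, and with cohomologous arity-$3$ parts---give rise to $\A$-quasi-isomorphic pairs. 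Such a datum is precisely a minimal associative Maurer--Cartan element in the linear endomorphism operad of the pair $(A,M)$, a graded operad with multiplication equipped with the associative operadic ideal of operations valued in $M$, so the statement becomes an instance of the general operadic comparison result of this article.

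To prove that operadic statement I would argue by obstruction theory. Two minimal structures that agree in arity $2$ are compared by successively trying to extend an isotopy (operadic gauge transformation) from arity $n$ to arity $n+1$; the obstructions assemble into a truncated fringed spectral sequence of Bousfield--Kan type, whose $E_2$-page is---up to the low-degree fringe---the bimodule Hochschild cohomology $\RelBimHH{A}{M}$ and whose second differential is the Gerstenhaber bracket with the class $\Hclass{\Astr<3>[A\ltimes M]}$ of the arity-$3$ component, so that the $E_3$-page is the bimodule Hochschild--Massey cohomology of the triple $(A,M,\Hclass{\Astr<3>[A\ltimes M]})$ in the relevant bidegrees. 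The hypothesis $\Hclass{\Astr<3>[B\ltimes N]}=\Hclass{\Astr<3>[A\ltimes M]}$ in $\RelBimHH[3]<-1>{\H{A}}{\H{M}}$ says precisely that the first obstruction---the class in $\RelBimHH[3]<-1>{A}{M}$ measuring the difference of the arity-$3$ components---vanishes, so after one gauge transformation the two structures agree through arity $3$. Inductively, once they agree through arity $n\geq 3$, the arity-$(n+1)$ obstruction lies a priori in $\RelBimHH[n+2]<-n>{A}{M}$; but having already matched the lower arities forces it to be a permanent cycle surviving past the $E_2$-page, hence to lie in a subquotient of the bimodule Hochschild--Massey cohomology group in bidegree $(n+2,-n)$, which vanishes for $n>1$ by assumption. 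Thus the arities $n=3,4,\dots$ are all covered, the isotopy extends to every arity, the two minimal models are $\A$-quasi-isomorphic, and composing with the quasi-isomorphisms to and from the minimal models of $(A,M)$ and $(B,N)$ yields the claim. Alternatively, one can first apply \Cref{thm:B} to identify $B$ with $A$ as DG algebras and then run the relative obstruction theory for a bimodule over a fixed algebra, feeding the hypotheses across via the long exact sequence relating the bimodule Hochschild cohomology of $(A,M)$, the Hochschild cohomology of $A$, and the self-extensions of $M$.

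The main obstacle is the construction and analysis of the fringed spectral sequence underlying the second paragraph: one must identify its $E_2$-page with the bimodule Hochschild cohomology, prove by a chain-level computation that the second differential is the Gerstenhaber bracket with the bimodule universal Massey product, and---crucially---show that once two minimal structures agree through arity $n\geq 3$ the arity-$(n+1)$ obstruction is a $d_2$-cycle, so that vanishing of the Hochschild--Massey group in bidegree $(n+2,-n)$, rather than of the a priori larger group $\RelBimHH[n+2]<-n>{A}{M}$, already suffices. The off-by-one in the quantifier ($n>1$ rather than $n\geq 1$) is the shadow of this last point: the arity-$3$ obstruction is disposed of by the universal-Massey-product hypothesis instead of by a vanishing cohomology group. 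Additional care is needed with the ``fringed'' (only partially defined) and ``truncated'' nature of the spectral sequence, and with the nonabelian bookkeeping of isotopies in the lowest arities.
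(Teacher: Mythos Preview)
Your approach is essentially the paper's: reduce to minimal models via \Cref{minimal_models_quasi_isomorphic_algebras-bimodules}, recognise the problem as comparing two points of $\Map{\A}{\E{\H{A},\H{M}}}$, and use the spectral sequence of \Cref{operad_obstruction_theory} whose $E_2$ is $\HHE{\H{A}}{\H{M}}$ and whose $d_2$ is $[\Hclass{\Astr<3>[A\ltimes M]},-]$, so that the vanishing of the diagonal $E_3$-terms forces the restrictions of the two structure maps to each $\A[s+2]$ to be homotopic. The paper packages this as the general operadic statement \Cref{secondary_Kadeishvili_operads_A-infinity_uniqueness} and then specialises to $\O=\E{\H{A},\H{M}}$.

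One genuine step you do not address is the passage from ``the restrictions to $\A[s+2]$ are homotopic for every $s$'' to ``the two maps $\A\to\O$ are homotopic''. This is not automatic: $\pi_0\Map{\A}{\O}$ sits in a Milnor exact sequence with a $\lim^1$ term over $\pi_1\Map{\A[s+2]}{\O}$, and the paper disposes of this term using the Complete Convergence Lemma of Bousfield--Kan (the same vanishing $\BK[r][s][s]=0$ for $r\geq 3$, $s>1$ forces the $\BK[r][s][s+1]$ to stabilise, hence $\lim^1_r\BK[r][s][s+1]=0$). Your phrasing ``the isotopy extends to every arity'' suggests you are building a single compatible tower of homotopies, which would sidestep the $\lim^1$; but your actual argument only produces, at each stage, a homotopy class, with no control over how the homotopies at successive stages fit together. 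Either build the homotopies coherently or invoke the $\lim^1$ vanishing explicitly. Also note a minor imprecision: your ``arity-$(n+1)$ obstruction'' language mixes the extension obstructions of \Cref{operad_obstruction_theory}\eqref{obstructions} with the comparison statement \Cref{operad_obstruction_theory}\eqref{lower-diagonal}; the paper uses the latter, via $\BK[s+1][s][s]=0$, not an obstruction class per se.
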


Given a minimal $\A$-algebra $(A,\Astr[A])$ and a minimal $\A$-bimodule
$(M,\Astr[M])$, we also introduce the \emph{Massey bimodule
  (cochain) complex} of the pair $(M,\Hclass{\Astr<3>[A\ltimes M]})$, defined as
the bigraded vector space
\[
  \BimHMC[n]<r>{M}[3]\coloneqq\BimHH[n]<r>{M},\qquad n\geq 1,\quad r\in\ZZ,
\]
and which is endowed with the bidegree $(2,-1)$ differential
\[
  x\longmapsto[\Hclass{\Astr<3>[A\ltimes M]},x]
\]
given by the Gerstenhaber bracket with the bimodule universal Massey product. The cohomology
\[
  \BimHMH{M}[3]\coloneqq\H[\bullet,*]{\BimHMC{M}[3]}
\]
is the \emph{Massey bimodule cohomology} of the pair
$(M,\Hclass{\Astr<3>[A\ltimes M]})$. The following almost formality theorem for
DG bimodules over a \emph{fixed} DG algebra is a generalisation of
\Cref{Kadeishvili_bimodules}.

\begin{theorem}\label{thm:B_bimodules}
  Let $A$ be a DG algebra and $M$ a DG $A$-bimodule. Choose a minimal model
  $(\H{A},\Astr)$ of $A$, a compatible minimal model $(\H{M},\Astr[M])$
  of $M$ over $(\H{A},\Astr)$, and suppose that the Massey bimodule
  cohomology of the pair $(\H{M},\Hclass{\Astr<3>[A\ltimes M]})$ vanishes in the
  following range:
  \[
    \BimHMH!\H{A}![n+1]<-n>{\H{M}}[3][A\ltimes M]=0,\qquad n>1.
  \]
  Then, every DG $A$-bimodule $N$ such that $\H{N}\cong\H{M}$ as graded
  $\H{A}$-bimodules and such that
  \[
    \Hclass{\Astr<3>[M]-\Astr<3>[N]}=0\in\BimHH!\H{A}^e![2]<-1>{\H{M}}
  \]
  is quasi-isomorphic to $M$.
\end{theorem}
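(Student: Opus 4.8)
The plan is to deduce this statement from the obstruction theory for truncated minimal $\A$-bimodule structures over a \emph{fixed} minimal $\A$-algebra, in exact parallel with the way \Cref{Kadeishvili_bimodules} follows from the corresponding naive obstruction theory and \Cref{thm:B} follows from its Massey-refined analogue for algebras.

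First I would reduce to a purely $\A$-algebraic comparison. Starting from the chosen minimal models $(\H{A},\Astr[A])$ of $A$ and $(\H{M},\Astr[M])$ of $M$, and transporting a minimal model of $N$ along an $\A$-isomorphism of minimal models of $A$, one may assume that $N$ has been replaced by a minimal $\A$-bimodule $(\H{M},\Astr[N])$ over the \emph{same} minimal $\A$-algebra $(\H{A},\Astr[A])$ and with the \emph{same} underlying graded $\H{A}$-bimodule, after identifying $\H{N}\cong\H{M}$. By the standard correspondence between DG $A$-bimodules up to quasi-isomorphism and minimal $\A$-bimodules over a fixed minimal model of $A$ up to $\A$-isomorphism, it then suffices to produce an $\A$-isomorphism $(\H{M},\Astr[M])\to(\H{M},\Astr[N])$ over $(\H{A},\Astr[A])$ that is the identity on the underlying graded bimodule. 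Under this reduction, $\Astr<3>[M]$ and $\Astr<3>[N]$ are cocycles in bidegree $(2,-1)$ of the bimodule complex of $\H{M}$, and the hypotheses read: $\Hclass{\Astr<3>[M]-\Astr<3>[N]}=0$ in $\BimHH!\H{A}^e![2]<-1>{\H{M}}$, and $\BimHMH{\H{M}}[3][A\ltimes M]$ vanishes in bidegree $(n+1,-n)$ for every $n>1$ --- equivalently, in every bidegree $(m,1-m)$ with $m\geq3$.

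Next I would invoke the obstruction theory developed in the body of the paper, in the operadic generality therein applied to the linear endomorphism operad of the pair $(\H{A},\H{M})$ together with its associative operadic ideal; concretely, this is the obstruction theory governing deformations of the minimal $\A$-bimodule structure on $\H{M}$ with the underlying minimal $\A$-algebra structure $\Astr[A]$ held fixed. By the results of the paper, the homotopy of the associated tower is computed by a truncated fringed spectral sequence of Bousfield--Kan type whose second page is the bigraded vector space of self-extensions $\BimHH{\H{M}}$, whose second differential is the Gerstenhaber bracket $[\Hclass{\Astr<3>[A\ltimes M]},-]$ with the bimodule universal Massey product, and whose third page is therefore the Massey bimodule cohomology $\BimHMH{\H{M}}[3][A\ltimes M]$; moreover the spectral sequence converges well enough that vanishing of one page in a range forces vanishing of all later pages, and hence of the corresponding obstruction groups, in that range. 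One then builds the desired $\A$-isomorphism stage by stage, arranging that $\Astr[M]$ and $\Astr[N]$ agree up to level $n$ after applying the partial isomorphism constructed so far. Agreement at level $2$ is automatic, since both structures restrict to the fixed bimodule action. The obstruction to passing from level $2$ to level $3$ is exactly the class $\Hclass{\Astr<3>[M]-\Astr<3>[N]}\in\BimHH!\H{A}^e![2]<-1>{\H{M}}$, which vanishes by hypothesis. For $n\geq3$, having aligned the two structures up to level $n$ by a compatible choice of the earlier components of the isomorphism, the obstruction to aligning at level $n+1$ is the class of $\Astr<n+1>[M]-\Astr<n+1>[N]$, which a priori lies in bidegree $(n,1-n)$ of $\BimHH{\H{M}}$ but, once the indeterminacy coming from those earlier choices is accounted for, is well defined and vanishing in bidegree $(n,1-n)$ of the third page $\BimHMH{\H{M}}[3][A\ltimes M]$; since $n\geq3$ this entry vanishes by hypothesis, and since every later page of the spectral sequence is a subquotient of the third, the obstruction genuinely vanishes there as well. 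Hence the inductive construction never terminates and produces the required $\A$-isomorphism; unwinding the initial reduction yields the quasi-isomorphism $N\simeq M$ of DG $A$-bimodules.

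The main obstacle is the input imported from the body of the paper: the identification of this spectral sequence, and above all the fact that for $n\geq3$ the stage-$n$ obstruction is detected on the \emph{third} page --- the Massey bimodule cohomology --- rather than merely on the second page of self-extensions. This refinement, whose justification rests on the computation of the second differential as the Gerstenhaber bracket with the bimodule universal Massey product, is precisely what permits both the weaker vanishing range $n>1$ and the replacement of the self-extension spaces appearing in \Cref{Kadeishvili_bimodules} by the typically smaller Massey bimodule cohomology. Granting it, the remaining points are bookkeeping: matching the hypothesis range $n>1$ with the bidegrees $(n,1-n)$, $n\geq3$, of the obstruction groups, and checking that the cubic stage $2\to3$ is correctly bridged by the explicit hypothesis $\Hclass{\Astr<3>[M]-\Astr<3>[N]}=0$ before the third-page vanishing takes over for $n\geq3$.
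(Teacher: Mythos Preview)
Your proposal is correct and takes essentially the same approach as the paper: reduce to minimal models over the fixed minimal $\A$-algebra $(\H{A},\Astr[A])$, then run the fibre-wise Bousfield--Kan spectral sequence for $\A$-bimodule structures, whose third page is the Massey bimodule cohomology, to conclude that the two minimal $\A$-bimodules are gauge $\A$-isomorphic. The paper packages this into a one-line citation of the general fibre-wise almost-formality theorem together with the minimal-model correspondence for bimodules and a unitality reduction; the only steps you leave implicit are the $\lim^1$-vanishing needed to assemble the stagewise homotopies into a genuine $\A$-isomorphism and that unitality point.
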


Our results have analogues for DG algebras and DG bimodules whose cohomologies
are $d$-sparse, that is they are concentrated in degrees that are multiples of a
fixed integer $d\geq1$, see \Cref{sec:sparse} for detailed statements. We also
mention that \Cref{thm:B_bimodules} and its $d$-sparse variants are key
technical ingredients in the proofs of the main theorems in our article
\cite{JM25}, a sequel to \cite{JKM22}, where we study certain bimodule right
Calabi--Yau algebras in the sense of Kontsevich~\cite{Kon93} (see also
\cite{KS09}). We remind the reader that a DG algebra $A$ is \emph{bimodule right
  $n$-Calabi--Yau}, $n\in\ZZ$, if the diagonal bimodule of $A$ and its shifted
$\kk$-linear dual $(DA)[-n]$ are quasi-isomorphic as DG $A$-bimodules.
Cohomological criteria for detecting the diagonal bimodule of a DG algebra are
therefore of interest in this context.

\subsection{Obstruction theory for algebras and bimodules}

In order to prove our main results we develop an obstruction theory for
extending minimal $\A[k]$-bimodule structures over minimal $\A[k]$-algebras, that is truncated
minimal $\A$-structures with operations only up to a given arity $k\geq2$. For
this, we rely heavily on previous work by the second-named author~\cite{Mur20}.

Let $A$ be a graded vector space. Consider the problem of extending a minimal
$\A[k]$-algebra structure
\[
  (A,\Astr<2>,\Astr<3>,\dots,\Astr<k-1>,\Astr<k>)
\]
on a graded vector space $A$ to a minimal $\A[k+1]$-algebra structure
\[
  (A,\Astr<2>,\Astr<3>,\dots,\Astr<k-1>,\widetilde{\Astr<k>},\Astr<k+1>[A])
\]
with the same underlying $\A[k-1]$-algebra structure but possibly with a different
operation of arity $k$. For $k\geq4$ there are obstructions in the Hochschild
cohomology
\[
  \HH[k+1][2-k]{A},
\]
see for example~\cite[Corollaire~B.1.2]{Lef03}. Let $k\geq5$. If we allow more
modifications to the given minimal $\A[k]$-algebra structure, there are
obstructions in far-away pages of a truncated spectral sequence constructed by
the second-named author in~\cite{Mur20}. The second second page  of this
spectral sequence is essentially given by the Hochschild cohomology $\HH{A}$ of
the underlying graded algebra $A$, and its second differential
\[
  \ssd{2}\colon x\longmapsto[\Hclass{\Astr<3>},x]
\]
is given by the Gerstenhaber bracket with the universal Massey product of the
original minimal $\A[k]$-algebra (compare with the definition of the
Hochschild--Massey complex of a minimal $\A$-algebra). Applications of
this obstruction theory and of its $d$-sparse variants in representation theory
and algebraic geometry can be found in~\cite{Mur22,JKM22,JKM24}.

In this article we introduce variants of the above obstruction theory for:
\begin{itemize}
\item \emph{simultaneously} extending a minimal $\A[k]$-algebra structure
  \[
    (A,\Astr<2>,\Astr<3>,\dots,\Astr<k>)
  \]
  on a graded vector space $A$ and a compatible minimal $\A[k]$-bimodule
  structure
  \[
    (M,\Astr<2>[A\ltimes M],\Astr<3>[A\ltimes M],\dots,\Astr<k>[A\ltimes M])
  \]
  on a graded vector space $M$;
\item extending a minimal $\A[k]$-bimodule structure
  \[
    (M,\Astr<2>[A\ltimes M],\Astr<3>[A\ltimes M],\dots,\Astr<k>[A\ltimes M])
  \]
  on a graded vector space $M$ over (the truncation of) a \emph{fixed} minimal $\A$-algebra
  structure $(A,\Astr)$ on a graded vector space $A$.
\end{itemize}
As with the obstruction theory for extending truncated minimal $\A$-algebras recalled
above, in both cases we allow for several modifications of the given minimal
$\A[k]$-structures in lower arities. In the first case the obstructions are
located in far-away pages of a truncated spectral sequence whose second page
 is essentially given by the bimodule Hochschild cohomology
$\RelBimHH{A}{M}$ of the pair $(A,M)$, and whose second differential
\[
  \ssd{2}\colon x\longmapsto[\Hclass{\Astr<3>[A\ltimes M]},x]
\]
is given by the Gerstenhaber bracket with the corresponding bimodule universal
Massey product (compare with the definition of the bimodule Hochschild--Massey
complex of a triple). Similarly, in the second case the obstructions are
located in far-away pages of a truncated spectral sequence whose second page
 is essentially given by the bigraded vector space of self-extensions
$\BimHH{M}$, and whose second differential
\[
  \ssd{2}\colon x\longmapsto[\Hclass{\Astr<3>[A\ltimes M]},x]
\]
is again given by the Gerstenhaber bracket with the corresponding bimodule
universal Massey product (compare with the definition of the Massey bimodule complex).

Our construction of the aforementioned truncated spectral sequences is quite
general and the setup is better described in the language of (non-symmetric)
DG operads. Recall that a DG operad $\O$ consists of a sequence $\O<n>$,
$n\geq0$, of DG vector spaces of \emph{operations} equipped with
\emph{infinitesimal composition} operations
\[
  \circ_i\colon\O<p>\otimes\O<q>\longrightarrow\O<p+q-1>,\qquad 1\leq i\leq
  p,\quad p\geq 0,
\]
and a \emph{unit} cocycle $\id\in\O<1>^0$ satisfying suitable axioms. The prototypical example is the \emph{endomorphism
  DG operad} $\E{V}$ of a DG vector space $V$, whose homogeneous operations of
arity $n$ are the homogeneous maps
\[
  V^{\otimes n}\longrightarrow V,
\]
and whose infinitesimal compositions are given by the apparent compositions of
multilinear maps. Given a pair of DG vector spaces $(V,W)$, we also consider the
\emph{linear endomorphism DG operad} $\E{V,W}$ introduced in~\cite{BM09}; this
is a suboperad $\E{V,W}\subseteq\E{V\oplus W}$ of the endomorphism DG operad of
the direct sum $V\oplus W$ that contains the endomorphism DG operad $\E{V}$ as a
suboperad (with different unit, though) and also an operadic ideal $\E*{V,W}\subseteq\E{V,W }$ consisting of
all homogeneous operations of the form
\[
  V^{\otimes p}\otimes W\otimes V^{\otimes q}\longrightarrow W,\qquad n\geq
  2,\qquad p+1+q=n.
\]
In particular, there is a canonical quotient map of operads
$\E{V,W}\twoheadrightarrow\E{V}\cong\E{V,W}/\E*{V,W}$ preserving the unit.

An $\A$-algebra structure on a DG vector space $A$ is equivalent to the datum of
a morphism of DG operads $\Astr\colon\A\to\E{A}$ from the DG operad $\A$;
similarly, a compatible $\A$-bimodule structure on a DG vector space $M$ is equivalent to
the datum of a morphism of DG operads $\Astr[A\ltimes M]\colon\A\to\E{A,M}$ such
that the following diagram commutes, where the vertical arrow denotes the
canonical projection:
\[
  \begin{tikzcd}
    &\E{A,M}\dar[two heads]\\
    \A\rar[swap]{\Astr}\urar{\Astr[A\ltimes M]}&\E{A}.
  \end{tikzcd}
\]
The DG operad $\A$ admits an exhaustive filtration
\[
  \A[2]\subset\A[3]\subset\cdots\subset\A[k]\subset\cdots\subset\A
\]
by DG suboperads, where $\A[k]$ is the DG operad for truncated $\A$-structures
with operations only up to a given arity $k\geq2$, and the previous discussion
extends to $\A[k]$-structures in the obvious way.

The upshot of this perspective is the following. Let $\infty\geq k\geq 2$. A
morphism of DG operads $\Astr[\O]\colon \A[k]\to\O$ can be identified with a
point in the Dwyer--Kan mapping space (a Kan complex)
\[
  \Map{\A[k]}{\O}=\Map[\Operads]{\A[k]}{\O}
\]
computed in the category $\Operads$ of DG operads endowed with the transferred
projective model structure~\cite{Lyu11,Mur11}. Thus, for $\O=\E{A}$ or
$\O=\E{A,M}$, the extension problems discussed above correspond to exhibiting a
point in the mapping space
\[
  \Map{\A[k+1]}{\O}
\]
that agrees with the original point $\Astr[\O]\colon\A[k]\to\O$ after restriction along the
induced maps
\[
  \Map{\A[k+1]}{\O}\twoheadrightarrow\Map{\A[k]}{\O}\twoheadrightarrow\Map{\A[\ell]}{\O}
\]
for some chosen $k\geq \ell\geq 2$. As mentioned above, the spectral sequences that we
construct are truncated, but if $n=\infty$ then they are defined in their
entirety and the term $\BK[\infty]$ contributes to the homotopy groups of the
mapping space
\[
  \Map{\A}{\O}\simeq\operatorname{holim}_{k\geq
    2}\Map{\A[k]}{\O},
\]
based at the given point $\Astr[\O]$~\cite[Section~IX.5]{BK72}. In fact, our (truncated) spectral
sequences are extended (!) fringed spectral sequences of Bousfield--Kan
type~\cite[Section~IX.4]{BK72}; we refer the reader to the introduction
to~\cite{Mur20} for a more through discussion of the computational advantages of
this kind of extended spectral sequences (at least in the case $\O=\E{A}$).
Finally, obstructions for extending a minimal $\A[k]$-bimodule structure on $M$
over a fixed minimal $\A$-structure $\Astr$ on $A$ arise from considering the spectral
sequence associated to the the homotopy fibre of the map (a Kan
fibration)
\[
  \Map{\A}{\E{A,M}}\twoheadrightarrow\Map{\A}{\E{A}}
\]
induced by the canonical quotient map $\E{A,M}\twoheadrightarrow\E{A}$, taken
above the point $\Astr$ of the target.

In the above discussion we may work, more generally, with an arbitrary graded operad $\O$
with multiplication (\Cref{operad_multiplication}) and an associative operadic
ideal ${\I\subset\O}$ (\Cref{def:associative_operadic_ideal}). We then construct
compatible spectral sequences associated to the induced morphism of mapping
spaces
\[
  \Map{\A}{\O}\twoheadrightarrow\Map{\A}{\O/\I},
\]
and to its homotopy fibre $\Str{\A}{h}{\O}$ above a given point $h\colon\A\to\O/\I$ of the target. The three resulting
spectral sequences fit into a sequence of composable morphisms that map
obstructions onto obstructions, similar to the long exact sequence involving
their second pages. The simultaneous obstruction theory fits into the general
setting of \cite[Section~6]{Mur20}, and the obstruction theory for
$\A$-bimodules over a fixed $\A$-algebra fits into the more general setting of
\cite[Section~5]{Mur20}. The key insight is that the good properties of the
tower of mapping spaces
\[
  \cdots\twoheadrightarrow\Map{\A[k]}{\O}\twoheadrightarrow\cdots\twoheadrightarrow\Map{\A[3]}{\O}\twoheadrightarrow\Map{\A[2]}{\O}
\]
only depend on the good properties of the DG operad $\A$ and of its filtration
by the suboperads $\A[k]$, $k\geq2$, as well as a minimal amount of additional
structure on the graded operad $\O$ (the datum of a multiplication).


\addtocontents{toc}{\SkipTocEntry}\subsection*{Structure of the article}

In \Cref{subsec:homotopy_theory} we discuss the homotopy theory of
(non-symmetric) DG operads, building upon previous work of the
second-named author~\cite{Mur14}. In \Cref{sec:operads_cohomology} we recall
the construction of the Gerstenhaber algebra
associated to a (graded) operad with multiplication, and describe the additional
algebraic structures that can be extracted under the additional presence of an operadic
ideal. In the special case of the linear endomorphism operad of a pair
consisting of a graded algebra and a graded bimodule over it we obtain the novel
bimodule Hochschild cochain complex of the pair. In \Cref{sec:obstructions} we
establish, by leveraging results from the second-named author's~\cite{Mur20} in
a crucial way, the various obstruction theories discussed in the introduction. In
\Cref{sec:Kadeishvili-type_theorems} we prove our main theorems as stated in the
introduction and, in \Cref{sec:sparse}, we prove their `sparse' variants.

\addtocontents{toc}{\SkipTocEntry}\subsection*{Conventions}

We work over an arbitrary field $\kk$. We let $\dgMod*{\kk}$ be the closed
symmetric monoidal category of DG vector spaces over $\kk$, with differentials
rising the degree, that is we work with cochain complexes. The tensor product
and internal $\operatorname{hom}$ are denoted by $V\otimes W=V\otimes_\kk W$ and
$\hom{V}{W}=\hom[\kk]{V}{W}$, respectively. We also work with the closed symmetric monoidal
subcategory of graded vector spaces, which we identify with the full subcategory
of DG vector spaces with vanishing differential. All operads are non-symmetric.


\addtocontents{toc}{\SkipTocEntry}\subsection*{Acknowledgements}

The authors would like to thank Víctor Carmona for his help with the proof
of~\Cref{bimodule_same_operad_quillen_equivalence}\eqref{victor}.


\addtocontents{toc}{\SkipTocEntry}\subsection*{Financial support}

G.~J.~was partially supported by the Swedish Research Council (Vetenskapsrådet) Research
Project Grant 2022-03748 `Higher structures in higher-di\-men\-sio\-nal homological algebra.'
F.~M.~is partially supported by grant PID2020-117971GB-C21 funded by
MCIN/AEI/10.13039/501100011033.


\section{Homotopy theory of operads, algebras and bimodules}\label{subsec:homotopy_theory}

In this section we discuss the homotopy theories of DG operads, of algebras over
such operads, and of bimodules over such algebras. All the operads considered in
this article are non-symmetric. We refer the reader to~\cite{LV12} for the
basics on these algebraic structures, but beware that there the authors work
with symmetric operads and hence bimodules are called `modules' therein.
For algebras over non-symmetric operads there is a different notion of module
(akin to one-sided modules over an associative algebra) that we do not consider in this
article, see for example~\cite{Kel01,Kel02} for the case of $\A$-algebras. The
results in this section are expressed in the language of Quillen model
categories, for which our main reference is~\cite{Hov99}.

\subsection{Model category structures}

The category $\dgMod*{\kk}$ of DG vector spaces is a closed symmetric monoidal
model category. The weak
equivalences in the model category $\dgMod*{\kk}$ are the quasi-isomorphisms, the fibrations are
the epimorphisms (=degree-wise surjective maps) and the cofibrations are the
monomorphisms (=degree-wise injective maps). All DG vector spaces are
bifibrant and weak equivalences are homotopy equivalences. The situation is
particularly favorable because, since $\kk$ is a field, the projective and
injective model category structures on $\dgMod*{\kk}$ coincide, see for
example~\cite[Section~2.3]{Hov99}.

In what follows we consider several categories whose objects are (collections
of) DG vector
spaces with extra structure. We say that such a category has the
\emph{transferred projective model category structure} if it has a (necessarily
unique) model category structure whose weak equivalences and fibrations are the
maps that are weak equivalences and fibrations of DG vector spaces after
forgetting the extra structure. All objects are fibrant in such model
categories, but cofibrations and cofibrant objects are in general more difficult
to describe. We also consider Quillen adjunctions between such model categories,
in which case the right adjoints preserve and reflect weak equivalences and
fibrations between arbitrary objects.

Recall that a \emph{DG operad} is a sequence $\O=\{\O<n>\}_{n\geq 0}$ of DG
vector spaces, the DG vector spaces of \emph{operations}, endowed with extra
structure given by \emph{infinitesimal composition} operations
\[
  \circ_i\colon\O<p>\otimes\O<q>\longrightarrow\O<p+q-1>,\qquad
  \mu\otimes\nu\longmapsto \mu\circ_i\nu,\qquad 1\leq i\leq p,\quad q\geq 0,
\]
and a unit cocycle $\id=\id[\O]\in\O<1>^0$ satisfying certain equations (see for
example~\cite[Section~2]{Mur11}) that imply, in particular, that the DG vector space
$\O<1>$ of unary operations is a DG algebra. A \emph{morphism} $\varphi\colon\O\to\P$
of DG operads is a sequence $\varphi=\{\varphi_n\colon\O<n>\to\P<n>\}_{n\geq0}$ of
morphisms of DG vector spaces that are strictly compatible with the
corresponding infinitesimal composition operations and the units in the apparent
sense: we have $\varphi_1(\id[\O])=\id[\P]$ and the squares below commute.
\[
  \begin{tikzcd}
    \O<p>\otimes\O<q>\rar{\circ_i}\dar[swap]{\varphi_p\otimes\varphi_q}&\O<p+q-1>\dar{\varphi_{p+q-1}}\\
    \P<p>\otimes\P<q>\rar{\circ_i}&\P<p+q-1>
  \end{tikzcd}\quad\quad 1\leq i\leq p,\quad q\geq0.
\]

The category $\Operads$ of (non-symmetric) DG operads has the transferred projective model
category structure~\cite{Lyu11,Mur11}. An \emph{elementary cofibration}
$\O\rightarrowtail\P$ in $\Operads$ is the inclusion of a suboperad such that,
as a graded operad, $\P$ is obtained from $\O$ by freely adjoining a new
operation $\nu\in\P$ with differential $d(\nu)\in\O$.
Cofibrations in $\Operads$ are retracts of transfinite compositions of
elementary cofibrations.

Given a DG operad $\O$, the category $\Alg{\O}$ of $\O$-algebras carries the
transferred projective model category structure~\cite[Theorem~1.2]{Mur11}.
Recall that an \emph{$\O$-algebra} is a DG vector $A$ space equipped with
structure maps
\[
  \O<n>\otimes A^{\otimes n}\longrightarrow A,\qquad \mu\otimes
  x_1\otimes\cdots\otimes x_n\longmapsto \mu(x_1,\dots,x_n),\qquad n\geq 0,
\]
satisfying certain equations; a \emph{(strict) morphism} $f\colon A\to B$ between $\O$-algebras
is a morphism of DG vector spaces that is strictly compatible with the structure
maps:
\[
  \begin{tikzcd}
    \O<n>\otimes A^{\otimes n}\rar\dar[swap]{\id*\otimes f^{\otimes n}}&A\dar{f}\\
    \O<n>\otimes B^{\otimes n}\rar&B
  \end{tikzcd}\qquad n\geq0.
\]

A morphism of DG operads $\varphi\colon\O\to\P$ induces a Quillen
adjunction
\begin{equation}\label{algebra_quillen_adjunction}
  \begin{tikzcd}
    \varphi_!\colon\Alg{\O}\rar[shift left]&\Alg{\P}\noloc \varphi^*\lar[shift
    left]
  \end{tikzcd}
\end{equation}
where $\varphi^*$ is the restriction of scalars along $\varphi$. If $\varphi$ is
a weak equivalence, then this adjunction is moreover a Quillen
equivalence~\cite[Theorem~1.3]{Mur11}.

Given a DG operad $\O$, the comma category $\commacat{\O}{\Operads}$ of operads
under $\O$ inherits a model category structure from $\Operads$. There is also a
Quillen adjunction~\cite[Definition~1.5 and Proposition~1.6]{BM09}
\begin{equation}\label{enveloping_operad_adjunction}
  \begin{tikzcd}[row sep = 0ex,
    /tikz/column 1/.append style={anchor=base east},
    /tikz/column 2/.append style={anchor=base west}]
    \Alg{\O}\ar[r,shift right]&\commacat{\O}{\Operads},\ar[l,shift right]\\
    \P<0>&\P,\ar[l,mapsto]\\
    A\ar[r,mapsto]&\O[A].
  \end{tikzcd}
\end{equation}
The right adjoint sends an operad $\P$ under $\O$ to the restriction of scalars
of the initial $\P$-algebra $\P<0>$ along the structure morphism $\O\to\P$. The
DG operad $\O[A]$ is called the \emph{enveloping DG operad} of the $\O$-algebra
$A$.

Let $\O$ be a DG operad and $A$ an $\O$-algebra. Recall that an
\emph{$A$-bimodule $M$ over $\O$}, also called \emph{$\O$-$A$-bimodule}, or
\emph{$\O$-bimodule over $A$}, or simply \emph{$\O$-bimodule} or
\emph{$A$-bimodule} if the other part is understood, is a DG vector space
equipped with structure maps
\begin{align*}
  \O<n>\otimes A^{\otimes p}\otimes M\otimes A^{\otimes q}&\longrightarrow M,&n\geq 1,\quad p+1+q=n,\\
  \mu\otimes x_1\otimes\cdots\otimes x_n&\longmapsto \mu(x_1,\dots,x_n)
\end{align*}
satisfying certain equations (see for example~\cite[Definition~1.1]{BM09}); (strict) morphisms between bimodules are defined in the
obvious way. There is an equivalence of categories~\cite[Theorem 1.10]{BM09}
\[
  \Bimod*{\O,A}\simeq\dgMod*{\O[A]<1>}
\]
between the category of $\O$-$A$-bimodules and the category of left DG modules
over the \emph{enveloping DG algebra} $\O[A]<1>$. Keeping this equivalence of
categories in mind, the following result is a consequence of
\cite[Theorem~4.1]{SS00}.

\begin{proposition}\label{bimodules_model_category}
  Let $\O$ be a DG operad and let $A$ be an $\O$-algebra. The category $\Bimod*{\O,A}$ of $\O$-$A$-bimodules has the transferred projective model category structure.
\end{proposition}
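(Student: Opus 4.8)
The plan is to transport the sought model structure along the equivalence of categories
$\Bimod*{\O,A}\simeq\dgMod*{\O[A]<1>}$ of \cite[Theorem~1.10]{BM09}. Set $R\coloneqq\O[A]<1>$, a
monoid in the closed symmetric monoidal model category $\dgMod*{\kk}$. Since a transferred
projective model category structure, if it exists, is necessarily unique, it suffices to prove
existence; and since the displayed equivalence is, by construction, compatible with the respective
forgetful functors to $\dgMod*{\kk}$ (a point I return to below), it suffices to exhibit the
transferred projective model category structure on the category $\dgMod*{R}$ of left DG $R$-modules.

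For this I would invoke \cite[Theorem~4.1]{SS00}, whose hypotheses are met by $\dgMod*{\kk}$: it is
a cofibrantly generated closed symmetric monoidal model category (see, e.g.,
\cite[Section~2.3]{Hov99}) and it satisfies the monoid axiom. The latter is immediate because
$\kk$ is a field: tensoring an arbitrary DG vector space with a (trivial) cofibration again yields
a (trivial) cofibration, as all objects are flat, and both classes are stable under cobase change
and transfinite composition; hence the maps occurring in the monoid axiom are quasi-isomorphisms.
Therefore \cite[Theorem~4.1]{SS00} equips $\dgMod*{R}$ with a (cofibrantly generated) model
category structure whose weak equivalences, respectively fibrations, are exactly the morphisms that
become quasi-isomorphisms, respectively degree-wise surjections, after applying the forgetful
functor $\dgMod*{R}\to\dgMod*{\kk}$. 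This is precisely the transferred projective model category
structure, in which moreover every object is fibrant.

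It then remains to observe that the equivalence $\Bimod*{\O,A}\simeq\dgMod*{R}$ of
\cite[Theorem~1.10]{BM09} identifies the underlying DG vector space of an $\O$-$A$-bimodule with
that of the corresponding left $R$-module, the $R$-action being reassembled out of the bimodule
structure maps together with the structure maps of $A$. Consequently the transferred projective
model category structure on $\dgMod*{R}$ pulls back, along this equivalence, to a model category
structure on $\Bimod*{\O,A}$ whose weak equivalences and fibrations are detected by the forgetful
functor $\Bimod*{\O,A}\to\dgMod*{\kk}$; by definition this is the transferred projective model
category structure on $\Bimod*{\O,A}$, as desired.

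The argument involves no deep difficulty; the only points meriting attention are the verification
of the monoid axiom for $\dgMod*{\kk}$ (immediate over a field) and the bookkeeping establishing
that the equivalence of \cite[Theorem~1.10]{BM09} intertwines the two forgetful functors to
$\dgMod*{\kk}$, so that the model structures transferred from $\dgMod*{\kk}$ agree on both sides.
One could alternatively bypass \cite{SS00} and appeal directly to the classical projective model
structure on DG modules over a DG algebra, but the route via \cite{SS00} is the most economical
given the present setup.
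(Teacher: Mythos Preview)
Your proposal is correct and follows essentially the same approach as the paper: the paper's proof is a one-line appeal to \cite[Theorem~4.1]{SS00} via the equivalence $\Bimod*{\O,A}\simeq\dgMod*{\O[A]<1>}$ of \cite[Theorem~1.10]{BM09}, and your argument is a careful unpacking of exactly that route, including the verification of the monoid axiom and the compatibility of forgetful functors.
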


Given a DG operad $\O$ and a morphism of $\O$-algebras $f\colon A\to B$, there is a Quillen adjunction
\begin{equation}\label{bimodule_same_operad_quillen_adjunction}
  \begin{tikzcd}
    f_!\colon\Bimod*{\O,A}\rar[shift left]&\Bimod*{\O,B}\noloc f^*,\lar[shift left]
  \end{tikzcd}
\end{equation}
where $f^*$ is the restriction of scalars along $f$.

Given a DG operad map $\varphi\colon\O\to\P$ and a $\P$-algebra $C$, there is a Quillen adjunction
\begin{equation}\label{bimodule_different_operad_quillen_adjunction}
  \begin{tikzcd}
    \varphi_!\colon\Bimod*{\O,\varphi^*C}\rar[shift left]&\Bimod*{\P,C}\noloc \varphi^*,\lar[shift left]
  \end{tikzcd}
\end{equation}
where $\varphi^*C$ is the $\O$-algebra obtained from $C$ by applying the right
adjoint in \eqref{algebra_quillen_adjunction}. The right adjoint $\varphi^*$ in \eqref{bimodule_different_operad_quillen_adjunction} is the
restriction of scalars along $\varphi$.

Furthermore, if we have a morphism of DG operads $\varphi\colon\O\to\P$ and an
$\O$-algebra $A$, we can consider the Quillen adjunction
\begin{equation}\label{composite_bimodule_quillen_adjunction}
  \begin{tikzcd}
    \varphi_!\colon\Bimod*{\O,A}\rar[shift left]&\Bimod*{\P,\varphi_!A}\noloc\varphi^*,\lar[shift left]
  \end{tikzcd}
\end{equation}
which is the composite of two Quillen adjunctions of the form~\eqref{bimodule_same_operad_quillen_adjunction} and~\eqref{bimodule_different_operad_quillen_adjunction}
\[\begin{tikzcd}
    \Bimod*{\O,A}\ar[r,shift left,"\eta_!"] &
    \Bimod*{\O,\varphi^*\varphi_!A}\ar[r,shift left,"\varphi_!"]\ar[l,shift
    left,"\eta^*"] & \Bimod*{\P,\varphi_!A},\ar[l,shift left,"\varphi^*"]
  \end{tikzcd}
\]
where $\eta\colon A\to \varphi^*\varphi_!A$ is the unit of the
adjunction~\eqref{algebra_quillen_adjunction}.

Under additional assumptions, the Quillen
adjunctions~\eqref{bimodule_same_operad_quillen_adjunction}
and~\eqref{composite_bimodule_quillen_adjunction} are Quillen equivalences.

\begin{proposition}\label{bimodule_same_operad_quillen_equivalence}
  Let $\O$ be a DG operad and let $f\colon A\to B$ be an $\O$-algebra map.
  \begin{enumerate}
  \item\label{bsoqe_quillen_equivalence_conditions} If $f$ satisfies one of the
    two following properties, then the Quillen
    adjunction~\eqref{bimodule_same_operad_quillen_adjunction}
    \[
      \begin{tikzcd}
        f_!\colon\Bimod*{\O,A}\rar[shift left]&\Bimod*{\O,B}\noloc
        f^*,\lar[shift left]
      \end{tikzcd}
    \]
    is a Quillen equivalence:
    \begin{enumerate}
    \item $f$ is a trivial cofibration in $\Alg{\O}$.
    \item\label{between_cofibrant_algebras} $f$ is a weak equivalence between
      cofibrant $\O$-algebras.
    \end{enumerate}
  \item\label{victor} If $f$ is a trivial cofibration in $\Alg{\O}$, then for
    every $\O$-$A$-bimodule $M$ the unit $M\to f^*f_!M$ of the
    adjunction~\eqref{bimodule_same_operad_quillen_adjunction} is a weak
    equivalence.
  \end{enumerate}
\end{proposition}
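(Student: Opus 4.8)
The plan is to transport the statement to left DG modules over enveloping DG algebras and then to invoke the homotopy invariance of extension of scalars. Recall that the equivalences of categories $\Bimod*{\O,A}\simeq\dgMod*{\O[A]<1>}$ and $\Bimod*{\O,B}\simeq\dgMod*{\O[B]<1>}$ of~\cite[Theorem~1.10]{BM09} commute with the forgetful functors down to $\dgMod*{\kk}$, and hence are compatible with the transferred projective model structures on both sides. The $\O$-algebra map $f$ induces a morphism of DG operads $\O[f]\colon\O[A]\to\O[B]$, whose arity-$1$ component is a morphism of DG algebras $\psi\coloneqq\O[f]<1>\colon\O[A]<1>\to\O[B]<1>$. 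By naturality of the equivalence of~\cite[Theorem~1.10]{BM09} in the algebra variable, the right adjoint $f^*$ of~\eqref{bimodule_same_operad_quillen_adjunction} corresponds to restriction of scalars along $\psi$, hence its left adjoint $f_!$ corresponds to extension of scalars $\O[B]<1>\otimes_{\O[A]<1>}(-)$, and the unit $M\to f^*f_!M$ is identified with the canonical map $M\cong\O[A]<1>\otimes_{\O[A]<1>}M\to\O[B]<1>\otimes_{\O[A]<1>}M$, namely $\psi\otimes_{\O[A]<1>}M$.

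First I would show that $\psi$ is a quasi-isomorphism. The enveloping operad functor $\O[-]\colon\Alg{\O}\to\commacat{\O}{\Operads}$ is a left Quillen functor, being the left adjoint in~\eqref{enveloping_operad_adjunction}. Therefore, if $f$ is a trivial cofibration in $\Alg{\O}$ then $\O[f]$ is a trivial cofibration of DG operads, and if $f$ is a weak equivalence between cofibrant $\O$-algebras then $\O[f]$ is a weak equivalence of DG operads by Ken Brown's lemma. In either case $\O[f]$ is an arity-wise quasi-isomorphism, so its arity-$1$ component $\psi$ is a quasi-isomorphism of DG algebras. Part~\ref{bsoqe_quillen_equivalence_conditions} then follows: since $\kk$ is a field and $\psi$ is a quasi-isomorphism, extension and restriction of scalars along $\psi$ form a Quillen equivalence between $\dgMod*{\O[A]<1>}$ and $\dgMod*{\O[B]<1>}$ by the standard homotopy invariance of DG module categories under quasi-isomorphisms of DG algebras (see~\cite[Theorem~4.3]{SS00} and~\cite{Hov99}), and transporting back along the equivalences of model categories above yields the assertion.

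For part~\ref{victor}, suppose $f$ is a trivial cofibration in $\Alg{\O}$; by the first paragraph it suffices to prove that $\psi\otimes_{\O[A]<1>}M$ is a quasi-isomorphism for every left $\O[A]<1>$-module $M$. The key claim is that $\O[B]<1>$ is cofibrant as a right $\O[A]<1>$-module. Granting this, both $\O[A]<1>$ (which is free over itself) and $\O[B]<1>$ are cofibrant right $\O[A]<1>$-modules, so $\operatorname{cone}(\psi)$ is a cofibrant right $\O[A]<1>$-module; it is acyclic since $\psi$ is a quasi-isomorphism, hence contractible, and so the complex $\operatorname{cone}(\psi)\otimes_{\O[A]<1>}M=\operatorname{cone}\!\bigl(\psi\otimes_{\O[A]<1>}M\bigr)$ is acyclic for every $M$. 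Thus $\psi\otimes_{\O[A]<1>}M$ is a quasi-isomorphism, as required.

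It remains to prove the key claim, which I expect to be the main obstacle. Since $\O[-]$ is left Quillen, $\O[f]\colon\O[A]\to\O[B]$ is a cofibration of DG operads, hence a retract of a (possibly transfinite) composition of elementary cofibrations, each obtained by freely adjoining a single operation. Using that cofibrancy of modules is stable under retracts and transfinite composition of cofibrations, and that restriction of scalars along a DG algebra map that exhibits its target as a cofibrant right module over its source preserves cofibrations of right modules, a routine induction reduces the claim to the following local statement: if a DG operad is obtained from a sub-DG-operad by freely adjoining one operation, then the arity-$1$ enveloping algebra of the former is cofibrant as a right module over that of the latter. This in turn follows from the explicit description of the free operadic extension, namely that the enlarged arity-$1$ enveloping algebra carries an exhaustive filtration indexed by the number of occurrences of the newly adjoined operation, whose associated graded pieces are free right modules over the smaller enveloping algebra, on generating DG vector spaces assembled from the operations of the smaller operad occupying the remaining tree inputs (such generating DG vector spaces are automatically cofibrant over $\kk$). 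Carrying out this bookkeeping with care over an arbitrary field is the technical heart of the proof.
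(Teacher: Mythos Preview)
Your treatment of part~\eqref{bsoqe_quillen_equivalence_conditions} is correct and matches the paper's.

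For part~\eqref{victor}, however, your key claim---that $\O[B]<1>$ is cofibrant as a right $\O[A]<1>$-module---is false, and so is the local statement you reduce it to. When one freely adjoins an operation $\nu$ to an operad $\P$, the associated graded of the arity-$1$ part of the extension (filtered by the number of occurrences of $\nu$) is \emph{not} in general free over $\P<1>$: its pieces are built from the right $\P<1>$-modules $\P<m>$ for various $m$, with action given by some $\circ_i$, and there is no reason for these to be free or even cofibrant. Concretely, take $R=\kk[x]/(x^2)$ in degree~$0$, let $\P$ be the operad with $\P<1>=R$, $\P<2>=\kk$ (all $R$-actions through the augmentation $R\to\kk$), and zero in all other arities, and adjoin an arity-$0$ operation with vanishing differential. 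The arity-$1$ part of the resulting operad is $R\oplus\kk\oplus\kk$ as a right $R$-module, which is not cofibrant since $\kk$ is not projective over $R$. The same phenomenon obstructs the global claim: your filtration argument never uses that $f$ is \emph{trivial}, only that it is a cofibration, and cofibrancy of $\O[B]<1>$ over $\O[A]<1>$ simply does not follow from that.

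The paper's route is different and does not pass through cofibrancy. One reduces instead at the level of \emph{algebras}: a trivial cofibration $f$ is a retract of a transfinite composite of pushouts of generating trivial cofibrations in $\Alg{\O}$, each a free $\O$-algebra map on $0\to V$ with $V$ a contractible DG vector space. For such a pushout, \cite[Lemma~1.2]{Mur17} gives
\[
\O[B]<1>\;=\;\bigoplus_{n\geq 0}\,\bigoplus_{1\leq i\leq n+1}\O[A]<n+1>\otimes V^{\otimes n},
\]
with right $\O[A]<1>$-action on the $(n,i)$ summand via $\circ_i$ on $\O[A]<n+1>$. These summands are typically not cofibrant over $\O[A]<1>$, but for $n\geq 1$ they are \emph{contractible}, since $V^{\otimes n}$ is contractible and the tensor product is over $\kk$. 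Tensoring over $\O[A]<1>$ with $M$ therefore yields $M$ plus contractible summands, so the unit is a quasi-isomorphism. The point is that the ``trivial'' in ``trivial cofibration'' is used essentially, through the contractibility of $V$, and not through any cofibrancy of $\O[B]<1>$.
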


\begin{proof}
  If $f$ is a trivial cofibration, then the induced morphism of DG operads
  $\O[A]\to\O[B]$ is a trivial cofibration of DG operads under $\O$ since
  \eqref{enveloping_operad_adjunction} is a Quillen adjunction. For the same
  reason, if $f$ is a weak equivalence between cofibrant $\O$-algebras, then
  $\O[A]\to\O[B]$ is a weak equivalence between cofibrant DG operads under $\O$.
  Both properties imply (independently from each other) that
  $\O[A]<1>\to\O[B]<1>$ is a quasi-isomorphism of DG algebras, hence
  \eqref{bimodule_same_operad_quillen_adjunction} is a Quillen equivalence by
  \cite[Theorem~4.3]{SS00}. However, this does not suffice for proving statement
  \eqref{victor}, which is stronger than statement
  \eqref{bsoqe_quillen_equivalence_conditions}.

  Suppose then that $f$ is a trivial cofibration. The unit $M\to f^*f_!M$ is the
  map
  \[
    (\O[A]<1>\to\O[B]<1>)\otimes_{\O[A]<1>}M.
  \]
  In order to prove that it is a quasi-isomorphism we analyse
  $\O[A]<1>\to\O[B]<1>$ as a map of right $\O[A]<1>$-modules. By the usual
  transfinite composition and retract argument, it suffices to assume that
  $f\colon A\to B$ is the pushout of a generating trivial cofibration in
  $\Alg{\O}$. Such a generating trivial cofibration is a free $\O$-algebra map
  on $0\to V$, where $V$ is a contractible DG vector space (=contractible
  complex). Under these assumptions, by
  \cite[Lemma~1.2]{Mur17}, the map
  $\O[A]<1>\to\O[B]<1>$ is the inclusion of the factor $n=0$ of the direct sum
  \[
    \textstyle\O[B]<1>=\bigoplus_{n\geq0}\left(\bigoplus_{1\leq i\leq n+1}\O[A]<n+1>\otimes
    V^{\otimes n} \right)
  \]
  As a right $\O[A]<1>$-module, the $(n,i)$ direct factor is the tensor product of $\O[A]<n+1>$ with the right $\O[A]<1>$-module structure given by $\circ_i\colon\O[A]<n+1>\otimes \O[A]<1>\to \O[A]<n+1>$, and the DG vector space $V^{\otimes n}$. Therefore,
  \[
    \textstyle\O[B]<1>\otimes_{\O[A]<1>}M=\bigoplus_{n\geq0}\left(\bigoplus_{1\leq i\leq
      n+1}(\O[A]<n+1>\otimes_{\O[A]<1>}M)\otimes V^{\otimes n}\right)
  \]
  Since $V$ is contractible, so is $V^{\otimes n}$ for $n\geq 1$. Hence, the
  inclusion of the factor $n=0$ into the latter direct sum is a
  quasi-isomorphism. This inclusion is precisely the unit map
  $(\O[A]<1>\to\O[B]<1>)\otimes_{\O[A]<1>}M$.
\end{proof}

\begin{proposition}\label{composite_bimodule_quillen_equivalence_cofibrant_algebra}
  Given a weak equivalence $\varphi\colon\O\to\P$ in $\Operads$ and a cofibrant
  $\O$-algebra $A$, the Quillen
  adjunction~\eqref{composite_bimodule_quillen_adjunction}
  \[
    \begin{tikzcd}
      \varphi_!\colon\Bimod*{\O,A}\rar[shift
      left]&\Bimod*{\P,\varphi_!A}\noloc\varphi^*\lar[shift left]
    \end{tikzcd}
  \]
  is a Quillen equivalence.
\end{proposition}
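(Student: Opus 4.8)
The plan is to reduce the assertion to a quasi-isomorphism of DG algebras and then apply \cite[Theorem~4.3]{SS00}, in the same spirit as the proof of \Cref{bimodule_same_operad_quillen_equivalence}. Using the equivalences of categories $\Bimod*{\O,A}\simeq\dgMod*{\O[A]<1>}$ and $\Bimod*{\P,\varphi_!A}\simeq\dgMod*{\P[\varphi_!A]<1>}$ from \cite[Theorem~1.10]{BM09}, I would first identify the composite Quillen adjunction~\eqref{composite_bimodule_quillen_adjunction} with the extension/restriction-of-scalars adjunction along the canonical morphism of DG algebras
\[
  \psi\colon\O[A]<1>\longrightarrow\P[\varphi_!A]<1>
\]
induced by the functoriality of the enveloping DG operad in the pair $(\varphi,A)$. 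Indeed, the right adjoint of~\eqref{composite_bimodule_quillen_adjunction} is restriction of scalars along $\varphi$ followed by restriction of scalars along the unit $\eta\colon A\to\varphi^*\varphi_!A$ of~\eqref{algebra_quillen_adjunction}; it is therefore the identity on underlying DG vector spaces and is plainly restriction of scalars along $\psi$. Granting this identification, \cite[Theorem~4.3]{SS00} reduces the proposition to showing that $\psi$ is a quasi-isomorphism.

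To see this, I would observe that $\psi$ is the enveloping DG algebra (that is, the arity-$1$ component) of the morphism of DG operads $\O[A]\to\P[\varphi_!A]$, and that the square
\[
  \begin{tikzcd}
    \O\rar{\varphi}\dar&\P\dar\\
    \O[A]\rar&\P[\varphi_!A]
  \end{tikzcd}
\]
is a pushout in $\Operads$. This last point is formal: the functors $A\mapsto\P\sqcup_\O\O[A]$ and $A\mapsto\P[\varphi_!A]$ from $\Alg{\O}$ to $\commacat{\P}{\Operads}$ have naturally isomorphic right adjoints, since both send an operad $\mathcal{Q}$ under $\P$ to its initial algebra, regarded as an $\O$-algebra via restriction of scalars along $\O\to\P\to\mathcal{Q}$; here $\P\sqcup_\O(-)$ denotes the left adjoint of the restriction functor $\commacat{\P}{\Operads}\to\commacat{\O}{\Operads}$, and $\O[-]$, $\P[-]$ are the left adjoints in~\eqref{enveloping_operad_adjunction}. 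Now, since $A$ is cofibrant and~\eqref{enveloping_operad_adjunction} is a Quillen adjunction, the functor $\O[-]$ sends the cofibration $\O<0>\to A$ of $\O$-algebras to a cofibration $\O=\O[\O<0>]\to\O[A]$ of DG operads under $\O$, so $\O\rightarrowtail\O[A]$ is a cofibration in $\Operads$. As $\Operads$ is left proper (see~\cite{Mur14}), the cobase change $\O[A]\to\P[\varphi_!A]$ of the weak equivalence $\varphi$ along this cofibration is again a weak equivalence; hence its arity-$1$ component $\psi$ is a quasi-isomorphism, and the proof concludes by \cite[Theorem~4.3]{SS00}.

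The fussiest point will be the identification in the first paragraph of~\eqref{composite_bimodule_quillen_adjunction} with extension of scalars along $\psi$: this is purely formal, but it requires chasing the triangle identities for $\eta$ through the functoriality of enveloping operads, to check that the enveloping-algebra map attached to the composite of $\eta_!$ and $\varphi_!$ agrees with the one attached to $\varphi_!$ directly. The genuine input, though, is left properness of $\Operads$. If one preferred to circumvent it, one could factor $\varphi$ as a trivial cofibration $\O\to\widetilde{\O}$ followed by a trivial fibration $\widetilde{\O}\to\P$ and handle the two resulting bimodule functors in turn: the first is a Quillen equivalence by the argument above, since one then takes the cobase change of a \emph{trivial} cofibration along $\O\rightarrowtail\O[A]$ and no properness is needed; the second, for a trivial fibration and a cofibrant algebra over its source, would instead have to be dispatched by an explicit analysis of enveloping DG algebras of free algebras, along the lines of the proof of \Cref{bimodule_same_operad_quillen_equivalence}\eqref{victor}.
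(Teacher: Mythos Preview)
Your proposal is correct and follows the same overall strategy as the paper: reduce to a quasi-isomorphism of enveloping DG algebras and invoke \cite[Theorem~4.3]{SS00}. The paper's proof is a two-line appeal to \cite[Proposition~3.4.2]{Mur17} for the fact that $\O[A]\to\P[\varphi_!A]$ is a weak equivalence when $A$ is cofibrant, whereas you supply a self-contained argument for that same fact via the pushout description $\P[\varphi_!A]\cong\P\sqcup_{\O}\O[A]$ together with left properness of $\Operads$. Your pushout identification (by comparison of right adjoints) and the cofibrancy of $\O\to\O[A]$ are both correct, so in effect you are reproving the cited proposition; the two proofs coincide once the reference is unwound. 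One minor caution: double-check that left properness of $\Operads$ really appears in \cite{Mur14}; it may be stated elsewhere in the second author's work or in Batanin--Berger, and your suggested fallback via a (trivial cofibration)/(trivial fibration) factorisation is not obviously simpler for the second half.
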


\begin{proof}
  By \cite[Theorem 4.3]{SS00}, it suffices to notice that the canonical morphism
  ${\O[A]\to\P[\varphi_!A]}$ between enveloping operads defined by $\varphi$ and
  the unit ${A\to\varphi^*\varphi_!A}$ of the
  adjunction~\eqref{algebra_quillen_adjunction} is a quasi-isomorphism,
  see~\cite[Proposition~3.4.2]{Mur17}.
\end{proof}

We now study alternative conditions under which the Quillen
adjunctions~\eqref{bimodule_same_operad_quillen_adjunction}
and~\eqref{bimodule_different_operad_quillen_adjunction} are Quillen
equivalences.

\begin{definition}\label{excellent_operad}
  A DG operad $\O$ is \emph{excellent} if the enveloping operad functor
  \[
    \Alg{\O}\longrightarrow\Operads,\qquad A\longmapsto\O[A],
  \]
  preserves quasi-isomorphisms.
\end{definition}

\Cref{excellent_operad} agrees with~\cite[Definition~3.1]{Mur17} because of the
aforementioned special properties of the model category $\dgMod*{\kk}$, namely
that every DG vector space is bifibrant and that the weak equivalences between
them are the homotopy equivalences.

\begin{example}[{\cite[Proposition~3.4.4
    and~3.4.5]{Mur17}}]\label{example_excellent_operads}
  The following DG operads are excellent:
  \begin{enumerate}
  \item $\Ass$, whose algebras are non-unital associative DG algebras.
  \item $\uAss$, whose algebras are unital associative DG algebras.
  \item Every cofibrant object in $\Operads$. For example:
    \begin{enumerate}
    \item $\A$, whose algebras are $\A$-algebras.
    \item $\A[k]$, whose algebras are $\A[k]$-algebras, $k\geq 1$.
    \end{enumerate}
  \end{enumerate}
  The operad $\A$ is a cofibrant resolution of $\Ass$ in $\Operads$, and the operads $\A[k]$ are the stages of the skeletal filtration of $\A$. We only consider non-unital $\A$-algebras and $\A[k]$-algebras in this article.
\end{example}

\begin{proposition}\label{bimodule_same_operad_quillen_equivalence_excellent}
  Let $\O$ be an excellent DG operad and let $f\colon A\to B$ be a weak
  equivalence between $\O$-algebras. Then, the Quillen
  adjunction~\eqref{bimodule_same_operad_quillen_adjunction}
  \[
    \begin{tikzcd}
      f_!\colon\Bimod*{\O,A}\rar[shift left]&\Bimod*{\O,B}\noloc f^*\lar[shift left]
    \end{tikzcd}
  \]
  is a Quillen equivalence.
\end{proposition}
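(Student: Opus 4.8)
The plan is to reduce this to \Cref{bimodule_same_operad_quillen_equivalence}\eqref{bsoqe_quillen_equivalence_conditions}\eqref{between_cofibrant_algebras} by a standard cofibrant-replacement argument that exploits excellence. The key point is that excellence of $\O$ lets us transport the given weak equivalence $f\colon A\to B$ into a weak equivalence between \emph{cofibrant} $\O$-algebras while keeping track of the induced maps of enveloping DG algebras at the level of quasi-isomorphisms.

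\textbf{Step 1: Cofibrant replacement.} Choose trivial fibrations $p_A\colon QA\xrightarrow{\sim} A$ and $p_B\colon QB\xrightarrow{\sim} B$ from cofibrant $\O$-algebras (possible since every object of $\Alg{\O}$ is fibrant). Using that $QA$ is cofibrant and $p_B$ is a trivial fibration, lift $f\circ p_A$ through $p_B$ to obtain a map $Qf\colon QA\to QB$ of $\O$-algebras making the evident square commute up to the strict equality $p_B\circ Qf=f\circ p_A$. By two-out-of-three, $Qf$ is a weak equivalence between cofibrant $\O$-algebras.

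\textbf{Step 2: Reduce along cofibrant replacements.} Apply \Cref{bimodule_same_operad_quillen_equivalence}\eqref{bsoqe_quillen_equivalence_conditions}\eqref{between_cofibrant_algebras} to $Qf$: the adjunction $(Qf)_!\dashv (Qf)^*$ between $\Bimod*{\O,QA}$ and $\Bimod*{\O,QB}$ is a Quillen equivalence. It therefore suffices to show that the adjunctions $(p_A)_!\dashv (p_A)^*$ and $(p_B)_!\dashv (p_B)^*$ (of the form~\eqref{bimodule_same_operad_quillen_adjunction}) are Quillen equivalences; then the desired statement for $f$ follows by composing the three Quillen equivalences along the commuting square of Step~1. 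Thus the problem is reduced to the case of a single weak equivalence $p\colon C\xrightarrow{\sim} D$ of $\O$-algebras in which the source is cofibrant.

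\textbf{Step 3: Use excellence.} Here is where \Cref{excellent_operad} enters. Since $\O$ is excellent, the enveloping operad functor preserves quasi-isomorphisms, so $p\colon C\xrightarrow{\sim} D$ induces a quasi-isomorphism $\O[C]\to\O[D]$ of DG operads, hence a quasi-isomorphism $\O[C]<1>\to\O[D]<1>$ of enveloping DG algebras. Under the identification $\Bimod*{\O,-}\simeq\dgMod*{\O[-]<1>}$ of~\cite[Theorem~1.10]{BM09}, the adjunction~\eqref{bimodule_same_operad_quillen_adjunction} for $p$ becomes extension/restriction of scalars along this quasi-isomorphism of DG algebras, which is a Quillen equivalence by~\cite[Theorem~4.3]{SS00}. (Note that the cofibrancy hypothesis on $C$, while needed in \Cref{bimodule_same_operad_quillen_equivalence}, is not needed here once excellence is invoked; it was only used in Steps~1--2 to produce the replacements $Qf$, $p_A$, $p_B$.) Assembling Steps 1--3 gives the result.

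\textbf{Main obstacle.} The genuinely substantive input is the passage from a weak equivalence of $\O$-algebras to a quasi-isomorphism of enveloping DG algebras $\O[C]<1>\to\O[D]<1>$, which is exactly what excellence of $\O$ buys us (without excellence this can fail, which is why \Cref{bimodule_same_operad_quillen_equivalence} had to assume a trivial cofibration or a weak equivalence between cofibrant objects). Everything else---the lifting in Step~1, the two-out-of-three, and the compositions of Quillen equivalences---is routine model-categorical bookkeeping. A minor point to handle carefully is that composing Quillen equivalences along the commuting square requires the square to commute strictly (which Step~1 arranges) so that the induced square of Quillen adjunctions commutes and the two-out-of-three property for Quillen equivalences applies.
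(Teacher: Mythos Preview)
Your proof is correct, but Steps~1 and~2 are superfluous. You yourself observe in Step~3 that ``the cofibrancy hypothesis on $C$\dots\ is not needed here once excellence is invoked''---and indeed, that argument applies verbatim to $f\colon A\to B$ itself, with no need to pass through cofibrant replacements. The paper's proof is exactly your Step~3 applied directly to $f$: excellence gives a quasi-isomorphism $\O[A]\langle 1\rangle\to\O[B]\langle 1\rangle$, and then \cite[Theorem~4.3]{SS00} finishes. Your detour through \Cref{bimodule_same_operad_quillen_equivalence} is harmless but adds nothing, since the input you need for it (that $(p_A)_!\dashv(p_A)^*$ and $(p_B)_!\dashv(p_B)^*$ are Quillen equivalences) already requires the full strength of the direct argument.
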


\begin{proof}
  By definition of excellent DG operad, the enveloping operad map
  $\O[A]\to\O[B]$ is a weak equivalence. Hence $\O[A]<1>\to\O[B]<1>$ is a
  quasi-isomorphism of DG algebras and the claim follows from
  \cite[Theorem~4.3]{SS00}.
\end{proof}

\begin{proposition}\label{composite_bimodule_quillen_equivalence_excellent}
  Let $\varphi\colon\O\to\P$ be a weak equivalence in $\Operads$ between excellent DG
  operads and let $C$ be a $\P$-algebra. Then, the Quillen
  adjunction~\eqref{bimodule_different_operad_quillen_adjunction}
  \[
    \begin{tikzcd}
      \varphi_!\colon\Bimod*{\O,\varphi^*C}\rar[shift left]&\Bimod*{\P,C}\noloc
      \varphi^*\lar[shift left]
    \end{tikzcd}
  \]
  is a Quillen equivalence.
\end{proposition}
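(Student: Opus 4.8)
The plan is to imitate the proof of \Cref{composite_bimodule_quillen_equivalence_cofibrant_algebra}. As there, by \cite[Theorem~4.3]{SS00} --- via the equivalences of categories $\Bimod*{\O,\varphi^*C}\simeq\dgMod*{\O[\varphi^*C]<1>}$ and $\Bimod*{\P,C}\simeq\dgMod*{\P[C]<1>}$ of \cite[Theorem~1.10]{BM09}, under which \eqref{bimodule_different_operad_quillen_adjunction} is identified with the extension/restriction-of-scalars adjunction along the canonical DG algebra map $\O[\varphi^*C]<1>\to\P[C]<1>$ induced by $\varphi$ --- it suffices to show that the canonical morphism $\O[\varphi^*C]\to\P[C]$ between enveloping operads defined by $\varphi$ is a quasi-isomorphism (in every arity). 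The role of the hypotheses is that excellence of $\O$ and of $\P$ will let us reduce this to the case of a cofibrant $\O$-algebra, which is exactly the input \cite[Proposition~3.4.2]{Mur17} used in the proof of \Cref{composite_bimodule_quillen_equivalence_cofibrant_algebra}.

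Concretely: take a cofibrant replacement $p\colon Q\to\varphi^*C$ of $\varphi^*C$ in $\Alg{\O}$ and let $\tilde p\colon\varphi_!Q\to C$ be its adjoint transpose under \eqref{algebra_quillen_adjunction}. First one checks that $\tilde p$ is a weak equivalence: since $\varphi$ is a weak equivalence, \eqref{algebra_quillen_adjunction} is a Quillen equivalence by \cite[Theorem~1.3]{Mur11}, so the unit $\eta\colon Q\to\varphi^*\varphi_!Q$ is a weak equivalence because $Q$ is cofibrant; as $\varphi^*\tilde p\circ\eta=p$ is a weak equivalence and $\varphi^*$ reflects weak equivalences, so is $\tilde p$. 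Next, naturality of the canonical enveloping-operad morphism in the algebra variable --- evaluated at $\tilde p$ and precomposed with the morphism induced by $\eta$ --- yields a commutative square of DG operads
\[
  \begin{tikzcd}
    \O[Q]\rar\dar[swap]{\O[p]}&\P[\varphi_!Q]\dar{\P[\tilde p]}\\
    \O[\varphi^*C]\rar&\P[C]
  \end{tikzcd}
\]
whose top arrow is, after this precomposition, the canonical morphism $\O[Q]\to\P[\varphi_!Q]$ attached to the \emph{cofibrant} $\O$-algebra $Q$. The top arrow is a quasi-isomorphism by \cite[Proposition~3.4.2]{Mur17}; the left arrow is one because $\O$ is excellent and $p$ is a weak equivalence; the right arrow is one because $\P$ is excellent and $\tilde p$ is a weak equivalence. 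By two-out-of-three the bottom arrow $\O[\varphi^*C]\to\P[C]$ is a quasi-isomorphism, as needed.

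Apart from routine manipulations with adjoint transposes and the two citations, the one step deserving care --- and the main potential obstacle --- is the bookkeeping that makes the square above commute, in particular the identification of its top arrow (after precomposition with $\O[\eta]$) with the canonical morphism for $Q$, which is precisely the map to which \cite[Proposition~3.4.2]{Mur17} applies. One can also bypass enveloping operads entirely: factor the composite adjunction \eqref{composite_bimodule_quillen_adjunction} at $Q$ as \eqref{bimodule_same_operad_quillen_adjunction} at the weak equivalence $\eta$ followed by \eqref{bimodule_different_operad_quillen_adjunction} at $\varphi_!Q$, deduce from \Cref{composite_bimodule_quillen_equivalence_cofibrant_algebra} and \Cref{bimodule_same_operad_quillen_equivalence_excellent} that \eqref{bimodule_different_operad_quillen_adjunction} is a Quillen equivalence at $\varphi_!Q$, and then transport this along $\tilde p$ by applying \Cref{bimodule_same_operad_quillen_equivalence_excellent} to $\tilde p$ and to $\varphi^*\tilde p$ inside the evident commuting square of bimodule adjunctions. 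Either way, the crux is using excellence to move from $C$ and $\varphi^*C$ to the cofibrant model $Q$ and its image $\varphi_!Q$ compatibly with $\varphi$.
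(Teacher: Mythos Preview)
Your proposal is correct and takes essentially the same approach as the paper: reduce via \cite[Theorem~4.3]{SS00} to showing that $\O[\varphi^*C]\to\P[C]$ is a weak equivalence, build the commutative square with a cofibrant replacement $Q\to\varphi^*C$ on top, use excellence of $\O$ and $\P$ for the vertical arrows, invoke the cofibrant case (\cite[Proposition~3.4.2]{Mur17}) for the top arrow, and conclude by two-out-of-three. The paper's proof is terser (it omits the explicit unit/reflection argument for why $\varphi_!Q\to C$ is a weak equivalence and does not mention your alternative route), but the argument is the same.
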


\begin{proof}
  As in the proof of
  \Cref{composite_bimodule_quillen_equivalence_cofibrant_algebra}, we prove that
  the canonical map $\O[\varphi^*C]\to\P[C]$ between enveloping operads is a
  weak equivalence.

  Let $A\to \varphi^*C$ be a cofibrant replacement in $\Alg{\O}$. The adjoint
  map $\varphi_!A\to C$ is a weak equivalence in $\Alg{\O}$ since
  \eqref{algebra_quillen_adjunction} is a Quillen equivalence. We have a
  commutative square in $\Operads$
  \[
    \begin{tikzcd}
      \O[A]\ar[r]\ar[d] & \P[\varphi_!A]\ar[d] \\
      \O[\varphi^*C]\ar[r] & \P[C]
    \end{tikzcd}
  \]
  in which the vertical maps are obtained by applying the enveloping operad
  functor to the maps $A\to \varphi^*C$ and $\varphi_!A\to C$, and hence they
  are weak equivalences since the DG operads $\O$ and $\P$ are excellent. The
  top horizontal map is a weak equivalence, as noticed in the proof of
  \Cref{composite_bimodule_quillen_equivalence_cofibrant_algebra}. Consequently,
  by the $2$-out-of-$3$ property, the bottom horizontal map is also a weak
  equivalence.
\end{proof}

The \emph{category $\Bimod*{\O}$ of all $\O$-bimodules} is defined as follows.
Objects are pairs $(A,M)$ given by an $\O$-algebra $A$ and an $A$-bimodule $M$.
Morphisms are pairs $(f,g)\colon (A,M)\to(B,N)$ given by a morphism of
$\O$-algebras $f\colon A\to B$ and a morphism of $A$-bimodules $g\colon M\to
f^*N$, where $f^*$ is the right adjoint
in~\eqref{bimodule_same_operad_quillen_adjunction}. The forgetful functor
\begin{equation}\label{Grothendieck_fibration}
  \Phi\colon\Bimod*{\O}\longrightarrow\Alg{\O},\qquad (A,M)\longmapsto A,
\end{equation}
is a Grothendieck bifibration with fibres
\[
  \Bimod*{\O,A},\qquad A\in\Alg{\O}.
\]
The category of all bimodules over an operad also has a natural model category
structure.

\begin{proposition}
  Let $\O$ be a DG operad. The category $\Bimod*{\O}$ of all $\O$-bimodules has the transferred projective model category structure.
\end{proposition}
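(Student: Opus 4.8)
The plan is to obtain the model structure on $\Bimod*{\O}$ by transferring the cofibrantly generated model structure on $\dgMod*{\kk}\times\dgMod*{\kk}$ along the forgetful functor
\[
  U\colon\Bimod*{\O}\longrightarrow\dgMod*{\kk}\times\dgMod*{\kk},\qquad (A,M)\longmapsto(A,M);
\]
by construction the weak equivalences and fibrations of the resulting structure are then exactly the morphisms whose two underlying morphisms of DG vector spaces are quasi-isomorphisms, respectively epimorphisms, which is what is meant by the transferred projective model category structure. The functor $U$ has a left adjoint $F$ sending a pair $(V,W)$ to the pair formed by the free $\O$-algebra $\mathbb{F}_{\O}(V)$ on $V$ and the free $\mathbb{F}_{\O}(V)$-bimodule on $W$, the latter having underlying DG vector space $\O[\mathbb{F}_{\O}(V)]<1>\otimes W$; its universal property follows from the definition of morphisms in $\Bimod*{\O}$ together with the adjunctions~\eqref{algebra_quillen_adjunction} and~\eqref{bimodule_same_operad_quillen_adjunction}. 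The category $\Bimod*{\O}$ is bicomplete, being the total category of the Grothendieck bifibration~\eqref{Grothendieck_fibration} over the bicomplete category $\Alg{\O}$, with bicomplete fibres $\Bimod*{\O,A}$ and with cocontinuous, respectively continuous, base-change functors $f_!$ and $f^*$; moreover $U$ preserves filtered colimits, so the smallness hypotheses for the small object argument are satisfied. By a transfer argument parallel to the one used for~\cite[Theorem~1.2]{Mur11}, it then suffices to verify the acyclicity condition: every relative cell complex built from $F(J)$, with $J$ the set of generating trivial cofibrations of $\dgMod*{\kk}\times\dgMod*{\kk}$, is a weak equivalence, i.e.\ a quasi-isomorphism on both underlying DG vector spaces.

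The first step is to unwind $F(J)$. The set $J$ consists of the maps $(0\to D,\id*[0])$ and $(\id*[0],0\to D)$, where $0\to D$ runs over the generating trivial cofibrations of $\dgMod*{\kk}$, so that $D$ is a contractible complex. Every map in $F(J)$ has the initial object $(\mathbb{F}_{\O}(0),0)$ of $\Bimod*{\O}$ as its source, so pushing out such a map along a morphism $(\mathbb{F}_{\O}(0),0)\to(A,M)$ amounts to forming a coproduct, which can be computed explicitly using the bifibration structure. There are two cases. In case (i) the pushout of $F(0\to D,\id*[0])$ is $(A',f_!M)$, where $f\colon A\to A'$ is the pushout of the generating trivial cofibration $\mathbb{F}_{\O}(0)\to\mathbb{F}_{\O}(D)$ of $\Alg{\O}$ and hence is itself a trivial cofibration in $\Alg{\O}$; the algebra component of the attaching map $(A,M)\to(A',f_!M)$ is $f$, and its bimodule component is, on underlying DG vector spaces, the unit $M\to f^*f_!M$. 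In case (ii) the pushout of $F(\id*[0],0\to D)$ is $(A,M\oplus(\O[A]<1>\otimes D))$, with attaching map the identity on $A$ and the canonical inclusion on the bimodule part.

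It remains to check that these attaching maps, and hence all relative $F(J)$-cell complexes, are weak equivalences. In case (i) the algebra component $f$ is a weak equivalence since trivial cofibrations of $\Alg{\O}$ are, while the bimodule component is a quasi-isomorphism by~\Cref{bimodule_same_operad_quillen_equivalence}\eqref{victor}. This is the crucial point, and the main obstacle: pushouts in $\Bimod*{\O}$ are \emph{not} computed fibrewise, so one must control the base change along the trivial cofibration $f$, and here the full strength of~\Cref{bimodule_same_operad_quillen_equivalence}\eqref{victor}---which applies to an \emph{arbitrary} bimodule $M$, not only a cofibrant one---is essential, since the bimodules appearing along a cell complex need not be cofibrant. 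In case (ii) the algebra component is an identity and the bimodule component is a quasi-isomorphism because $\O[A]<1>\otimes D$ is the tensor product of a DG vector space with a contractible complex, hence contractible. Since quasi-isomorphisms of DG vector spaces are closed under transfinite composition (as homology commutes with filtered colimits) and under retracts, every relative $F(J)$-cell complex is a weak equivalence. The transfer principle then produces the desired model structure, cofibrantly generated by $F(I)$ and $F(J)$ with $I$ the set of generating cofibrations of $\dgMod*{\kk}\times\dgMod*{\kk}$, and with every object fibrant; uniqueness of the transferred structure is automatic. A more conceptual but essentially equivalent route would be to identify $\Bimod*{\O}$ with the category of algebras over a suitable two-coloured non-symmetric DG operad and to invoke a coloured analogue of~\cite[Theorem~1.2]{Mur11}.
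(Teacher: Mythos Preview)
Your proof is correct but takes a different route from the paper's. The paper invokes \cite[2.3~Theorem]{Sta12a} (which repairs a gap in \cite{Roi94}), a general result about assembling a model structure on the total category of a Grothendieck bifibration from model structures on the base and on each fibre. The five hypotheses of that theorem reduce precisely to: the existence of the transferred model structures on $\Alg{\O}$ and on each $\Bimod*{\O,A}$; the Quillen adjunction property of $(f_!,f^*)$; the fact that $f^*$ preserves and reflects weak equivalences; and finally condition~(b), which is exactly \Cref{bimodule_same_operad_quillen_equivalence}\eqref{victor}. Your approach instead performs a direct transfer along $U\colon\Bimod*{\O}\to\dgMod*{\kk}\times\dgMod*{\kk}$ and verifies the acyclicity condition by an explicit analysis of pushouts of $F(J)$; the only nontrivial step is your case~(i), and there you invoke \Cref{bimodule_same_operad_quillen_equivalence}\eqref{victor} for precisely the same reason. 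So the two arguments share the essential ingredient and differ only in packaging: the paper outsources the combinatorics to Stanculescu's theorem, while you unwind by hand what that theorem does in this instance. Your closing remark---that one may identify $\Bimod*{\O}$ with algebras over a two-coloured non-symmetric DG operad and invoke a coloured version of~\cite[Theorem~1.2]{Mur11}---is a third equivalent route, and is in fact the cleanest way to justify the bicompleteness and filtered-colimit-preservation claims that you assert without proof.
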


\begin{proof}
  Objects in $\Bimod*{\O}$ have an underlying pair of DG vector spaces, so a morphism is a weak equivalence or fibration in the tentative transferred projective model category structure if and only if each coordinate is a weak equivalence or fibration in $\dgMod*{\kk}$.

  To prove the result we apply~\cite[2.3~Theorem]{Sta12a}, which fixes a gap
  in~\cite[Theorem~5.1]{Roi94}, see also~\cite[Theorem~4.4]{CM20} for a more
  general approach. We need to verify that the five conditions in~\cite[2.3~Theorem]{Sta12a} are satisfied.
  Indeed, (i$'$) and (ii$'$) correspond to the existence of the transferred
  projective model category structures on $\Alg{\O}$ (\cite[Theorem~1.2]{Mur11})
  and $\Bimod*{\O,A}$ for each $\O$-algebra $A$
  (\Cref{bimodules_model_category}), and (iii$'$) is the fact that the adjoint
  pair~\eqref{bimodule_same_operad_quillen_adjunction} is a Quillen adjunction
  for each $\O$-algebra morphism $f\colon A\to B$. Condition (a) also holds
  since the functor $f^*$ in the adjoint
  pair~\eqref{bimodule_same_operad_quillen_adjunction} preserves and reflects
  weak equivalences for every morphism of $\O$-algebras $f\colon A\to B$, not just for weak equivalences, and (b) is~\Cref{bimodule_same_operad_quillen_equivalence}\eqref{victor}.
\end{proof}

\begin{definition}
  \label{def:quasi-isomorphic_algebra-bimodule_pairs}
  We say that two pairs $(A,M)$ and $(B,N)$, consisting of $\O$-algebras $A$
  and $B$, an $\O$-$A$-bimodule $M$ and $\O$-$B$-bimodule $N$ are
  \emph{quasi-isomorphic} if they are isomorphic in the homotopy category of the
  category $\Bimod*{\O}$ of all $\O$-bimodules.
\end{definition}

A morphism of DG operads $\varphi\colon\O\to\P$ also gives rise to a Quillen adjunction
\begin{equation}\label{all_bimodules_quillen_adjunction}
  \begin{tikzcd}
    \varphi_!\colon\Bimod*{\O}\rar[shift left]&\Bimod*{\P}\noloc\varphi^*,\lar[shift left]
  \end{tikzcd}
\end{equation}
where the right adjoint $\varphi^*$ is defined as in~\eqref{bimodule_different_operad_quillen_adjunction} and the left adjoint $\varphi_!$ is defined as in~\eqref{composite_bimodule_quillen_adjunction}.

\begin{proposition}\label{bimodule_quillen_equivalence}
  Given a weak equivalence $\varphi\colon\O\to\P$ in $\Operads$, the Quillen
  adjunction~\eqref{all_bimodules_quillen_adjunction}
  \[
    \begin{tikzcd}
      \varphi_!\colon\Bimod*{\O}\rar[shift left]&\Bimod*{\P}\noloc\varphi^*,\lar[shift left]
    \end{tikzcd}
  \]
  is a Quillen equivalence.
\end{proposition}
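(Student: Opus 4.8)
The plan is to verify the standard criterion for a Quillen equivalence in the convenient setting where every object is fibrant. Recall that in the transferred projective model structures on $\Bimod*{\O}$ and $\Bimod*{\P}$ a morphism is a weak equivalence precisely when its underlying pair of maps of DG vector spaces consists of quasi-isomorphisms, and that the restriction-of-scalars functor $\varphi^*$ does not alter underlying DG vector spaces; hence $\varphi^*$ preserves and \emph{reflects} all weak equivalences. Since moreover all objects are fibrant, the derived unit of $\varphi_!\dashv\varphi^*$ agrees with the unit. It follows that the adjunction \eqref{all_bimodules_quillen_adjunction} is a Quillen equivalence if and only if, for every cofibrant object $(A,M)$ of $\Bimod*{\O}$, the unit $(A,M)\to\varphi^*\varphi_!(A,M)$ is a weak equivalence.

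Next I would identify the cofibrant objects and the two components of this unit. Since $A\mapsto(A,0)$ is both left and right adjoint to the forgetful functor $\Phi\colon\Bimod*{\O}\to\Alg{\O}$ of \eqref{Grothendieck_fibration}, the functor $\Phi$ is left Quillen, so a cofibrant $(A,M)$ has $A=\Phi(A,M)$ cofibrant in $\Alg{\O}$; inspecting the generating (trivial) cofibrations produced in the proof of \cite[2.3~Theorem]{Sta12a} one moreover sees that $M$ is cofibrant in the fibre $\Bimod*{\O,A}$. Unwinding the definitions in \eqref{all_bimodules_quillen_adjunction}, the unit $(A,M)\to\varphi^*\varphi_!(A,M)$ is the morphism of $\Bimod*{\O}$ whose underlying $\O$-algebra map is the unit $A\to\varphi^*\varphi_!A$ of \eqref{algebra_quillen_adjunction}, and whose underlying map of bimodules is the unit $M\to\varphi^*\varphi_!M$ of the composite adjunction \eqref{composite_bimodule_quillen_adjunction} (the right adjoint of the latter builds in the restriction along $A\to\varphi^*\varphi_!A$). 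Matching up these components, and above all justifying the stated description of the cofibrant objects of the Stanculescu total model structure, is the step I expect to require the most care.

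With this in hand the conclusion is immediate. Fix a cofibrant $(A,M)$, so that $A$ is cofibrant in $\Alg{\O}$ and $M$ is cofibrant in $\Bimod*{\O,A}$. Since $\varphi\colon\O\to\P$ is a weak equivalence, \cite[Theorem~1.3]{Mur11} says that \eqref{algebra_quillen_adjunction} is a Quillen equivalence, so its unit at the cofibrant object $A$ — which equals the derived unit, all objects being fibrant — is a weak equivalence. Since $A$ is cofibrant, \Cref{composite_bimodule_quillen_equivalence_cofibrant_algebra} says that \eqref{composite_bimodule_quillen_adjunction} is a Quillen equivalence, so its unit at the cofibrant object $M$ is likewise a weak equivalence. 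Both underlying components of the unit $(A,M)\to\varphi^*\varphi_!(A,M)$ are therefore quasi-isomorphisms, hence this unit is a weak equivalence in $\Bimod*{\O}$, and by the first paragraph the proof is complete.
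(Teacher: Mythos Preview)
Your proof is correct and follows essentially the same approach as the paper's: identify the unit of \eqref{all_bimodules_quillen_adjunction} at $(A,M)$ with the pair of units of \eqref{algebra_quillen_adjunction} and \eqref{composite_bimodule_quillen_adjunction}, characterise cofibrant $(A,M)$ as those with $A$ cofibrant in $\Alg{\O}$ and $M$ cofibrant in $\Bimod*{\O,A}$, and conclude via \cite[Theorem~1.3]{Mur11} and \Cref{composite_bimodule_quillen_equivalence_cofibrant_algebra}. The only cosmetic difference is that the paper cites \cite[2.3~Remark]{Sta12a} directly for the description of cofibrant objects rather than arguing via $\Phi$ being left Quillen and inspecting generating cofibrations.
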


\begin{proof}
  Let $(A,M)$ be a pair consisting of an $\O$-algebra $A$ and an
  $\O$-$A$-bimodule $M$. The unit of the adjunction
  $(A,M)\to\varphi^*\varphi_!(A,M)$ consists of the unit $A\to
  \varphi^*\varphi_!A$ of the adjunction~\eqref{algebra_quillen_adjunction} and the unit $M\to\varphi^*\varphi_!M$ of the adjunction~\eqref{composite_bimodule_quillen_adjunction}. The pair $(A,M)$ is cofibrant in $\Bimod*{\O}$ if and only if $A$ is cofibrant in $\Alg{\O}$ and $M$ is cofibrant in $\Bimod*{\O,A}$, see~\cite[2.3~Remark]{Sta12a}. In that case, $A\to \varphi^*\varphi_!A$ is a weak equivalence since the adjoint pair~\eqref{algebra_quillen_adjunction} is a Quillen equivalence, and $M\to\varphi^*\varphi_!M$ is also a weak equivalence by \Cref{composite_bimodule_quillen_equivalence_cofibrant_algebra}.
\end{proof}

\subsection{Spaces of algebras}\label{subsec:spaces_of_algebras}

Given a category $\C$, we denote its \emph{nerve} by $|\C|$. If $\C$ is a model
category, we let $w\C\subseteq\C$ be the subcategory of weak equivalences, and the \emph{classifying space} of $\C$ is $|w\C|$, called classification complex in \cite[\S2]{DK84}.

We denote the Dwyer--Kan mapping spaces (Kan complexes) between DG operads $\O$
and $\P$ in the model category $\Operads$ by
\[
  \Map{\O}{\P}=\Map[\Operads]{\O}{\P}.
\]
These are defined as maps from a cosimplicial resolution of the source DG operad
$\O$, hence they preserve fibrations and trivial fibrations on the second
variable. Points (=vertices) in the mapping space $\Map{\O}{\P}$ are morphisms
$\O_{\infty}\to\P$ from a cofibrant replacement $\O_{\infty}$ of $\O$, which is
a cofibrant DG operad $\O_{\infty}$ equipped with a trivial fibration
$\O_{\infty}\to\O$. In particular, maps $\O\to\P$ define special points, the
composites ${\O_{\infty}\to\O\to\P}$. If $\O$ is already cofibrant, we always
take $\O_{\infty}=\O$. The set $\pi_0\Map{\O}{\P}$ of connected components of
the mapping space $\Map{\O}{\P}$ is the set of homotopy classes of operad maps
$\O_\infty\to\P$ or, equivalently, the set of morphisms $\O\to\P$ in the
homotopy category of the model category $\Operads$.

Let $V$ be a DG vector space. Denote by $\E{V}$ the \emph{endomorphism DG
  operad} of $V$, whose DG vector spaces or operations are
\[
  \E{V}<n>\coloneqq\hom[\kk]{V^{\otimes n}}{V},\qquad n\geq 0,
\]
and with infinitesimal composition $\circ_i$ given by composition of multilinear
maps at the $i$-th slot. The datum of a morphism of DG operads $\O\to\E{V}$ is
equivalent to that of an $\O$-algebra structure on $V$: the $\O$-algebra
structure morphisms ${\O<n>\otimes V^{\otimes n}\to V}$ are the adjoints of the
maps ${\O<n>\to\E{V}<n>}$, $n\geq 0$.

\begin{definition}\label{def:gauge_O_infty_isomorphism_algebras}
  Let $A$ and $B$ be two $\O_{\infty}$-algebras with the same underlying DG
  vector space $V$.
  \begin{enumerate}
  \item A
    \emph{gauge $\O_{\infty}$-isomorphism} or \emph{$\O_{\infty}$-isomorphism
      with identity linear part} $A\leadsto B$ is a path between the
    corresponding points in $\Map{\O}{\E{V}}$, that is a homotopy between the
    corresponding structure maps $\O_{\infty}\to\E{V}$.
  \item The \emph{identity gauge $\O_{\infty}$-isomorphism} $A\leadsto A$ on a
    given $\O_{\infty}$-algebra $A$ with underlying DG vector space $V$ is the
    constant path at the point in $\Map{\O}{\E{V}}$ corresponding to $A$, that
    is the trivial homotopy on the structure map $\O_{\infty}\to\E{V}$.
  \end{enumerate}
  Composition of gauge $\O_{\infty}$-isomorphisms is defined by concatenation of
  paths in $\Map{\O}{\E{V}}$ or, equivalently, by pasting homotopies between the
  corresponding structure maps $\O_{\infty}\to\E{V}$.
\end{definition}

\begin{remark}
  \Cref{def:gauge_O_infty_isomorphism_algebras} is consistent with the usual
  definitions (\cite[Section~10.2]{LV12}) in the case where $\O$ is a Koszul DG
  operad and $\O_{\infty}$ is its minimal resolution, see
  \cite[Section~5.2.1]{Fre09}.
\end{remark}

The following result is a version of~\cite[Theorem~1.1.5]{Rez96} for DG vector
spaces.

\begin{theorem}[{\cite[Theorem~4.6]{Mur14}}]\label{rezk_theorem}
  Let $\O$ be a DG operad. The forgetful functor 
  \[\Alg{\O}\longrightarrow\dgMod*{\kk}\] 
  that sends an $\O$-algebra $A$ to its underlying DG vector space induces a map
  between classifying spaces
  \[|w\Alg{\O}|\longrightarrow|w\dgMod*{\kk}|\]
  whose homotopy fibre at a DG vector space $V$ is the mapping space
  \[\Map{\O}{\E{V}}.\]
\end{theorem}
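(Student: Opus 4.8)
The final statement in the excerpt is \Cref{rezk_theorem}, attributed to \cite[Theorem~4.6]{Mur14}: for a DG operad $\O$, the forgetful functor $\Alg{\O}\to\dgMod*{\kk}$ induces a map of classifying spaces $|w\Alg{\O}|\to|w\dgMod*{\kk}|$ whose homotopy fibre over a DG vector space $V$ is the mapping space $\Map{\O}{\E{V}}$.

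**Plan of proof.**

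The plan is to follow the Dwyer–Kan/Rezk strategy, identifying the homotopy fibre of a map of classification complexes with a moduli space of structures on a fixed object. First I would set up the general principle: for a model category $\M$ (here $\M=\Alg{\O}$) with a forgetful functor $U\colon\M\to\N$ (here $\N=\dgMod*{\kk}$) that creates weak equivalences and is suitably compatible with the model structure, the homotopy fibre of $|w\M|\to|w\N|$ over an object $V\in\N$ is the classification complex $|w\M_V|$ of the category $\M_V$ whose objects are pairs $(A,\alpha)$ with $A\in\M$ and $\alpha\colon U A\xrightarrow{\sim} V$ a weak equivalence, and whose morphisms are weak equivalences in $\M$ commuting with the $\alpha$'s. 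This is essentially Rezk's argument \cite[Theorem~1.1.5]{Rez96}, adapted from simplicial sets to $\dgMod*{\kk}$; the adaptation is clean because, as emphasised in the Conventions and in the discussion preceding \Cref{excellent_operad}, every DG vector space is bifibrant and weak equivalences between them are homotopy equivalences, so $\dgMod*{\kk}$ behaves like $\mathrm{sSet}$ for these purposes. The first step, then, is to quote or reprove this fibration lemma for classification complexes, using that $U$ preserves fibrations and trivial fibrations (it is a right Quillen functor for the transferred structure) and creates weak equivalences.

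The second step is to identify $|w\M_V|$, the classification complex of $\O$-algebra structures resting over $V$ up to weak equivalence, with the Dwyer–Kan mapping space $\Map[\Operads]{\O}{\E{V}}$. The key input is the bijective correspondence, recalled in the paragraph introducing $\E{V}$, between $\O$-algebra structures on $V$ and DG operad maps $\O\to\E{V}$ — and, more relevantly for the mapping space, between $\O_\infty$-algebra structures on $V$ and operad maps $\O_\infty\to\E{V}$ for a cofibrant resolution $\O_\infty\xrightarrow{\sim}\O$. Points of $\Map{\O}{\E{V}}$ are exactly such maps $\O_\infty\to\E{V}$, and paths between them are gauge $\O_\infty$-isomorphisms in the sense of \Cref{def:gauge_O_infty_isomorphism_algebras}. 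One then shows that the simplicial set $\Map{\O}{\E{V}}$, built from a cosimplicial resolution of $\O$, is weakly equivalent to the nerve $|w\M_V|$: an object of $\M_V$ is an $\O$-algebra $A$ with $U A\simeq V$, which up to the weak equivalence $\alpha$ transports the structure to an $\O_\infty$-algebra structure on $V$ itself (using cofibrancy of $\O_\infty$ to lift along the trivial fibration $UA\to V$... more precisely, replacing $A$ by a cofibrant model and transporting structure along the homotopy equivalence), i.e. a $0$-simplex of $\Map{\O}{\E{V}}$; weak equivalences in $\M_V$ correspond to homotopies, i.e. $1$-simplices; and this extends to all simplicial degrees because both sides are built from the same cosimplicial/simplicial resolution data. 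This is where \cite[Theorem~4.6]{Mur14} does the real work, and I would cite it for the precise comparison; the conceptual content is that the homotopy theory of "$\O$-algebra structures on $V$" is computed by the operadic mapping space into $\E{V}$.

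**Main obstacle.** The hard part is the second step — the honest identification of $|w\M_V|$ with $\Map{\O}{\E{V}}$ — and within it, the issue that an arbitrary $\O$-algebra $A$ with $U A\xrightarrow{\sim} V$ need not have $U A$ equal to $V$ on the nose, nor need $A$ be cofibrant, so transporting its structure to a literal operad map $\O_\infty\to\E{V}$ requires a homotopy-coherent argument rather than a one-line transfer of structure. Making this precise means showing that the inclusion of the "strict" objects (those of the form: $V$ itself equipped with an $\O_\infty$-structure, with $\alpha=\id$) into $\M_V$ is a weak equivalence of categories after nerve, and that this in turn matches the simplicial structure of the Dwyer–Kan mapping space degree by degree — exactly the content of Rezk's and Muro's theorems. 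A secondary, more routine, point is verifying that $\dgMod*{\kk}$ satisfies the hypotheses needed for the fibration lemma for classification complexes (properness-type conditions, every object bifibrant), which follows from the special features of $\dgMod*{\kk}$ recalled in the excerpt. Since the statement is explicitly attributed to \cite[Theorem~4.6]{Mur14}, the expected "proof" here is really a citation accompanied by the observation that the general-operad case reduces, via the transferred model structure of \cite[Theorem~1.2]{Mur11} and the correspondence $\O$-algebras $\leftrightarrow$ operad maps into $\E{V}$, to that theorem together with the Rezk-type fibration lemma for classification complexes.
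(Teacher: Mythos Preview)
The paper does not give its own proof of this theorem: it is stated with the attribution \cite[Theorem~4.6]{Mur14} and the remark that it is a version of \cite[Theorem~1.1.5]{Rez96} for DG vector spaces, and no argument is supplied. Your proposal correctly anticipates this in its final paragraph, and the sketch you give of the underlying Rezk--Muro argument (fibration lemma for classification complexes, followed by identification of the fibre with the operadic mapping space via the correspondence between $\O$-algebra structures on $V$ and maps $\O\to\E{V}$) is an accurate outline of how the cited result is established; in particular your identification of the transport-of-structure step as the delicate point matches what the paper later unpacks in \Cref{rem:minimal_model_algebra} using the endomorphism operad $\E{i}$ of a map.
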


\begin{definition}[{\cite[\S2]{Mur16b}}]
  Recall that a map of spaces $f\colon X\to Y$ is a \emph{homotopy monomorphism} when either of the two following equivalent conditions holds:
  \begin{enumerate}
  \item The map $f$ corestricts to a weak equivalence between $X$ and a subset of connected components of $Y$.
  \item The map $f$ induces an injection $\pi_0f\colon \pi_0X\hookrightarrow\pi_0Y$ and isomorphisms $\pi_n\colon\pi_n(X,x_0)\cong\pi_n(Y,f(x_0))$, for any $x_0\in X$ and $n\geq 1$.
  \end{enumerate}
\end{definition}

The following theorem is the main result of \cite{Mur16b}.

\begin{theorem}[{\cite[Theorem 1.2]{Mur16b}}]\label{homotopy_units}
  The canonical morphism $\Ass\to\uAss$ from the associative DG operad to the unital associative DG operad, whose induced restriction of scalars functor
  $\Alg{\uAss}\rightarrow\Alg{\Ass}$ is the forgetful functor from the category of unital DG algebras to the category of possibly non-unital DG algebras, is a homotopy epimorphism, i.e.~for any DG operad $\O$, the induced map
  \[\Map{\uAss}{\O}\longrightarrow\Map{\Ass}{\O}\]
  is a homotopy monomorphism.
\end{theorem}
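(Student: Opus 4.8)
The plan is to invoke the standard criterion characterising homotopy epimorphisms in terms of homotopy pushouts: a morphism $g\colon\mathcal{C}\to\mathcal{D}$ in a model category has the property that restriction along $g$ induces a homotopy monomorphism $\Map{\mathcal{D}}{X}\to\Map{\mathcal{C}}{X}$ for every object $X$ if and only if the codiagonal $\mathcal{D}\amalg^{h}_{\mathcal{C}}\mathcal{D}\to\mathcal{D}$ out of the homotopy pushout is a weak equivalence. (If the codiagonal is a weak equivalence, then the two structure morphisms $\mathcal{D}\rightrightarrows\mathcal{D}\amalg^{h}_{\mathcal{C}}\mathcal{D}$ become homotopic; hence any two morphisms out of $\mathcal{D}$ that agree up to homotopy after restriction along $g$ glue to a single morphism out of $\mathcal{D}\amalg^{h}_{\mathcal{C}}\mathcal{D}$ and are therefore homotopic---and likewise on higher homotopy groups---so restriction along $g$ is a homotopy monomorphism; the converse follows by taking $X=\mathcal{D}\amalg^{h}_{\mathcal{C}}\mathcal{D}$.) Applying this with $g$ the canonical morphism $\Ass\to\uAss$, it suffices to prove that the codiagonal
\[
  \uAss\amalg^{h}_{\Ass}\uAss\longrightarrow\uAss
\]
is a weak equivalence in $\Operads$. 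Equivalently---and this is the geometric content---the homotopy fibre of $\Map{\uAss}{\O}\to\Map{\Ass}{\O}$ over any point is empty or contractible: a homotopy-coherent multiplication on $\O$ admits an essentially unique homotopy unit, if it admits one at all.

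To make the homotopy pushout computable I would fix a convenient cofibrant model of $\uAss$. Factor the composite $\A\to\Ass\to\uAss$ as a cofibration followed by a trivial fibration $\A\hookrightarrow\mathcal{U}\xrightarrow{\sim}\uAss$; since $\A$ is cofibrant (\Cref{example_excellent_operads}), $\mathcal{U}$ is a cofibrant resolution of $\uAss$ and $\A\hookrightarrow\mathcal{U}$ is a cofibration between cofibrant DG operads. One may take $\mathcal{U}$ to be the (explicitly cellular over $\A$) operad governing homotopy-unital $\A$-algebras, whose generators over $\A$ are an arity-$0$, degree-$0$ cocycle $e$ and a family of ``unit-coherence'' operations. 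Since $\A\hookrightarrow\mathcal{U}$ is a cofibration between cofibrant objects, the ordinary pushout $\mathcal{U}\amalg_{\A}\mathcal{U}$ computes $\uAss\amalg^{h}_{\Ass}\uAss$; concretely it is the cofibrant DG operad obtained from $\A$ by freely adjoining two copies $e_{1},e_{2}$ of the homotopy unit together with two copies of all the coherence operations---that is, the operad of $\A$-algebras carrying two a priori unrelated homotopy units. The codiagonal to $\uAss$ factors as $\mathcal{U}\amalg_{\A}\mathcal{U}\xrightarrow{\nabla}\mathcal{U}\xrightarrow{\sim}\uAss$, so it is enough to show that $\nabla$ (``use the first unit for both'') is a weak equivalence. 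The first structure map $\mathcal{U}\hookrightarrow\mathcal{U}\amalg_{\A}\mathcal{U}$ is a cofibration splitting $\nabla$ and identifies $\mathcal{U}$ with the sub-operad $\mathcal{U}_{1}$ generated by $\A$ and $e_{1}$; hence by $2$-out-of-$3$ it suffices to prove that the inclusion $\mathcal{U}_{1}\hookrightarrow\mathcal{U}\amalg_{\A}\mathcal{U}$ is a weak equivalence.

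This is the crux: one must show that $\mathcal{U}_{1}\hookrightarrow\mathcal{U}\amalg_{\A}\mathcal{U}$ is a trivial cofibration, that is, that the second homotopy unit together with all of its coherence operations constitutes an acyclic relative cell complex over $\mathcal{U}_{1}$---the operadic form of the assertion that a homotopy-coherent multiplication has an essentially unique homotopy unit, so that a second one is homotopically redundant. I would prove this by exhibiting an explicit contracting homotopy on the cofibre of $\mathcal{U}_{1}\hookrightarrow\mathcal{U}\amalg_{\A}\mathcal{U}$ (or, equivalently, via the spectral sequence of the filtration by the number of $e_{2}$-labelled generators appearing in a tree monomial), obtained by pairing each generator built out of $e_{2}$ with the coherence generator that cancels it. The main obstacle is exactly this verification: controlling the combinatorics and the signs in the differential of $\mathcal{U}\amalg_{\A}\mathcal{U}$ precisely enough to check the acyclicity, for which an explicit presentation of the homotopy-unital $\A$-operad is indispensable. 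Granting it, the codiagonal $\uAss\amalg^{h}_{\Ass}\uAss\to\uAss$ is a weak equivalence, and by the criterion of the first paragraph $\Ass\to\uAss$ is a homotopy epimorphism: for every DG operad $\O$ the induced map $\Map{\uAss}{\O}\to\Map{\Ass}{\O}$ is a homotopy monomorphism.
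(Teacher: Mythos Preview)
The paper does not prove this statement; it is quoted without proof from \cite[Theorem~1.2]{Mur16b}. There is therefore no in-paper argument to compare your proposal against.

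That said, your strategy is the right one and is essentially what the cited reference carries out. The characterisation of homotopy epimorphisms via the codiagonal $\uAss\amalg^{h}_{\Ass}\uAss\to\uAss$ being a weak equivalence is standard, and replacing $\Ass\to\uAss$ by the cofibration $\A\hookrightarrow\mathcal{U}$ into a cofibrant model for homotopy-unital $\A$-algebras is exactly the manoeuvre used in \cite{Mur16b}. You correctly isolate the crux: showing that the inclusion $\mathcal{U}_{1}\hookrightarrow\mathcal{U}\amalg_{\A}\mathcal{U}$ is a trivial cofibration, i.e.\ that a second homotopy unit together with its coherence data is homotopically redundant once one unit is already present. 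This is precisely the technical heart of \cite{Mur16b}, where it is handled via an explicit cellular presentation of $\mathcal{U}$ and a careful analysis of the attaching maps. Your proposal is honest in flagging this verification as the main obstacle and in not pretending to have done it; as a roadmap it is accurate, but a complete proof would need to supply that combinatorial core.
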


As a consequence of \Cref{rezk_theorem,homotopy_units} and \cite[Lemma 6.3]{Mur14}, the following map between classifying spaces is a homotopy monomorphism:

\begin{proposition}[{\cite[Proposition 6.2]{Mur14}}]\label{homotopy_monomorphism_algebras}
  The forgetful functor $\Alg{\uAss}\to\Alg{\Ass}$ from the category of unital DG algebras to the category of possibly non-unital DG algebras induces a homotopy monomorphism
  \[|w\Alg{\uAss}|\longrightarrow|w\Alg{\Ass}|.\]
\end{proposition}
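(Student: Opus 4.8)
The plan is to deduce the statement from \Cref{rezk_theorem,homotopy_units} via the general fibrewise criterion \cite[Lemma~6.3]{Mur14}. First I would record that the forgetful functors to DG vector spaces assemble into a strictly commutative triangle of classifying spaces
\[
  \begin{tikzcd}[column sep=small]
    |w\Alg{\uAss}| \ar[rr] \ar[dr] & & |w\Alg{\Ass}| \ar[dl] \\
    & |w\dgMod*{\kk}| &
  \end{tikzcd}
\]
(the triangle commutes on the nose because the forgetful functor $\Alg{\uAss}\to\dgMod*{\kk}$ factors strictly through $\Alg{\Ass}\to\dgMod*{\kk}$). Applying \Cref{rezk_theorem} to the operads $\uAss$ and $\Ass$, the homotopy fibre at a DG vector space $V$ of the left (respectively right) leg of this triangle is the Dwyer--Kan mapping space $\Map{\uAss}{\E{V}}$ (respectively $\Map{\Ass}{\E{V}}$); moreover, by the functoriality of this identification in the operad variable, the map induced on homotopy fibres by the horizontal arrow is exactly the restriction map $\Map{\uAss}{\E{V}}\to\Map{\Ass}{\E{V}}$ along the canonical operad morphism $\Ass\to\uAss$, that is, the $\O=\E{V}$ instance of the map appearing in \Cref{homotopy_units}.

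Then I would invoke \Cref{homotopy_units} with $\O=\E{V}$ to conclude that this map on homotopy fibres is a homotopy monomorphism for every DG vector space $V$, i.e.\ over every point of the common base $|w\dgMod*{\kk}|$. Finally I would apply \cite[Lemma~6.3]{Mur14}---which promotes a fibrewise homotopy monomorphism between two spaces lying over a fixed base to a homotopy monomorphism of the total spaces---to the triangle above, obtaining that $|w\Alg{\uAss}|\to|w\Alg{\Ass}|$ is a homotopy monomorphism, as desired.

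I expect the last step to carry the actual content. On homotopy groups $\pi_n$ for $n\geq1$ it is a five-lemma comparison of the two long exact sequences of the fibrations---whose base terms $\pi_n|w\dgMod*{\kk}|$ agree on the nose---using that a homotopy monomorphism induces isomorphisms on $\pi_n$ for $n\geq1$. The genuinely delicate point is injectivity on $\pi_0$: one must show that two unital DG-algebra structures on a fixed DG vector space that become equivalent after forgetting the units are already equivalent as unital DG algebras. Here one cannot strengthen the fibrewise statement to a weak equivalence---a non-unital DG algebra need not admit a unit, and when it does a unitalization need not be unique---so it is precisely the weaker ``homotopy monomorphism'' hypothesis of \Cref{homotopy_units}, together with the exact agreement of the base spaces, that makes the $\pi_0$-level bookkeeping in \cite[Lemma~6.3]{Mur14} go through.
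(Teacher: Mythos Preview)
Your proposal is correct and follows exactly the same approach as the paper: the paper simply states that the result is a consequence of \Cref{rezk_theorem,homotopy_units} and \cite[Lemma~6.3]{Mur14}, which is precisely the three-step argument you outline (identify the homotopy fibres via \Cref{rezk_theorem}, apply \Cref{homotopy_units} fibrewise, and conclude with the general lemma). Your additional unpacking of the $\pi_0$ step is accurate and consistent with what \cite[Lemma~6.3]{Mur14} does.
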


\begin{corollary}\label{quasi-iso_unit_algebras}
  Let $A$ and $B$ be unital DG algebras. Then, 
  $A$ and $B$ are quasi-isomorphic as \emph{unital} DG algebras, i.e.~as $\uAss$-algebras, if and only if they are quasi-isomorphic as \emph{non-unital} DG algebras, i.e.~as $\Ass$-algebras.
\end{corollary}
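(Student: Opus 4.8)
The plan is to deduce the statement directly from the homotopy monomorphism of \Cref{homotopy_monomorphism_algebras}. The key observation is that, for a model category $\C$, two objects are connected by a zigzag of quasi-isomorphisms (equivalently, become isomorphic in $\operatorname{Ho}\C$) if and only if they lie in the same connected component of the classifying space $|w\C|$; this is the standard description of $\pi_0$ of the classification complex of Dwyer--Kan~\cite[\S2]{DK84}, and it applies here because $\Alg{\uAss}$ and $\Alg{\Ass}$ are model categories. Thus it suffices to compare the images of $A$ and $B$ in $\pi_0|w\Alg{\uAss}|$ and in $\pi_0|w\Alg{\Ass}|$.

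The implication ``quasi-isomorphic as unital DG algebras $\Rightarrow$ quasi-isomorphic as non-unital DG algebras'' is immediate: the forgetful functor $\Alg{\uAss}\to\Alg{\Ass}$ preserves quasi-isomorphisms, since a morphism on either side is a weak equivalence exactly when the underlying map of DG vector spaces is one; hence it carries a zigzag of quasi-isomorphisms in $\Alg{\uAss}$ to one in $\Alg{\Ass}$.

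For the converse, I would note that the same forgetful functor restricts to a functor $w\Alg{\uAss}\to w\Alg{\Ass}$ and induces, after taking nerves, precisely the map of classifying spaces of \Cref{homotopy_monomorphism_algebras}. Since that map is a homotopy monomorphism, the induced map $\pi_0|w\Alg{\uAss}|\to\pi_0|w\Alg{\Ass}|$ is injective. Now, if $A$ and $B$ are quasi-isomorphic as non-unital DG algebras, then their classes in $\pi_0|w\Alg{\uAss}|$ have equal image in $\pi_0|w\Alg{\Ass}|$, and hence already agree by injectivity; that is, $A$ and $B$ are quasi-isomorphic as unital DG algebras.

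The only genuinely delicate point---and hence the main obstacle---is the identification of $\pi_0$ of the classification complex with the set of quasi-isomorphism classes of objects; everything else is formal. It is worth emphasising that the full strength of \Cref{homotopy_monomorphism_algebras}, rather than merely the existence of the operad map $\Ass\to\uAss$, is what excludes the \emph{a priori} possibility that two non-quasi-isomorphic unital DG algebras become quasi-isomorphic only after forgetting their units.
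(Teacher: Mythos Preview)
Your proof is correct and is precisely the argument the paper intends: the corollary is stated without proof immediately after \Cref{homotopy_monomorphism_algebras}, and your write-up simply unpacks the implicit reasoning---the injection on $\pi_0$ together with the identification of $\pi_0|w\C|$ with quasi-isomorphism classes.
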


\begin{remark}
  \Cref{quasi-iso_unit_algebras} is equivalent to saying that, given unital DG algebras $A$ and $B$, if there exist quasi-isomorphisms of DG algebras $A\leftarrow C\rightarrow B$ with $C$ possibly non-unital, then we can find another diagram $A\leftarrow C'\rightarrow B$ with $C'$ unital where the two arrows are quasi-isomorphisms of DG algebras preserving the units.
\end{remark}

\Cref{rezk_theorem} can be used to define minimal models in great generality.

\begin{definition}\label{def:minimal_model_algebra} We make the following
  definitions:
  \begin{enumerate}
  \item Let $V$ be a DG vector space. A \emph{cocycle selection map}
    ${i\colon\dgH{V}\to V}$ is a morphism of DG vector spaces, regarding
    $\dgH{V}$ as a DG vector space with vanishing differential, which sends each
    cohomology class to a representing cocycle:
    \[
      \forall x\in \H{V},\qquad [i(x)]=x\in\H{V}.
    \]
    Notice that a cocycle selection map is necessarily a quasi-isomorphism.
  \item Given an $\O$-algebra $A$, a \emph{minimal model} of $A$ is a point in
    $\Map{\O}{\E{\dgH{A}}}$ defined by a cocycle selection map $i\colon\dgH{A}\to A$ in
    $\dgMod*{\kk}$, regarded as a path in $|w\dgMod*{\kk}|$, see
    \Cref{rezk_theorem}. More precisely, the path $i$ induces a weak equivalence
    between mapping spaces/homotopy fibres
    \[
      i_*\colon\Map{\O}{\E{\H{A}}}\stackrel{\sim}{\longrightarrow}\Map{\O}{\E{A}}
    \]
    and hence, in particular, a bijection on path connected components:
    \[
      i_*\colon\pi_0\Map{\O}{\E{\H{A}}}\stackrel{\cong}{\longrightarrow}\pi_0\Map{\O}{\E{A}}.
    \]
    A minimal model of $A$ is then a point $\O_\infty\to\E{\H{A}}$ in the source
    mapping space such that
    \[
      i_*[\O_\infty\to\E{\H{A}}]=[\O_\infty\to\O\to\E{A}]\in\pi_0\Map{\O}{\E{A}},
    \]
    where $\O\to\E{A}$ is the $\O$-algebra structure on $A$.
  \end{enumerate}
\end{definition}

Minimal models are unique up to gauge $\O_{\infty}$-isomorphism in the following
sense.

\begin{remark}\label{rem:minimal_model_algebra}
  \Cref{def:minimal_model_algebra} agrees with the usual definition of minimal
  models via homotopy transfer. Indeed, as can be seen from the proof of
  \cite[Theorem~4.6]{Mur14}, an $\O_{\infty}$-algebra structure on $\dgH{A}$
  defined by a cocycle selection map ${i\colon\dgH{A}\to A}$ is obtained by means
  of the following diagram:
  \begin{equation*}
    \begin{tikzcd}
      \E{\dgH{A}}&\E{i}\ar[l,"s"']\ar[r,"t"]&\E{A}\\
      &\O_{\infty}\rar\ar[lu,dashed]\ar[u,dashed]\ar[ru,""{name=X}]&
      \arrow[Leftarrow,from=X,to=1-2]\O\uar
    \end{tikzcd}
  \end{equation*}
  Here, the top horizontal arrows are the `source' and `target'
  quasi-isomorphisms from the endomorphism DG operad $\E{i}$ of $i$ in the category of maps
  in $\dgMod*{\kk}$ with the point-wise tensor product, hence\footnote{The DG
    operad $\E{i}$ is such that a
    map $\O_\infty\longrightarrow\E{i}$ is equivalent to the data of $\O_\infty$-algebra
    structures on $\H{A}$ and $A$ such that $i$ becomes a
    strict morphism of $\O_\infty$-algebras.}
  \[
    \E{i}<n>\coloneqq\hom[\kk]{i^{\otimes n}}{i},\qquad n\geq 0.
  \]
  The solid diagonal arrow is the $\O_{\infty}$-algebra structure on $A$
  corresponding to the special point $\O_{\infty}\to\O\to\E{A}$ of
  $\Map{\O}{\E{A}}$. Since $\O_{\infty}$ is cofibrant and all operads are
  fibrant in $\Operads$, we can lift this $\O_\infty$-algebra structure up to
  a homotopy ($\Rightarrow$) along $t$. Then, we compose the lift with the
  `source' quasi-isomorphism $s$ in order to obtain an $\O_\infty$-algebra
  structure on $\H{A}$. This diagram is equivalent to giving another
  $\O_{\infty}$-algebra $A'$ with the same underlying DG vector space as $A$
  (the composite of the vertical dashed arrow and $t$), a gauge
  $\O_{\infty}$-isomorphism $A'\leadsto A$ (the homotopy $\Rightarrow$), and an
  $\O_{\infty}$-algebra structure on $\dgH{A}$ (the dashed arrow on the left)
  such that $i\colon\dgH{A}\to A'$ is a morphism of $\O_{\infty}$-algebras (see
  \cite[Corollary 2.17]{Mur14}), that is a diagram of $\O_{\infty}$-algebras of
  the form
  \[
    \dgH{A}\stackrel{i}{\longrightarrow}A'\leadsto A.
  \]
  This agrees with the construction of minimal models given in
  \cite[Theorem~10.4.3]{LV12} in the case of Koszul DG operads. Since any two
  cocycle selection maps are homotopic, minimal models are indeed unique up to
  gauge $\O_{\infty}$-isomorphism.
\end{remark}

\begin{remark}
  The cocycle selection map $i\colon\dgH{A}\to A$ is clearly a monomorphism,
  hence a cofibration in $\dgMod*{\kk}$, but \cite[Theorem~4.6]{Mur14} uses 
  fibrations instead. Nevertheless, \cite[Theorem~4.6]{Mur14} also works
  replacing fibrations with cofibrations. Alternatively, we can replace
  ${i\colon\dgH{A}\to A}$ with a retraction ${p\colon A\to\dgH{A}}$, which is
  then a fibration and exists since all objects in $\dgMod*{\kk}$ are bifibrant.
\end{remark}

We can use \Cref{rezk_theorem} to prove that quasi-isomorphic $\O$-algebras can
be detected by means of their minimal models.

\begin{proposition}\label{minimal_models_quasi_isomorphic_algebras}
  Let $A$ and $B$ be $\O$-algebras. The following statements are equivalent:
  \begin{enumerate}
  \item\label{it:quasi-isomorphic} The $\O$-algebras $A$ and $B$ are
    quasi-isomorphic.
  \item\label{it:gauge_isomorphic} The $\O$-algebras $A$ and $B$ have gauge
    $\O_{\infty}$-isomorphic minimal models.
  \end{enumerate}
\end{proposition}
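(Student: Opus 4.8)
The plan is to deduce the equivalence from \Cref{rezk_theorem} by translating both conditions into statements about the homotopy fibre $\Map{\O}{\E{V}}$ of the forgetful map $|w\Alg{\O}|\to|w\dgMod*{\kk}|$. First I would recall that, by \Cref{rezk_theorem}, two $\O$-algebras $A$ and $B$ are quasi-isomorphic—that is, isomorphic in $\Ho\Alg{\O}$, equivalently connected by a zig-zag of weak equivalences—precisely when they lie in the same path component of $|w\Alg{\O}|$. So statement~\eqref{it:quasi-isomorphic} is equivalent to the existence of a path in $|w\Alg{\O}|$ joining $A$ to $B$.

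Next I would analyse such a path via the fibration sequence of \Cref{rezk_theorem}. A path from $A$ to $B$ in $|w\Alg{\O}|$ projects to a path $\gamma$ from $\dgH{A}$ to $\dgH{B}$ in $|w\dgMod*{\kk}|$; since $\dgMod*{\kk}$ is a `homotopically discrete up to the action of automorphisms' situation—more precisely, since all objects are bifibrant and weak equivalences are homotopy equivalences—this path $\gamma$ realises a quasi-isomorphism $\dgH{A}\xrightarrow{\sim}\dgH{B}$, which we may as well assume (after composing with cocycle selection maps) is induced by an actual chain map, indeed an isomorphism of graded vector spaces once we pass to cohomology. Transporting the fibre along $\gamma$ gives a weak equivalence $\gamma_*\colon\Map{\O}{\E{\dgH{A}}}\xrightarrow{\sim}\Map{\O}{\E{\dgH{B}}}$, and the lifted path exhibits the minimal model of $A$ (a point of $\Map{\O}{\E{\dgH{A}}}$, per \Cref{def:minimal_model_algebra}) as being carried by $\gamma_*$ into the same path component as the minimal model of $B$. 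A path component of $\Map{\O}{\E{\dgH{A}}}$ through a minimal model is, by \Cref{def:gauge_O_infty_isomorphism_algebras}, exactly a gauge $\O_\infty$-isomorphism class, so this gives \eqref{it:quasi-isomorphic}$\Rightarrow$\eqref{it:gauge_isomorphic}. Conversely, given gauge $\O_\infty$-isomorphic minimal models, a cocycle selection map $i\colon\dgH{A}\to A$ realises $A$ as lying over $\dgH{A}$ in the fibre $\Map{\O}{\E{\dgH{A}}}$ at the same point (up to path) as its minimal model, and similarly for $B$; chaining the path $i$, the gauge isomorphism, and the path $j^{-1}$ for $B$ inside $|w\Alg{\O}|$ produces the desired path from $A$ to $B$, hence \eqref{it:gauge_isomorphic}$\Rightarrow$\eqref{it:quasi-isomorphic}.

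The main obstacle I anticipate is bookkeeping the identification of path components of the fibre $\Map{\O}{\E{V}}$ under the parallel-transport equivalence $\gamma_*$: one must check that transporting along a path in $|w\dgMod*{\kk}|$ corresponding to a cocycle selection map sends the special point $\O_\infty\to\O\to\E{A}$ to the special point of the target fibre, and that the minimal-model points—defined in \Cref{def:minimal_model_algebra} exactly so that they map to these special points under $i_*$—are matched up correctly. This is essentially a diagram chase with the homotopy fibre sequence, but it requires being careful that `quasi-isomorphic $\O$-algebras' (i.e.\ connected by a zig-zag of weak equivalences, equivalently isomorphic in the homotopy category) really does coincide with `same path component of $|w\Alg{\O}|$'; this last point is standard (it is the content of, e.g., the Dwyer–Kan theory underlying \cite[Theorem~4.6]{Mur14}) and I would simply cite it. Once that dictionary is in place the rest is formal.
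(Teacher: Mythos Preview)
Your approach is essentially the same as the paper's: both directions are deduced from \Cref{rezk_theorem} by translating ``quasi-isomorphic'' into ``same path component of $|w\Alg{\O}|$'' and ``gauge $\O_\infty$-isomorphic minimal models'' into ``same path component of the fibre $\Map{\O}{\E{\dgH{A}}}$''. Your \eqref{it:gauge_isomorphic}$\Rightarrow$\eqref{it:quasi-isomorphic} matches the paper almost verbatim.

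There is one imprecision and one genuine difference in implementation for \eqref{it:quasi-isomorphic}$\Rightarrow$\eqref{it:gauge_isomorphic}. The imprecision: the forgetful functor sends $A$ to its underlying DG vector space, not to $\dgH{A}$, so the projected path in $|w\dgMod*{\kk}|$ runs between the underlying complexes of $A$ and $B$; you must splice in the cocycle-selection paths $\dgH{A}\to A$ and $\dgH{B}\to B$ before you are comparing fibres over $\dgH{A}$ and $\dgH{B}$. This is fixable and you seem aware of it, but as written the sentence is wrong. The difference: the paper sidesteps the parallel-transport bookkeeping you flag as the main obstacle by a direct lifting argument. It chooses a span of weak equivalences $A\twoheadleftarrow C\to B$ in $\Alg{\O}$ with the left leg a fibration, lifts the cocycle selection map $\dgH{A}\to A$ along that fibration to a quasi-isomorphism $\dgH{A}\to C$, and composes to get $\dgH{A}\to B$. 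The two resulting cocycle-selection maps $\dgH{A}\to A$ and $\dgH{A}\to B$ then define minimal models of $A$ and $B$ as points in the \emph{same} fibre $\Map{\O}{\E{\dgH{A}}}$, connected by the path coming from $C$. This is more elementary than invoking fibre transport and avoids exactly the diagram chase you were worried about.
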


\begin{proof}
  \eqref{it:quasi-isomorphic}$\Rightarrow$\eqref{it:gauge_isomorphic} Since all
  objects in $\Alg{\O}$ are fibrant, there exist weak equivalences of
  $\O$-algebras $A\twoheadleftarrow C\rightarrow B$, where the first map is also a
  fibration. Every cocycle selection map $\dgH{A}\to A$ is a cofibration in
  $\dgMod*{\kk}$, and hence it can be lifted to a quasi-isomorphism $\dgH{A}\to
  C$ and then composed to obtain a further quasi-isomorphism $\dgH{A}\to B$:
  \[
    \begin{tikzcd}
      &C\dar[two heads]\rar&B\\
      \H{A}\rar[swap]{i}\urar[dotted]&A
    \end{tikzcd}
  \]
  Therefore, the corresponding points in $\Map{\O}{\E{\H{A}}}$, given by minimal
  models of $A$ and $B$, are connected by a path, that is by a gauge
  $\O_{\infty}$-isomorphism.

  \eqref{it:gauge_isomorphic}$\Rightarrow$\eqref{it:quasi-isomorphic} This
  implies that $\dgH{A}=\dgH{B}$ and that we have cocycle selection maps
  $\dgH{A}\to A$ and $\dgH{A}\to B$ whose corresponding points in
  $\Map{\O}{\E{\dgH{A}}}$ are connected by a path. Therefore, by
  \Cref{rezk_theorem}, $A$ and $B$ are also connected by a path in
  $|w\Alg{\O}|$, that is they are quasi-isomorphic as $\O$-algebras.
\end{proof}

\begin{remark}
  \label{rmk:unitality_and_minimal_models}
  Let $A$ and $B$ be unital DG algebras. 
  It follows from
  \Cref{quasi-iso_unit_algebras,minimal_models_quasi_isomorphic_algebras} for the DG operads $\O=\Ass$ and
  $\O_\infty=\A$ that $A$ and $B$ are quasi-isomorphic as \emph{unital} DG algebras if and
  only if their minimal models are gauge $\A$-isomorphic. We refer the reader to
  \cite[Section~I.2]{Sei08} for a survey of the various notions of `unitality'
  for $\A$-algebras.
\end{remark}

\subsection{Spaces of algebras and bimodules}\label{subsec:spaces_of_algebras_and_bimodules}

\Cref{rezk_theorem} admits an analogue for algebra-bimodule pairs. Below,
$\E{V,W}$ denotes the \emph{linear endomorphism DG operad} of the DG vector
spaces $V$ and $W$ introduced in~\cite{BM09}, whose DG vector spaces of
operations are given by
\begin{equation}\label{aritywise_linear_endomorphism_operad}
	\E{V,W}<n>\coloneqq\E{V}<n>\oplus\bigoplus_{p+1+q=n}\hom[\kk]{V^{\otimes p}\otimes W\otimes V^{\otimes q}}{W},\quad n\geq1.
\end{equation}
This is a suboperad of the endomorphism DG operad of the direct sum $V\oplus W$,
\begin{equation}\label{linear_endomorphism_inclusion}
  \E{V,W}\subset\E{V\oplus W},
\end{equation}
and contains the endomorphism DG operad $\E{V}\subset\E{V,W}$ as a suboperad (with a different unit, though).
The datum of a morphism of DG operads $\O\to\E{V,W}$ is equivalent to the data
of an $\O$-algebra structure on $V$ and an $\O$-$V$-bimodule structure on $W$.
Indeed, the projection onto the first factor
$\E{V,W}<n>\twoheadrightarrow\E{V}<n>$, $n\geq 0$, defines an operad map
\begin{equation}\label{linear_endomorphism_operad}
  \E{V,W}\twoheadrightarrow\E{V}
\end{equation}
(preserving units), actually a fibration in $\Operads$; the $\O$-algebra structure on $V$ induced by $\O\to\E{V,W}$ is the composite
\[\O\to\E{V,W}\twoheadrightarrow\E{V}\]
and the $\O$-$V$-bimodule structure maps $\O<n>\otimes V^{\otimes p}\otimes
W\otimes V^{\otimes q}\to W$ are the adjoints of the components
\[
  \O<n>\to\hom[\kk]{V^{\otimes p}\otimes W\otimes V^{\otimes q}}{W},\qquad
  n\geq 0,\quad p+1+q=n. 
\]

\begin{theorem}\label{rezk_theorem_for_bimodules}
  Let $\O$ be a DG operad. The forgetful functor
  \[\Bimod*{\O}\longrightarrow\dgMod*{\kk}\times \dgMod*{\kk}\]
  that sends a pair $(A,M)$ consisting of an $\O$-algebra $A$ and an
  $A$-bimodule $M$ to the pair of their underlying DG vector spaces
  induces a map
  \[|w\Bimod*{\O}|\longrightarrow|w\dgMod*{\kk}|\times|w\dgMod*{\kk}|\] whose
  homotopy fibre at a pair of DG vector spaces $(V,W)$ is the mapping space
  \[\Map{\O}{\E{V,W}}.\]
\end{theorem}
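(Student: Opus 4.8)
The plan is to deduce \Cref{rezk_theorem_for_bimodules} from \Cref{rezk_theorem} by an appropriate iteration of the fibration argument, using the Grothendieck bifibration structure of $\Phi\colon\Bimod*{\O}\to\Alg{\O}$ from~\eqref{Grothendieck_fibration}. The first step is to factor the forgetful functor $\Bimod*{\O}\to\dgMod*{\kk}\times\dgMod*{\kk}$ through $\Phi$, namely as the composite of $\Phi$ with $\Alg{\O}\to\dgMod*{\kk}$ on the first coordinate and of the bimodule-forgetful functor on the second coordinate. On classifying spaces of weak equivalences this exhibits $|w\Bimod*{\O}|\to|w\dgMod*{\kk}|\times|w\dgMod*{\kk}|$ as (up to homotopy) a tower: first project $|w\Bimod*{\O}|\to|w\Alg{\O}|$, then $|w\Alg{\O}|\to|w\dgMod*{\kk}|$ on the first factor, while keeping track of the second $|w\dgMod*{\kk}|$ factor throughout.

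The key step is to compute the homotopy fibre of $|w\Bimod*{\O}|\to|w\Alg{\O}|\times|w\dgMod*{\kk}|$ over a point given by an $\O$-algebra $A$ and a DG vector space $W$. I would argue as in the proof of \cite[Theorem~4.6]{Mur14} (i.e.\ \Cref{rezk_theorem}): the fibre of $\Phi$ over $A$ is the model category $\Bimod*{\O,A}$, and passing to classifying spaces of weak equivalences and taking the further homotopy fibre along the underlying-DG-vector-space functor $\Bimod*{\O,A}\to\dgMod*{\kk}$ yields the mapping space of $\O[A]<1>$-modules with prescribed underlying complex $W$; by the equivalence $\Bimod*{\O,A}\simeq\dgMod*{\O[A]<1>}$ recorded before \Cref{bimodules_model_category}, this is a mapping space that classifies left $\O[A]<1>$-module structures on $W$. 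Such structures are the same as DG algebra maps $\O[A]<1>\to\E{W}$, equivalently (by the enveloping-operad/enveloping-algebra adjunctions~\eqref{enveloping_operad_adjunction} and the $n=1$ part) as morphisms of DG operads $\O\to\E{A,W}$ lying over the fixed structure map $\O\to\E{A}$ along the fibration~\eqref{linear_endomorphism_operad}. Hence this iterated homotopy fibre is the homotopy fibre of $\Map{\O}{\E{A,W}}\twoheadrightarrow\Map{\O}{\E{A}}$ over the chosen point, where we use that $\Map{\O}{-}$ sends the fibration~\eqref{linear_endomorphism_operad} to a Kan fibration.

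To assemble these fibre computations into the statement, I would then recombine with \Cref{rezk_theorem}: over a point $V\in|w\dgMod*{\kk}|$ in the first coordinate, \Cref{rezk_theorem} identifies the fibre of $|w\Alg{\O}|\to|w\dgMod*{\kk}|$ with $\Map{\O}{\E{V}}$, and the previous paragraph identifies the further fibre over $W$ with the homotopy fibre of $\Map{\O}{\E{V,W}}\twoheadrightarrow\Map{\O}{\E{V}}$. Composing two fibre sequences, the total homotopy fibre of $|w\Bimod*{\O}|\to|w\dgMod*{\kk}|\times|w\dgMod*{\kk}|$ over $(V,W)$ is $\Map{\O}{\E{V,W}}$, as claimed. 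A clean way to package this is to invoke the version of the Quillen/Dwyer--Kan `homotopy fibre of a classification space' machinery used for \Cref{rezk_theorem}, applied to the bifibration $\Phi$ together with the fibrewise forgetful functor to $\dgMod*{\kk}$, rather than redoing the simplicial bookkeeping from scratch.

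The main obstacle I anticipate is making the iterated homotopy fibre argument precise: \Cref{rezk_theorem} is stated for a single forgetful functor $\Alg{\O}\to\dgMod*{\kk}$, and here one needs the analogous statement for the bifibration $\Bimod*{\O}\to\Alg{\O}$ (with its fibrewise structure), so one must either reprove the relevant part of~\cite{Mur14} in this relative setting or carefully cite a form of it that applies to Grothendieck bifibrations of model categories. The compatibility one needs is precisely that the cofibrant-resolution/cosimplicial-resolution constructions defining $\Map{\O}{-}$ interact well with the enveloping-algebra construction $A\mapsto\O[A]<1>$, which is exactly the content already used (via~\eqref{enveloping_operad_adjunction} and \Cref{composite_bimodule_quillen_equivalence_cofibrant_algebra}) elsewhere in this section; so while the argument is somewhat involved, all the required inputs are available, and the proof should reduce to carefully chaining the two fibre sequences above.
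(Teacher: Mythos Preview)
The paper's proof is much more direct than your plan: it simply re-runs the argument of \cite[Theorem~4.6]{Mur14} (the proof of \Cref{rezk_theorem}) with the linear endomorphism operad $\E{V,W}$ in place of $\E{V}$. The Rezk-style argument only uses that structures on the underlying object are classified by operad maps into an endomorphism-type operad; for pairs $(V,W)$ the relevant endomorphism-type operad is $\E{V,W}$, so the same argument applies verbatim to the forgetful functor $\Bimod*{\O}\to\dgMod*{\kk}\times\dgMod*{\kk}$.

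Your approach is genuinely different---you factor through $|w\Alg{\O}|$ and compute the homotopy fibre in two stages---and it has a real gap. The key intermediate step you need is that the homotopy fibre of $|w\Phi|\colon|w\Bimod*{\O}|\to|w\Alg{\O}|$ over an $\O$-algebra $A$ is $|w\Bimod*{\O,A}|$. This is exactly \Cref{Grothendieck_fibration_homotopy_fibre}, which requires an extra hypothesis: either $A$ is cofibrant or $\O$ is excellent. Without it, the base-change condition for Quillen's Theorem~B (that $f^*$ is a Quillen equivalence for every weak equivalence $f\colon A\to B$ in $\Alg{\O}$) can fail, and then the strict fibre $\Bimod*{\O,A}$ need not compute the homotopy fibre. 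The theorem you are trying to prove carries no such hypothesis.

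In fact, in the paper's logical order \Cref{rezk_theorem_for_bimodules} is proved first and then used (via \Cref{rezk_comparison}) in the proofs of \Cref{Grothendieck_fibration_homotopy_fibre} and \Cref{total_homotopy_fibre_of_classification_spaces}; your route would reverse those dependencies. The reason the direct argument avoids the difficulty is that its base is $|w\dgMod*{\kk}|\times|w\dgMod*{\kk}|$ rather than $|w\Alg{\O}|$: over a field every quasi-isomorphism of DG vector spaces is a homotopy equivalence, so the base-change condition holds automatically. Factoring through $|w\Alg{\O}|$ reintroduces precisely the obstacle the direct argument sidesteps. You could try to rescue your approach by restricting throughout to cofibrant $\O$-algebras (as in the second half of the proof of \Cref{Grothendieck_fibration_homotopy_fibre}), but at that point you are essentially rebuilding the machinery of \cite{Mur14} in a more complicated setting, when the one-line replacement $\E{V}\rightsquigarrow\E{V,W}$ already does the job.
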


The proof of \Cref{rezk_theorem_for_bimodules} is entirely analogous to the
proof of \Cref{rezk_theorem} in \cite{Mur14}, replacing endomorphism DG
operads $\E{V}$ with the linear endomorphism DG operads $\E{V,W}$, and hence we
omit it.

The following result is a consequence of \Cref{rezk_theorem_for_bimodules,homotopy_units} and \cite[Lemma 6.3]{Mur14}.

\begin{proposition}\label{homotopy_monomorphism_algebras_bimodules}
  The forgetful functor $\Bimod*{\uAss}\to\Bimod*{\Ass}$ from the category of bimodules over unital DG associative algebras to the category of bimodules over possibly non-unital DG associative algebras induces a homotopy monomorphism
  \[|w\Bimod*{\uAss}|\longrightarrow|w\Bimod*{\Ass}|.\]
\end{proposition}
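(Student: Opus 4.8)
The plan is to follow, \emph{mutatis mutandis}, the proof of \Cref{homotopy_monomorphism_algebras} (that is, of~\cite[Proposition~6.2]{Mur14}), replacing the endomorphism DG operads $\E{V}$ by the linear endomorphism DG operads $\E{V,W}$ and \Cref{rezk_theorem} by its bimodule counterpart \Cref{rezk_theorem_for_bimodules}. First I would record the commutative square of classifying spaces
\[
  \begin{tikzcd}
    |w\Bimod*{\uAss}| \ar[r] \ar[d] & |w\Bimod*{\Ass}| \ar[d] \\
    |w\dgMod*{\kk}|\times|w\dgMod*{\kk}| \ar[r,"\mathrm{id}"] & |w\dgMod*{\kk}|\times|w\dgMod*{\kk}|
  \end{tikzcd}
\]
in which the top horizontal map is induced by the restriction-of-scalars functor $\Bimod*{\uAss}\to\Bimod*{\Ass}$ along the canonical morphism $\Ass\to\uAss$ (this is the forgetful functor in the statement), and the vertical maps are induced by the forgetful functors sending a pair $(A,M)$ to the pair of its underlying DG vector spaces. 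This square commutes strictly, because restriction of scalars does not alter underlying DG vector spaces.

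By \Cref{rezk_theorem_for_bimodules}, the homotopy fibre of the right-hand (respectively, left-hand) vertical map over a pair of DG vector spaces $(V,W)$ is the mapping space $\Map{\Ass}{\E{V,W}}$ (respectively, $\Map{\uAss}{\E{V,W}}$). Moreover, inspecting the construction underlying \Cref{rezk_theorem_for_bimodules}---which, as indicated after its statement, runs parallel to the proof of \Cref{rezk_theorem} in~\cite{Mur14}---one sees that the map induced by the square between these homotopy fibres is precisely the map
\[
  \Map{\uAss}{\E{V,W}}\longrightarrow\Map{\Ass}{\E{V,W}}
\]
given by precomposition with $\Ass\to\uAss$. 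By \Cref{homotopy_units} applied to the DG operad $\O=\E{V,W}$, this map is a homotopy monomorphism, and this holds for every pair $(V,W)$.

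It then remains to assemble this fibrewise information, which is exactly the content of \cite[Lemma~6.3]{Mur14}: a map between two homotopy fibre sequences over a common base space whose induced map on every homotopy fibre is a homotopy monomorphism induces a homotopy monomorphism on total spaces. Applying this to the square above---whose bottom arrow is the identity---yields that $|w\Bimod*{\uAss}|\to|w\Bimod*{\Ass}|$ is a homotopy monomorphism, as claimed. I do not expect a substantial obstacle here: the non-formal content is entirely contained in the cited results, and the argument is a verbatim transcription of the algebra case. The only points meriting a moment's care are that the three functors above be strictly compatible (immediate, as observed), that the identification of homotopy fibres in \Cref{rezk_theorem_for_bimodules} be natural enough for \cite[Lemma~6.3]{Mur14} to apply (read off from the proof of \Cref{rezk_theorem} in~\cite{Mur14}), and that \Cref{homotopy_units} be invoked with the operad $\E{V,W}$ rather than with $\E{V\oplus W}$, into which the former embeds by~\eqref{linear_endomorphism_inclusion}.
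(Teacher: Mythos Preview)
Your proposal is correct and follows precisely the approach indicated in the paper, which simply states that the result is a consequence of \Cref{rezk_theorem_for_bimodules}, \Cref{homotopy_units}, and \cite[Lemma~6.3]{Mur14}. You have accurately unpacked how these three ingredients combine, mirroring the proof of \Cref{homotopy_monomorphism_algebras} with linear endomorphism operads in place of endomorphism operads.
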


\begin{corollary}\label{quasi-iso_unit_algebras_bimodules}
  Let $A$ and $B$ be unital DG algebras, $M$ a unital $A$-bimodule and $N$ a unital $B$-bimodule. Then 
  $(A,M)$ and $(B,N)$ are quasi-isomorphic in $\Bimod*{\uAss}$ if and only if they are quasi-isomorphic in $\Bimod*{\Ass}$.
\end{corollary}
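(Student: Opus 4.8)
The plan is to mimic the deduction of \Cref{quasi-iso_unit_algebras} from \Cref{homotopy_monomorphism_algebras}, now using the bimodule statement \Cref{homotopy_monomorphism_algebras_bimodules} in place of the algebra one. First I would recall the basic translation: for any model category $\C$, two objects are isomorphic in the homotopy category (equivalently, connected by a zigzag of weak equivalences) if and only if they lie in the same path component of the classification complex $|w\C|$, so that $\pi_0|w\C|$ is the set of quasi-isomorphism classes of objects. Thus being quasi-isomorphic in $\Bimod*{\uAss}$ (resp.\ in $\Bimod*{\Ass}$), in the sense of \Cref{def:quasi-isomorphic_algebra-bimodule_pairs}, means lying in the same path component of $|w\Bimod*{\uAss}|$ (resp.\ of $|w\Bimod*{\Ass}|$).

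Next I would observe that the restriction-of-scalars functor $U\colon\Bimod*{\uAss}\to\Bimod*{\Ass}$ along the operad map $\Ass\to\uAss$ sends a pair $(A,M)$, with $A$ a unital DG algebra and $M$ a unital $A$-bimodule, to the very same underlying pair equipped with its non-unital $\Ass$-algebra and $\Ass$-bimodule structure; in particular the two notions of quasi-isomorphism in the statement compare the pairs $(A,M),(B,N)$ with their images $U(A,M),U(B,N)$. Since weak equivalences in both $\Bimod*{\uAss}$ and $\Bimod*{\Ass}$ are detected by the forgetful functor to $\dgMod*{\kk}\times\dgMod*{\kk}$, the functor $U$ preserves weak equivalences and hence induces a map of classification complexes $|wU|\colon|w\Bimod*{\uAss}|\to|w\Bimod*{\Ass}|$; this is exactly the map shown to be a homotopy monomorphism in \Cref{homotopy_monomorphism_algebras_bimodules}.

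Granting this, both implications are immediate. The ``only if'' direction is purely formal: if $(A,M)$ and $(B,N)$ lie in the same path component of $|w\Bimod*{\uAss}|$, applying $|wU|$ shows that $U(A,M)$ and $U(B,N)$ lie in the same path component of $|w\Bimod*{\Ass}|$. For the ``if'' direction, if $U(A,M)$ and $U(B,N)$ lie in the same path component of $|w\Bimod*{\Ass}|$, then, since $|wU|$ is a homotopy monomorphism, the induced map $\pi_0|wU|$ is injective, so $(A,M)$ and $(B,N)$ already lie in the same path component of $|w\Bimod*{\uAss}|$. I do not anticipate a genuine obstacle here: the entire content is packaged into \Cref{homotopy_monomorphism_algebras_bimodules}, and the only points requiring (minor) care are the identification of $\pi_0$ of a classification complex with the set of quasi-isomorphism classes of objects, and the verification that the map on classification complexes induced by $U$ is literally the map appearing in that proposition — which reduces to the fact that $U$ preserves weak equivalences.
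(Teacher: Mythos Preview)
Your proposal is correct and follows exactly the approach the paper intends: the corollary is stated without proof as an immediate consequence of \Cref{homotopy_monomorphism_algebras_bimodules}, and your argument spells out the standard deduction (a homotopy monomorphism of classifying spaces is injective on $\pi_0$, and $\pi_0|w\C|$ classifies objects up to weak equivalence). This is precisely the same pattern the paper uses for the algebra version in \Cref{quasi-iso_unit_algebras}, which is also left implicit.
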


\begin{remark}
  \Cref{quasi-iso_unit_algebras} is equivalent to saying that, given unital DG algebras $A$ and $B$, a unital $A$-bimodule $M$ and a unital $B$-bimodule $N$, if there exist quasi-isomorphisms of DG algebras $A\leftarrow C\rightarrow B$ with $C$ possibly non-unital and quasi-isomorphisms of $C$-bimodules $M\leftarrow P\rightarrow N$, then we can find diagrams $A\leftarrow C'\rightarrow B$ and $M\leftarrow P'\rightarrow N$ with $C'$ a unital DG algebra and $P'$ a unital $C'$-bimodule such that the two arrows in $A\leftarrow C'\rightarrow B$ are quasi-isomorphisms of DG algebras preserving the units and the two arrows in $M\leftarrow P'\rightarrow N$ are quasi-isomorphisms of $C'$-bimodules.
\end{remark}

We have the following straightforward comparison result
between \Cref{rezk_theorem,rezk_theorem_for_bimodules}.

\begin{proposition}\label{rezk_comparison}
  Let $\O$ be a DG operad. Consider the commutative diagram
  \[\begin{tikzcd}
      {|w\Bimod*{\O}|}\ar[r]\ar[d]&{|w\dgMod*{\kk}|\times|w\dgMod*{\kk}|}\ar[d,"p_1"]\\
      {|w\Alg{\O}|}\ar[r]&{|w\dgMod*{\kk}|}
    \end{tikzcd}\] where the horizontal maps are the maps in
  \Cref{rezk_theorem,rezk_theorem_for_bimodules}, the left vertical map is induced by 
  the Grothendieck bifibration~\eqref{Grothendieck_fibration}
  \[
    \Phi\colon\Bimod*{\O}\longrightarrow\Alg{\O},\qquad (A,M)\longmapsto A,
  \]
  and the right vertical
  map is the projection onto the first factor. Let $V$ and $W$ be DG vector
  spaces. The induced map between the homotopy fibres of the top and bottom
  horizontal maps at $(V,W)$ and $V=p_1(V,W)$, respectively, is the map (a Kan
  fibration)
  \[\Map{\O}{\E{V,W}}\longrightarrow\Map{\O}{\E{V}}\]
  induced by \eqref{linear_endomorphism_operad}.
\end{proposition}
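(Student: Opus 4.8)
The plan is to revisit the proof of \Cref{rezk_theorem} from \cite[Theorem~4.6]{Mur14} and to observe that it can be carried out in parallel for the two horizontal maps of the square, keeping track of the induced map on homotopy fibres along the way. Recall that in \Cref{rezk_theorem} the homotopy fibre of $|w\Alg{\O}|\to|w\dgMod*{\kk}|$ over a DG vector space $V$ is identified with the mapping space $\Map{\O}{\E{V}}$, whose points are $\O_\infty$-algebra structures on $V$ and whose paths are gauge $\O_\infty$-isomorphisms (cf.~\Cref{def:gauge_O_infty_isomorphism_algebras}); in the course of the proof, the non-functoriality of $V\mapsto\E{V}$ is dealt with by means of the endomorphism operad $\E{i}$ of a cocycle selection map $i$ (cf.~\Cref{rem:minimal_model_algebra}). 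As the text indicates, the proof of \Cref{rezk_theorem_for_bimodules} runs the very same argument with $\dgMod*{\kk}$ replaced by $\dgMod*{\kk}\times\dgMod*{\kk}$, the endomorphism operad $\E{V}$ by the linear endomorphism operad $\E{V,W}$, and $\E{i}$ by its evident linear analogue $\E{i,j}$, which bears the same relation to $\E{i}$ as $\E{V,W}$ does to $\E{V}$.

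First I would record that this construction is natural with respect to forgetful functors between categories of DG vector spaces equipped with extra structure that cover products of copies of $\dgMod*{\kk}$: such a functor carries the fibre sequence associated with the source to the one associated with the target over the corresponding projection of base classifying spaces, and over a fixed underlying object it restricts to the map between spaces of structures induced by the evident map of coefficient operads (and, compatibly, of the $\E{i}$-type operads). Applying this to $\Phi\colon\Bimod*{\O}\to\Alg{\O}$ of \eqref{Grothendieck_fibration}, which covers $p_1$, produces a map between the two fibre sequences lying over the square in the statement; over $(V,W)$ and $V=p_1(V,W)$ it is the map from the space of pairs consisting of an $\O_\infty$-algebra structure on $V$ and a compatible $\O_\infty$-bimodule structure on $W$ to the space of $\O_\infty$-algebra structures on $V$ that forgets the bimodule part. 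By the discussion in \Cref{subsec:spaces_of_algebras_and_bimodules}, the former structures are precisely the morphisms $\O\to\E{V,W}$, the latter are the morphisms $\O\to\E{V}$, and forgetting the bimodule part corresponds to postcomposition with the canonical projection $\E{V,W}\twoheadrightarrow\E{V}$ of \eqref{linear_endomorphism_operad} (and, compatibly, with the analogous $\E{i,j}\twoheadrightarrow\E{i}$). Hence the induced map on homotopy fibres is exactly $\Map{\O}{\E{V,W}}\to\Map{\O}{\E{V}}$ induced by \eqref{linear_endomorphism_operad}, as claimed; that it is a Kan fibration then follows at once, since $\E{V,W}\twoheadrightarrow\E{V}$ is a fibration in $\Operads$ and $\Map{\O}{-}$ preserves fibrations.

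I expect the only real obstacle to be bookkeeping, namely making precise the assertion that the construction of \cite[Theorem~4.6]{Mur14} is natural in forgetful functors despite the non-functoriality of $V\mapsto\E{V}$ and $(V,W)\mapsto\E{V,W}$. This is handled exactly as in \cite{Mur14}, by working throughout with the endomorphism operads $\E{i}$ and $\E{i,j}$ of (co)spans of weak equivalences, which \emph{are} functorial in the relevant sense, together with the projections $\E{i,j}\twoheadrightarrow\E{i}$ (defined just as $\E{V,W}\twoheadrightarrow\E{V}$ is), which glue the two fibre sequences into the ladder over $|w\dgMod*{\kk}|\times|w\dgMod*{\kk}|\to|w\dgMod*{\kk}|$ that we need. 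Since every step of \cite{Mur14} — cocycle selection maps, endomorphism operads of maps, Dwyer--Kan classifying complexes — is strictly compatible with forgetting the bimodule part of a pair, the argument requires nothing genuinely new beyond tracing that construction through the square.
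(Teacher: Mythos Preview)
Your proposal is correct and aligns with the paper's approach: the paper gives no proof at all for this proposition, introducing it only as ``the following straightforward comparison result between \Cref{rezk_theorem,rezk_theorem_for_bimodules}.'' Your argument---running the construction of \cite[Theorem~4.6]{Mur14} in parallel for both horizontal maps and observing that the forgetful functor $\Phi$ carries the fibre-sequence data for $\E{V,W}$ to that for $\E{V}$ via the projection \eqref{linear_endomorphism_operad}---is exactly the unpacking the paper leaves implicit, and your identification of the only subtlety (naturality via the $\E{i,j}\twoheadrightarrow\E{i}$ devices) is apt.
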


\begin{definition}
  \label{def:gauge_iso_pairs}
  We make the following definitions:
  \begin{enumerate}
  \item Let $A$ and $B$ be two $\O_{\infty}$-algebras $A$ and $B$ with the same
    underlying DG vector space $V$, and $M$ an $\O_{\infty}$-$A$-bimodule and
    $N$ an $\O_{\infty}$-$B$-bimodule, both with the same underlying DG vector
    space $W$. A
    \emph{gauge $\O_{\infty}$-iso\-morph\-ism} or \emph{$\O_{\infty}$-isomorphism
      with identity linear part} $(A,M)\leadsto (B,N)$ is a path
    between the corresponding points in $\Map{\O}{\E{V,W}}$, that is a homotopy
    between the corresponding structure maps $\O_{\infty}\to\E{V,W}$.
  \item The \emph{identity gauge $\O_{\infty}$-isomorphism} $(A,M)\leadsto
    (A,M)$ on a given pair with underlying DG vector spaces $(V,W)$ is the
    constant path at the corresponding point in $\Map{\O}{\E{V,W}}$, that is the
    trivial homotopy on the structure map $\O_{\infty}\to\E{V,W}$.
  \end{enumerate}
  Composition of such gauge $\O_{\infty}$-isomorphisms is defined by concatenation
  of paths in $\Map{\O}{\E{V,W}}$ or, equivalently, by pasting homotopies
  between the corresponding structure maps $\O_{\infty}\to\E{V,W}$.
\end{definition}

\Cref{rezk_theorem_for_bimodules} can be used to define minimal models for pairs
consisting of an $\O$-algebra and a compatible $\O$-bimodule (compare with
\Cref{def:minimal_model_algebra}).

\begin{definition}\label{def:minimal_model_simultaneous}
  Let $A$ be an $\O$-algebra $A$ and an $M$ an $\O$-$A$-bimodule. A
  \emph{minimal model} of the pair $(A,M)$ is a point in
  $\Map{\O}{\E{\dgH{A},\dgH{M}}}$, that is an $\O_{\infty}$-algebra structure on
  $\dgH{A}$ and an $\O_{\infty}$-bimodule structure on $\dgH{M}$ over it,
  defined by cocycle selection maps $\dgH{A}\to A$ and $\dgH{M}\to M$ in
  $\dgMod*{\kk}$, regarded as a path in $|w\dgMod*{\kk}|\times|w\dgMod*{\kk}|$,
  see \Cref{rezk_theorem_for_bimodules}. Since a minimal model of $(A,M)$ contains an
  underlying minimal model of $A$, namely its image under the map
  \[
    \Map{\O}{\E{\dgH{A},\dgH{M}}}\longrightarrow\Map{\O}{\E{\dgH{A}}}
  \]
  in \eqref{linear_endomorphism_operad}, we also speak of a \emph{minimal
    model of $M$} that is \emph{compatible} with the underlying minimal model of $A$.
\end{definition}

Minimal models of pairs in the sense of \Cref{def:minimal_model_simultaneous}
are unique up to gauge $\O_{\infty}$-isomorphism in the following sense (compare
with \Cref{rem:minimal_model_algebra}).

\begin{remark}
  \label{rmk:minimal_model_simultaneous}
  Minimal models for pairs $(A,M)$ consisting of an $\O$-algebra $A$ and an
  $\O$-$A$-bimodule $M$ can be constructed by homotopy transfer as in
  \Cref{rem:minimal_model_algebra}, from cocycle selection maps
  $i\colon\H{A}\to\H{A}$ and $j\colon\H{M}\to M$:
  \[
    \begin{tikzcd}
      \E{\dgH{A},\H{M}}&\E{i,j}\ar[l,"s"']\ar[r,"t"]&\E{A,M}\\
      &\O_{\infty}\rar\ar[lu,dashed]\ar[u,dashed]\ar[ru,""{name=X}]&
      \arrow[Leftarrow,from=X,to=1-2]\O\uar
    \end{tikzcd}
  \]
  Here, $\E{i,j}$ is the linear endomorphism DG operad of the pair $(i,j)$,
  viewed as objects of the category of maps in $\dgMod*{\kk}$ with the point-wise
  tensor product. A diagram as above corresponds to an $\O_{\infty}$-algebra $A'$ and an
  $\O_{\infty}$-$A'$-bimodule $M'$ with the same underlying DG vector spaces as
  $A$ and $M$, a gauge $\O_{\infty}$-isomorphism $(A',M')\leadsto (A,M)$, and an
  $\O_{\infty}$-algebra structure on $\dgH{A}$ and an $\O_{\infty}$-bimodule
  structure on $\dgH{M}$ over it, such that the given cocycle selection maps
  $\dgH{A}\to A'$ and $\dgH{M}\to M'$ are morphisms of $\O_{\infty}$-algebras
  and $\O_{\infty}$-bimodules, respectively,
  \[
    (\dgH{A},\dgH{M})\longrightarrow(A',M')\leadsto (A,M).
  \]
\end{remark}

\begin{remark}
  It follows from \Cref{rmk:minimal_model_simultaneous} that, when $\O=\Ass$ and
  $\O_\infty=\A$, \Cref{def:minimal_model_simultaneous} agrees with the
  definition of minimal models of pairs given in~\cite[Definition~5.67]{JM25}.
\end{remark}

By \Cref{rezk_theorem_for_bimodules}, quasi-isomorphic pairs of an $\O$-algebra
and a compatible $\O$-bimodule can be distinguished by their minimal models. The
proof is almost identical to that of
\Cref{minimal_models_quasi_isomorphic_algebras}, hence we omit it.

\begin{proposition}\label{minimal_models_quasi_isomorphic_algebras-bimodules}
  Let $A$ and $B$ be $\O$-algebras, $M$ an $\O$-$A$-bimodule and $N$ an
  $\O$-$B$-bimodule. The following statements are equivalent:
  \begin{enumerate}
  \item The pairs $(A,M)$ and $(B,N)$ are quasi-isomorphic, that is weakly equivalent in $\Bimod*{\O}$.
  \item The pairs $(A,M)$ and $(B,N)$ have gauge $\O_{\infty}$-isomorphic
    minimal models.
  \end{enumerate}
\end{proposition}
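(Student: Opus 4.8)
The plan is to transcribe the proof of \Cref{minimal_models_quasi_isomorphic_algebras} almost verbatim, systematically replacing the category $\dgMod*{\kk}$ of DG vector spaces by the product $\dgMod*{\kk}\times\dgMod*{\kk}$, the category $\Alg{\O}$ by $\Bimod*{\O}$, the endomorphism operad $\E{V}$ by the linear endomorphism operad $\E{V,W}$, \Cref{rezk_theorem} by its bimodule counterpart \Cref{rezk_theorem_for_bimodules}, and a single cocycle selection map $i\colon\dgH{A}\to A$ by a pair $(i,j)\colon(\dgH{A},\dgH{M})\to(A,M)$ of cocycle selection maps. The two facts that make this transcription possible are already at hand: every object of $\Bimod*{\O}$ is fibrant, since that category carries the transferred projective model structure, and a pair of cocycle selection maps is a monomorphism and a quasi-isomorphism in each coordinate, hence a trivial cofibration in $\dgMod*{\kk}\times\dgMod*{\kk}$.

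For the implication that quasi-isomorphic pairs have gauge $\O_{\infty}$-isomorphic minimal models, I would first invoke fibrancy of all objects of $\Bimod*{\O}$ to replace the given quasi-isomorphism by a span $(A,M)\twoheadleftarrow(C,P)\to(B,N)$ of weak equivalences in $\Bimod*{\O}$ whose left leg is moreover a fibration. The forgetful functor to $\dgMod*{\kk}\times\dgMod*{\kk}$ preserves fibrations, so the left leg becomes a fibration there; the trivial cofibration $(i,j)\colon(\dgH{A},\dgH{M})\to(A,M)$ therefore lifts through it, and the lift is a pair of quasi-isomorphisms by the two-out-of-three property, that is a pair of cocycle selection maps for $(C,P)$, which composed with the right leg yields one for $(B,N)$. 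By \Cref{rezk_theorem_for_bimodules}, these data exhibit the points of $\Map{\O}{\E{\dgH{A},\dgH{M}}}$ representing the minimal models of $(A,M)$ and of $(B,N)$ as lying in a common path component, that is as connected by a gauge $\O_{\infty}$-isomorphism in the sense of \Cref{def:gauge_iso_pairs}.

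For the converse, a gauge $\O_{\infty}$-isomorphism between the two minimal models is by definition a path in $\Map{\O}{\E{\dgH{A},\dgH{M}}}$ (so in particular $\dgH{A}=\dgH{B}$ and $\dgH{M}=\dgH{N}$); pushing this path forward along the weak equivalence of \Cref{rezk_theorem_for_bimodules} between $\Map{\O}{\E{\dgH{A},\dgH{M}}}$ and the homotopy fibre of $|w\Bimod*{\O}|\to|w\dgMod*{\kk}|\times|w\dgMod*{\kk}|$ over $(\dgH{A},\dgH{M})$ produces a path in $|w\Bimod*{\O}|$ joining $(A,M)$ to $(B,N)$, whence the two pairs are weakly equivalent in $\Bimod*{\O}$.

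I expect the only genuinely non-formal point---and the reason the argument is ``almost'', rather than exactly, the one for \Cref{minimal_models_quasi_isomorphic_algebras}---to be the bookkeeping of paths inside homotopy fibres in the first implication: one must verify that the lifted cocycle selection maps render the two chosen paths into $|w\dgMod*{\kk}|\times|w\dgMod*{\kk}|$ compatible over the span $(A,M)\twoheadleftarrow(C,P)\to(B,N)$, exactly as in the algebra case. Since all the model-categorical inputs---\Cref{rezk_theorem_for_bimodules}, fibrancy of $\Bimod*{\O}$, and functorial factorizations in $\dgMod*{\kk}\times\dgMod*{\kk}$---are already in place, this should present no new difficulty, and the proof is a routine adaptation.
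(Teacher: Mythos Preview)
Your proposal is correct and matches the paper's approach exactly: the paper omits the proof entirely, stating only that it ``is almost identical to that of \Cref{minimal_models_quasi_isomorphic_algebras}, hence we omit it,'' and your transcription with the indicated replacements (endomorphism operad by linear endomorphism operad, \Cref{rezk_theorem} by \Cref{rezk_theorem_for_bimodules}, single cocycle selection map by a pair) is precisely the argument the paper has in mind. One minor quibble of phrasing: the lift of $(i,j)$ through the left leg uses that $(C,P)\twoheadrightarrow(A,M)$ is a \emph{trivial} fibration in $\dgMod*{\kk}\times\dgMod*{\kk}$ (it is both a fibration and a weak equivalence) against the cofibration $0\to(\dgH{A},\dgH{M})$, rather than lifting a trivial cofibration against a mere fibration; but the conclusion is the same.
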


\subsection{Spaces of bimodules}\label{subsec:spaces_of_bimodules}

\Cref{rezk_theorem} also admits an analogue for bimodules over a fixed algebra.

\begin{proposition}\label{Grothendieck_fibration_homotopy_fibre}
  Let $\O$ be a DG operad and $A$ an $\O$-algebra. Assume that $A$ is cofibrant
  in $\Alg{\O}$ or that $\O$ is excellent. Then, the Grothendieck bifibration~\eqref{Grothendieck_fibration}
  \[
    \Phi\colon\Bimod*{\O}\longrightarrow\Alg{\O},\qquad (A,M)\longmapsto A,
  \]
  induces a map
  \[
    |w\Phi|\colon|w\Bimod*{\O}|\longrightarrow|w\Alg{\O}|
  \]
  whose homotopy fibre above $A$ is
  \[|w\Bimod*{\O,A}|.\]
\end{proposition}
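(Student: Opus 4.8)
The plan is to realise $|w\Phi|$ as a homotopy fibration with fibre $|w\Bimod*{\O,A}|$ by applying Quillen's Theorem~B (and its dual) to the functor $w\Phi\colon w\Bimod*{\O}\to w\Alg{\O}$ obtained by restricting $\Phi$ to the subcategories of weak equivalences, the key input being the Quillen equivalences of \Cref{bimodule_same_operad_quillen_equivalence,bimodule_same_operad_quillen_equivalence_excellent}. Since weak equivalences in $\Bimod*{\O}$ are detected coordinate-wise on underlying DG vector spaces, the strict fibre of $w\Phi$ over $A$ is exactly $w\Bimod*{\O,A}$, so the task is to show that this strict fibre computes the homotopy fibre. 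By Quillen's Theorem~B, the homotopy fibre of $|w\Phi|$ over $A$ is weakly equivalent to the nerve of the comma category $A\downarrow w\Phi$, whose objects are pairs $((B,N),v)$ with $(B,N)$ an $\O$-bimodule and $v\colon A\to B$ a weak equivalence of $\O$-algebras, provided that every weak equivalence $\alpha\colon A\to A'$ of $\O$-algebras induces a weak homotopy equivalence $A'\downarrow w\Phi\to A\downarrow w\Phi$. The bifibration structure of $\Phi$ from~\eqref{Grothendieck_fibration} identifies $A\downarrow w\Phi$, up to weak equivalence, with $w\Bimod*{\O,A}$: the inclusion $M\mapsto((A,M),\id[A])$ admits the retraction $((B,N),v)\mapsto v^*N$ given by restriction of scalars, and the two are connected by the natural transformation whose components are the cartesian lifts $(A,v^*N)\to(B,N)$, which are themselves weak equivalences. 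Under this identification the base-change functor induced by $\alpha$ becomes restriction of scalars $\alpha^*\colon w\Bimod*{\O,A'}\to w\Bimod*{\O,A}$, so it remains to verify that $\alpha^*$ induces a weak equivalence on classifying spaces.

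When $\O$ is excellent this holds for every weak equivalence $\alpha$: by \Cref{bimodule_same_operad_quillen_equivalence_excellent} the adjunction $(\alpha_!,\alpha^*)$ is a Quillen equivalence, and a Quillen equivalence between model categories all of whose objects are fibrant induces a weak equivalence of classifying spaces (the right adjoint preserves weak equivalences, and a homotopy inverse is obtained from the left adjoint precomposed with a cofibrant replacement functor, the derived unit and counit being weak equivalences). Thus Theorem~B applies directly to $w\Phi$ and yields the claim in this case.

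When instead $A$ is only assumed cofibrant, I would first pass to the full subcategories of cofibrant objects, which I write $\C^{\mathrm c}$; the inclusions $|w\Bimod*{\O}^{\mathrm c}|\to|w\Bimod*{\O}|$, $|w\Alg{\O}^{\mathrm c}|\to|w\Alg{\O}|$ and $|w\Bimod*{\O,A}^{\mathrm c}|\to|w\Bimod*{\O,A}|$ are weak equivalences since all objects are fibrant, and $\Phi$ restricts to $w\Bimod*{\O}^{\mathrm c}\to w\Alg{\O}^{\mathrm c}$ because a pair $(B,N)$ is cofibrant precisely when both $B$ and $N$ are (see~\cite[2.3~Remark]{Sta12a}). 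This restricted functor is a Grothendieck \emph{opfibration}, its cocartesian lifts being built from the left Quillen functors $v_!$, so the dual of Theorem~B applies with base change $\alpha_!\colon w\Bimod*{\O,A}^{\mathrm c}\to w\Bimod*{\O,A'}^{\mathrm c}$ (well defined on weak equivalences by Ken Brown's lemma); the relevant comma category over $A$ is identified with $w\Bimod*{\O,A}^{\mathrm c}$ via the cocartesian lifts $(B,N)\to(A,v_!N)$, whose bimodule component is the unit $N\to v^*v_!N$, a weak equivalence for cofibrant $N$ because $(v_!,v^*)$ is a Quillen equivalence by \Cref{bimodule_same_operad_quillen_equivalence}\eqref{between_cofibrant_algebras}. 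The hypothesis of the dual Theorem~B, namely that $\alpha_!$ induce a weak equivalence of classifying spaces for every weak equivalence $\alpha$ between cofibrant $\O$-algebras, is again \Cref{bimodule_same_operad_quillen_equivalence}\eqref{between_cofibrant_algebras} together with the ``Quillen equivalence $\Rightarrow$ weak equivalence of classifying spaces'' principle.

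The main obstacle is the bookkeeping imposed by the dichotomy in the hypotheses: in the excellent case Theorem~B applies on the nose with the restriction-of-scalars base change, but in the merely cofibrant case one must descend to cofibrant objects---where the workable base change is $\alpha_!$ rather than $\alpha^*$, and where the unit maps entering the cocartesian lifts become weak equivalences---and then check that the comma-category descriptions survive this restriction. A technically different route, which I would take if setting up Theorem~B proved cumbersome, is to deduce the statement directly from \Cref{rezk_theorem,rezk_theorem_for_bimodules,rezk_comparison}: the homotopy fibre of $|w\Phi|$ over $A$ is assembled, as the underlying DG vector space $W$ of the bimodule ranges over $|w\dgMod*{\kk}|$, from the fibres of the Kan fibrations $\Map{\O}{\E{V,W}}\to\Map{\O}{\E{V}}$ of \Cref{rezk_comparison} over the point of $\Map{\O}{\E{V}}$ determined by the $\O$-algebra structure on $A$; making this precise amounts to rerunning the argument of \cite[Theorem~4.6]{Mur14} with that $\O$-algebra structure held fixed.
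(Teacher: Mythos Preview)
Your argument is correct and, for the excellent case, essentially matches the paper: both apply Quillen's Theorem~B to $w\Phi$, identify $A\downarrow w\Phi$ with $w\Bimod*{\O,A}$ via the adjunction coming from restriction of scalars, and verify the base-change hypothesis using \Cref{bimodule_same_operad_quillen_equivalence_excellent}.

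For the cofibrant case you and the paper diverge. You restrict \emph{both} coordinates to cofibrant objects; since $v^*N$ need not remain cofibrant, this forces you to the opfibration side and the dual of Theorem~B with base change $\alpha_!$, at the cost of checking that units $N\to w^*w_!N$ are weak equivalences for cofibrant $N$ (and of a small notational slip: in the cocartesian lift $(B,N)\to(A,v_!N)$ the map $v$ must go $B\to A$, opposite to your earlier convention). The paper instead restricts only the \emph{base}: it works with $\Bimod*{\O}_u$, the category of all $\O$-bimodules over cofibrant $\O$-algebras, and reruns the identical Theorem~B argument with $f^*$, merely replacing the reference to \Cref{bimodule_same_operad_quillen_equivalence_excellent} by \Cref{bimodule_same_operad_quillen_equivalence}\eqref{between_cofibrant_algebras}. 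This buys uniformity---the two cases become formally the same proof---and avoids both the dual setup and any cofibrancy bookkeeping on bimodules.

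Your closing alternative via \Cref{rezk_theorem,rezk_theorem_for_bimodules,rezk_comparison} is not circular in the paper's order, but assembling the homotopy fibre from the Kan fibrations $\Map{\O}{\E{V,W}}\to\Map{\O}{\E{V}}$ into $|w\Bimod*{\O,A}|$ is essentially the content of \Cref{total_homotopy_fibre_of_classification_spaces}, which the paper derives \emph{from} the present proposition rather than the other way around.
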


\begin{proof}
  We first prove the statement for $\O$ excellent. The objects of the comma category $\commacat{A}{\Phi}$ of the functor $\Phi$ in~\eqref{Grothendieck_fibration} are pairs $(A\to B,M)$ where the first coordinate is an $\O$-algebra morphism and the second coordinate is a $B$-bimodule. A morphism $(A\to B,M)\to(A\to B',M')$ in $\commacat{A}{\Phi}$ is given by an $\O$-algebra morphism $B\to B'$ such that the triangle
  \[\begin{tikzcd}[column sep = small]
      &A\ar[dl]\ar[dr]&\\
      B\ar[rr]&&B'
    \end{tikzcd}\]
  commutes
  and a $B$-bimodule morphism $M\to M'$; here, for simplicity, we omit from the notation the restriction of scalars along an algebra morphism~\eqref{bimodule_same_operad_quillen_adjunction}. The adjunction
  \[\begin{tikzcd}[row sep = 0ex,
        /tikz/column 1/.append style={anchor=base east},
        /tikz/column 2/.append style={anchor=base west}]
      \Bimod*{\O,A}=\Phi^{-1}(A)\ar[r,shift left]&\commacat{A}{\Phi},\ar[l,shift left]\\
      (A,M)\ar[r,mapsto]&(\id*[A]\colon A\to A,M),\\
      (A,N)&(A\to B,N),\ar[l,mapsto]
    \end{tikzcd}\]
  restricts to an adjoint pair
  \[\begin{tikzcd}[row sep = 0ex,
        /tikz/column 1/.append style={anchor=base east},
        /tikz/column 2/.append style={anchor=base west}]
      w\Bimod*{\O,A},\ar[r,shift left]&A\downarrow w\Phi\ar[l,shift left]
    \end{tikzcd}\]
  that induces weak equivalences on nerves.

  A weak equivalence $f\colon A\to B$ in $\Alg{\O}$ gives rise to a commutative square
  \[\begin{tikzcd}
      \Bimod*{\O,B}\ar[r]\ar[d,"f^*"']&\commacat{B}{\Phi}\ar[d,"f^*"]\\
      \Bimod*{\O,A}\ar[r]&\commacat{A}{\Phi}
    \end{tikzcd}\]
  that also restricts to a commutative square
  \[\begin{tikzcd}
      w\Bimod*{\O,B}\ar[r]\ar[d,"f^*"']&\commacat{B}{w\Phi}\ar[d,"f^*"]\\
      w\Bimod*{\O,A}\ar[r]&\commacat{A}{w\Phi}.
    \end{tikzcd}\]
  The leftmost functor $f^*\colon \Bimod*{\O,B}\to \Bimod*{\O,A}$ is a right Quillen
  equivalence (\Cref{bimodule_same_operad_quillen_equivalence_excellent}).
  Therefore, taking nerves on the last commutative square yields weak
  equivalences. The statement then follows from Quillen's Theorem~B.

  We now indicate how to modify the previous argument in case $\O$ is not
  excellent but $A$ is cofibrant in $\Alg{\O}$. Let $\Alg{\O}_c$ be the category
  of cofibrant $\O$-algebras and let $\Bimod*{\O}_{u}$ be the category of all
  $\O$-bimodules over cofibrant $\O$-algebras. The functor
  \eqref{Grothendieck_fibration} restricts to a functor
  \[\Phi_c\colon\Bimod*{\O}_{u}\longrightarrow\Alg{\O}_c.\]
  The canonical inclusions induce vertical weak equivalences in the following
  commutative diagram, see~\cite[Lemma~4.2.4]{Rez96}:
  \[\begin{tikzcd}
      {|w\Bimod*{\O}_{u}|}\ar[r,"|w\Phi_c|"]\ar[d]&{|w\Alg{\O}_c|}\ar[d]\\
      {|w\Bimod*{\O}|}\ar[r,"|w\Phi|"]&{|w\Alg{\O}|}
    \end{tikzcd}\]
  Hence, we can compute the homotopy fibre of the top horizontal map instead. The same argument as above then goes through, replacing $\Phi$ with $\Phi_c$ and the use of \Cref{bimodule_same_operad_quillen_equivalence_excellent} with \Cref{bimodule_same_operad_quillen_equivalence}\eqref{between_cofibrant_algebras}.
\end{proof}

\begin{proposition}\label{homotopy_monomorphism_bimodules}
  Let $A$ be a unital DG algebra. 
  The full inclusion functor $\Bimod*{\uAss,A}\hookrightarrow\Bimod*{\Ass,A}$ from the category of unital $A$-bimodules to the category of possibly non-unital $A$-bimodules induces a homotopy monomorphism
  \[|w\Bimod*{\uAss,A}|\longrightarrow|w\Bimod*{\Ass,A}|.\]
\end{proposition}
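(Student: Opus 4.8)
The plan is to realise the map in the statement as the map induced on homotopy fibres by a map between two homotopy fibre sequences, and then to deduce the conclusion from the fact that two of the three constituent maps are already known to be homotopy monomorphisms.

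First I would apply \Cref{Grothendieck_fibration_homotopy_fibre} to the operads $\Ass$ and $\uAss$, both of which are excellent (\Cref{example_excellent_operads}). The unital DG algebra $A$ is an $\uAss$-algebra, and its image under the forgetful functor $\Alg{\uAss}\to\Alg{\Ass}$ is $A$ equipped with its underlying non-unital multiplication, so that $A$ may be regarded as an $\Ass$-algebra as well. Hence \Cref{Grothendieck_fibration_homotopy_fibre} identifies $|w\Bimod*{\uAss,A}|$ and $|w\Bimod*{\Ass,A}|$ with the homotopy fibres over $A$ of the maps $|w\Bimod*{\uAss}|\to|w\Alg{\uAss}|$ and $|w\Bimod*{\Ass}|\to|w\Alg{\Ass}|$ induced by the Grothendieck bifibration~\eqref{Grothendieck_fibration}. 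Since the forgetful functors $\Bimod*{\uAss}\to\Bimod*{\Ass}$ and $\Alg{\uAss}\to\Alg{\Ass}$ are compatible with these bifibrations (both composites $\Bimod*{\uAss}\to\Alg{\Ass}$ send a pair to its underlying non-unital DG algebra) and identify the relevant basepoints, we obtain a commutative diagram whose rows are homotopy fibre sequences
\[
  \begin{tikzcd}[column sep=small]
    {|w\Bimod*{\uAss,A}|}\ar[r]\ar[d]&{|w\Bimod*{\uAss}|}\ar[r]\ar[d]&{|w\Alg{\uAss}|}\ar[d]\\
    {|w\Bimod*{\Ass,A}|}\ar[r]&{|w\Bimod*{\Ass}|}\ar[r]&{|w\Alg{\Ass}|}
  \end{tikzcd}
\]
whose leftmost vertical map is the one induced by the full inclusion $\Bimod*{\uAss,A}\hookrightarrow\Bimod*{\Ass,A}$. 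By \Cref{homotopy_monomorphism_algebras_bimodules} the middle vertical map is a homotopy monomorphism and, by \Cref{homotopy_monomorphism_algebras}, so is the rightmost vertical map.

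It then remains to verify the following general fact: in a map of homotopy fibre sequences $(F'\to E'\to B')\to(F\to E\to B)$ in which $E'\to E$ and $B'\to B$ are homotopy monomorphisms, the induced map $F'\to F$ is also a homotopy monomorphism. I would prove this using the characterisation that a map of spaces is a homotopy monomorphism precisely when each of its homotopy fibres is empty or contractible. Fix a vertex $x_0\in F'$ with images $e_0\in E'$ and $b_0\in B'$, denoted also $x_0$, $e_0$, $b_0$ when regarded as vertices of $F$, $E$, $B$. The total homotopy fibre, over these basepoints, of the commutative square
\[
  \begin{tikzcd}[column sep=small]
    E'\ar[r]\ar[d]&B'\ar[d]\\
    E\ar[r]&B
  \end{tikzcd}
\]
can be computed either by first passing to homotopy fibres of its rows---which gives $\operatorname{hofib}_{x_0}(F'\to F)$---or by first passing to homotopy fibres of its columns---which gives $\operatorname{hofib}\bigl(\operatorname{hofib}_{e_0}(E'\to E)\to\operatorname{hofib}_{b_0}(B'\to B)\bigr)$---and the two results agree. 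Since $E'\to E$ and $B'\to B$ are homotopy monomorphisms and $e_0$, $b_0$ lie in their respective images, both inner homotopy fibres are contractible, hence the map between them is a weak equivalence and therefore has contractible homotopy fibre. Consequently $\operatorname{hofib}_{x_0}(F'\to F)$ is contractible for every vertex $x_0\in F'$, and since the homotopy fibres of $F'\to F$ over points outside its image are empty, $F'\to F$ is a homotopy monomorphism. Applying this to the diagram above yields the proposition; alternatively, one could invoke a general principle in the spirit of~\cite[Lemma~6.3]{Mur14}.

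The step most likely to require care---and hence the main obstacle, though a mild and purely formal one---is this general fact about maps of homotopy fibre sequences; all of the genuinely substantive inputs (the identification of the two bimodule classification spaces as homotopy fibres, and the two homotopy-monomorphism statements) are already established in this section.
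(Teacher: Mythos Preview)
Your proposal is correct and follows essentially the same approach as the paper, which simply states that the result ``follows easily from \Cref{homotopy_monomorphism_algebras,homotopy_monomorphism_algebras_bimodules,Grothendieck_fibration_homotopy_fibre,example_excellent_operads} by a homotopy long exact sequence argument.'' Your total-homotopy-fibre argument for the general fact about maps of homotopy fibre sequences is a clean alternative presentation of that long exact sequence argument, and the inputs you invoke are exactly those the paper cites.
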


This follows easily from \Cref{homotopy_monomorphism_algebras,homotopy_monomorphism_algebras_bimodules,Grothendieck_fibration_homotopy_fibre,example_excellent_operads} by a homotopy long exact sequence argument.

\begin{corollary}\label{quasi-iso_unit_bimodules}
  Let $A$ be a unital DG algebra and $M,N$ unital $A$-bimodules. Then
  $M$ and $N$ are quasi-isomorphic in $\Bimod*{\uAss,A}$ if and only if they are quasi-isomorphic in $\Bimod*{\Ass,A}$.
\end{corollary}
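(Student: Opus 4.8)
The plan is to obtain the corollary as a purely formal consequence of \Cref{homotopy_monomorphism_bimodules}, following verbatim the pattern by which \Cref{quasi-iso_unit_algebras} was extracted from \Cref{homotopy_monomorphism_algebras}.

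First I would recall that both $\Bimod*{\Ass,A}$ and $\Bimod*{\uAss,A}$ carry the transferred projective model structure (\Cref{bimodules_model_category}), so that in each of them two objects are quasi-isomorphic precisely when they lie in the same path component of the corresponding classifying space of weak equivalences; equivalently, when they represent the same class in $\pi_0$. Under this dictionary the statement becomes the assertion that the map $\pi_0|w\Bimod*{\uAss,A}|\to\pi_0|w\Bimod*{\Ass,A}|$ induced by the full inclusion $\Bimod*{\uAss,A}\hookrightarrow\Bimod*{\Ass,A}$ is injective, which is exactly the injectivity part of the fact that this inclusion induces a homotopy monomorphism on classifying spaces, i.e.\ the content of \Cref{homotopy_monomorphism_bimodules}.

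Concretely, the `only if' direction is immediate: weak equivalences in both categories are created by the forgetful functor to $\dgMod*{\kk}$ after discarding the bimodule (and in particular the unitality) structure, so any zigzag of quasi-isomorphisms of unital $A$-bimodules linking $M$ and $N$ is also such a zigzag of possibly non-unital $A$-bimodules. For the `if' direction, if $M$ and $N$ become quasi-isomorphic in $\Bimod*{\Ass,A}$ then $[M]=[N]$ in $\pi_0|w\Bimod*{\Ass,A}|$; applying the injectivity furnished by \Cref{homotopy_monomorphism_bimodules} yields $[M]=[N]$ in $\pi_0|w\Bimod*{\uAss,A}|$, that is, $M$ and $N$ are quasi-isomorphic as unital $A$-bimodules.

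There is no genuine obstacle internal to this corollary: the one substantive point---that forgetting unitality of bimodules cannot merge distinct quasi-isomorphism classes of unital $A$-bimodules---has already been settled in \Cref{homotopy_monomorphism_bimodules}, itself deduced by a homotopy long exact sequence argument from \Cref{homotopy_monomorphism_algebras,homotopy_monomorphism_algebras_bimodules,Grothendieck_fibration_homotopy_fibre,example_excellent_operads}. Hence the proof reduces to the above two-line bookkeeping with model structures and $\pi_0$'s.
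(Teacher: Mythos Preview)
Your proposal is correct and matches the paper's approach: the paper states the corollary immediately after \Cref{homotopy_monomorphism_bimodules} without proof, treating it as the evident consequence on $\pi_0$ that you spell out. Your observation that quasi-isomorphism classes correspond to path components of the classifying space, together with the $\pi_0$-injectivity encoded in the definition of homotopy monomorphism, is exactly the intended reasoning.
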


\begin{remark}
  \Cref{quasi-iso_unit_bimodules} is equivalent to saying that, given a unital DG algebra $A$ and unital $A$-bimodules $M,N$, if there exist quasi-isomorphisms of $A$-bimodules $M\leftarrow P\rightarrow N$ with $P$ possibly non-unital, then we can find quasi-isomorphisms $M\leftarrow P'\rightarrow N$ with $P'$ unital.
\end{remark}

\begin{theorem}\label{total_homotopy_fibre_of_classification_spaces}
  Let $\O$ be an operad and $A$ an $\O$-algebra. Suppose that $A$ is cofibrant
  in $\Alg{\O}$ or that $\O$ is excellent. The forgetful functor
  \[\Bimod*{\O,A}\longrightarrow\dgMod*{\kk}\]
  that sends an $\O$-$A$-bimodule to its underlying DG vector space induces a map
  \[|w\Bimod*{\O,A}|\longrightarrow|w\dgMod*{\kk}|\]
  whose homotopy fibre at a DG vector space $W$ is the (homotopy) fibre of the
  map (Kan fibration) between mapping spaces
  \[\Map{\O}{\E{A,W}}\longrightarrow\Map{\O}{\E{A}}\]
  induced by \eqref{linear_endomorphism_operad}, taken above the $\O$-algebra structure map $\O\to\E{A}$.
\end{theorem}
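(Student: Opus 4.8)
The plan is to recognise the forgetful map $|w\Bimod*{\O,A}|\to|w\dgMod*{\kk}|$ as the map induced on \emph{vertical} homotopy fibres of the commutative square of \Cref{rezk_comparison}, and then to compute its homotopy fibre at $W$ as the total homotopy fibre of that square, exploiting that the latter may equally be obtained by first passing to the \emph{horizontal} homotopy fibres.

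In detail, I would start from the square
\[
  \begin{tikzcd}
    {|w\Bimod*{\O}|}\ar[r]\ar[d]&{|w\dgMod*{\kk}|\times|w\dgMod*{\kk}|}\ar[d,"p_1"]\\
    {|w\Alg{\O}|}\ar[r]&{|w\dgMod*{\kk}|}
  \end{tikzcd}
\]
of \Cref{rezk_comparison}, writing $V$ for the underlying DG vector space of $A$. Taking homotopy fibres of the two horizontal maps at $(V,W)$ and at $V$, \Cref{rezk_theorem,rezk_theorem_for_bimodules} identify these with $\Map{\O}{\E{V,W}}$ and $\Map{\O}{\E{V}}$, while \Cref{rezk_comparison} identifies the induced map between them with the Kan fibration $\Map{\O}{\E{V,W}}\to\Map{\O}{\E{V}}$ of \eqref{linear_endomorphism_operad}; in this identification the point of $\Map{\O}{\E{V}}=\Map{\O}{\E{A}}$ singled out by the basepoint $A\in|w\Alg{\O}|$ is the one given by the $\O$-algebra structure map $\O\to\E{A}$. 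Next, under the running hypothesis (that $A$ is cofibrant in $\Alg{\O}$, or that $\O$ is excellent), \Cref{Grothendieck_fibration_homotopy_fibre} identifies the homotopy fibre of the left vertical map above $A$ with $|w\Bimod*{\O,A}|$; the homotopy fibre of $p_1$ above $V$ is a copy of $|w\dgMod*{\kk}|$; and unravelling the two identifications shows that the induced map between these vertical fibres is exactly the forgetful functor in the statement.

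To finish, I would invoke the standard fact that the total homotopy fibre of a commutative square of Kan complexes, with respect to a compatible choice of basepoints, may be computed either as the homotopy fibre of the map induced on the homotopy fibres of its two horizontal maps, or as the homotopy fibre of the map induced on the homotopy fibres of its two vertical maps. Applied to the square above with the (compatible) basepoints $A\in|w\Alg{\O}|$ and $(V,W)\in|w\dgMod*{\kk}|\times|w\dgMod*{\kk}|$, both lying over $V\in|w\dgMod*{\kk}|$, the first computation yields the homotopy fibre of the Kan fibration $\Map{\O}{\E{A,W}}\to\Map{\O}{\E{A}}$ above the point $\O\to\E{A}$, and the second yields the homotopy fibre of $|w\Bimod*{\O,A}|\to|w\dgMod*{\kk}|$ at $W$; since the homotopy fibre of a Kan fibration agrees with its strict fibre, the theorem follows.

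The step I expect to require the most care is the basepoint bookkeeping in this last argument: one must verify that the point of $\Map{\O}{\E{A}}$ arising from $A\in|w\Alg{\O}|$ is genuinely the structure map $\O\to\E{A}$, and that the map induced on vertical fibres is literally the forgetful functor and not merely a map weakly equivalent to it. Both facts are transparent from the constructions of the homotopy fibres underlying \Cref{rezk_theorem} (that is, \cite[Theorem~4.6]{Mur14}) and \Cref{Grothendieck_fibration_homotopy_fibre}, so no genuine difficulty arises; as the authors already note for the closely related \Cref{rezk_theorem_for_bimodules}, the proof is a routine variation on Rezk's theorem.
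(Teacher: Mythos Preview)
Your proof is correct and follows precisely the paper's approach: both compute the desired homotopy fibre as the total homotopy fibre of the square in \Cref{rezk_comparison}, using \Cref{Grothendieck_fibration_homotopy_fibre} to identify the vertical fibres and \Cref{rezk_comparison} (together with \Cref{rezk_theorem,rezk_theorem_for_bimodules}) to identify the horizontal ones. The paper's proof is terser, but your extra care with basepoints is a welcome elaboration rather than a deviation.
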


\begin{proof}
  It suffices to notice that, by \Cref{Grothendieck_fibration_homotopy_fibre}, the first map is the homotopy fibre of the vertical maps in \Cref{rezk_comparison}, and the second map is the homotopy fibre of the horizontal maps in \Cref{rezk_comparison} by that very same result.
\end{proof}

\Cref{total_homotopy_fibre_of_classification_spaces} motivates the following
definition.

\begin{definition}\label{space_of_bimodule_structures}
  Let $\O$ be a DG operad, $A$ an $\O$-algebra and $W$ a DG vector space. The
  \emph{space of $\O$-$A$-bimodule structures on $W$}, denoted by
  $\Str{\O}{A}{W}$, is the fibre of the fibration
  \[\Map{\O}{\E{A,W}}\longrightarrow\Map{\O}{\E{A}}\]
  at the $\O$-algebra structure map of $A$.
\end{definition}

\begin{remark}
  \label{rmk:Str}
  We make the following observations:
  \begin{enumerate}
  \item Points in $\Str{\O}{A}{W}$ are $\O_{\infty}$-$A$-bimodule structures on
    $W$. In particular, $\O$-$A$-bimodule structures on $W$ are special points
    in $\Str{\O}{A}{W}$.
  \item By \Cref{total_homotopy_fibre_of_classification_spaces}, if the algebra $A$ is
    cofibrant in $\Alg{\O}$ or the operad $\O$ is excellent, then $\Str{\O}{A}{W}$ is also
    the homotopy fibre of the map
    \[|w\Bimod*{\O,A}|\longrightarrow|w\dgMod*{\kk}|\] induced by the forgetful
    functor at $W$.
  \item Given an $\O$-algebra $A$ and an $\O$-$A$-bimodule $M$, a minimal model
    of the pair $(A,M)$ in the sense of \Cref{def:minimal_model_simultaneous}
    can also be regarded as a point in $\Str{\O_{\infty}}{A'}{\dgH{M}}$, where
    $A'$ is a minimal model of $A$. Recall that $A'$ is an
    $\O_{\infty}$-algebra with underlying minimal DG vector space $\dgH{A}$.
  \end{enumerate}
\end{remark}

The following result follows from \Cref{homotopy_monomorphism_bimodules,total_homotopy_fibre_of_classification_spaces,example_excellent_operads} and \cite[Lemma 6.3]{Mur16b}.

\begin{proposition}
  Given a DG-vector space $W$, the map
  \[\Str{\uAss}{A}{W}\longrightarrow\Str{\Ass}{A}{W}\]
  induced by the canonical operad morphism $\uAss\to\Ass$ is a homotopy monomorphism. 
\end{proposition}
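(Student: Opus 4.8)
The plan is to deduce the statement from \Cref{homotopy_monomorphism_bimodules} by passing to homotopy fibres of the relevant forgetful functors, using the identification of spaces of bimodule structures supplied by \Cref{total_homotopy_fibre_of_classification_spaces}. Concretely, the map $\Str{\uAss}{A}{W}\to\Str{\Ass}{A}{W}$ will be exhibited as the map induced on homotopy fibres by a map of fibration sequences over the common base $|w\dgMod*{\kk}|$, at which point one appeals to the (formal) fact that a homotopy monomorphism between the total spaces of two fibrations over a fixed base restricts to a homotopy monomorphism between the fibres.

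First I would record the hypotheses needed for \Cref{total_homotopy_fibre_of_classification_spaces}: since $A$ is a unital DG algebra it is simultaneously an $\uAss$-algebra and, via restriction of scalars along the canonical operad morphism of \Cref{homotopy_units}, an $\Ass$-algebra; and both $\uAss$ and $\Ass$ are excellent by \Cref{example_excellent_operads}. Hence \Cref{total_homotopy_fibre_of_classification_spaces} applies to each of these algebras and identifies, for the given DG vector space $W$, the homotopy fibre over $W$ of the map $|w\Bimod*{\uAss,A}|\to|w\dgMod*{\kk}|$ (resp.\ of $|w\Bimod*{\Ass,A}|\to|w\dgMod*{\kk}|$) induced by the forgetful functor with $\Str{\uAss}{A}{W}$ (resp.\ with $\Str{\Ass}{A}{W}$).

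Next I would observe that the canonical operad morphism induces a commutative triangle of forgetful functors: the full inclusion $\Bimod*{\uAss,A}\hookrightarrow\Bimod*{\Ass,A}$ of \Cref{homotopy_monomorphism_bimodules}, composed with either path to $\dgMod*{\kk}$, sends a bimodule to its underlying DG vector space. Applying classification spaces and then taking homotopy fibres over $W$ turns this into a commutative triangle of spaces whose leg from the $\uAss$-fibre to the $\Ass$-fibre is, by the previous step, precisely the map $\Str{\uAss}{A}{W}\to\Str{\Ass}{A}{W}$ under consideration. By \Cref{homotopy_monomorphism_bimodules} the top map $|w\Bimod*{\uAss,A}|\to|w\Bimod*{\Ass,A}|$ is a homotopy monomorphism, and since the two composites to $|w\dgMod*{\kk}|$ agree, \cite[Lemma~6.3]{Mur16b} yields that the induced map on homotopy fibres over $W$ is again a homotopy monomorphism; this is the assertion.

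The one point requiring care is the application of \cite[Lemma~6.3]{Mur16b}. On homotopy groups in positive degrees the claim is a routine five-lemma comparison of the long exact sequences of the two fibrations, using that the map between total spaces induces isomorphisms on $\pi_n$ for $n\geq 1$ and an injection on $\pi_0$. The subtle part lies at the fringed ends of these sequences: one must check injectivity on $\pi_0$ of the fibres, equivalently that $\Str{\uAss}{A}{W}$ is, up to weak equivalence, a union of connected components of $\Str{\Ass}{A}{W}$. This is exactly the content packaged by \cite[Lemma~6.3]{Mur16b}, and I expect this bookkeeping---rather than any genuinely new input---to be the main obstacle.
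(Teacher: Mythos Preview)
Your argument is correct and follows the same route as the paper: identify $\Str{\uAss}{A}{W}$ and $\Str{\Ass}{A}{W}$ as homotopy fibres of the forgetful maps to $|w\dgMod*{\kk}|$ via \Cref{total_homotopy_fibre_of_classification_spaces} (using that $\uAss$ and $\Ass$ are excellent, \Cref{example_excellent_operads}), then apply \Cref{homotopy_monomorphism_bimodules} together with \cite[Lemma~6.3]{Mur16b} to pass the homotopy monomorphism to the fibres. The paper's proof is a one-line invocation of exactly these ingredients.
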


\begin{definition}
  We make the following definitions:
  \begin{enumerate}
  \item Let $A$ be an $\O_{\infty}$-algebra and $M$ and $N$ two
    $\O_{\infty}$-$A$-bimodules with the same underlying DG vector space $W$. A
    \emph{gauge
      $\O_{\infty}$-isomorphism} or \emph{$\O_{\infty}$-isomorphism with identity linear part} $M\leadsto N$ is a path between the
    corresponding points in $\Str{\O_{\infty}}{A}{W}$, that is a homotopy
    between the corresponding structure maps $\O_{\infty}\to\E{A,W}$ which,
    composed with $\E{A,W}\to\E{A}$ in \eqref{linear_endomorphism_operad}, is
    the trivial homotopy (=the identity gauge $\O_\infty$-isomor\-phism) on the
    $\O_\infty$-algebra structure map of $A$.
  \item The \emph{identity gauge $\O_{\infty}$-isomorphism} $M\leadsto M$ on a
    given $\O_{\infty}$-$A$-bimodule $M$ with underlying DG vector space $W$ is
    the constant path at the corresponding point in $\Str{\O_{\infty}}{A}{W}$.
  \end{enumerate}
  Composition of such gauge $\O_{\infty}$-isomorphisms is defined by
  concatenation of paths in $\Str{\O_{\infty}}{A}{W}$.
\end{definition}

By \Cref{minimal_models_quasi_isomorphic_algebras-bimodules}, quasi-isomorphic pairs of an $\O$-algebra
and a compatible $\O$-bimodule can be distinguished by their minimal models.

\begin{proposition}\label{minimal_models_quasi_isomorphic_bimodules}
  Let $\O$ be a DG operad and $A$ an $\O$-algebra. Suppose that $A$ is cofibrant
  or that $\O$ is excellent. Let $M$ be an $\O$-$A$-bimodule and $N$ an
  $\O$-$A$-bimodule. The following statements are equivalent:
  \begin{enumerate}
  \item\label{it:zig-zag_modules} The $\O$-$A$-bimodules $M$ and $N$
    quasi-isomorphic, that is weakly
    equivalent in $\Bimod*{\O,A}$.
  \item\label{it:gauge_isomorphism_modules} The $\O$-$A$-bimodules $M$ and $N$
    have gauge $\O_{\infty}$-isomorphic minimal models compatible with a given
    minimal model of $A$.
  \end{enumerate}
\end{proposition}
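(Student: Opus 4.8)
The plan is to mimic the proof of \Cref{minimal_models_quasi_isomorphic_algebras} almost verbatim, replacing the Rezk-type fibration sequence for algebras with the one for bimodules over a fixed algebra established in \Cref{total_homotopy_fibre_of_classification_spaces}. The key observation, recorded in \Cref{rmk:Str}, is that a minimal model of an $\O$-$A$-bimodule $M$ (compatible with a fixed minimal model $A'$ of $A$) is precisely a point in $\Str{\O_{\infty}}{A'}{\dgH{M}}$, and that under the running hypothesis ($A$ cofibrant or $\O$ excellent) this space is the homotopy fibre at $\dgH{M}$ of the map $|w\Bimod*{\O,A}|\to|w\dgMod*{\kk}|$ induced by the forgetful functor.

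For \eqref{it:zig-zag_modules}$\Rightarrow$\eqref{it:gauge_isomorphism_modules}, suppose $M$ and $N$ are quasi-isomorphic in $\Bimod*{\O,A}$. Since all objects in $\Bimod*{\O,A}$ are fibrant (\Cref{bimodules_model_category}), there exist weak equivalences $M\twoheadleftarrow P\rightarrow N$ with the first map a fibration. A cocycle selection map $\dgH{M}\to M$ is a cofibration in $\dgMod*{\kk}$, hence lifts along the fibration $P\twoheadrightarrow M$ to a quasi-isomorphism $\dgH{M}\to P$, which composes to a further quasi-isomorphism $\dgH{M}\to N$; all these maps are taken in $\dgMod*{\kk}$ after applying the forgetful functor. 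This exhibits a path in $|w\dgMod*{\kk}|$ joining the images of $M$ and $N$, together with lifts to $|w\Bimod*{\O,A}|$, and hence—via the homotopy-fibre identification—a path in $\Str{\O_{\infty}}{A'}{\dgH{M}}$ between the corresponding points, i.e.\ a gauge $\O_{\infty}$-isomorphism between the induced minimal models of $M$ and $N$. (One must also check that the underlying minimal model of $A$ is the fixed $A'$ throughout; this is automatic because the forgetful map to $\dgMod*{\kk}$ and the chosen cocycle selection map $\dgH{A}\to A$ are held fixed, so the basepoint of the fibre does not move.) Conversely, for \eqref{it:gauge_isomorphism_modules}$\Rightarrow$\eqref{it:zig-zag_modules}, a gauge $\O_{\infty}$-isomorphism between the minimal models is by definition a path in $\Str{\O_{\infty}}{A'}{\dgH{M}}$; pushing this path forward along the inclusion of the homotopy fibre into $|w\Bimod*{\O,A}|$ and noting that it becomes nullhomotopic in $|w\dgMod*{\kk}|$, we obtain (up to homotopy) a path in $|w\Bimod*{\O,A}|$ joining $M$ and $N$, so $M$ and $N$ are quasi-isomorphic as $\O$-$A$-bimodules.

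The main obstacle is purely bookkeeping rather than conceptual: one has to be careful that the fibre $\Str{\O_{\infty}}{A'}{W}$ is taken over the \emph{same} basepoint in $\Map{\O}{\E{A}}$ when comparing the minimal models of $M$ and $N$—that is, over a fixed minimal model of $A$—and that the lift of a cocycle selection map in the first implication can be chosen compatibly with this basepoint. This is where the hypothesis `compatible with a given minimal model of $A$' in \eqref{it:gauge_isomorphism_modules} is used, and it is the reason one invokes \Cref{total_homotopy_fibre_of_classification_spaces} (which already builds in the fibration over $\Map{\O}{\E{A}}$) rather than a naive fibre-sequence argument. Since the rest of the argument is a word-for-word transcription of the proof of \Cref{minimal_models_quasi_isomorphic_algebras} with $\Alg{\O}$ replaced by $\Bimod*{\O,A}$, I would present it briefly and refer back to that proof.
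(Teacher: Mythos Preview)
Your proposal is correct and follows essentially the same approach as the paper's own proof: lift a cocycle selection map through a span $M\twoheadleftarrow P\rightarrow N$ of weak equivalences in $\Bimod*{\O,A}$, use a fixed cocycle selection map $\dgH{A}\to A$ so that all induced minimal models of $A$ coincide, and invoke the identification of $\Str{\O_{\infty}}{A'}{\dgH{M}}$ with the homotopy fibre of $|w\Bimod*{\O,A}|\to|w\dgMod*{\kk}|$ at $\dgH{M}$. The only cosmetic difference is that you cite \Cref{total_homotopy_fibre_of_classification_spaces} directly, whereas the paper cites its ingredients \Cref{rezk_comparison,Grothendieck_fibration_homotopy_fibre}; and the paper spells out that one takes minimal models of all three pairs $(A,M)$, $(A,P)$, $(A,N)$ as points in $\Str{\O_{\infty}}{A'}{\dgH{M}}$ before reading off the paths, which you compress into one sentence.
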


\begin{proof}
  \eqref{it:zig-zag_modules}$\Rightarrow$\eqref{it:gauge_isomorphism_modules}.
  As in the proof of \Cref{minimal_models_quasi_isomorphic_algebras}, we can
  choose weak equivalences $M\twoheadleftarrow P\rightarrow N$ in
  $\Bimod*{\O,A}$, where the first map is also a fibration. Choose a cocycle
  selection map $\dgH{M}\to M$, lift it to a map $\dgH{M}\to P$ and then compose
  it with $P\to N$ in order to obtain a quasi-isomorphism $\dgH{M}\to N$:
  \[
    \begin{tikzcd}
      &P\dar[two heads]\rar&N\\
      \H{M}\rar[swap]{i}\urar[dotted]&M
    \end{tikzcd}
  \]
  We also choose a cocycle selection map $\dgH{A}\to A$. We take minimal models of $(A,M)$, $(A,P)$ and $(A,N)$ in the sense of \Cref{def:minimal_model_simultaneous} associated to these choices. They all have the same underlying minimal model of $A$ since we are taking just one cocycle selection map $\dgH{A}\to A$. If we denote this minimal model by $A'$, the minimal models of the three previous pairs are points in $\Str{\O_{\infty}}{A'}{\dgH{M}}$. By \Cref{rezk_comparison,Grothendieck_fibration_homotopy_fibre}, the homotopy fibre of the map
  \[|w\Bimod*{\O,A}|\longrightarrow|w\dgMod*{\kk}|\]
  induced by the forgetful functor at $\dgH{M}$ is
  $\Str{\O_{\infty}}{A'}{\dgH{M}}$. Therefore, the quasi-isomorphisms $M\twoheadleftarrow P\rightarrow N$ in $\Bimod*{\O,A}$ define paths between those points in $\Str{\O_{\infty}}{A'}{\dgH{M}}$ and \eqref{it:gauge_isomorphism_modules} holds by definition.

  \eqref{it:gauge_isomorphism_modules}$\Rightarrow$\eqref{it:zig-zag_modules}.
  There is an equality $\dgH{M}=\dgH{N}$ and we have cocycle selection maps
  ${\dgH{M}\to M}$, ${\dgH{M}\to N}$ and ${\dgH{A}\to A}$ such that, if $A'$
  denotes the underlying minimal model of $A$, the corresponding points in
  $\Str{\O_{\infty}}{A'}{\dgH{M}}$ are connected by a path. Therefore, the
  homotopy fibre computation in the previous paragraph shows that $M$ and $N$
  are also connected by a path in $|w\Bimod*{\O,A}|$, that is they are
  quasi-isomorphic as $\O$-$A$-bimodules.
\end{proof}


\section{Cohomology of graded operads with multiplication}\label{sec:operads_cohomology}

In this section we recall the construction of the Gerstenhaber algebra associated to a
graded operad with multiplication. We also describe the additional structure
that can be extracted from this construction in the presence of an operadic
ideal. Finally, we apply this construction to the (shifted) linear endomorphism operad
associated to a pair consisting of a graded algebra and a compatible graded
bimodule in order to obtain a variant of Hochschild cohomology for
algebra-bimodule pairs.

\subsection{Operadic suspension}\label{subsec:associative_operad}

The \emph{operadic suspension} is an automorphism of the category of DG operads
\[\os\colon\Operads\longrightarrow\Operads\]
whose action on objects is given by
\begin{equation}\label{operadic_suspension}
	\os\O<n>\coloneqq\shift{\O<n>}[1-n],
\end{equation}
where
\[\shift{}\colon\dgMod*{\kk}\longrightarrow\dgMod*{\kk}\]
is the usual \emph{suspension} or \emph{shift} automorphism of the category of
DG vector spaces:
\begin{equation}\label{dg_shift}
	\shift{V}[1]^n=V^{n+1},\quad n\in\ZZ;\qquad d_{\shift{V}}=-d_V.
\end{equation}
As usual, the $n$-fold iteration of $\shift{}$ is denoted by $\shift{}[n]$,
$n\in\ZZ$. The operad structure of $\os\O$ coincides with that of $\O$ twisted
by signs. We adopt the sign conventions in~\cite[Definition~2.4]{Mur16}. Abusing notation, given an operadic ideal $\I\subset\O$, we will also denote by $\os\I\subset\os\O$ the corresponding operadic ideal defined arity-wise by the same formula as above.

\begin{remark}
  Endomorphism DG operads establish a connection between both suspensions,
  \begin{equation}\label{shifts_endomorphism}
    \os\E{V}\cong\E{\shift{V}},
  \end{equation}
  see \cite[Remark 2.5]{Mur16}. By means of the canonical
  inclusion~\eqref{linear_endomorphism_inclusion}, this isomorphism restricts to
  linear endomorphism operads,
  \begin{equation}\label{shifts_linear_endomorphism}
    \os\E{V,W}\cong\E{\shift{V},\shift{W}}.
  \end{equation}
\end{remark}

\subsection{Brace operations}\label{sec:brace_algebras}

We start by recalling the definition of the brace operations associated to a
graded operad~\cite{Kad88,Get93,GV95}.

\begin{definition}\label{operad_complex}
  The \emph{operad (cochain) complex} $\OC{\O}$ of a graded operad $\O$ is given
  by
  \begin{equation*}
    \OC[p][q]{\O}\coloneqq\O<p>^{p+q-1}, \qquad p\geq 0,\quad q\in\ZZ.
  \end{equation*}
\end{definition}

\begin{remark}
  The complex introduced in \Cref{operad_complex} is a plain bigraded vector
  space concentrated on the right half-plane. We need extra structure on the
  operad (namely a multiplication) in order to endow it with a differential.
  Nevertheless, in order to avoid awkward language, we shall abuse the
  terminology.
\end{remark}

\begin{notation}
  Let $\O$ be a graded operad. If $x\in \OC[p][q]{\O}$, its \emph{horizontal
    degree} is $|x|_h\coloneqq p$, its \emph{vertical degree} is $|x|_v\coloneqq q$, and
  its \emph{total degree} is
  \[
    |x|\coloneqq|x|_h+|x|_v= p+q;
  \]
  its
  \emph{shifted total degree} is $|x|-1$, and its \emph{bidegree} is $(p,q)$.
\end{notation}

\begin{example}
  \label{ex:OCosEV}
  Let $\O$ be a graded operad. Then,
  \[
    \OC[p][q]{\os\O}=\os\O<p>^{p+q-1}=\O<p>[1-p]^{p+q-1}=\O<p>^q, \qquad p\geq
    0,\quad q\in\ZZ.
  \]
  In particular, for a graded vector space $V$ and its endomorphism
  operad $\O=\E{V}$ we have
  \[
    \OC[p][q]{\os\E{V}}=\E{V}<p>^q=\hom{V^{\otimes p}}{V}[q], \qquad p\geq
    0,\quad q\in\ZZ.
  \]
  The perhaps unexpected bigrading in \Cref{operad_complex} is chosen with this
  example in mind, since our preferred sign conventions for the Hochschild
  cochain complex of a graded algebra arise from the sign involved
  in the definition of the infinitesimal compositions of the suspended endomorphism
  operad $\os\E{V}$.
\end{example}

\begin{definition}
  Let $\O$ be a graded operad. The \emph{brace operations} or \emph{braces} are
  the operations
  \begin{align*}
    \OC[p_0][q_0]{\O}\otimes\OC[p_1][q_1]{\O}\otimes\cdots\otimes \OC[p_n][q_n]{\O} & \longrightarrow \OC[p_0+p_1+\cdots+p_n-n][q_0+q_1+\cdots+q_n]{\O}, \\
    x_0\otimes x_1\otimes\cdots\otimes x_n & \longmapsto x_0\{x_1,\dots,x_n\},
  \end{align*}
  defined by
  \[x_0\{x_1,\dots,x_n\}\coloneqq\hspace{-20pt}\sum_{1\leq i_1<\cdots<i_n\leq
      p_0}\hspace{-20pt}(\cdots((x_0\circ_{i_1}x_1)\circ_{i_2+p_1-1}x_2)\cdots)\circ_{i_n+p_1+\cdots+p_{n-1}-(n-1)}x_n,\]
  and $x_0\{x_1,\dots,x_n\}=0$ if $n>p_0$. The brace operations satisfy the \emph{brace
    relation}
  \begin{multline*}
    x\{y_1,\dots,y_p\}\{z_1,\dots,z_q\}\\
    =\textstyle\sum_{0\leq i_1\leq j_1\leq\cdots\leq i_p\leq j_p\leq q}(-1)^\maltese x\{z_1,\dots,z_{i_1},y_1\{z_{i_1+1},\dots,z_{j_1}\},\dots\\
    \dots,y_p\{z_{i_p+1},\dots,z_{j_p}\}, z_{j_p+1},\dots,z_q\},
  \end{multline*}
  where the sign is given by the Koszul sign rule for the shifted total degree:
  \[\textstyle\maltese=\sum_{s=1}^p\sum_{t=1}^{i_s}(|y_s|-1)(|z_t|-1).\]
  The resulting structure on the operad complex $\OC{\O}$ is called a
  \emph{brace algebra}.
\end{definition}

The brace operations on the operad complex encode several perhaps more familiar
algebraic structures, as we now recall.

\begin{definition}
  The \emph{Gerstenhaber Lie bracket} is defined by
  \begin{align*}
    [-,-]\colon\OC[p][q]{\O}\otimes\OC[s][t]{\O}&\longrightarrow\OC[p+s-1][q+t]{\O}\\
    x\otimes y&\longmapsto x\{y\}-(-1)^{(p+q-1)(s+t-1)}y\{x\}.
  \end{align*}
  The brace relations imply that Gerstenhaber Lie bracket is graded
  anti-commutative and satisfies the graded Jacobi identity, both with respect
  to the shifted total degree:
  \begin{align*}
    [x,y]&=-(-1)^{(|x|-1)(|y|-1)}[y,x]\\
    [x,[y,z]]&=[[x,y],z]+(-1)^{(|x|-1)(|y|-1)}[y,[x,z]].
  \end{align*}
  Consequently $\shift{\OC{\O}}$ is a graded Lie algebra.
  In $\cchar{\kk} = 2$ we actually have
  \[[x,x]=0,\]
  and in $\cchar{\kk} = 3$
  \[[x,[x,x]]=0\]
  if $|x|$ is even.
\end{definition}

\begin{definition}
  Let $\O$ be a graded operad and $\I\subset\O$ an operadic ideal. We define the
  \emph{operadic ideal complex} $\IC{\I}$ as\footnote{Notice the shift in arity
    and degree with respect to the definition of $\OC{\O}$.}
  \begin{equation*}
    \IC[p][q]{\I}=\I<p+1>^{p+q},\qquad p\geq 0,\quad q\in\ZZ.
  \end{equation*}
  Hence,
  \[
    \IC[p][q]{\O}\subset\OC[p+1][q]{\O}\qquad\text{and}\qquad\IC{\I}\subset\shift{\OC{\O}},
  \]
  where the shift denotes the shift in the horizontal direction. Moreover,
  \[
    x_0\{x_1,\dots,x_n\}\in\IC{\I}
  \]
  as long as $x_i\in\IC{\I}$ for some $0\leq i\leq n$. The resulting structure
  on the operadic ideal complex $\IC{\I}$ is called a \emph{brace ideal}.
\end{definition}

\begin{remark}
  Let $\O$ be a graded operad and $\I\subset\O$ an operadic ideal. We observe
  that $\IC{\I}\subset\shift{\OC{\O}}$ is a graded Lie ideal and that there is a
  canonical identification
  \[
    \shift{\OC{\O}}/\IC{\I}=\shift{\OC{\O/\I}}.
  \]
\end{remark}

\begin{definition}
  \label{def:associative_operadic_ideal}
  An operadic ideal $\I\subset\O$ is \emph{associative} if
  \[
    x_0\{x_1,\dots,x_n\}=0
  \]
  whenever $x_i,x_j\in\OC{\I}$ for some $1\leq i<j\leq n$ (notice that the lower
  bound is $1$, not $0$).
\end{definition}

\begin{example}
  Given graded vector spaces $V$ and $W$, the kernel $\E*{V,W}$ of the
  projection $\E{V,W}\twoheadrightarrow\E{V}$ in
  \eqref{linear_endomorphism_operad} is an associative operadic ideal. Indeed,
  \begin{equation}\label{aritywise_linear_endomorphism_ideal}
    \E*{V,W}<n>=\bigoplus_{p+1+q=n}\hom{V^{\otimes p}\otimes W\otimes V^{\otimes q}}{W}.
  \end{equation}
  Therefore, given $x\in\E{V}$, $y\in \hom{V^{\otimes p}\otimes W\otimes
    V^{\otimes q}}{W}$, and $z\in\E*{V,W}$, we have
  \begin{align*}
    x\circ_j y & =0,\;\forall j; &
                                   y\circ_{p+1} x &=0;&
                                                    y\circ_jz & =0,\;\forall j\neq p+1; &
                                                                                        y\{z\}&=y\circ_{p+1}z.
  \end{align*}
\end{example}

\begin{lemma}\label{brace_associative}
  Let $\O$ be a graded operad and $\I\subset\O$ an associative operadic ideal.
  The following statements hold:
  \begin{enumerate}
  \item\label{it:circle_product-ass} The \emph{circle product}
    \[
      \IC[p][q]{\I}\otimes\IC[s][t]{\I}\longrightarrow\IC[p+s][q+t]{\I},\qquad
      x\circ y\coloneqq x\{y\}
    \]
    is associative; that is, given $x,y,z\in\IC{\I}$,
    \[(x\circ y)\circ z=x\circ(y\circ z).\]
  \item\label{circle_product-ass-consequence} Endowed with the circle product,
    $\IC{\I}$ is a (non-unital) graded associative algebra, and the Gerstenhaber
    Lie bracket on $\IC{\I}$ is the commutator of the circle product.
  \item\label{circle_product-ass-Gerstenhaber_rel} Given $a\in\OC{\O}$ and
    $x,y\in\IC{\I}$, the following \emph{Gerstenhaber relation} holds:
    \[[a,x\circ y]=[a,x]\circ y+(-1)^{(|a|-1)|x|}x\circ[a,y].\]
  \end{enumerate}
\end{lemma}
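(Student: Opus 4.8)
The plan is to deduce all three statements from a single instance of the brace relation---the expansion of a doubly nested brace, which is the case $p=q=1$---together with \Cref{def:associative_operadic_ideal}. Concretely, for homogeneous $a,b,c\in\OC{\O}$ the brace relation specialises to
\[
  a\{b\}\{c\}=a\{b,c\}+a\{b\{c\}\}+(-1)^{(|b|-1)(|c|-1)}a\{c,b\},
\]
the three summands corresponding to $(i_1,j_1)\in\{(0,0),(0,1),(1,1)\}$ and the sign being the only nontrivial value of $(-1)^{\maltese}$. Applying this with $a=x$, $b=y$, $c=z$, all of which lie in $\OC{\I}$ since they lie in $\IC{\I}$, proves \eqref{it:circle_product-ass}: in $x\{y,z\}$ the arguments $y$ and $z$ occupy slots $1$ and $2$ and both belong to $\OC{\I}$, so \Cref{def:associative_operadic_ideal} forces $x\{y,z\}=0$, and likewise $x\{z,y\}=0$; hence $x\{y\}\{z\}=x\{y\{z\}\}$, i.e.\ $(x\circ y)\circ z=x\circ(y\circ z)$.

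For \eqref{circle_product-ass-consequence}, the circle product is a well-defined bigraded operation $\IC{\I}\otimes\IC{\I}\to\IC{\I}$ by the brace-ideal discussion preceding the lemma, and it is associative by \eqref{it:circle_product-ass}; thus $(\IC{\I},\circ)$ is a non-unital graded associative algebra. The relevant grading is the total degree $|x|=p+q$ of $x\in\IC[p][q]{\I}$ in the $\IC$-bigrading, and the key bookkeeping remark is that this $|x|$ coincides with the shifted total degree of $x$ viewed as an element of $\OC[p+1][q]{\O}$, because of the shift built into the definition of $\IC{\I}$. Substituting this remark into the definition of the Gerstenhaber bracket immediately yields $[x,y]=x\{y\}-(-1)^{|x|\,|y|}y\{x\}=x\circ y-(-1)^{|x|\,|y|}y\circ x$, the graded commutator.

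For the Gerstenhaber relation \eqref{circle_product-ass-Gerstenhaber_rel} I would expand each of $[a,x\circ y]$, $[a,x]\circ y$ and $x\circ[a,y]$ using the definition of $[-,-]$ and the $p=q=1$ brace relation above. The only place \Cref{def:associative_operadic_ideal} enters is the collapse $(a\{x\})\{y\}=a\{x\{y\}\}$, which holds because the two correction terms $a\{x,y\}$ and $a\{y,x\}$ vanish ($x$ and $y$ occupy slots $1$ and $2$ and lie in $\OC{\I}$). After this collapse the identity reduces to matching the Koszul signs for shifted total degrees on the two sides; equivalently, it records that $[a,-]$ acts as a degree $|a|-1$ derivation of $(\IC{\I},\circ)$. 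This sign bookkeeping is the only real obstacle: one must stay consistent about whether $|x|$ is read in $\OC{\O}$ (where $x$ has horizontal degree one larger) or in $\IC{\I}$, and track the instances of $(-1)^{\maltese}$ and of the anti-commutativity sign in $[-,-]$ carefully. Everything else reduces to applying these two inputs a bounded number of times.
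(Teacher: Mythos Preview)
Your approach is correct and essentially identical to the paper's: both use only the $p=q=1$ instance of the brace relation together with the associative-ideal axiom, and both make the same degree-bookkeeping remark that the total degree in $\IC{\I}$ equals the shifted total degree in $\OC{\O}$.

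One small clarification for part \eqref{circle_product-ass-Gerstenhaber_rel}: your description says that after the collapse $a\{x\}\{y\}=a\{x\{y\}\}$ the identity ``reduces to matching Koszul signs''. In the paper's computation one must also expand $x\{y\}\{a\}$ and $x\{a\}\{y\}$ via the brace relation, and these expansions produce terms $x\{y,a\}$ and $x\{a,y\}$ which are \emph{not} killed by \Cref{def:associative_operadic_ideal} (since $a$ need not lie in $\OC{\I}$); rather, they cancel between the two sides once the signs are tracked correctly. This is consistent with your outline---it is indeed sign bookkeeping that makes the cancellation work---but it is worth noting explicitly that there are four such surviving ``cross terms'' to pair off, not merely a single sign to verify.
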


\begin{proof}
  Throughout the proof we use the total degree in $\IC{\I}$, which is equal to
  the shifted total degree in $\OC{\O}$.

  \eqref{it:circle_product-ass} and \eqref{circle_product-ass-consequence} By
  the brace relation,
  \begin{align*}
    x\{y\}\{z\}=x\{y,z\}+x\{y\{z\}\}+(-1)^{|y||z|}x\{z,y\}.
  \end{align*}
  By the associativity hypothesis $x\{y,z\}=x\{z,y\}=0$, therefore
  \[(x\circ y)=x\{y\}\{z\}=x\{y\{z\}\}=x\circ(y\circ z).\]

  \eqref{circle_product-ass-Gerstenhaber_rel} By definition of the Gerstenhaber
  Lie bracket,
  \begin{align*}
    [a,x\circ y] & = a\{x\{y\}\}-(-1)^{(|a|-1)(|x|+|y|)}x\{y\}\{a\}, \\
    [a,x]\circ y & = a\{x\}\{y\}-(-1)^{(|a|-1)|x|}x\{a\}\{y\},     \\
    x\circ [a,y] & = x\{a\{y\}\}-(-1)^{(|a|-1)|y|}x\{y\{a\}\}.
  \end{align*}
  Using the brace relation and the associativity hypothesis we obtain:
  \begin{align*}
    x\{y\}\{a\} & = x\{y,a\}+x\{y\{a\}\}+(-1)^{(|a|-1)|y|}x\{a,y\}, \\
    a\{x\}\{y\} & = a\{x,y\}+a\{x\{y\}\}+(-1)^{|x||y|}a\{y,x\} = a\{x\{y\}\}, \\
    x\{a\}\{y\} & = x\{a,y\}+x\{a\{y\}\}+(-1)^{(|a|-1)|y|}x\{y,a\}.
  \end{align*}
  The Gerstenhaber relation in the statement now follows easily.
\end{proof}

\subsection{Graded operads with multiplication}\label{sec:operads_with_multiplication}

We recall
the construction of the Gerstenhaber algebra associated to
an operad with multiplication. We adopt
the sign conventions in \cite[Section~1]{Mur20}.

\begin{definition}[{\cite[Section~1.2]{GV95}}]
  \label{operad_multiplication}
  Let $\O$ be a graded operad. An \emph{(associative) multiplication} is an
  element $m_2\in\O<2>^{1}$ that satisfies
  \[m_2\{m_2\}=0.\] This is equivalent to the datum of an operad morphism
  $\os\Ass\to\O$, where $\Ass$ is the associative operad.
\end{definition}

\begin{example}
  \label{multiplication_endomorphism_operad}
  If $A$ is a graded algebra, that is an $\Ass$-algebra, the
  suspended endomorphism operad $\os\E{A}$ has a multiplication
  \[m_2^A\in\os\E{A}<2>^1=\shift{\E{A}<2>}[-1]^1=\E{A}(2)^0=\hom{A\otimes A}{A}[0]\]
  given by the binary product in $A$:
  \[m_2^A\colon A\otimes A\longrightarrow A,\qquad m_2^A(x\otimes y)\coloneqq xy.\]
  This multiplication corresponds to the operadic suspension $\os\Ass\to\os\E{A}$ of the $\Ass$-algebra structure map $\Ass\to\E{A}$.
\end{example}

\begin{example}
  \label{multiplication_linear_endomorphism_operad}
  Let $A$ be a graded algebra and $M$ a graded $A$-bimodule. The suspended linear endomorphism operad $\os\E{A,M}$ has a multiplication
  \begin{align*}
    m_2^A+m_{0,1}^{M}+m_{1,0}^{M}&\in\os\E{A,M}<2>^1\\&=\shift{\E{A,M}<2>}[-1]^1\\&=\E{A,M}<2>^0\\
    &=\hom{A\otimes A}{A}[0]\oplus \hom{M\otimes A}{M}[0]\oplus \hom{A\otimes M}{M}[0],
  \end{align*}
  where
  \[m_{0,1}^{M}\colon M\otimes A\longrightarrow M,\qquad
  m_{1,0}^{M}\colon A\otimes M\longrightarrow M,\]
  are the right and left $A$-module structure maps of $M$, respectively.
  This multiplication corresponds to the operadic suspension $\os\Ass\to\os\E{A,M}$ of the $\Ass$-$A$-bimodule structure map $\Ass\to\E{A,M}$.
\end{example}

\begin{definition}[{\cite[Section~1.2, Proposition~2(3)]{GV95}}]
  \label{multiplication_differential_cup-product}
  Let $\O$ be a graded operad with multiplication $m_2\in\O<2>^1$.
  \begin{enumerate}
  \item 
    The
    \emph{differential} $d$ on the operad complex $\OC{\O}$ is given by 
    \[d\colon \OC[p][q]{\O}\longrightarrow \OC[p+1][q]{\O},\qquad
      d(x)\coloneqq[m_2,x].\] This differential has bidegree $(1,0)$ and,
    consequently, the cohomology
    \[
      \OH{\O}\coloneqq\dgH*{\OC{\O}}
    \]
    of the operad complex $\OC{\O}=(\OC{\O},d)$ is bigraded.
  \item The \emph{product} or \emph{cup product} is given by
    \[\OC[p][q]{\O}\otimes \OC[s][t]{\O}\stackrel{\cdot}{\longrightarrow}\OC[p+s][q+t]{\O},\qquad x\cdot y\coloneqq(-1)^{p+q-1}m_2\{x,y\}.\]
  \end{enumerate}
  Endowed with the above structure, the operadic complex $\OC{\O}$ is a
  (non-unital) DG associative algebra. Moreover, $\shift{\OC{\O}}$ is a DG Lie
  algebra with the Gerstenhaber Lie bracket.
\end{definition}

If $\O$ is a graded operad with multiplication, with respect to the total
degree, the cohomology $\OH{\O}$ is a
graded-commutative (non-unital) associative algebra with the induced product
\begin{align*}
\cdot\;\colon\OH[p][q]{\O}\otimes\OH[s][t]{\O}&\longrightarrow\OH[p+s][q+t]{\O},
\end{align*}
that is
\begin{align*}
  (x\cdot y)\cdot z & =x\cdot(y\cdot z), \\
  x\cdot y &= (-1)^{|x||y|}y\cdot x.
\end{align*}
Moreover, $\OH{\O}$ is a shifted graded Lie algebra with the induced
Gerstenhaber Lie bracket
\begin{align*}
[-,-]\colon\OH[p][q]{\O}\otimes\OH[s][t]{\O}&\longrightarrow\OH[p+s-1][q+t]{\O},
\end{align*}
that is,
\begin{align*}
  [x,y] &= -(-1)^{(|x|-1)(|y|-1)}[y,x], \\
    [x,[y,z]] &= [[x,y],z]+(-1)^{(|x|-1)(|y|-1)}[y,[x,z]],\\
  [x,x] & =0,& \cchar{\kk}=2, \\
  [x,[x,x]] & =0,& \cchar{\kk}=3\text{ and } |x|\text{ even}.
\end{align*}
Finally, the associative product is compatible with the Gerstenhaber Lie
bracket in the sense that the \emph{Gerstenhaber relation} is satisfied:
\begin{align*}
  [x,y\cdot z] & =[x,y]\cdot z+(-1)^{(|x|-1)|y|}y\cdot [x,z].
\end{align*}

\begin{definition}\label{def:Gerstenhaber_square}
  Let $\O$ be a graded operad with multiplication and $\cchar{\kk} = 2$. The quadratic map
  \[\Sq\colon\OC[p][q]{\O}\longrightarrow\OC[2p-1][2q]{\O},\qquad\Sq[x]=x\{x\},\]
  induces a quadratic map in cohomology called \emph{Gerstenhaber square},
  \[\Sq\colon\OH[p][q]{\O}\longrightarrow\OH[2p-1][2q]{\O}.\]
  This map satisfies the following relations:
  \begin{align*}
    \Sq[x+y]&=\Sq[x]+\Sq[y]+[x,y],\\
    \Sq[x\cdot y]&=\Sq[x]\cdot y^2+x\cdot[x,y]\cdot y+x^2\cdot\Sq[y],\\
    [\Sq[x],y]&=[x,[x,y]].
  \end{align*}
  In $\cchar{\kk}\neq 2$ we define the Gerstenhaber square in even total degree as
  \[\Sq\colon\OH[p][q]{\O}\longrightarrow\OH[2p-1][2q]{\O},\qquad\Sq[x]=\tfrac{1}{2}[x,x],\qquad p+q\text{ even}.\]
  It satisfies the previous three equations whenever they make sense. Both definitions agree at the cochain level.
\end{definition}

\begin{remark}
  Let $\O$ be a graded operad with multiplication. The structure on $\OH{\O}$
  described above is known as \emph{Gerstenhaber algebra}.
\end{remark}

\begin{definition}
  \label{def:op-Ext}
  Let $\O$ be a graded operad with multiplication and $\I\subset\O$ an operadic
  ideal. The operadic ideal complex $\shift{\IC{\I}}[-1]\subset\OC{\O}$ is an
  associative DG ideal (with respect to the DG algebra structure from
  \Cref{multiplication_differential_cup-product}). We denote its cohomology by
  \[\IH{\I}\coloneqq\dgH*{\IC{\I}}.\]
  The graded operad $\O/\I$ inherits the multiplication from $\O$ and, moreover,
  \[
    \OC{\O}/(\shift{\IC{\I}}[-1])=\OC{\O/\I}
  \]
  as DG associative algebras and
  \[
    \shift{\OC{\O}}/\IC{\I}=\shift{\OC{\O/\I}}
  \]
  as DG Lie algebras.
\end{definition}

\begin{remark}
If $\O$ is an operad with multiplication and $\I\subset\O$ is an operadic ideal, the short exact sequence of complexes
\begin{equation}
  \label{operad_multiplication_exact_sequence}
  \shift{\IC{\I}}[-1]\stackrel{i}\hookrightarrow\OC{\O}\stackrel{p}\twoheadrightarrow\OC{\O/\I}
\end{equation}
induces long exact sequences of vector spaces ($q\in\ZZ$)
\begin{equation}
  \label{operad_multiplication_long_exact_sequence}
  \cdots\to\IH[p-1][q]{\I}\xrightarrow{i_*}\OH[p][q]{\O}\xrightarrow{p_*}\OH[p][q]{\O/\I}\xrightarrow{\delta}\IH[p][q]{\I}\to\cdots
\end{equation}
Moreover, the map $p_*\colon\OH{\O}\to\OH{\O/\I}$ is a morphism of Gerstenhaber
algebras since $p$ is induced by the DG operad map $\O\to\O/\I$ that maps the multiplication in $\O$ to that in $\O/\I$.
\end{remark}

We now describe the algebraic structure on the cohomology of the operadic ideal complex $\IH{\I}$
when $\I\subset\O$ is an associative operadic ideal in the graded operad with multiplication $\O$.
Recall that if $A$ is a graded algebra and $M$ is an
$A$-bimodule, then $\shift{M}$ is also an $A$-bimodule with left $A$-module
structure twisted by a sign. More precisely, given $m\in M$ and $a,b\in A$, the
following formula relates the $A$-bimodule structure of $\shift{M}$ on the left
to the $A$-bimodule structure of $M$ on the right:
\[a\cdot \s m\cdot b\coloneqq(-1)^{|a|}\s( a\cdot m\cdot b),\qquad m\in M,\quad
  a,b\in A,\]
where $\s$ is the suspension operator of (cohomological) degree $|\s|=-1$, i.e.~$\s\colon M\to\shift{M}$ is a natural degree $-1$ morphism.

\begin{lemma}\label{associative_operadic_ideal_cohomology}
  Let $\O$ be a graded operad with multiplication and $\I\subset\O$ an associative operadic ideal.
  The following statements hold:
  \begin{enumerate}
    \item\label{square-zero} The DG associative ideal
      $\shift{\IC{\I}}[-1]\subset\OC{\O}$ is square-zero,
      \[
        x\cdot y=0,\qquad x,y\in\shift{\IC{\I}}[-1].
      \]
    \item\label{bimodule_over_quotient} The shifted operadic ideal complex
      $\shift{\IC{\I}}[-1]$ is an associative DG bimodule over the DG algebra $\OC{\O/\I}$.
    \item The cohomology $\shift{\IH{\I}}[-1]$ of $\shift{\IC{\I}}[-1]$ is an
      associative bimodule over the graded algebra $\OH{\O/\I}$, and hence so is
      the cohomology $\IH{\I}$.
    \item\label{bimodule_morphism} The map
      $i_*\colon\shift{\IH{\I}}[-1]\to\OH{\O}$ induced by the canonical
      inclusion $i\colon\shift{\IC{\I}}[-1]\hookrightarrow\OC{\O}$ is a morphism of $\OH{\O}$-bimodules.
    \item\label{Lie_bimodule} The cohomology $\IH{\I}$ is a graded Lie bimodule
      over the graded Lie algebra $\shift{\OH{\O}}$ and $i_*\colon\IH{\I}\to\shift{\OH{\O}}$ is a morphism of graded Lie $\shift{\OH{\O}}$-modules.
    \item\label{Lie_crossed_module} Given $x,y\in\IH{\I}$, \[[i_*(x),y]=x\circ y-(-1)^{|x||y|}y\circ x=[x,i_*(y)].\]
    \item\label{circle_square} If $\cchar{\kk}=2$ and $x\in\IH{\I}$, then \[i_*(x\circ x)=\Sq[i_*(x)].\]
    \item\label{Lie_and_associative_bimodule_interaction} Given $x\in\OH{\O}$, $y\in\OH{\O/\I}$ and $z\in\IH{\I}$, \begin{align*}[x,y\cdot z]&=[p_*(x),y]\cdot z+(-1)^{(|x|-1)|y|}y\cdot[x,z]\\ [x,z\cdot y]&=[x,z]\cdot y+(-1)^{(|x|-1)(|z|-1)}z\cdot[p_*(x),y].\end{align*}
    \item\label{circle_bimodule} Given $x,y\in\IH{\I}$ and $a\in\OH{\O/\I}$, \begin{align*}
      (a\cdot x)\circ y & = a\cdot(x\circ y),                   &
      (x\cdot a)\circ y & = (-1)^{|a||y|}(x\circ y)\cdot a, \\
      x\circ(a\cdot y)  & = (-1)^{|x||a|}a\cdot(x\circ y),  &
      x\circ (y\cdot a) & = (x\circ y)\cdot a.
    \end{align*}
  \end{enumerate}
\end{lemma}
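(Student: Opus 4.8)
The plan is to push every assertion down to the operad complex $\OC{\O}$, using the identification of $\shift{\IC{\I}}[-1]$ with the sub-bigraded vector space $\OC{\I}\subset\OC{\O}$, and then pass to cohomology where required. The three tools I would use repeatedly are: the cup-product formula $u\cdot v=(-1)^{|u|-1}m_2\{u,v\}$; the brace relation, together with the fact that $m_2$ is binary (so any brace with three arguments of $m_2$ vanishes); and the defining property of an associative operadic ideal, namely that $x_0\{x_1,\dots,x_n\}=0$ as soon as two of the \emph{inner} arguments $x_1,\dots,x_n$ lie in $\OC{\I}$. For \eqref{square-zero}: if $x,y\in\OC{\I}$ then $m_2\{x,y\}=0$ since both inner arguments lie in $\OC{\I}$, hence $x\cdot y=0$. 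Part \eqref{bimodule_over_quotient} is then formal: a square-zero DG ideal in a DG algebra is an associative DG bimodule over the quotient, and here the quotient is $\OC{\O}/(\shift{\IC{\I}}[-1])=\OC{\O/\I}$ by \Cref{def:op-Ext}. Taking cohomology of that DG bimodule yields (3) for $\shift{\IH{\I}}[-1]$, hence for $\IH{\I}$ after the horizontal unshift, which only twists the left action by the sign recalled just before the lemma.

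For \eqref{bimodule_morphism} and \eqref{Lie_bimodule} I would observe that the inclusion $i$ is simultaneously a morphism of DG $\OC{\O}$-bimodules (whose source action factors through $p\colon\OC{\O}\to\OC{\O/\I}$ by \eqref{bimodule_over_quotient}) and a morphism of DG Lie $\shift{\OC{\O}}$-modules for the adjoint action, the latter because $\IC{\I}\subset\shift{\OC{\O}}$ is a DG Lie ideal (recorded in the remark preceding \Cref{def:associative_operadic_ideal}); passing to cohomology gives both statements. Parts \eqref{Lie_crossed_module} and \eqref{circle_square} are cochain-level identities in disguise: the Lie action of $i_*(x)$ on $y$ (resp.\ of $x$ on $i_*(y)$) is by definition the Gerstenhaber bracket computed in $\shift{\OC{\O}}$, which restricts to the Gerstenhaber bracket of $\IC{\I}$ and therefore, by \Cref{brace_associative}\eqref{circle_product-ass-consequence}, equals the commutator $x\circ y-(-1)^{|x||y|}y\circ x$ of the circle product; and $i(x\circ x)=x\{x\}=\Sq[i(x)]$ at the cochain level, with $\Sq$ in cohomology induced from the cochain level.

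For \eqref{Lie_and_associative_bimodule_interaction} I would represent $y\in\OH{\O/\I}$ by a cocycle, lift it to $\tilde y\in\OC{\O}$ (so $y\cdot z=\tilde y\cdot z$ in $\OC{\O}$, independently of the lift by \eqref{square-zero}), apply the Gerstenhaber relation of the DG algebra $\OC{\O}$ from \Cref{multiplication_differential_cup-product}, and then use that $p(\tilde y)=y$, that $p([x,\tilde y])=[p(x),y]$ since $p$ is a morphism of DG Lie algebras, and that a product in $\OC{\O}$ of an element of $\OC{\I}$ with a general element depends only on the image of the latter in $\OC{\O/\I}$ (again by \eqref{square-zero}); the apparent discrepancy in the exponent of $z$ disappears because the $\IH{\I}$-degree of $z$ is one less than its $\OC{\O}$-degree. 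For \eqref{circle_bimodule} I would prove the first two identities at the cochain level by expanding $m_2\{a,x\}\{y\}$ and $m_2\{x,a\}\{y\}$ via the brace relation: every term with three arguments of $m_2$ vanishes, and of the two surviving terms exactly one has two inner arguments in $\OC{\I}$ and so vanishes by associativity of $\I$, leaving a single term equal, up to the Koszul sign for the shifted total degree, to $a\cdot(x\circ y)$, respectively $\pm(x\circ y)\cdot a$.

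The remaining two identities of \eqref{circle_bimodule} do not hold on the nose — the $a$-inputs end up in different slots — so here I would argue in cohomology. Expanding $[i_*(x),a\cdot y]$ and $[i_*(x),y\cdot a]$ in two ways, first via \eqref{Lie_crossed_module} as commutators of the circle product and second via \eqref{Lie_and_associative_bimodule_interaction} through the module structure, and using that $p_*\circ i_*=0$ by the long exact sequence \eqref{operad_multiplication_long_exact_sequence} together with the first two identities of \eqref{circle_bimodule}, the commutator terms cancel and one is left with $x\circ(a\cdot y)=(-1)^{|x||a|}a\cdot(x\circ y)$ and $x\circ(y\cdot a)=(x\circ y)\cdot a$. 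I expect the brace bookkeeping in \eqref{circle_bimodule} — in particular pinning down the Koszul signs $(-1)^{|a||y|}$ and $(-1)^{|x||a|}$ while keeping track of the several horizontal and internal shifts — to be the only genuinely delicate point; everything else is a formal consequence of the square-zero and ideal structure or a routine transfer of a cochain identity to cohomology.
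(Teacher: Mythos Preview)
Your argument is correct and, for items \eqref{square-zero}--\eqref{circle_square} and the first two identities of \eqref{circle_bimodule}, follows the same route as the paper: square-zero is immediate from the associative-ideal axiom, \eqref{bimodule_over_quotient}--\eqref{Lie_bimodule} are formal, \eqref{Lie_crossed_module}--\eqref{circle_square} are cochain-level tautologies, and the first two circle-bimodule identities come from expanding $(a\cdot x)\{y\}$ and $(x\cdot a)\{y\}$ via the brace relation and killing the cross term because it is a cup product of two elements of the ideal. One small imprecision: for \eqref{Lie_and_associative_bimodule_interaction} you invoke ``the Gerstenhaber relation of the DG algebra $\OC{\O}$'', but that relation only holds in cohomology (or at the cochain level up to an explicit coboundary). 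Your argument still goes through, but you need the cochain-level identity up to boundary and the observation that changing the lift $\tilde y$ by an element of $\IC{\I}$ does not affect any of the products appearing, by \eqref{square-zero}; this is exactly what the paper means when it cites the ``same cochain-level equations'' from \cite[p.~71]{Mur20}.

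Where you genuinely diverge from the paper is in the last two identities of \eqref{circle_bimodule}. The paper proves these via two further cochain-level formulas, expanding $d(x\{a,y\})$ and $d(x\{y,a\})$ and reading off the identities in cohomology after discarding terms that vanish by the square-zero property. Your approach instead bootstraps from what is already established: expand $[i_*(x),a\cdot y]$ once via \eqref{Lie_crossed_module} as a commutator of the circle product, once via \eqref{Lie_and_associative_bimodule_interaction}, use $p_*i_*=0$ from the long exact sequence, and apply the first two identities of \eqref{circle_bimodule} to cancel the commutator terms. This works and is arguably cleaner---it avoids a second round of brace bookkeeping---at the cost of making \eqref{circle_bimodule} logically depend on \eqref{Lie_and_associative_bimodule_interaction}. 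The paper's route keeps \eqref{circle_bimodule} self-contained at the cochain level but requires writing out two more brace expansions.
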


\begin{proof}
  The square zero property in \eqref{square-zero} is a direct consequence of the definition of associative operadic ideal, and \eqref{bimodule_over_quotient}--\eqref{bimodule_morphism} follow immediately.

  Item \eqref{Lie_bimodule} is obvious and \eqref{Lie_crossed_module} holds at the cochain level by the very definition of the Gerstenhaber bracket and the circle product, and similarly \eqref{circle_square} with the Gerstenhaber square.

  The Gerstenhaber-like relations in \eqref{Lie_and_associative_bimodule_interaction} follows from the same cochain-level equations in $\HCE{A}{M}$ as the original, see \cite[p.~71]{Mur20}.

  Concerning \eqref{circle_bimodule}, the four equations in cohomology are consequences of the following cochain-level formulas, derived from \cite[Lemmas~1.4.1 and~1.4.2]{JKM24}, compare \cite[Theorem~3]{GV95}. Given $a\in \OC{\O}$ and $x,y\in \IC{\I}$,
  \begin{align*}
    (a\cdot x)\{y\}    & = a\cdot (x\{y\})+(-1)^{(|x|+1)|y|}a\{y\}\cdot x,                                                                \\
(x\cdot a)\{y\}    & = x\cdot a\{y\}+(-1)^{|a||y|}x\{y\}\cdot a,                                                 \\
    d(x\{a,y\}) & =d(x)\{a,y\}+(-1)^{|x|}x\{d(a),y\}+(-1)^{|x|+|a|-1}x\{a,d(y)\} \\
                       & \phantom{=}+(-1)^{|x|+(|x|+1)|a|}a\cdot x\{y\}
    +(-1)^{|x|+|a|-1}x\{a\cdot y\}\\
    & \phantom{=}+(-1)^{|x|+|a|}x\{a\}\cdot y,                                                                          \\
    d(x\{y,a\}) & =d(x)\{y,a\}+(-1)^{|x|}x\{d(y),a\}+(-1)^{|x|+|y|}x\{y,d(a)\} \\
                       & \phantom{=}
                       +(-1)^{|x|+(|x|+1)(|y|+1)}y\cdot x\{a\}+(-1)^{|x|+|y|}x\{y\cdot a\}\\
                       & \phantom{=}+(-1)^{|x|+|y|-1}x\{y\}\cdot a.
  \end{align*}
  We also use the brace ideal property and square-zero property in \eqref{square-zero} to derive that 
  \begin{align*}
    a\{y\}\cdot x&=0,&
    x\cdot a\{y\}&=0,&
    x\{a\}\cdot y&=0,&
    y\cdot x\{a\}&=0.\qedhere
  \end{align*}
\end{proof}

\begin{remark}
  We warn the reader that, despite the graded associative algebra $\OH{\O/\I}$ being
  graded-commutative, the graded $\OH{\O/\I}$-bimodule $\IH{\I}$ need not be
  graded-symmetric.
\end{remark}

\subsection{Hochschild cohomology and related variants}\label{sec:hochschild}

We specialise the discussions in previous sections to shifted endomorphism
operads of graded algebras and to shifted linear endomorphism operads of pairs
consisting of a graded algebra and a compatible graded bimodule. In the first
case we recover the classical Hochschild cohomology of graded algebras and in
the second case its variant for pairs discussed in the introduction.

Let $A$ be a graded algebra. Recall that the \emph{bar complex} $\B{A}$ is the
free resolution of the diagonal $A$-bimodule $A$ with the components
\[\B[n]{A}\coloneqq A\otimes A^{\otimes n}\otimes A,\qquad n\geq 0,\]
and differential
\[d\colon\B[n]{A}\longrightarrow\B[n-1]{A},\quad d(a_0\otimes\cdots\otimes
  a_{n+1})\coloneqq\sum_{i=0}^n(-1)^ia_0\otimes \cdots\otimes a_i
  a_{i+1}\otimes\cdots\otimes a_{n+1}.\] The
augmentation
\[
  \varepsilon\colon\B[0]{A}=A\otimes A\longrightarrow A,\qquad a_0\otimes
  a_1\longmapsto a_0a_1
\]
is the multiplication map of $A$.

\begin{definition}\label{cohomologies}
  Let $A$ be a graded algebra, the \emph{Hochschild (cochain) complex} and the
  \emph{Hochschild cohomology} of $A$ are defined as
    \[\HC{A}\coloneqq\OC{\os\E{A}}\cong\hom[A^e]{\B{A}}{A}\]
    and
    \[
      \HH{A}\coloneqq\OH{\os\E{A}}\cong \Ext{A^e}{A}{A},
    \]
    respectively (compare with
    \Cref{ex:OCosEV,multiplication_endomorphism_operad}).
\end{definition}

\begin{remark}
The cochain-level isomorphism in \Cref{cohomologies} (which implies the
isomorphisms at the cohomological level) is the composite
\begin{align*}
    \OC[p][q]{\os\E{A}}&=\os\E{A}<p>^{p+q-1}\\
    &=\shift{\E{A}<p>}[1-p]^{p+q-1}\\ 
    &= \E{A}<p>^q\\ 
    &= \hom[\kk]{A^{\otimes p}}{A}[q]\\
    &\cong \hom[A^e]{A\otimes A^{\otimes p}\otimes A}{A}[q]\\
    &= \hom[A^e]{\B[p]{A}}{A}[q],
\end{align*}
where some of the identifications must involve some signs (depending on the
bidegree) in order to match with the sign conventions in the differential of the
operad complex and that of the bar complex. We do not elaborate more on this
as the precise formulas do not play a role in the sequel.
\end{remark}

\begin{definition}
  \label{def:bimodule_cohomologies}
  Let $A$ be a graded algebra and $M$ a graded $A$-bimodule. The \emph{bimodule
    Hochschild (cochain) complex} and the \emph{bimodule Hochschild cohomology} of the
  pair $(A,M)$ are defined as 
  \[\HCE{A}{M}\coloneqq\OC{\os\E{A,M}}\]
  and
  \[
    \HHE{A}{M}\coloneqq\OH{\os\E{A,M}},
  \]
  respectively, compare with
  \Cref{ex:OCosEV,multiplication_linear_endomorphism_operad}. We also define the
  \emph{bimodule complex} of $M$ to be
    \[\BC{A^e}{M}\coloneqq\IC{\os\E*{A,M}}[\os\E{A,M}]\cong\hom[A^e]{\B{A}\otimes_AM\otimes_A\B{A}}{M}.\]
    Hence, its cohomology is
    \[\BH{A^e}{M}\coloneq\H[\bullet,*]{\BC{A^e}{M}}\cong\Ext{A^e}{M}{M}.\]
\end{definition}

\begin{remark}
  In the setting of \Cref{def:bimodule_cohomologies}, we have
\begin{align*}
    \IC[p][q]{\os\E*{A,M}}&=\os\E*{A,M}<p+1>^{p+q}\\
    &=\shift{\E*{A,M}<p+1>}[-p]^{p+q}\\
    &= \E*{A,M}<p+1>^q\\
    &= \bigoplus_{s+t=p}\hom[\kk]{A^{\otimes s}\otimes M\otimes A^{\otimes t}}{M}[q]\\
    &\cong \bigoplus_{s+t=p} \hom[A^e]{A\otimes A^{\otimes s}\otimes A\otimes_A M\otimes_A\otimes A\otimes A^{\otimes t}\otimes A}{M}[q]\\
    &=\bigoplus_{s+t=p} \hom[A^e]{\B[s]{A}\otimes_AM\otimes_A\B[t]{A}}{M}[q],
\end{align*}
we must also introduce signs in some of the identifications in order to
match the signs of the differentials.
\end{remark}

\begin{remark}
  Let $A$ be a graded algebra and $M$ a graded $A$-bimodule. The results in
  \Cref{sec:brace_algebras,sec:operads_with_multiplication} show the Hochschild
  cohomology $\HH{A}$ and the bimodule Hochschild cohomology $\HHE{A}{M}$ are
  Gerstenhaber algebras. The former Gerstenhaber algebra is well known, and the
  commutative product corresponds to the Yoneda product in $\Ext{A^e}{A}{A}$.
  The latter Gerstenhaber algebra is new, we believe. The circle product in
  $\Ext{A^e}{M}{M}$ also corresponds to the Yoneda product. The graded
  $\HH{A}$-bimodule structure is new, we also think, as well as the graded Lie
  $\shift{\HHE{A}{M}}$-module structure. As in
  \eqref{operad_multiplication_long_exact_sequence}, there are long exact
  sequences of vector spaces ($q\in\ZZ$)
\begin{equation}\label{long_exact_sequence_Hochschild}
    \cdots\rightarrow \Ext[p-1][q]{A^e}{M}{M}\xrightarrow{i_*} \HHE[p][q]{A}{M}\xrightarrow{p_*} \HH[p][q]{A}\xrightarrow{\delta} \Ext[p][q]{A^e}{M}{M}\rightarrow\cdots
\end{equation}
that satisfy several compatibilities with respect to the various algebraic
structures; for example, $p_*$ is a morphism of Gerstenhaber algebras. However,
in this particular setting, the three cohomologies and the long exact sequence
have special features that we now explore. For instance, the three associative
algebras are unital. The unit of $\HH{A}$ and $\HHE{A}{M}$ are induced by the algebra unit
\[
  \id[A]\in A=\hom[\kk]{\kk}{A}[0]=\HC[0][0]{A}\subset\HCE[0][0]{A}{M}.
\]
The unit of $\Ext{A^e}{M}{M}$ is induced by the identity morphism
\[
  \id*[M]\in\hom[A^e]{M}{M}[0]=\BC[0][0]{A^e}{M}.
\]
The morphism $p_*$ preserves units, but the morphism $i_*$ does not.
\end{remark}

Our next task is to compute the connecting morphism $\delta$ in the long exact
sequence \eqref{long_exact_sequence_Hochschild}. We recall the following
standard definition.

\begin{definition}\label{def:square_zero_extension}
  Let $B$ be a DG algebra, $N$ a DG $B$-bimodule and $x\in N^0$ a cocycle,
  $d_N(x)=0$. We define the DG algebra $B\ltimes_x \shift{N}[-1]$ as the homotopy
  fibre (=mapping cocone) of the cochain map
  \[B\longrightarrow N,\qquad b\longmapsto x\cdot b-b\cdot x;\] explicitly, 
  \[(B\ltimes_x \shift{N}[-1])^n\coloneqq B^n\oplus  N^{n-1}\] and, for $b\in B$
  and $n\in N$,
  \[
    d(b+\s[-1]n)\coloneqq d_B(b)-\s[-1](d_N(n)+x\cdot b-b\cdot x);
  \]
  here we use the desuspension operator $\s[-1]$ of (cohomological) degree $|\s[-1]|=1$ for the
  sake of clarity, i.e.~$\s[-1]\colon N\mapsto\shift{N}[-1]$ is natural degree $1$ isomorphism. The product is the usual square-zero extension product,
  \[(b+\s[-1]n)(b'+\s[-1]n')\coloneqq b\cdot b'+b\cdot \s[-1]n'+\s[-1]n\cdot b'.\] In this
  last formula we use the induced $B$-bimodule structure on $\shift{N}[-1]$. We
  leave the reader to check that this defines a DG algebra.
\end{definition}



\begin{proposition}\label{connecting_morphism}
  If $A$ is a graded algebra and $M$ is a graded $A$-bimodule, the bimodule
  Hochschild complex is, as a DG algebra,
  \[\HCE{A}{M}=\HC{A}\ltimes_{\id*[M]}\shift{\BC{A^e}{M}}[-1],\]
  for the degree $0$ cocyle $\id*[M]\in\BC{A^e}{M}$.
  In particular, the morphism $\delta$ in
  \eqref{long_exact_sequence_Hochschild} is given by
  \[\delta\colon\HH{A}\longrightarrow\Ext{A^e}{M}{M},\qquad a\longmapsto
    \id*[M]\cdot a-a\cdot\id*[M].\]
\end{proposition}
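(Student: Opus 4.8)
The plan is to specialise the general machinery of \Cref{sec:operads_cohomology} to the graded operad with multiplication $\O=\os\E{A,M}$, with multiplication $m=m_2^A+m_{0,1}^M+m_{1,0}^M$ (\Cref{multiplication_linear_endomorphism_operad}), and to its associative operadic ideal $\I=\os\E*{A,M}$, for which $\O/\I\cong\os\E{A}$. Thus $\OC{\O}=\HCE{A}{M}$, $\OC{\O/\I}=\HC{A}$, and $\shift{\BC{A^e}{M}}[-1]=\shift{\IC{\I}}[-1]$ is the square-zero DG ideal in the short exact sequence~\eqref{operad_multiplication_exact_sequence}, so that as a bigraded vector space $\HCE[n][r]{A}{M}=\HC[n][r]{A}\oplus\BC[n-1][r]{A^e}{M}$ via the arity-wise splitting $\os\E{A,M}<n>=\os\E{A}<n>\oplus\os\E*{A,M}<n>$ of~\eqref{aritywise_linear_endomorphism_operad}--\eqref{aritywise_linear_endomorphism_ideal}. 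First I would observe that this splitting exhibits $\HC{A}=\OC{\os\E{A}}$ as a graded subalgebra of $\HCE{A}{M}$ complementing the ideal: in the cup product $x\cdot y=(-1)^{|x|-1}m\{x,y\}$, each nonzero term of $m_{0,1}^M\{x,y\}$ or $m_{1,0}^M\{x,y\}$ would require plugging an $A$-valued operation into an $M$-input, so $m\{x,y\}=m_2^A\{x,y\}$ whenever $x,y$ lie in the $\os\E{A}$-summand, and this is the cup product of $\HC{A}$. Since the ideal $\shift{\BC{A^e}{M}}[-1]$ is square-zero (\Cref{associative_operadic_ideal_cohomology}\eqref{square-zero}), this identifies $\HCE{A}{M}$, as a graded algebra, with the square-zero extension $\HC{A}\ltimes\shift{\BC{A^e}{M}}[-1]$.

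Next I would compute the differential $d=[m,-]$ under this identification. On the ideal summand it is, by the definition of the differential on $\BC{A^e}{M}=\IC{\os\E*{A,M}}$ (\Cref{def:op-Ext}, transported through the desuspension), equal to $-\s[-1]d_{\BC{A^e}{M}}$, matching the \Cref{def:square_zero_extension} formula on elements $\s[-1]z$. On the summand $\HC{A}$ I would split $[m,x]=[m_2^A,x]+[m_{0,1}^M+m_{1,0}^M,x]$: because $\os\E{A}\subset\os\E{A,M}$ is a suboperad and $m_2^A\in\os\E{A}<2>$, the first bracket is computed inside $\os\E{A}$ and equals the Hochschild differential $d_{\HC{A}}(x)$; the second bracket lies in the ideal (brace ideal property), and the term $x\{m_{0,1}^M+m_{1,0}^M\}$ vanishes by the same type argument as above, so $[m_{0,1}^M+m_{1,0}^M,x]=(m_{0,1}^M+m_{1,0}^M)\{x\}$. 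It then remains to identify $(m_{0,1}^M+m_{1,0}^M)\{x\}$ with $-\s[-1](\id*[M]\cdot x-x\cdot\id*[M])$, where the products are those of the $\HC{A}$-bimodule structure on $\BC{A^e}{M}$ of \Cref{associative_operadic_ideal_cohomology}\eqref{bimodule_over_quotient}: expanding that structure as cup products with $m$ and using that $\id*[M]$ is the strict unit of $\BC{A^e}{M}$, so that composition with it is the identity, one checks that $x\cdot\id*[M]$ and $\id*[M]\cdot x$ are respectively the left-action summand $m_{1,0}^M\{x\}$ and the right-action summand $m_{0,1}^M\{x\}$ of $(m_{0,1}^M+m_{1,0}^M)\{x\}$, up to the signs fixed by our conventions. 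Assembling the two summands yields exactly the mapping-cocone differential $d(x+\s[-1]z)=d_{\HC{A}}(x)-\s[-1](d_{\BC{A^e}{M}}(z)+\id*[M]\cdot x-x\cdot\id*[M])$ of \Cref{def:square_zero_extension} for the cocycle $\id*[M]$ (a cocycle since it is a strict unit), which together with the graded-algebra computation above proves the displayed DG algebra identity.

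Finally, the formula for $\delta$ follows from the standard description of the connecting homomorphism of~\eqref{operad_multiplication_exact_sequence}: one lifts $a\in\HH{A}$ to a cocycle of $\HC{A}\subset\HCE{A}{M}$, applies $d=[m,-]$, and reads off the component in the ideal, which by the previous paragraph is $-\s[-1](\id*[M]\cdot a-a\cdot\id*[M])$; hence $\delta(a)=\id*[M]\cdot a-a\cdot\id*[M]$. Equivalently, the cochain map $\HC{A}\to\BC{A^e}{M}$, $c\mapsto\id*[M]\cdot c-c\cdot\id*[M]$, is by construction the one whose homotopy fibre is $\HCE{A}{M}$. I expect the main obstacle to be the last step of the differential computation --- matching $(m_{0,1}^M+m_{1,0}^M)\{x\}$ with $\id*[M]\cdot x-x\cdot\id*[M]$ on the nose, signs included. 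This forces one to unwind the $\HC{A}$-bimodule structure on $\BC{A^e}{M}$ (defined via cup products with the full multiplication $m$) and to reconcile it with the auxiliary signs introduced in the identifications of \Cref{def:bimodule_cohomologies} and of the desuspension; everything else reduces to tracking which brace terms vanish for type reasons and invoking results already established in \Cref{sec:operads_cohomology}.
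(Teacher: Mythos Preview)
Your approach is essentially the paper's own: split along the arity-wise decomposition, identify the cup product as the square-zero extension product, and compute the differential of an element of $\HC{A}$ by decomposing the bracket with the multiplication. The paper's proof carries out exactly these steps.

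One point worth noting: where you propose to ``expand the $\HC{A}$-bimodule structure on $\BC{A^e}{M}$ as cup products with the full multiplication $m$'' and warn that matching $(m_{0,1}^M+m_{1,0}^M)\{\alpha\}$ with $\id_M\cdot\alpha-\alpha\cdot\id_M$ on the nose will be the delicate step, the paper has a cleaner device. It rewrites the $1$-brace $m_{0,1}^M\{\alpha\}$ as the $2$-brace $m_{0,1}^M\{\s[-1]\id_M,\alpha\}$ (inserting $\id_M$ in the $M$-slot is harmless), and the latter is \emph{by definition} the cup product $\s[-1]\id_M\cdot\alpha$ since the other summands of the multiplication contribute zero for type reasons. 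Similarly $m_{1,0}^M\{\alpha\}=m_{1,0}^M\{\alpha,\s[-1]\id_M\}=(-1)^{|\alpha|-1}\alpha\cdot\s[-1]\id_M$. This avoids having to unwind the bimodule structure of \Cref{associative_operadic_ideal_cohomology}\eqref{bimodule_over_quotient} and makes the sign bookkeeping mechanical.
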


\begin{proof}
  The second part is obviously a consequence of the first part, so let us
  concentrate in the latter. The equality holds bidegree-wise by the very
  definition of the linear endomorphism operad in
  \eqref{aritywise_linear_endomorphism_operad},
  \[\HCE[p][q]{A}{M}=\HC[p][q]{A}\ltimes_{\id*[M]}\BC[p-1][q]{A^e}{M}.\]
  The product in $\HCE{A}{M}$ is the square-zero extension product by the form
  of the multiplication in $\os\E{A,M}$, see
  \Cref{multiplication_linear_endomorphism_operad}. Moreover, the canonical inclusion
  \[
    \shift{\BC{A^e}{M}}[-1]\subset\HCE{A}{M}
  \]
  is the inclusion of a subcomplex. Hence, it is only left to check the formula
  for the differential of $\alpha\in\HC{A}$ in $\HCE{A}{M}$. Keeping in mind the
  above inclusion, we have that
    \begin{align*}
        [m_2^A+m_{0,1}^{M}+m_{1,0}^{M},\alpha]&=[m_2^A,\alpha]+[m_{0,1}^{M},\alpha]+[m_{1,0}^{M},\alpha],\\
        [m_{0,1}^{M},\alpha]&=m_{0,1}^{M}\{\alpha\}=m_{0,1}^{M}\{\s[-1]\id*[M],\alpha\}=\s[-1]\id*[M]\cdot\alpha,\\
        [m_{1,0}^{M},\alpha]&=m_{1,0}^{M}\{\alpha\}=m_{1,0}^{M}\{\alpha,\s[-1]\id*[M]\}=(-1)^{|\alpha|-1}\alpha\cdot\s[-1]\id*[M].
    \end{align*}
    Consequently,
    \[d_{\HCE{A}{M}}(\alpha)=[m_2^A+m_{0,1}^{M}+m_{1,0}^{M},\alpha]=d_{\HC{A}}(\alpha)+\s[-1](\id*[M]\cdot
      \alpha-\alpha\cdot\id*[M])\]
    since
    \[
      \alpha\cdot\s[-1]\id*[M]=(-1)^{|\alpha|}\s[-1](\alpha\cdot\id*[M]).
    \]
    This finishes the proof.
\end{proof}

It follows form \Cref{connecting_morphism} that the connecting homomorphism
$\delta$ vanishes when the $\HH{A}$-bimodule $\Ext{A^e}{M}{M}$ is graded-symmetric,
that is the left $\HH{A}$-module structure coincides with the right
$\HH{A}$-module structure up the Koszul sign. We now prove the converse.

\begin{corollary}
  \label{cor:symmetric_bimodule}
    Given a graded algebra $A$ and a graded $A$-bimodule $M$, the
    $\HH{A}$-bimodule $\Ext{A^e}{M}{M}$ is graded-symmetric if and only if for
    all $\alpha\in\HH{A}$ we have
    \[\alpha\cdot\id*[M]=\id*[M]\cdot \alpha\in\Ext{A^e}{M}{M}.\]
\end{corollary}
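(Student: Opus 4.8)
The plan is to deduce the statement from \Cref{connecting_morphism} together with the compatibility identities between the Yoneda product and the bimodule action collected in \Cref{associative_operadic_ideal_cohomology}\eqref{circle_bimodule}, applied to the associative operadic ideal $\os\E*{A,M}\subset\os\E{A,M}$. Under this specialisation the ideal cohomology is $\Ext{A^e}{M}{M}$, the quotient cohomology $\OH{\os\E{A}}$ is $\HH{A}$, the circle product on $\Ext{A^e}{M}{M}$ is the Yoneda product (which is unital with two-sided unit $\id*[M]$), and the $\HH{A}$-bimodule structure on $\Ext{A^e}{M}{M}$ is the one appearing in the long exact sequence \eqref{long_exact_sequence_Hochschild}. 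I would also record at the outset that, by \Cref{connecting_morphism}, the displayed condition $\alpha\cdot\id*[M]=\id*[M]\cdot\alpha$ for all $\alpha\in\HH{A}$ is precisely the vanishing of the connecting morphism $\delta$.

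For the ``only if'' direction I would simply evaluate the graded-symmetry relation on the element $\id*[M]$, which has total degree $0$; since $(-1)^{|\alpha|\cdot 0}=1$ the relation collapses to $\alpha\cdot\id*[M]=\id*[M]\cdot\alpha$ for every $\alpha\in\HH{A}$.

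For the ``if'' direction, I would fix $\alpha\in\HH{A}$ and $x\in\Ext{A^e}{M}{M}$ and write $x=x\circ\id*[M]$. Applying the identity $x\circ(y\cdot a)=(x\circ y)\cdot a$ from \Cref{associative_operadic_ideal_cohomology}\eqref{circle_bimodule} with $y=\id*[M]$ and $a=\alpha$ gives $x\cdot\alpha=x\circ(\id*[M]\cdot\alpha)$, while the identity $x\circ(a\cdot y)=(-1)^{|x||a|}a\cdot(x\circ y)$ with $a=\alpha$ and $y=\id*[M]$ gives $x\circ(\alpha\cdot\id*[M])=(-1)^{|\alpha||x|}\alpha\cdot x$. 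Inserting the hypothesis $\id*[M]\cdot\alpha=\alpha\cdot\id*[M]$ between these two computations yields $x\cdot\alpha=x\circ(\id*[M]\cdot\alpha)=x\circ(\alpha\cdot\id*[M])=(-1)^{|\alpha||x|}\alpha\cdot x$, which is exactly graded-symmetry of the $\HH{A}$-bimodule $\Ext{A^e}{M}{M}$.

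The only thing to watch is bookkeeping rather than substance: one must keep the two multiplications distinct---the cup product, which supplies the $\HH{A}$-bimodule action on $\Ext{A^e}{M}{M}$, versus the internal Yoneda (circle) product---invoke the correct instance of \eqref{circle_bimodule} at each step, and check that the single Koszul sign that intervenes (in $x\circ(a\cdot y)=(-1)^{|x||a|}a\cdot(x\circ y)$) combines as claimed, which it does precisely because $|\id*[M]|=0$. I do not anticipate any deeper obstacle.
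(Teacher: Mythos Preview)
Your proof is correct and follows essentially the same approach as the paper: both directions use the identities from \Cref{associative_operadic_ideal_cohomology}\eqref{circle_bimodule} together with the fact that $\id*[M]$ is a two-sided unit for the circle product, and your chain of equalities is the same as the paper's up to starting from $x\cdot\alpha$ rather than $\alpha\cdot x$. Your explicit verification of the ``only if'' direction by evaluating at $\id*[M]$ is exactly what the paper dismisses as obvious.
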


\begin{proof}
  The `only if' part is obvious. Given $x\in\Ext{A^e}{M}{M}$,
  \begin{align*}
    \alpha\cdot x & = \alpha\cdot (x\circ\id*[M])             \\
             & = (-1)^{|\alpha||x|}x\circ(\alpha\cdot \id*[M]) \\
             & = (-1)^{|\alpha||x|}x\circ(\id*[M]\cdot \alpha) \\
             & = (-1)^{|\alpha||x|}(x\circ\id*[M])\cdot \alpha \\
             & = (-1)^{|\alpha||x|}x\cdot \alpha,
  \end{align*}
  where in the second and fourth equalities we use the identities in
  \Cref{associative_operadic_ideal_cohomology}\eqref{circle_bimodule} and
  the hypothesis for the converse is used in the third equality.
\end{proof}

\begin{remark}
  Explicit cochain-level formulas for the operations on the Hochschild cochain
  complex and its analogue for algebra-bimodule pairs are given
  in~\cite[Sections~3.6 and~4.1]{JM25}, which have a deliberate overlap with the
  contents of this subsection.
\end{remark}


\section{Obstruction theory for $\A$-algebras and
  $\A$-bimodules}\label{sec:obstructions}

In this section we leverage results from the second-named author's~\cite{Mur20}
in order to establish an obstruction theory for the extension of DG operad
morphisms $\A[k]\to\O$ with source the DG operad of $\A[k]$-algebras and target
a given graded operad. When $\O=\E{A}$ is the graded endomorphism operad of a
graded algebra $A$, we recover the obstruction theory for extending minimal
$\A[k]$-algebra structures with underlying graded algebra $A$
considered by the second-named author in~\cite{Mur20}. When $\O=\E{A,M}$ is the
linear endomorphism operad of a graded algebra $A$ and a graded $A$-bimodule, we
obtain an obstruction theory for \emph{simultaneously} extending compatible
minimal $\A[k]$-algebra-bimodule structures on the algebra $A$ and the bimodule $M$; passing to fibres as
in \Cref{space_of_bimodule_structures}, we obtain a further obstruction theory
for extending a minimal $\A[k]$-bimodule structure on $M$ over (the truncation
of) a \emph{fixed} minimal $\A$-algebra structure on $A$.

\subsection{Graded operads}\label{subsec:operad_obstruction_theory}

The DG operad $\A[k]$ is such that a morphism ${f\colon\A[k]\to\O}$ to a graded
operad $\O$ is equivalent to the data of a sequence of homogeneous operations
\[m^f_n\in\OC[n][2-n]{\os\O}=\O<n>^{2-n},\qquad 2\leq n\leq k,\] that satisfy
the following equations in the brace algebra $\OC{\os\O}$:
\begin{equation}\label{A_infinity_equations}
	\sum_{p+q=n+2}m^f_p\{m^f_q\}=0,\qquad 2\leq n\leq k-1.
\end{equation}

\begin{remark}
  \label{rmk:Ak-equations}
  Let $\O$ be a graded operad and $f\colon\A[k]\to\O$ be a morphism of DG
  operads. We make the following observations:
  \begin{enumerate}
  \item The last element in the corresponding sequence of operations, $m^f_k$,
    does not appear in any of the equations~\eqref{A_infinity_equations}. In
    particular,
    \[
      m_k^f\in\OC[k][2-k]{\os\O}=\O<k>^{2-k}
    \]
    is an arbitrary operation.
  \item If $k\geq 3$ and $n=2$, equation \eqref{A_infinity_equations}
    simplifies to
    \[m^f_2\{m^f_2\}=0,\] hence $\os\O$ is an operad with multiplication
    \[
      m^f_2\in\OC[2][0]{\os\O}=\O<2>^0
    \]
    and $\HC{\os\O}$ is a (cochain) complex with the bidegree $(1,0)$ differential
    \[
      d\colon\OC{\os\O}\longrightarrow\OC[\bullet+1]{\os\O},\qquad x\longmapsto [m_2^f,x],
    \]
    see~\Cref{multiplication_differential_cup-product}.
  \item If $k\geq 4$ and $n=3$, equation \eqref{A_infinity_equations} reads
    \[m^f_2\{m^f_3\}+m^f_3\{m^f_2\}=[m^f_2,m^f_3]=d(m_3^f)=0.\] Therefore
    the operation $m^f_3\in\OC[3][-1]{\os\O}$ is a cocycle (in general, this is
    the only operation that is a cocycle).
  \end{enumerate}
\end{remark}

The observations in \Cref{rmk:Ak-equations} motivate the following definition.

\begin{definition}
  \label{def:UMP-operad}
  Let $\O$ be a graded operad and $f\colon\A[k]\to\O$ be a morphism of DG
  operads, with $k\geq 4$. The \emph{universal Massey product of $f$} is the
  cohomology class
  \[\Hclass{m^f_3}\in\OH[3][-1]{\os\O}.\]
\end{definition}

\begin{remark}
  \label{rmk:UMP-gauge_invariant}
  The universal Massey product introduced in \Cref{def:UMP-operad} is a homotopy
  invariant. More precisely, given two DG operad morphisms ${f,g\colon\A[k]\to\O}$, $k\geq 3$, their restrictions to $\A[2]$ are
  homotopic if and only if they induce the same multiplication $m_2^f=m_2^g$ on
  $\os\O$, in which case $f$ and $g$ induce the same differential on
  $\OC{\os\O}$ and therefore define the same cohomology $\OH{\os\O}$. Moreover,
  if $k\geq 4$ and the restrictions of $f$ and $g$ to $\A[3]$ are homotopic,
  then $f$ and $g$ have the same universal Massey product:
  \[
    \Hclass{m^f_3}=\Hclass{m^g_3}\in\HC[3][-1]{\os\O}.
  \]
\end{remark}

Let $\O$ be a graded operad with multiplication. In the following statement, we
denote by $\OZ{\os\O}\subset\OC{\os\O}$ the bigraded vector space of cocycles in
the operad complex. Given a cocycle $x\in\OZ{\os\O}$, we denote its cohomology
class by ${\Hclass{x}\in\OH{\os\O}}$.

\begin{theorem}
	\label{operad_obstruction_theory}
	Let $\O$ be a graded operad equipped with a morphism of DG operads
  $f\colon\A[k+2]\to\O$ for some $\infty\geq k\geq 0$. The following statements hold:
	\begin{enumerate}
		\item\label{truncated_ss} 	There is a spectral sequence $\BK$
      with pages defined in the range $1\leq r\leq \lfloor\frac{k+3}{2}\rfloor$
      (hence truncated if $k<\infty$)
      and bidegree $(r,r-1)$ differentials
      \[
        \ssd{r}\colon\BK[r][s][t]\longrightarrow\BK[r][s+r][t+r-1]
      \]
      defined up to the second-to-last page (see \Cref{diferentials_ss,ss_cohomology} for the
      source bidegrees in which the differentials are defined).
		\item For each defined page of the spectral sequence, the terms $\BK[r][s][t]$ are defined in the right half-plane, except if $t<s<2r-3$.
		\item The terms $\BK[r][s][t]$ are vector spaces, with the following exceptions:
		\begin{enumerate}
		\item The term $\BK[r][s][t]$ is a pointed set for $0\leq s=t\leq r-2$.
		\item The term $\BK[r][s][t]$ is an abelian group in either of
      the following two ranges:
		\begin{enumerate}
		\item $r-1\leq s=t\leq 2r-4$.
		\item $1\leq s+1=t\leq r-2$.
		\end{enumerate}
		\end{enumerate}
		\item\label{diferentials_ss} The differentials \[\ssd{r}\colon\BK[r][s][t]\longrightarrow\BK[r][s+r][t+r-1]\] are defined on the right half plane, except for $s=t\leq r-1$. If the source and the target are vector spaces, then this differential is a vector space homomorphism, except if $r\geq 2$ and $s+1=t=r-1$. Otherwise, it is an abelian group homomorphism. 
		\item\label{ss_cohomology} The term $\BK[r+1][s][t]$ is the cohomology of
      the differential of the previous page, as long as the incoming and the
      outgoing differentials are defined on the source term $\BK[r][s][t]$. By
      convention, for $s<r$ and $t>s$ the incoming differential is ${0\to\BK[r][s][t]}$.
		\item\label{Bousfield-Kan} When $k=\infty$, that is $f\colon\A\to\O$, all
      pages of the spectral sequence are defined, and it coincides for $t\geq s$
      with the Bousfield--Kan fringed spectral sequence of the tower of
      fibrations \[\cdots\twoheadrightarrow\Map{\A[n+1]}{\O}\twoheadrightarrow
        \Map{\A[n]}{\O}\twoheadrightarrow\cdots\twoheadrightarrow\Map{\A[2]}{\O},\]
      based at the corresponding restrictions of $f$, see~\cite[Section~IX.4]{BK72}.
		\item\label{lower-diagonal} For $0\leq s\leq \min\{r-1,k-r+1\}$, the term
      $\BK[r][s][s]$ is the pointed set of homotopy classes of maps
      $g\colon\A[s+2]\to\O$ that can be extended to $\A[s+r+1]$ and whose restriction to
      $\A[s+1]$ is homotopic to the restriction of $f\colon\A[k+2]\to\O$:
      \[
        \begin{tikzcd}
          \A[s+1]\dar[hook]\rar[hook]\ar[phantom]{dr}[description]{\Rightarrow}&\A[s+2]\dar{g}\rar[hook]&\A[s+r+1]\dlar[dotted]\\
          \A[k+2]\rar[swap]{f}&\O
        \end{tikzcd}
      \]
      The base point is the restriction of $f$ to $\A[s+2]$. This pointed set is actually an abelian group for $s=r-1$ but we cannot describe the sum with this characterisation.
		\item\label{obstructions} For each $\lceil\frac{k-1}{2}\rceil\leq \ell\leq
      k$, there is an obstruction in $\BK[k+1-\ell][k+1][k]$ that vanishes if
      and only if there exists a morphism $g\colon\A[k+3]\to\O$ such that the
      restrictions of ${f\colon\A[k+2]\to\O}$ and ${g\colon\A[k+3]\to\O}$ to
      $\A[\ell+2]$ coincide:
      \[
        \begin{tikzcd}
          \A[\ell+2]\dar[hook]\rar[hook]\ar[phantom]{dr}[description]{=}&\A[k+3]\dar[dotted]{g}\\
          \A[k+2]\rar[swap]{f}&\O
        \end{tikzcd}
      \]
		\item\label{first_page} The first page of the truncated spectral sequence is
      given by the cochains in operad complex in \Cref{operad_complex}: \[\BK[1][s][t]=\OC[s+2][-t]{\os\O},\quad s\geq 0,\quad t\in\ZZ.\]
		\item\label{first_differential} If $k\geq 1$, the differential
      $\ssd{1}\colon\BK[1][s][t]\to\BK[1][s+1][t]$ of the first page coincides with the
      differential of the operad complex in
      \Cref{multiplication_differential_cup-product}:
      \begin{align*}
        \ssd{1}\colon\OC[s+2][-t]{\os\O}&\longrightarrow\OC[s+3][-t]{\os\O},\\
        x&\longmapsto[m^f_2,x].
      \end{align*}
		\item\label{second_page} If $k\geq 1$, the second page $\BK[2]$ of the
      truncated spectral sequence is given by cohomology of the operad
      complex, \[\BK[2][s][t]=\OH[s+2][-t]{\os\O},\quad s> 0,\quad t\in\ZZ,\] by
      the cocycles of the operad complex, \[\BK[2][0][t]=\OZ[2][-t]{\os\O},\quad
        t>0,\] and, finally, the term $\BK[2][0][0]$ is the pointed set of
      multiplications in $\O$, based at the multiplication $m_2^f$ induced by
      $f\colon\A[k]\to\O$.
		\item\label{lower-diagonal_second_page} Let $k\geq2$, $r=2$ and $s=1$, and let
      \[
        \Hclass{m_3^f}\in\OH[3][-1]{\os\O}
      \]
      be the universal Massey product of $f$.
      \Cref{lower-diagonal,second_page} define a pointed bijection between the
      pointed set of homotopy classes of maps $g\colon\A[3]\to\O$ which extend
      to $\A[4]$ and such that the restrictions of $f$ and $g$ to $\A[2]$ are homotopic,
      \[
        \begin{tikzcd}
          \A[2]\dar[hook]\rar[hook]\ar[phantom]{dr}[description]{\Rightarrow}&\A[3]\dar{g}\rar[hook]&\A[4]\dlar[dotted]\\
          \A[k+2]\rar[swap]{f}&\O,
        \end{tikzcd}
      \]
      and the term $\BK[2][1][1]=\OH[3][-1]{\os\O}$. The base point in the
      source is the restriction of $f$ to $\A[3]$ and the base point in the
      target is $0$. The bijection maps the homotopy class of $g\colon\A[3]\to\O$ to $\Hclass{m^g_3}-\Hclass{m^f_3}$.
		\item\label{second_differential} If $k\geq 3$, the differential
      $\ssd{2}\colon\BK[2][s][t]\to\BK[2][s+2][t+1]$ of the second page is given
      by the Gerstenhaber Lie bracket with the universal Massey product,
      \begin{align*}
        \ssd{2}\colon\OH[s+2][-t]{\os\O}&\longrightarrow\OH[s+4][-t-1]{\os\O}&s\geq 1,\\
        x&\longmapsto[\Hclass{m^f_3},x],
      \end{align*}
      possibly composed with the projection of cocycles onto cohomology,
      \begin{align*}
        \ssd{2}\colon\OZ[2][-t]{\os\O}&\longrightarrow\OH[4][-t-1]{\os\O}&t>1,\\
        x&\longmapsto[\Hclass{m^f_3},\Hclass{x}],
      \end{align*}
      and, finally,
      \begin{align*}
        \ssd{2}\colon\OZ[2][-1]{\os\O}&\longrightarrow\OH[4][-2]{\os\O}\\
        x&\longmapsto\Hclass{x}^2+[\Hclass{m^f_3},\Hclass{x}].
      \end{align*}
      Notice that,  since $|x|=1$ is odd, $\Hclass{x}^2=0$ if $\cchar{\kk}\neq 2$ by graded
      commutativity.
		\item\label{first_obstruction} For $k=2$ and $\ell=1$, the obstruction in \Cref{obstructions} to
      the existence of a map $g\colon\A[5]\to\O$ such that the restrictions of
      $f$ and $g$ to $\A[3]$ coincide,
      \[
        \begin{tikzcd}
          \A[3]\rar[hook]\dar[hook]\ar[phantom]{dr}[description]{=}&\A[5]\dar[dotted]{g}\\
          \A[4]\rar[swap]{f}&\O
        \end{tikzcd}
      \]
      is the Gerstenhaber square of the universal
      Massey product \[\Sq[\Hclass{m^f_3}]\in\BK[2][3][2]=\OH[5][-2]{\os\O}.\]
		\item\label{primary_obstructions} More generally, for $k\geq 2$ and
      $\ell=k-1$, the obstruction in \Cref{obstructions} to the existence of a map
      $g\colon\A[k+3]\to\O$ such that the restrictions of $f$ and $g$ to
      $\A[k+1]$ coincide,
      \[
        \begin{tikzcd}
          \A[k+1]\rar[hook]\dar[hook]\ar[phantom]{dr}[description]{=}&\A[k+3]\dar[dotted]{g}\\
          \A[k+2]\rar[swap]{f}&\O
        \end{tikzcd}
      \]
      which lies in the term
      ${\BK[2][k+1][k]=\OH[k+3][-k]{\os\O}}$, is represented by the operad
      cocycle \[\sum_{p+q=k+4}m^f_p\{m^f_q\}\in\OZ[k+3][-k]{\os\O}.\]
	\end{enumerate}
\end{theorem}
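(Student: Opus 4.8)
The plan is to construct $\BK$ as the extended fringed Bousfield--Kan spectral sequence~\cite[Section~IX.4]{BK72} of the tower of mapping spaces
\[
  \cdots\twoheadrightarrow\Map{\A[n+1]}{\O}\twoheadrightarrow\Map{\A[n]}{\O}\twoheadrightarrow\cdots\twoheadrightarrow\Map{\A[2]}{\O}
\]
based at the restrictions of $f$, where we regard the graded operad $\O$ as a DG operad with vanishing differential. The statement is, in essence, a reformulation in the present language of the obstruction theory developed by the second-named author in~\cite[Sections~5 and~6]{Mur20} for the special case $\O=\E{V}$, with $V$ a graded vector space carrying a graded algebra structure that supplies the multiplication. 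So the plan is to import those arguments; the only point requiring attention is that they use nothing specific to (linear) endomorphism operads---indeed they only involve the multiplication $m_2^f\in\OC[2][0]{\os\O}$, the brace algebra structure of the operad complex $\OC{\os\O}$ (\Cref{sec:brace_algebras,sec:operads_with_multiplication}), and the combinatorics of the $\A$-equations~\eqref{A_infinity_equations}---so that they carry over to an arbitrary graded operad $\O$ equipped with $f\colon\A[k+2]\to\O$. I indicate only the main steps.

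First I would check that the displayed tower consists of Kan fibrations. This holds because the skeletal filtration $\A[2]\subset\A[3]\subset\cdots\subset\A$ is an exhaustive sequence of elementary cofibrations of cofibrant DG operads---each $\A[n+1]$ is obtained from $\A[n]$ by freely adjoining a single generating operation of arity $n+1$ and degree $1-n$ with prescribed differential---and all DG operads are fibrant, so that $\Map{-}{\O}$ sends these cofibrations to Kan fibrations. Feeding this tower into the fringed spectral sequence machinery of~\cite[Section~IX.4]{BK72} produces the structural items~\eqref{truncated_ss}--\eqref{Bousfield-Kan}: the truncation at page $\lfloor(k+3)/2\rfloor$ and the restriction of the fringe are forced by $f$ only pinning down finitely many stages of the tower, while the extension to the region $t<s$ is effected by identifying the lower pages with the operad (co)chain complex as below. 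The obstruction-theoretic items~\eqref{lower-diagonal}--\eqref{obstructions} are the usual reading of the layers of a tower of fibrations: the homotopy class of a partial extension, and the primary obstruction to prolonging it by one stage, are precisely the data recorded on the diagonal $s=t$ and on its neighbour.

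Next I would identify the low pages. Since $\A[n+1]$ is obtained from $\A[n]$ by adjoining one free operation in arity $n+1$, and since $\O$ has vanishing differential, the homotopy fibre of $\Map{\A[n+1]}{\O}\to\Map{\A[n]}{\O}$ over a morphism $g$ is either empty or an Eilenberg--MacLane-type space whose homotopy groups are a shift of the graded vector space $\O<n+1>$; assembling these fibres over all stages gives the identification $\BK[1][s][t]=\OC[s+2][-t]{\os\O}$ of~\eqref{first_page}. The first differential $\ssd{1}$ is the connecting map of the tower, and unravelling it yields exactly the linearisation $x\mapsto[m_2^f,x]$ of the $\A$-equations~\eqref{A_infinity_equations}, which is the differential of $\OC{\os\O}$; hence $\BK[2]=\OH{\os\O}$ with the expected fringe modifications---the bottom row consisting of cocycles and $\BK[2][0][0]$ being the pointed set of multiplications---giving~\eqref{first_differential}--\eqref{second_page}, and, combined with the fibre computation, the bijection in~\eqref{lower-diagonal_second_page}. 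The second differential $\ssd{2}$ is the secondary operation of the tower; computing it amounts to tracking the first nonlinear term $\sum_{p+q=n+2}m_p\{m_q\}$ of the $\A$-equations through the construction, which produces the Gerstenhaber bracket $x\mapsto[\Hclass{m_3^f},x]$ with the universal Massey product, together with the extra quadratic term in the exceptional fringe position (this is~\eqref{second_differential}; cf.~\Cref{def:Gerstenhaber_square}). Finally, the primary obstruction of~\eqref{primary_obstructions} is identified with the operad cocycle $\sum_{p+q=k+4}m_p^f\{m_q^f\}$ directly from the tower: extending $f\colon\A[k+2]\to\O$ to $\A[k+3]$ amounts to adjoining a generating operation in arity $k+3$ whose prescribed differential, pushed into $\O$, is (up to sign) this cocycle, which must therefore vanish in $\OH[k+3][-k]{\os\O}$; the case $k=2$, $\ell=1$ of~\eqref{first_obstruction} is the instance in which, after discarding a coboundary, this class is the Gerstenhaber square $\Sq[\Hclass{m_3^f}]$.

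What remains is bookkeeping of the fringe: the precise ranges in which the terms $\BK[r][s][t]$ are merely pointed sets or abelian groups rather than vector spaces, and the corresponding statements about when $\ssd{r}$ is a group or a vector-space homomorphism (items~\eqref{truncated_ss}--\eqref{second_page} and~\eqref{diferentials_ss}), are read off from~\cite[Section~IX.4]{BK72} together with the observation that for $t>s$ one is in the stable range, where the relevant homotopy groups of the fibres are vector spaces and all maps in sight are linear. I expect the genuinely delicate step to be the computation of $\ssd{2}$ in~\eqref{second_differential}, and, relatedly, the careful verification that every argument of~\cite{Mur20} phrased for $\O=\E{V}$ survives the passage to an arbitrary graded operad with multiplication; granting this, the theorem follows.
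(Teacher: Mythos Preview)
Your proposal is correct and takes essentially the same approach as the paper: both recognise that the theorem is a direct import of the obstruction theory of~\cite[Sections~4--6]{Mur20} (stated there for $\O=\E{V}$), with the key observation that those arguments depend only on the homotopy theory of the filtration $\A[2]\subset\A[3]\subset\cdots$ developed in~\cite[Section~3]{Mur20} and not on the specific target, so they transfer to an arbitrary graded operad $\O$. The paper's proof is terser---it simply cites~\cite{Mur20} and adds three annotations (signs in $\ssd{1},\ssd{2}$ can be absorbed into the identifications; item~\eqref{lower-diagonal_second_page} follows from~\cite[Eq.~(4.8), Propositions~3.4 and~4.16]{Mur20}; item~\eqref{primary_obstructions} generalises~\eqref{first_obstruction} via the general formula in~\cite[Proposition~3.4]{Mur20})---whereas you spell out more of the underlying mechanism, but the substance is the same.
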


\begin{proof}
	This result is a compendium of \cite[Sections~4, 5 and 6]{Mur20}. Some of the
  results in \cite[Section~6]{Mur20} are stated for the specific case the
  endomorphism operad $\O=\E{V}$ of a graded vector space. Nevertheless, the
  same proofs work for general graded operads since they only depend on the
  homotopy theory of the operad $\A$ developed in \cite[Section~3]{Mur20}. One
  only needs to replace the Hochschild complex with the operad complex, compare
  with \Cref{ex:OCosEV} and \Cref{cohomologies}.

	In \cite{Mur20}, some signs appear in the differentials $\ssd{1}$ and
  $\ssd{2}$ of \Cref{first_differential,second_differential}, but one can tweak
  the identifications in \Cref{first_page,second_page} with suitable signs in
  order to get rid of them in the differentials.

	\Cref{lower-diagonal_second_page} was not explicitly stated in \cite{Mur20},
  but it follows easily from \cite[Equation~(4.8), Propositions~3.4
  and~4.16]{Mur20}. As a side remark, we mention that the map
  $g\mapsto\Hclass{m^g_3}$ also defines a bijection, which is not pointed if
  $\Hclass{m^f_3}\neq0$.

	\Cref{primary_obstructions} is a generalisation of \Cref{first_obstruction} to any $k\geq 2$. The latter was proved in \cite[Proposition~6.7]{Mur20} using a specific case of the formula in \cite[Proposition~3.4]{Mur20}. \Cref{primary_obstructions} follows from the same argument, but using the general formula.
\end{proof}

\Cref{fig:mrd} depicts a generic page of the (truncated) spectral sequence in
\Cref{operad_obstruction_theory}.

\begin{remark}\label{second_representative}
	The cocycle in \Cref{operad_obstruction_theory}\eqref{primary_obstructions} can be rewritten as 
	\[[m^f_2,m^f_{k+2}]+\sum_{\substack{p+q=k+4\\p,q>2}}m^f_p\{m^f_q\}\in\OZ[k+3][-k]{\os\O}.\]
	Since $[m^f_2,m^f_{k+2}]=d(m^f_{k+2})$, another representative is
	\[\sum_{\substack{p+q=k+4\\p,q>2}}m^f_p\{m^f_q\}\in\OZ[k+3][-k]{\os\O}.\]
\end{remark}

\setcounter{rpage}{5}
\begin{figure}[t]
	\begin{tikzpicture}
	\draw[step=\rescale,gray,very thin] (0,{-\abajo*\rescale+\margen}) grid ({(3*\rpage -3 +\derecha)*\rescale-\margen},{(3*\rpage -3 +\arriba)*\rescale-\margen});
	\filldraw[fill=red,draw=none,opacity=0.2] (0,0) -- ({(3*\rpage -3 +\arriba)*\rescale-\margen},{(3*\rpage -3 +\arriba)*\rescale-\margen}) --
	(0,{(3*\rpage -3 +\arriba)*\rescale-\margen}) -- cycle;
	\draw[red!70,thick]  (0,0) -- ({(3*\rpage -3 +\arriba)*\rescale-\margen},{(3*\rpage -3 +\arriba)*\rescale-\margen});
	\foreach  \x in {2,...,\rpage}
	\node[fill=red,draw=none,circle,inner sep=.5mm,opacity=1]   at ({(\x-2)*\rescale},{(\x-2)*\rescale}) {};
	\filldraw[fill=blue,draw=none,opacity=0.2]  (0,{(3*\rpage -3 +\arriba)*\rescale-\margen}) --
	(0,2*\rescale) -- ({(\rpage-2)*\rescale},{\rpage*\rescale}) -- ({(\rpage-2)*\rescale},{(\rpage-1)*\rescale}) --({(2*\rpage-3)*\rescale},{(2*\rpage-2)*\rescale}) -- ({(2*\rpage-3)*\rescale},{-\abajo*\rescale+\margen}) -- ({(3*\rpage -3 +\derecha)*\rescale-\margen},{-\abajo*\rescale+\margen}) --
	({(3*\rpage -3 +\derecha)*\rescale-\margen},{(3*\rpage -3 +\arriba)*\rescale-\margen}) -- cycle;
	\draw[blue!70,thick] (0,2*\rescale) -- ({(\rpage-2)*\rescale},{\rpage*\rescale}) -- ({(\rpage-2)*\rescale},{(\rpage-1)*\rescale}) --({(2*\rpage-3)*\rescale},{(2*\rpage-2)*\rescale}) -- ({(2*\rpage-3)*\rescale},{-\abajo*\rescale+\margen}) node[black,anchor=north] {$\scriptscriptstyle 2r-3$};
	\node[black,anchor=north] at ({(\rpage-2)*\rescale},{-\abajo*\rescale+\margen}) {$\scriptscriptstyle r-2$};
	\draw [->] (0,0)  -- ({(3*\rpage -3 + \derecha)*\rescale},0) node[anchor=north] {$\scriptstyle s$};
	\draw [->] (0,-{\abajo*\rescale}) -- (0,{(3*\rpage -3 +\arriba)*\rescale})node[anchor=east] {$\scriptstyle t$};
	\node[fill=black,draw=none,circle,inner sep=.5mm,opacity=1]   at ({\rpage*\rescale},{\rpage*\rescale}) {};
	\draw [black,thick,->] ({\rpage*\rescale},{\rpage*\rescale})  -- ({2*\rpage*\rescale},{(2*\rpage-1)*\rescale});
	\end{tikzpicture}
	\caption{Range of definition of the extended (truncated) spectral sequence in
    \Cref{operad_obstruction_theory} with $r=\arabic{rpage}$. The blue region
    consist of vector spaces, the red region consists of abelian groups, with
    the exception of the red dots that are plain pointed sets. We depict a
    differential that `jumps' from the range of definition of the classical
    Bousfield--Kan spectral sequence (red region) to its extended part.}
  \label{fig:mrd}
\end{figure}
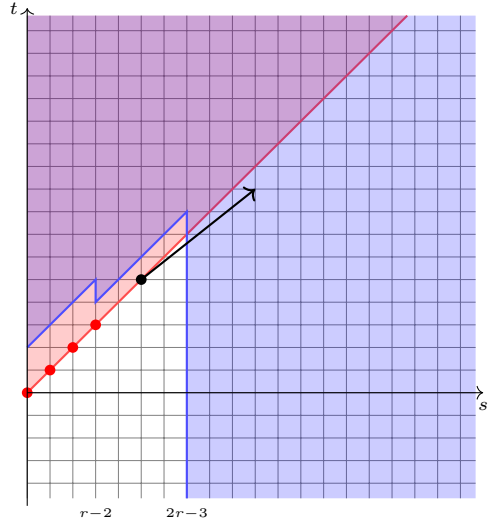

\begin{remark}
  Notice that statements \eqref{first_page}--\eqref{primary_obstructions} in
  \Cref{operad_obstruction_theory}
  (with the exception of \Cref{lower-diagonal_second_page}), require the terms
  $\BK[r][s][t]$ to be defined, at least partially, \emph{outside} the range of
  definition ${t\geq s\geq0}$ of the classical Bousfield--Kan spectral sequence
  of a tower of fibrations~\cite[Section~IX.4]{BK72}.
\end{remark}

\subsubsection{Graded algebras}\label{algebra_obstruction_theory}

We now explain how to interpret \Cref{operad_obstruction_theory} when $\O$ is
the endomorphism operad of a graded algebra, which is the case studied in detail
in~\cite[Section~6]{Mur20}. For concrete applications of
\Cref{operad_obstruction_theory} in this case we refer the reader
to~\cite{Mur22,JKM22,JKM24}.

Let $k\geq 3$. A \emph{minimal $\A[k]$-algebra} $(A,m_2^A,m^A_3,\dots,m^A_k)$ consists
of a graded algebra $A=(A,m_2^A)$ equipped with a sequence of operations
\[m^A_n\colon A\otimes\stackrel{n}{\cdots}\otimes A\longrightarrow A,\qquad
  3\leq n\leq k,\] of degree $2-n$, such that the following equations in the
Hochschild complex $\HC{A}$ hold:
\[\sum_{p+q=n+2}m^A_p\{m^A_q\}=0,\qquad 3\leq n\leq k-1.\]
The last operation, $m^A_k$, does not appear in any of these equations an hence
it is an arbitrary cochain in $\HC[k][2-k]{A}$. Equivalently, the above
operations define a map $m^A\colon\A[k]\to\E{A}$. For $k\geq 4$, the cochain $m^A_3\in\HC[3][-1]{A}$ is a cocycle
(\Cref{rmk:Ak-equations}) and its cohomology class
\[\Hclass{m^A_3}\in\HH[3][-1]{A}.\]
is the \emph{universal Massey product of $A$}.

A \emph{gauge $\A[k]$-isomorphism}
between minimal $\A[k]$-algebras with the same underlying graded algebra ($m_2^A=\bar{m}_2^A$)
\[
  (f_2,\dots,f_k)\colon (A,m_2^A,m^A_3,\dots,m^A_k)\longrightarrow(A,\bar{m}_2^A,\bar{m}^A_3,\dots,\bar{m}^A_k)
\]
is a sequence of degree $1-n$ maps
\[f_n\colon A^{\otimes n}\longrightarrow A,\qquad 2\leq n\leq k,\] satisfying
certain equations (corresponding to
\Cref{def:gauge_O_infty_isomorphism_algebras} with $\O_\infty=\O=\A[k]$, keeping in
mind that the DG operad $\A[k]$ is cofibrant). The
universal Massey product is a gauge invariant.\footnote{In fact, it is invariant by gauge
$\A[3]$-isomorphisms.}

\begin{remark}
A minimal $\A[2]$-algebra $(A,m_2)$ consists of a graded vector space $A$ with a
degree $0$ bilinear map $m_2\colon A\otimes A\to A$ (without any further
conditions). Graded algebras are therefore examples of minimal $\A[2]$-algebras.
\end{remark}

\Cref{table:EA} summarises the replacements to be made in
\Cref{operad_obstruction_theory} order to interpret everything in terms of
minimal $\A[k]$-algebras and Hochschild cohomology. Here, $\HZ{A}\subset\HC{A}$
denotes the bigraded subspace of cocycles in the Hochschild complex.

\begin{table}
\noindent\begin{tblr}{colspec={X[1,r]|X[1,l]X[1]X[2,r]|X[3,l]}}
	\cline[1.5pt]{-}
	$\O$ & $\E{A}$&&
	$f\colon\A[k+2]\to\O$ & $(A,m_2^A,m^A_3,\dots,m^A_{k+2})$\\
	$\OC{\os\O}$ & $\HC{A}$&&$m^f_i$ & $m^A_i$\\
	$\OZ{\os\O}$ & $\HZ{A}$&&$g\colon\A[k+3]\to\O$ & $(A,\bar{m}_2^A,\bar{m}^A_3,\dots,\bar{m}^A_{k+3})$\\
	$\OH{\os\O}$ & $\HH{A}$&&$m^g_i$ & $\bar{m}^A_i$\\
	\cline[1.5pt]{-}
\end{tblr}\vspace{1em}
\caption{Interpretation of \Cref{operad_obstruction_theory} for $\O=\E{A}$, that
is for minimal $\A[k]$-algebra structures on $A$.}
\label{table:EA}
\end{table}

\subsubsection{Graded algebra-bimodule pairs}\label{simultaneous_obstruction_theory}

We now explain the interpretation of \Cref{operad_obstruction_theory} when $\O$ is
the linear endomorphism operad of a graded algebra and a compatible graded
bimodule.

Let $k\geq 3$ and let $(A,m_2^A,m^A_3,\dots,m^A_k)$ be a minimal $\A[k]$-algebra. A
\emph{minimal $\A[k]$-bimodule} $(M,m_2^M,m^M_3,\dots,m^M_k)$ over it consists of a
graded $A$-bimodule $(M,m_2^M)$ together with sequences
\[
  m^M_n=(m^M_{p,q})_{p+1+q=n},\qquad 3\leq n\leq k,
\]
of homogeneous operations
\[m^M_{p,q}\colon A^{\otimes p}\otimes M\otimes A^{\otimes q}\longrightarrow
  M,\] of degree $2-n=1-p-q$ such that, if $m_2^M=m_{0,1}^M+m_{1,0}^M$
and $m^M_{0,1}\colon M\otimes A\to M$ and
$m^M_{1,0}\colon A\otimes M\to M$ are the right and left $A$-module structure
maps, then the following equations hold in the
bimodule complex $\BC{A^e}{M}$:
\[\sum_{p+q=n+2}[m^M_p,m^A_q]+m^M_p\circ m^M_q=0,\qquad 3\leq n\leq k-1.\]
Notice that the operations $m^A_k$ and $m^M_k$
do not appear in any of these equations.
Equivalently, the operations
\[
  m_n^{A\ltimes M}=m_n^A+m_n^M,\qquad 2\leq n\leq k,
\]
define a map $\Astr[A\ltimes M]\colon\A[k]\to\E{A,M}$
rendering the following diagram commutative, where the vertical map is the canonical projection and the horizontal map is the
$\A[k]$-algebra structure on $A$:
\[
  \begin{tikzcd}
    &\E{A,M}\dar[two heads]\\
    \A[k]\rar[swap]{\Astr[A]}\urar{\Astr[A\ltimes M]}&\E{A},
  \end{tikzcd}
\]

One can simultaneously define a minimal $\A[k]$-algebra $(A,m_2^A,m^A_3,\dots,m^A_k)$
and a minimal $\A[k]$-bimodule $(M,m_2^M,m^M_3,\dots,m^M_k)$ as a graded algebra $A$,
an $A$-bimodule $M$, with homogeneous operations as above and such that the
following equations hold in the bimodule Hochschild complex $\HCE{A}{M}$:
\[\sum_{p+q=n+2}(m^A_p+m^M_p)\{m^A_q+m^M_q\}=0,\qquad 3\leq n\leq k-1.\]
For $k\geq 4$, the cochain
\[
  \Astr<3>[A\ltimes M]\coloneqq m^A_3+m^M_3\in\HCE[3][-1]{A}{M}
\]
is a cocycle (\Cref{rmk:Ak-equations}) and its cohomology class
\[\Hclass{\Astr<3>[A\ltimes M]}\in\HHE[3][-1]{A}{M}\]
is called the \emph{bimodule universal Massey product}. As in the algebra case, we have
\emph{gauge $\A[k]$-isomorphisms} between pairs consisting of a minimal
$\A[k]$-algebra and a minimal $\A[k]$-bimodule, all of them with the same
underlying graded algebra and bimodule (corresponding to
\Cref{def:gauge_iso_pairs} with $\O_\infty=\O=\A[k]$). The bimodule universal Massey product is also
invariant under gauge $\A[k]$-isomorphisms of pairs.

\begin{remark}
  A minimal $\A[2]$-bimodule $(M, m_{0,1}^M+m_{1,0}^M)$ over
  the a minimal $\A[2]$-algebra $(A,m_2^A)$ is a graded vector space $M$ with
  degree $0$ bilinear maps
  \[
    {m_{0,1}^M\colon M\otimes A\to M}\qquad\text{and}\qquad{m_{1,0}\colon
  A\otimes M\to M}
  \]
  (without any further conditions). Bimodules over graded
  associative algebras are examples of minimal $\A[2]$-bimodules.
\end{remark}

We can apply \Cref{operad_obstruction_theory} to the linear endomorphism operad
$\E{A,M}$. In this case, for $k\geq 3$, maps $\A[k]\to\E{A,M}$ correspond to
minimal $\A[k]$-algebras structures on the
underlying graded vector space of $A$, equipped with a compatible minimal $\A[k]$-bimodule
structure on on the underlying graded vector space of
$M$. The operad complex is the bimodule Hochschild complex, etc., see
\Cref{table:EAM}, where we denote by $\HZE{A}{M}\subset\HCE{A}{M}$ the bigraded subspace of
cocycles in the bimodule Hochschild complex.

\begin{table}
\noindent\begin{tblr}{colspec={X[1.85,r]|X[1.85,l]X[0.05]X[2.9,r]|X[8.4,l]}}
	\cline[1.5pt]{-}
	$\O$ & $\E{A,M}$&&$f\colon\A[k+2]\to\O$ & $(A,m_2^A,\dots,m^A_{k+2}),(M,m_2^M,\dots,m^M_{k+2})$\\
  $\OC{\os\O}$&$\HCE{A}{M}$&&$m^f_i$ & $m^A_i+m^M_i$\\
	$\OZ{\os\O}$&$\HZE{A}{M}$&&$g\colon\A[k+3]\to\O$ & $(A,\bar{m}_2^A,\dots,\bar{m}^A_{k+3}),(M,\bar{m}_2^M,\dots,\bar{m}^M_{k+3})$\\
	$\OH{\os\O}$&$\HHE{A}{M}$&&$m^g_i$&$\bar{m}^A_i+\bar{m}^M_i$\\
	\cline[1.5pt]{-}
\end{tblr}
\vspace{1em}
\caption{Interpretation of \Cref{operad_obstruction_theory} for $\O=\E{A,M}$,
  that is for minimal $\A[k]$-algebra-bimodule structures on the pair $(A,M)$.}
\label{table:EAM}
\end{table}

The second representative of the obstruction in \Cref{operad_obstruction_theory}\eqref{primary_obstructions}, given in \Cref{second_representative}, is the following cocycle in $\HZE[k+3][-k]{A}{M}$:
\begin{align*}
	\sum_{\substack{p+q-1=k+3\\p,q>2}}(m^A_p+m^M_p)&\{m^A_q+m^M_q\}=\\
	&\sum_{\substack{p+q-1=k+3\\p,q>2}}m^A_p\{m^A_q\}
	+\sum_{\substack{p+q-1=k+3\\p,q>2}}m^M_p\{m^A_q+m^M_q\}.
\end{align*}

\subsection{Fibre-wise obstructions}\label{sec:fiber-wise}

Let $\O$ be a graded operad $\O$ equipped with an associative operadic ideal
$\I\subset\O$ and a map $h\colon\A\to\O/\I$. In this section we study maps
$f\colon\A[k]\to\O$ lifting the restriction of $h$ to $\A[k]$ along the
canonical projection $q\colon\O\twoheadrightarrow\O/\I$,
\[
  \begin{tikzcd}
    \A[k]\dar[hook]\rar[dotted]{f}\ar[phantom]{dr}[description]{=}&\O\dar[two heads]{q}\\
    \A\rar[swap]{h}&\O/\I
  \end{tikzcd}
\]
These maps $f$ are the points of the homotopy fibre $\Str{\A[k]}{h}{\I}$ of the fibration
\[\Map{\A[k]}{\O}\longrightarrow\Map{\A[k]}{\O/\I}\]
induced by the canonical projection at the restriction of $h$ to $\A[k]$,
compare with \Cref{space_of_bimodule_structures}.

\begin{definition}\label{fibre-wise_homotopy}
  In the previous setting,
  two maps $f,g\colon\A[k]\to\O$ are \emph{fibre-wise homotopic} if there is a homotopy between them which becomes the trivial homotopy when composed with $\O\twoheadrightarrow\O/\I$.
\end{definition}

The set $\pi_0\Str{\A[k]}{h}{\I}$ is the set of fibre-wise homotopy classes of maps $f\colon\A[k]\to\O$ such that $qf=h|_{\A[k]}$.

\begin{theorem}
	\label{fibre-wise_obstruction_theory}
	Let $\O$ be a graded operad, $\I\subset\O$ an associative operadic ideal, and
  $h\colon\A\to\O/\I$ a map. Denote $q\colon\O\twoheadrightarrow\O/\I$ the
  canonical projection. Assume that we have a map $f\colon\A[k+2]\to\O$ for some
  $\infty\geq k\geq 0$ such that $qf\colon\A[k+2]\to\O/\I$ is the restriction of
  $h\colon\A\to\O/\I$ to $\A[k+2]$, that is $q(m^f_i)=m^h_i$, $2\leq i\leq k+2$:
  \[
  \begin{tikzcd}
    \A[k+2]\dar[hook]\rar{f}\ar[phantom]{dr}[description]{=}&\O\dar[two heads]{q}\\
    \A\rar[swap]{h}&\O/\I
  \end{tikzcd}
  \]
  Then, statements \eqref{truncated_ss}--\eqref{ss_cohomology} in
  \Cref{operad_obstruction_theory} hold as stated and, moreover, we have
  the following:
	\begin{enumerate}\setcounter{enumi}{5}
		\item\label{Bousfield-Kan_fibre-wise} When $k=\infty$, that
      is~$f\colon\A\to\O$ is a map with $qf=h$, all pages of the spectral
      sequence are defined and it coincides for $t\geq s$ with the
      Bousfield--Kan fringed spectral sequence of the tower of
      fibrations \[\cdots\twoheadrightarrow\Str{\A[n+1]}{h}{\I}\twoheadrightarrow
        \Str{\A[n]}{h}{\I}\twoheadrightarrow\cdots\twoheadrightarrow\Str{\A[2]}{h}{\I}\] based at the
      restrictions of $f$ to $\A[n]$, see ~\cite[Section~IX.4]{BK72}.
		\item\label{lower-diagonal_fibre-wise} For $0\leq s\leq\min\{r-1,k-r+1\}$,
      the term $\BK[r][s][s]$ is the set of fibre-wise homotopy classes of maps
      $g\colon\A[s+2]\to\O$ that extend to $\A[s+r+1]$ such that the
      restrictions of $f$ and $g$ to $\A[s+1]$ are fibre-wise homotopic, and such that $qg\colon\A[s+2]\to\O/\I$ equals the restriction of $h\colon\A\to\O/\I$ to
      $\A[s+2]$:
      \[
        \begin{tikzcd}
          \A[s+1]\dar[hook]\rar[hook]\ar[phantom]{dr}[description]{\Rightarrow}&\A[s+2]\dar{g}\rar[equals]\ar[phantom]{ddr}[description]{=}&\A[s+2]\ar[hook]{dd}\\
          \A[k+2]\rar[swap]{f}\dar[hook]\ar[phantom]{dr}[description]{=}&\O\dar[two heads]{q}\\
          \A\rar[swap]{h}&\O/\I&\A\lar{h}
        \end{tikzcd}\qquad
        \begin{tikzcd}
          \A[s+2]\rar[hook]\dar[swap]{g}&\A[s+r+1]\ar[dotted]{dl}\\
          \O
        \end{tikzcd}
      \]
      This set is pointed by the restriction of $f\colon\A[k+2]\to\O$ to $\A[s+2]$.
		\item\label{obstructions_fibre-wise} For each $\lceil\frac{k-1}{2}\rceil\leq
      \ell\leq k$, there is an obstruction in $\BK[k+1-\ell][k+1][k]$ which
      vanishes if and only if there exists a map $g\colon\A[k+3]\to\O$ such that
      $qg$ equals the restriction of $h$ to $\A[k+3]$ and the restrictions of
      $f$ and $g$ to $\A[\ell+2]$ coincide:
      \[
        \begin{tikzcd}
          \A[\ell+2]\dar[hook]\rar[hook]\ar[phantom]{dr}[description]{=}&\A[k+3]\dar[dotted]{g}\rar[equals]\ar[phantom]{ddr}[description]{=}&\A[k+3]\ar[hook]{dd}\\
          \A[k+2]\ar{r}[swap]{f}\dar[hook]\rar\ar[phantom]{dr}[description]{=}&\O\dar[two heads]{q}\\
          \A\rar[swap]{h}&\O/\I&\A\lar{h}
        \end{tikzcd}
      \]
		\item\label{first_pag_fibre-wise} The first page of the truncated spectral
      sequence is given by the cochains in the operadic ideal complex in \Cref{cohomologies}, \[\BK[1][s][t]=\IC[s+1][-t]{\os\I}[\os\O],\quad s\geq 0,\quad t\in\ZZ.\]
		\item\label{first_differential_fibre-wise} For $k\geq1$, the differential
      $\ssd{1}\colon\BK[1][s][t]\to\BK[1][s+1][t]$ of the first page coincides with the
      differential of operadic ideal complex,
      \begin{align*}
        \ssd{1}\colon\IC[s+1][-t]{\os\I}[\os\O]&\longrightarrow\IC[s+2][-t]{\os\I}[\os\O],\\
        x&\longmapsto[m^f_2,x].
      \end{align*}
		\item\label{second_page_fibre-wise} For $k\geq1$, the second page $\BK[2]$
      of the truncated spectral sequence is given by the
      cohomology (see \Cref{def:op-Ext}) \[\BK[2][s][t]=\IH[s+1][-t]{\os\I}[\os\O],\quad s> 0,\quad t\in\ZZ,\]
      by the cocycles in the operadic ideal
      complex \[\BK[2][0][t]=\IZ[1][-t]{\os\I},\quad t>0,\] and, finally, $\BK[2][0][0]$ is the pointed set of multiplications in $\O$ whose projection to $\O/\I$ is the multiplication induced by $h\colon\A\to\O/\I$, based at the multiplication induced by $f\colon\A[k+2]\to\O$.
		\item\label{lower-diagonal_second_page_fibre-wise} Let $k\geq2$, $r=2$ and
      $s=1$. \Cref{lower-diagonal_fibre-wise,second_page_fibre-wise} define a
      pointed bijection between the set of homotopy fibre-wise classes of maps
      $g\colon\A[3]\to\O$ that extend to $\A[4]$, such that the restrictions of
      $f$ and $g$ to $\A[2]$ are fibre-wise homotopic, and such that
      $qg\colon\A[3]\to\O/\I$ equals the restriction of $h\colon\A\to\O/\I$ to $\A[3]$,
      \[
        \begin{tikzcd}
          \A[2]\dar[hook]\rar[hook]\ar[phantom]{dr}[description]{\Rightarrow}&\A[3]\dar{g}\rar[equals]\ar[phantom]{ddr}[description]{=}&\A[3]\ar[hook]{dd}\\
          \A[k+2]\rar[swap]{f}\dar[hook]\ar[phantom]{dr}[description]{=}&\O\dar[two heads]{q}\\
          \A\rar[swap]{h}&\O/\I&\A\lar{h}
        \end{tikzcd}\qquad
        \begin{tikzcd}
          \A[3]\rar[hook]\dar[swap]{g}&\A[4]\ar[dotted]{dl}\\
          \O
        \end{tikzcd}
      \]
      and
      the pointed set $\IH[2][-1]{\os\I}[\os\O]$. The base point in the source is the
      restriction of $f\colon\A[k+2]\to\O$ to $\A[3]$ and the base point in the target is $0$. The
      bijection maps the homotopy class of $g\colon\A[3]\to\O$ to $\Hclass{m^g_3-m^f_3}$.
		\item\label{second_differential_fibre-wise} If $k\geq 3$, the differential
      $\ssd{2}\colon\BK[2][s][t]\to\BK[2][s+2][t+1]$ of the second page is given
      by the Lie bracket with the universal Massey product of the
      map $f\colon\A[k+2]\to\O$,
      \begin{align*}
        \ssd{2}\colon\IH[s+1][-t]{\os\I}[\os\O]&\longrightarrow\IH[s+3][-t-1]{\os\I}[\os\O]&s\geq 1,\\
        x&\longmapsto[\Hclass{m^f_3},x],
      \end{align*}
        possibly composed with the natural projection of bimodule cocycles onto
        its cohomology
        \begin{align*}
          \ssd{2}\colon\IZ[1][-t]{\os\I}&\longrightarrow\IH[3][-t-1]{\os\I}[\os\O]&t\geq1,\\
          x&\longmapsto[\Hclass{m^f_3},\Hclass{x}].
        \end{align*}
		\setcounter{enumi}{14}
		\item\label{primary_obstructions_fibre-wise} For $k\geq 1$ and $\ell=k-1$, the
      obstruction in $\BK[2][k+1][k]=\IH[k+2][-k]{\os\I}[\os\O]$ to the existence of a
      map $g\colon\A[k+3]\to\O$ such that $qg\colon\A[k+3]\to\O/\I$ is the
      restriction of $h\colon\A\to\O/\I$ to $\A[k+3]$ and the restrictions of
      $f\colon\A[k+2]\to\O$ and $g\colon\A[k+3]\to\O$ to $\A[k+1]$ coincide,
      \[
        \begin{tikzcd}
          \A[k+1]\rar[hook]\dar[hook]\ar[phantom]{dr}[description]{=}&\A[k+3]\dar[dotted]{g}\rar[equals]\ar[phantom]{ddr}[description]{=}&\A[k+3]\ar[hook]{dd}\\
          \A[k+2]\rar{f}\dar[hook]\ar[phantom]{dr}[description]{=}&\O\dar[two heads]{q}\\
          \A\rar[swap]{h}&\O/\I&\A\lar{h}
        \end{tikzcd}
      \]
      is represented by the
      cocycle \[\sum_{p+q=k+4}m^f_p\{m^f_q\}\in\IC[k+2][-k]{\os\I}[\os\O],\]
      compare with \Cref{obstructions_fibre-wise}.
		\item\label{first_obstruction_fibre-wise} For $k=1$ and $\ell=0$, the
      obstruction in the term
      \[
        \BK[2][2][1]=\IH[3][-1]{\os\I}[\os\O]
      \]
      to the existence of a map
      $g\colon\A[4]\to\O$ such that $qg\colon\A[4]\to\O/\I$ is the restriction
      of $h\colon\A\to\O/\I$ to $\A[4]$ and the restrictions of $f$ and $g$ to
      $\A[2]$ coincide,
      \[
        \begin{tikzcd}
          \A[2]\rar[hook]\dar[hook]\ar[phantom]{dr}[description]{=}&\A[4]\dar[dotted]{g}\rar[equals]\ar[phantom]{ddr}[description]{=}&\A[4]\ar[hook]{dd}\\
          \A[3]\rar{f}\dar[hook]\ar[phantom]{dr}[description]{=}&\O\dar[two heads]{q}\\
          \A\rar[swap]{h}&\O/\I&\A\lar{h}
        \end{tikzcd}
      \]
      is the cohomology class
      \[
        \delta(\Hclass{m^{h}_3})\in\IH[3][-1]{\os\I}[\os\O].
      \]
      Here, $\delta$ is the connecting morphism in the long exact sequence
      \eqref{operad_multiplication_long_exact_sequence}.
	\end{enumerate}
\end{theorem}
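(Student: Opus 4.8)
The plan is to deduce \eqref{first_obstruction_fibre-wise} from \eqref{primary_obstructions_fibre-wise}, which already exhibits an explicit cocycle representative of the obstruction, and then to recognise that representative as $\delta(\Hclass{m^h_3})$ by means of the standard zig-zag computation of the connecting morphism. First I would observe that the pair $(k,\ell)=(1,0)$ satisfies $\ell=k-1$, so \eqref{primary_obstructions_fibre-wise} applies verbatim: the obstruction to extending $f\colon\A[3]\to\O$ to a map $g\colon\A[4]\to\O$ with $qg=h|_{\A[4]}$ and $g|_{\A[2]}=f|_{\A[2]}$ lies in $\BK[2][2][1]=\IH[3][-1]{\os\I}[\os\O]$ and is represented by the cocycle $\sum_{p+q=5}m^f_p\{m^f_q\}=m^f_2\{m^f_3\}+m^f_3\{m^f_2\}$. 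Since both $m^f_2\in\OC[2][0]{\os\O}$ and $m^f_3\in\OC[3][-1]{\os\O}$ have shifted total degree $1$, the Koszul sign in the Gerstenhaber bracket is trivial, so this representative equals $[m^f_2,m^f_3]=d(m^f_3)$, where $d$ denotes the differential on $\OC{\os\O}$ induced by the multiplication $m^f_2$ (see \Cref{multiplication_differential_cup-product} and \Cref{rmk:Ak-equations}).

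Next I would invoke the standard description of the connecting morphism $\delta$ of the long exact sequence \eqref{operad_multiplication_long_exact_sequence}, associated to the short exact sequence of complexes \eqref{operad_multiplication_exact_sequence} for the operad-with-multiplication $\os\O$ (with multiplication $m^f_2$) and its ideal $\os\I$; here the surjection $p\colon\OC{\os\O}\twoheadrightarrow\OC{\os\O/\os\I}$ is induced by the DG operad projection $\os q$. The key point to record is that $m^h_3$ is a cocycle in $\OC[3][-1]{\os\O/\os\I}$, so that $\Hclass{m^h_3}$ makes sense: indeed $h\colon\A\to\O/\I$ restricts to an $\A[4]$-algebra structure, whence the arity-$3$ equation $d(m^h_3)=[m^h_2,m^h_3]=0$ holds by \Cref{rmk:Ak-equations}. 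By the standing hypothesis $q(m^f_3)=m^h_3$, i.e.\ $p(m^f_3)=m^h_3$, so $m^f_3$ is a $p$-lift of this cocycle; applying the differential, $p\bigl(d(m^f_3)\bigr)=d(m^h_3)=0$, so $d(m^f_3)$ lands in the subcomplex $\shift{\IC{\os\I}[\os\O]}[-1]\subset\OC{\os\O}$ and, by the definition of $\delta$, represents $\delta(\Hclass{m^h_3})\in\IH[3][-1]{\os\I}[\os\O]$. Comparing this with the representative found above identifies the obstruction class with $\delta(\Hclass{m^h_3})$, which is the claim.

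Since \eqref{primary_obstructions_fibre-wise} carries the real weight, the remaining difficulties are purely bookkeeping. The one I would be most careful about is keeping the degree conventions aligned: one checks that $d(m^f_3)$ has bidegree $(4,-1)$ in $\OC{\os\O}$, so that under the inclusion $\IC[p][q]{\os\I}[\os\O]\subset\OC[p+1][q]{\os\O}$ it does represent a class in $\IC[3][-1]{\os\I}[\os\O]=\BK[1][2][1]$, and that the sign adjustments already made in the proof of \Cref{operad_obstruction_theory} (and inherited here) make the obstruction representative literally $\sum_{p+q=5}m^f_p\{m^f_q\}$, hence literally $[m^f_2,m^f_3]$, rather than a signed variant. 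It is also worth noting explicitly that only $h|_{\A[4]}$ is used in the argument, so the hypothesis that $h$ is a full $\A$-structure on $\O/\I$ is amply sufficient to guarantee that $m^h_3$ is a cocycle.
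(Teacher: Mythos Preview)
Your proposal is correct and follows essentially the same route as the paper: apply \eqref{primary_obstructions_fibre-wise} with $(k,\ell)=(1,0)$ to obtain the representative $m^f_2\{m^f_3\}+m^f_3\{m^f_2\}=[m^f_2,m^f_3]=d(m^f_3)$, then observe that since $q(m^f_3)=m^h_3$ is a cocycle, $d(m^f_3)$ is by definition a representative of $\delta(\Hclass{m^h_3})$. The paper's argument is terser but identical in substance; your additional remarks on degree bookkeeping and the role of $h|_{\A[4]}$ are accurate and do no harm.
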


\begin{proof}
	Consider the following commutative diagram 
	\[
	\begin{tikzcd}[column sep = 12pt]
		\cdots \arrow[hook,r] & \Str{\A[n+1]}{h}{\I} \arrow[hook,d] \arrow[hook,r] & \Str{\A[n]}{h}{\I} \arrow[hook,d] \arrow[two heads,r] & \cdots \arrow[two heads,r] & \Str{\A[2]}{h}{\I} \arrow[hook,d] \\
		\cdots \arrow[two heads,r] & \Map{\A[n+1]}{\O} \arrow[two heads,d] \arrow[two heads,r] & \Map{\A[n]}{\O} \arrow[two heads,d] \arrow[two heads,r] & \cdots \arrow[two heads,r] & \Map{\A[2]}{\O} \arrow[two heads,d] \\
		\cdots \arrow[two heads,r] & \Map{\A[n+1]}{\O/\I} \arrow[two heads,r] & \Map{\A[n]}{\O/\I} \arrow[two heads,r] & \cdots \arrow[two heads,r] & \Map{\A[2]}{\O/\I}
	\end{tikzcd}
	\]
	The rows are towers of fibrations, the bottom tower is based at the restrictions of $h$, the two other towers are based from $\A[k+2]$ downwards, the columns are fibre sequences and define maps between the towers of fibrations.

	By the results in \cite[Section~6]{Mur20}, the middle and the bottom towers
  fit into the general framework of \cite[Sections~4 and 5]{Mur20}. The top
  tower also fits within this framework since the required structure is
  preserved by taking homotopy fibres. This implies that the (truncated)
  spectral sequence and obstructions are also defined for the top tower, that is
  statements \eqref{truncated_ss}--\eqref{obstructions_fibre-wise} hold. Moreover, the maps of
  towers induce maps between the corresponding (truncated) spectral sequences,
  and these maps are compatible with the obstructions. Also, the maps on $\BK[1]$ form a short exact sequence.

	The computations in \eqref{first_pag_fibre-wise} and \eqref{first_differential_fibre-wise} follow from the fact that the map on $\BK[1]$ induced by the bottom map of towers is the second map in the short exact sequence of complexes \eqref{operad_multiplication_exact_sequence}, and \eqref{second_page_fibre-wise} follows from \eqref{first_differential_fibre-wise}.

	Items \eqref{lower-diagonal_second_page_fibre-wise},\eqref{second_differential_fibre-wise} and \eqref{primary_obstructions_fibre-wise} follow from the corresponding items in \Cref{operad_obstruction_theory}, since the computations of representatives of cohomology classes here should be done in the same way as in \Cref{operad_obstruction_theory}. The quadratic term $\Hclass{x}^2$ in \Cref{operad_obstruction_theory}\eqref{second_differential} does not appear in \eqref{second_differential_fibre-wise} since $\shift{\IC{\os\I}[\os\O]}[-1]\subset\OC{\os\O}$ is a square-zero associative ideal. There is no analogue of \Cref{operad_obstruction_theory}\eqref{first_obstruction} here since there is no universal Massey product in $\IH{\os\I}[\os\O]$. This is why we have omitted that item number in the statement of this theorem. By contrast, \eqref{first_obstruction_fibre-wise} is new, so we need to prove it. According to \eqref{primary_obstructions_fibre-wise}, the obstruction is represented by 
	\[m^f_2\{m^f_3\}+m^f_3\{m^f_2\}=[m^f_2,m^f_3].\]
	Since $q(m^f_3)=m^h_3$ is a cocycle representing the universal Massey product
  of $h\colon\A\to\O/\I$, then $[m^f_2,m^f_3]=d(m^f_3)\in\IC[3][-1]{\os\I}[\os\O]$
  represents $\delta(\Hclass{m^h_3})\in\IH[3][-1]{\os\I}[\os\O]$. This finishes the proof.
\end{proof}

\subsubsection{Graded bimodules}\label{bimodule_obstruction_theory}

We now explain how to interpret \Cref{fibre-wise_obstruction_theory} when $\O$ is
the linear endomorphism operad of a graded algebra and a compatible graded
bimodule and $\I\subset\O$ is its canonical (associative) operadic ideal.

Let $A$ be a graded algebra and  $M$ a graded $A$-bimodule. In what follows we denote by $\BZ{A^e}{M}\subset\BC{A^e}{M}$ the bigraded vector space of cocycles in the bimodule complex. Given a cocycle $x\in\BZ{A^e}{M}$, we denote its cohomology class by $\Hclass{x}\in\Ext{A^e}{M}{M}$.

We can apply \Cref{fibre-wise_obstruction_theory} to the graded operad
$\O=\E{A,M}$ and its associative operadic ideal $\I=\E*{A,M}$. In this case, $h$
is a minimal $\A$-algebra structure $(A,m^A)$ on the underlying
graded vector space of $A$. For $n\geq 3$, points in $\Str{\A[n]}{h}{\I}$, that
is maps $f\colon\A[n]\to\E{A,M}$ such that $qf=h_{|_{\A[n]}}$, are minimal
$\A[n]$-bimodule structures $(M,m^M_2,\dots,m^M_{n})$ over
$(A,m^A_{2},\dots,m^A_{n})$ on the underlying graded vector space of $M$. The
operadic ideal complex is the bimodule complex, etc.,
see~\Cref{table:EAM-fibrewise}.

\begin{table}
\noindent\begin{tblr}{colspec={X[2,r]|X[3.5,l]XX[3,r]|X[4,l]}}
	\cline[1.5pt]{-}
		$\O$ & $\E{A,M}$&&
		$h\colon\A\to\O/\I$ & $(A,m^A_2,m^A_2,\dots)$\\
		$\I$ & $\E*{A,M}$&&$m^h_i$ & $m^A_i$\\
		$\Str{\A[n]}{h}{\I}$ & $\Str{\A[n]}{(A,m^A_2,\dots,m^A_{n})}{M}$&&$f\colon\A[k+2]\to\O$& $(M,m^M_2,\dots,m^M_{k+2})$\\
		$\IC{\os\I}[\os\O]$ & $\BC{A^e}{M}$&&$m^f_i$ & $m^A_i+m^M_i$\\
		$\IZ{\os\I}[\os\O]$ & $\BZ{A^e}{M}$&&$g\colon\A[k+3]\to\O$ & $(M,\bar{m}^M_2,\dots,\bar{m}^M_{k+3})$\\
		$\IH{\os\I}[\os\O]$ & $\Ext{A^e}{M}{M}$&&$m^g_i$ & $m^A_i+\bar{m}^M_i$\\
		\cline[1.5pt]{-}
\end{tblr}\vspace{1em}
\caption{Interpretation of \Cref{fibre-wise_obstruction_theory} for $\O=\E{A,M}$
  and $\I=\E*{A,M}$, that is for minimal $\A[k]$-bimodule structures on $M$ over the truncation of
  a fixed minimal $A_\infty$-algebra structure on $A$.}
\label{table:EAM-fibrewise}
\end{table}

The representative of the obstruction in \Cref{fibre-wise_obstruction_theory}\eqref{primary_obstructions_fibre-wise} is the following cocycle in $\BZ[k+2][-k]{A^e}{M}$:
\begin{align*}
	\sum_{p+q=k+4}(m^A_p+m^M_p)&\{m^A_q+m^M_q\}\\
	&=\sum_{p+q=k+4}m^A_p\{m^A_q\}
	+\sum_{p+q=k+4}m^M_p\{m^A_q+m^M_q\}\\
	&=m^M_2\{m^A_{k+2}+m^M_{k+2}\}+m^M_{k+2}\{m^A_2+m^M_2\}\\
	&\phantom{=}+\sum_{\substack{p+q=k+4\\p,q>2}}m^M_p\{m^A_q+m^M_q\}\\
	&=\id*[M]\cdot m^A_{k+2}-m^A_{k+2}\cdot\id*[M]+[m^A_2+m^M_2,m^M_{k+2}]\\
	&\phantom{=}+\sum_{\substack{p+q=k+4\\p,q>2}}\left([m^M_p,m^A_q]+m^M_p\circ m^M_q\right).\\
\end{align*}
Here we use that $(A,m^A_3,m^A_4,\dots)$ is an $\A$-algebra and the computations
in the proof of \Cref{connecting_morphism}. Since
$[m^A_2+m^M_2,m^M_{k+2}]=d(m^M_{k+2})$, another representative of the obstruction is
\[\id*[M]\cdot m^A_{k+2}-m^A_{k+2}\cdot\id*[M]+\sum_{\substack{p+q=k+4\\p,q>2}}\left([m^M_p,m^A_q]+m^M_p\circ m^M_q\right).\]
By \Cref{connecting_morphism}, the obstruction in
\Cref{fibre-wise_obstruction_theory}\eqref{first_obstruction_fibre-wise} is the
cohomology class
\[\delta(\Hclass{m^A_3})=\id*[M]\cdot\Hclass{m^A_3}-\Hclass{m^A_3}\cdot\id*[M]\in\Ext[3][-1]{A^e}{M}{M}.\]


\section{Intrinsic formality and almost formality theorems}
\label{sec:Kadeishvili-type_theorems}

In this section we use the obstruction theories developed in
\Cref{sec:obstructions} to prove the various intrinsic formality and almost
formality theorems for DG algebras and DG bimodules stated in the introduction.

\subsection{Intrinsic formality results}

The following result is a vast generalisation of Kadeishvili's Intrinsic
Formality Theorem (\Cref{Kadeishvili_algebras}).

\begin{theorem}\label{Kadeishvili_operads_A-infinity}
	Let $\O$ be a graded operad with a multiplication $m_2$ in $\os\O$. Suppose that the cohomology of the operad complex vanishes in the following range:
	\[\OH[n+2][-n]{\os\O}=0,\qquad n\geq1.\]
	Then, every map $g\colon \A\to\O$ with underlying multiplication $m_2^g=m_2$
  is homotopic to the map $f\colon \A\to\O$ defined by the multiplication $m_2^f=m_2$
  and $m_n^f=0$ for $n\geq 3$, that is by the special point
  $f\colon\A\to\Ass\stackrel{m_2}{\to}\O$ of the mapping space $\Map{\A}{\O}$.
\end{theorem}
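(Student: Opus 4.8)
The plan is to deduce this from the obstruction theory of \Cref{operad_obstruction_theory} applied with $\O$ itself (no operadic ideal) and with the reference point $f\colon\A\to\O$ being the ``trivial'' map $f\colon\A\to\os\Ass\xrightarrow{m_2}\O$, i.e. the one with $m_2^f=m_2$ and $m_n^f=0$ for $n\geq 3$. Concretely, given another map $g\colon\A\to\O$ with $m_2^g=m_2$, I want to show $[g]=[f]$ in $\pi_0\Map{\A}{\O}$. The strategy is to build a path from $g$ to $f$ stage by stage along the tower $\Map{\A[k]}{\O}$, correcting higher operations one arity at a time, using that the relevant obstruction groups vanish.

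First I would set up the inductive claim: for each $k\geq 2$ there is a map $g_k\colon\A\to\O$, homotopic to $g$, whose restriction to $\A[k]$ agrees (up to homotopy, hence after a further adjustment, on the nose) with the restriction of $f$ to $\A[k]$. The base case $k=2$ is the hypothesis $m_2^g=m_2$, since maps $\A[2]\to\O$ are classified up to homotopy by the induced multiplication (\Cref{rmk:UMP-gauge_invariant}). For the inductive step, suppose we have arranged agreement up to arity $k$; we want to modify the arity-$(k+1)$ operation so that it becomes $0=m_{k+1}^f$ without disturbing lower arities, and this is exactly the kind of statement governed by \Cref{operad_obstruction_theory}. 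The key point is that since $m_n^f=0$ for $n\geq 3$, the map $f$ has vanishing universal Massey product $\Hclass{m_3^f}=0$, so all the higher differentials $\ssd{r}$ of the spectral sequence based at $f$ are computed purely from the (already vanishing) cohomology. More precisely, the obstruction to replacing $g_k$ by a map agreeing with $f$ in one more arity lives in a subquotient of $\BK[r][s][t]$ with $s+t$ in the ``diagonal-ish'' range that \Cref{operad_obstruction_theory}\eqref{obstructions} identifies, and the ambient group $\BK[2][k+2][k+1]=\OH[k+3][-(k+1)]{\os\O}$ — more generally all the relevant $\BK[2][s][t]=\OH[s+2][-t]{\os\O}$ with $s-t=1$, i.e. $\OH[n+2][-n]{\os\O}$ — vanishes by hypothesis for $n\geq 1$. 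Since the higher pages $\BK[r]$ are subquotients of $\BK[2]$ (they are iterated cohomologies of differentials on $\BK[2]$, by \Cref{operad_obstruction_theory}\eqref{ss_cohomology}), all these obstruction groups vanish, so each extension/correction problem is solvable.

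The cleanest way to package the induction is perhaps the following. Working in the spectral sequence based at the special point $f$, \Cref{operad_obstruction_theory}\eqref{lower-diagonal} identifies the diagonal terms $\BK[r][s][s]$ (for $s$ in the appropriate range) with pointed sets of homotopy classes of partial maps that extend and agree with $f$ at low arity, pointed at the restriction of $f$; and as just noted these are all trivial pointed sets because they are subquotients of the vanishing $\BK[2][s][t]$ with $t>s$ — wait, here $t=s$, so I should instead argue via the obstructions in \eqref{obstructions}, which live off-diagonal in $\BK[k+1-\ell][k+1][k]$, a subquotient of $\BK[2][k+1][k]=\OH[k+3][-k]{\os\O}$, vanishing for $k\geq 1$. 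So: starting from $g_k=g$ (after the inductive modifications), \Cref{operad_obstruction_theory}\eqref{obstructions} with $\ell=k$ guarantees a map $g_{k+1}\colon\A[k+3]\to\O$ — and then extended to all of $\A$ using that $\A$ is cofibrant and all operads are fibrant, or by iterating — whose restriction to $\A[k+2]$ agrees with that of $g_k$; the point is that by re-choosing the reference so that we are extending towards $f$, one arranges that the arity-$(k+1)$ operation can be taken to be the trivial one. I would spell out that the obstruction to ``$g_k$ restricted to $\A[k+1]$ is homotopic to $f$ restricted to $\A[k+1]$ rel $\A[k]$'' lies in $\OH[k+2][-k+?]{\os\O}$ in the relevant degree and vanishes, and then separately that, having made them homotopic rel $\A[k]$, one can rectify the homotopy to an equality and proceed. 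Passing to the homotopy limit $\Map{\A}{\O}\simeq\holim_k\Map{\A[k]}{\O}$, the sequence of $g_k$ (with the connecting homotopies) assembles to a path from $g$ to $f$, using that the tower of fibrations has the Mittag-Leffler–type property built into \Cref{operad_obstruction_theory}\eqref{Bousfield-Kan}; more elementarily, since every bonding map $\Map{\A[k+1]}{\O}\to\Map{\A[k]}{\O}$ is a fibration and we have produced compatible lifts, $\lim^1$ issues do not arise at the level of $\pi_0$ because the relevant $\pi_1$'s controlling $\lim^1$ are governed by the same vanishing cohomology groups shifted by one (the terms $\OH[n+1][-n]{\os\O}$ contribute, and the hypothesis covers $\OH[n+2][-n]{\os\O}=0$ for $n\geq 1$ — I would need to double-check the bookkeeping here).

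The main obstacle I anticipate is precisely this bookkeeping: matching the indices in the hypothesis $\OH[n+2][-n]{\os\O}=0$ ($n\geq 1$) with (i) the obstruction groups for extending one more arity, (ii) the groups measuring uniqueness of such extensions up to homotopy rel the previous arity (so that the modifications can be made coherently), and (iii) the $\lim^1$-type term needed to pass from ``compatible system of finite-stage paths'' to ``an actual path in $\Map{\A}{\O}$''. I expect (i) to land in $\OH[n+2][-n]{\os\O}$ for the arity-$(n+1)$ extension and hence be covered directly; (ii) and (iii) to land in $\OH[n+1][-n]{\os\O}$-type groups, which are \emph{not} assumed to vanish — so the argument must instead exploit that we are comparing to the \emph{specific} trivial point $f$ with vanishing higher operations, for which the relevant based loop/homotopy spaces are better behaved (e.g. the ``gauge group'' acting is trivial in the right range, or one uses that $f$ factors through $\os\Ass$ and $\A\to\os\Ass$ is a cofibrant resolution of an operad with no higher homotopy in the relevant range). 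Sorting out this last point — showing the uniqueness-of-extension and assembly steps go through with only the stated vanishing — is where I would spend the most care; one clean route is to invoke \Cref{operad_obstruction_theory}\eqref{Bousfield-Kan} to identify the whole fringed spectral sequence computing $\pi_*\Map{\A}{\O}$ based at $f$, observe that the hypothesis forces the entire relevant diagonal-and-near-diagonal region of $\BK[2]$ to vanish, conclude that the component of $f$ in $\Map{\A}{\O}$ is (in the appropriate range) contractible / absorbs every component with multiplication $m_2$, and hence $[g]=[f]$.
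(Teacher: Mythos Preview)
Your general framework is right—work with the spectral sequence of \Cref{operad_obstruction_theory} based at the trivial map $f$—but you talk yourself out of the key observation at exactly the wrong moment. When you write ``these are all trivial pointed sets because they are subquotients of the vanishing $\BK[2][s][t]$ with $t>s$ — wait, here $t=s$'', you should not pivot away. By \Cref{operad_obstruction_theory}\eqref{second_page}, for $s>0$ one has $\BK[2][s][s]=\OH[s+2][-s]{\os\O}$, and this is \emph{exactly} the group the hypothesis kills (set $n=s\geq 1$). Hence $\BK[r][s][s]=0$ for all $r\geq 2$ and $s>0$, since later pages are iterated subquotients. Now \Cref{operad_obstruction_theory}\eqref{lower-diagonal} with $r=s+1$ says $\BK[s+1][s][s]$ is the pointed set of homotopy classes of maps $\A[s+2]\to\O$ whose restriction to $\A[s+1]$ is homotopic to that of $f$ (the extendability clause is automatic here since $g$ already extends to $\A$). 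This pointed set is trivial, so by induction every restriction $g|_{\A[s+2]}$ is homotopic to $f|_{\A[s+2]}$. You do not need item \eqref{obstructions} at all: that item is for \emph{extending} a partial map, not for comparing two total maps.

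The remaining issue is the one you flag: passing from ``all finite restrictions are homotopic'' to ``$[g]=[f]$ in $\pi_0\Map{\A}{\O}$''. Your instinct that a $\lim^1$-term intervenes is correct, and your worry that controlling it might require vanishing of $\OH[n+1][-n]{\os\O}$ is the real gap. The paper does not use any such extra vanishing. Instead it invokes the Milnor exact sequence
\[
  *\to{\lim_s}^1\,\pi_1\Map{\A[s+2]}{\O}\to\pi_0\Map{\A}{\O}\to\lim_s\pi_0\Map{\A[s+2]}{\O}\to *
\]
and kills the $\lim^1$-term via the Complete Convergence Lemma \cite[IX.5.4]{BK72}: since $\BK[r][s][s]=0$ for $r\geq 2$ and $s>0$, the incoming differentials into $\BK[r][s][s+1]$ vanish for $r\geq 2$, and the outgoing differentials land in $\BK[r][s+r][s+r]=0$ once $r\geq 2$; hence $\BK[r][s][s+1]$ stabilises for $r>\max\{2,s\}$, giving $\lim^1_r\BK[r][s][s+1]=0$, which is precisely the hypothesis of that lemma for $i=1$. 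So the diagonal vanishing alone, which is exactly the stated hypothesis, suffices.
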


\begin{proof}
  Recall that the mapping space $\Map{\A}{\O}$ is the homotopy limit of the
  canonical tower of Kan fibrations
  \[
    \cdots\twoheadrightarrow\Map{\A[s+2]}{\O}\twoheadrightarrow\cdots\twoheadrightarrow\Map{\A[3]}{\O}\twoheadrightarrow\Map{\A[2]}{\O},\qquad s\geq0,
  \]
  which we regard as being based at the restrictions of the map $f\colon\A\to\O$.
  According to \cite[Theorem~IX.3.1]{BK72}, there is a Milnor short exact
  sequence of pointed sets
  \[
    *\rightarrow {\lim_s}^1\; \pi_1\Map{\A[s+2]}{\O}\rightarrow\pi_0\Map{\A}{\O}\rightarrow\lim_s\pi_0\Map{\A[s+2]}{\O}\rightarrow *
  \]
  
  We prove first that the map $g\colon\A\to\O$ lies in the pointed kernel of the
  rightmost map, which is to say that the restrictions of $g$ and $f$ to
  $\A[s+2]$ are homotopic for all $s\geq 0$. We prove this claim by induction on
  $s$, noticing that the case $s=0$ is obvious since the assumption
  $m_2^g=m_2=m_2^f$ means that the restrictions of $f$ and $g$ to $\A[2]$ are
  equal. Let $s>0$ and suppose that the restrictions of $f$ and $g$ to
  $\A[s+1]$ are homotopic:
  \[
    \begin{tikzcd}
      \A[s+1]\rar[hook]\dar[hook]\ar[phantom]{dr}[description]{\Rightarrow}&\A[s+2]\rar[hook]\dar&\A[2(s+1)]\dar[hook]\dlar\\
      \A\rar[swap]{f}&\O&\A\lar{g}
    \end{tikzcd}
  \]
  Consider the spectral sequence in
  \Cref{operad_obstruction_theory} associated to the map $f\colon\A\to\O$. By
  assumption, using \Cref{operad_obstruction_theory}\eqref{second_page},
  \[
    \BK[2][s][s]=\OH[s+2][-s]{\os\O}=0,\qquad s>0,
  \]
  and hence also
  \[
    \BK[r][s][s]=0,\qquad r\geq2,\quad s>0.
  \]
  Since in particular $\BK[s+1][s][s]=0$, we conclude from \Cref{operad_obstruction_theory}\eqref{lower-diagonal} that
  the restrictions of $f$ and $g$ to $\A[s+2]$ are indeed homotopic.

  To conclude the proof of the theorem it is enough to show that the
  $\lim^1$ term in the above Milnor short exact sequence collapses to a
  point. This follows immediately from the Complete Convergence Lemma in
  \cite[Section IX.5.4]{BK72} with $i=1$: Since $\BK[r][s][s]=0$ for $r\geq2$
  and $s>0$, for each $s\geq 0$ the term $\BK[r][s][s+1]$ stabilises for
  $r>\max\{2,s\}$, and consequently $\lim^1_r\BK[r][s][s+1]=0$ which is what we
  need to show for applying the aforementioned lemma.
\end{proof}

\subsubsection{Graded algebras}

The following result is a consequence of
\Cref{Kadeishvili_operads_A-infinity}.

\begin{theorem}\label{Kadeishvili_algebras_A-infinity}
	Let $A$ be a graded algebra. Suppose that its Hochschild cohomology vanishes in the following range:
	\[\HH[n+2][-n]{A}=0,\qquad n\geq1.\]
	Then, any minimal $\A$-algebra $(A,m^A)$ is gauge $\A$-isomorphic to the
  trivial minimal $A_\infty$-structure $(A,0)$.
\end{theorem}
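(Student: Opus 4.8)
The plan is to deduce \Cref{Kadeishvili_algebras_A-infinity} directly from \Cref{Kadeishvili_operads_A-infinity} by specialising to the endomorphism operad. First I would set $\O\coloneqq\E{A}$ and recall from \Cref{multiplication_endomorphism_operad} that the suspended endomorphism operad $\os\E{A}$ carries the multiplication $m_2^A$ given by the product of the graded algebra $A$. Under the dictionary of \Cref{table:EA} (see also \Cref{cohomologies}), the operad complex $\OC{\os\O}$ is the Hochschild complex $\HC{A}$ and its cohomology $\OH{\os\O}$ is the Hochschild cohomology $\HH{A}$; in particular the vanishing hypothesis $\HH[n+2][-n]{A}=0$ for $n\geq 1$ is exactly the hypothesis $\OH[n+2][-n]{\os\O}=0$ of \Cref{Kadeishvili_operads_A-infinity}.

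Next I would translate the datum of a minimal $\A$-algebra $(A,m^A)$ into a map of DG operads. A minimal $\A$-algebra structure on the graded vector space underlying $A$ whose underlying graded algebra is $A=(A,m_2^A)$ is precisely a morphism $g\colon\A\to\E{A}$ with $m_2^g=m_2^A=m_2$ (this is the content of \Cref{A_infinity_equations} and the discussion in \Cref{algebra_obstruction_theory}). Likewise, the trivial minimal $\A$-structure $(A,0)$, i.e. the one with $m_n=0$ for $n\geq 3$, corresponds to the special point $f\colon\A\to\Ass\xrightarrow{m_2}\E{A}$. \Cref{Kadeishvili_operads_A-infinity} then asserts that $g$ is homotopic to $f$ as maps $\A\to\E{A}$.

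Finally I would invoke the definition of gauge $\A$-isomorphism: by \Cref{def:gauge_O_infty_isomorphism_algebras} (with $\O=\O_\infty=\A$, which is legitimate since $\A$ is cofibrant, see \Cref{example_excellent_operads}), a gauge $\A$-isomorphism between two $\A$-algebra structures on a fixed DG vector space with the same linear part is exactly a homotopy between the corresponding maps $\A\to\E{A}$ in the mapping space $\Map{\A}{\E{A}}$. Since $f$ and $g$ have the same (vanishing) linear part and the same underlying binary product, the homotopy produced by \Cref{Kadeishvili_operads_A-infinity} is precisely a gauge $\A$-isomorphism $(A,m^A)\leadsto(A,0)$, which completes the proof.

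Since every step is a direct unwinding of definitions already set up in the excerpt, I do not anticipate a serious obstacle; the only point requiring a little care is confirming that "same underlying multiplication $m_2$" on the operad side corresponds to "same underlying graded algebra $A$" on the algebra side, and that the homotopy staying in $\Map{\A}{\E{A}}$ (rather than in a larger mapping space) is what gives a gauge isomorphism with \emph{identity} linear part—both of which are immediate from \Cref{multiplication_endomorphism_operad} and \Cref{def:gauge_O_infty_isomorphism_algebras}, respectively.
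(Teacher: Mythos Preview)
Your proposal is correct and follows exactly the paper's approach: the paper's proof is the single line ``Take $\O=\E{A}$ in \Cref{Kadeishvili_operads_A-infinity}, see also \Cref{algebra_obstruction_theory},'' and your argument is a faithful unpacking of precisely that specialisation and the dictionary in \Cref{table:EA}.
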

\begin{proof}
  Take $\O=\E{A}$ in \Cref{Kadeishvili_operads_A-infinity}, see also \Cref{algebra_obstruction_theory}.
\end{proof}

\begin{remark}
  Kadeishvili's Intrinsic Formality Theorem (\Cref{Kadeishvili_algebras})
  follows immediately from \Cref{Kadeishvili_algebras_A-infinity},
  \Cref{minimal_models_quasi_isomorphic_algebras} for $\O=\Ass$ (or~\cite[Corollaire~1.3.1.3]{Lef03}) and \Cref{quasi-iso_unit_algebras}. See
also \cite[Theorems~10.3.10 and 11.4.9]{LV12} and notice that the characteristic $0$ hypothesis
in \cite{LV12} is not needed here since we are dealing with non-symmetric
operads.
\end{remark}

\subsubsection{Graded algebras-bimodule pairs}

\Cref{Kadeishvili_operads_A-infinity} also yields the following
\emph{simultaneous} intrinsic formality theorems for graded algebra-bimodule
pairs.

\begin{theorem}\label{Kadeishvili_simultaneous_A-infinity}
	Let $A$ be a graded algebra and $M$ an $A$-bimodule. Suppose that the bimodule Hochschild cohomology vanishes in the following range:
	\[\HHE[n+2][-n]{A}{M}=0,\qquad n\geq1.\]
	Then, any pair given by a minimal $\A$-algebra $(A,m^A)$ and a minimal
  $\A$-bimodule $(M,m^M)$ over it is gauge $\A$-isomorphic to the formal pair
  consisting of $(A,0)$ and $(M,0)$.
\end{theorem}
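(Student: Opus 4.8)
The plan is to deduce \Cref{Kadeishvili_simultaneous_A-infinity} directly from \Cref{Kadeishvili_operads_A-infinity} by taking $\O=\E{A,M}$, the linear endomorphism operad of the pair. First I would recall, from \Cref{multiplication_linear_endomorphism_operad}, that the suspended operad $\os\E{A,M}$ carries the multiplication $m_2=m_2^A+m_{0,1}^M+m_{1,0}^M$ coming from the graded algebra structure on $A$ and the graded $A$-bimodule structure on $M$; this is the multiplication corresponding to the operadic suspension of $\Ass\to\E{A,M}$. By \Cref{def:bimodule_cohomologies} the cohomology of the operad complex of $\os\E{A,M}$ is precisely the bimodule Hochschild cohomology $\HHE{A}{M}$, so the hypothesis $\HHE[n+2][-n]{A}{M}=0$ for $n\geq1$ is exactly the vanishing hypothesis $\OH[n+2][-n]{\os\O}=0$ required by \Cref{Kadeishvili_operads_A-infinity}.

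Next I would translate the conclusion of \Cref{Kadeishvili_operads_A-infinity} through the dictionary of \Cref{simultaneous_obstruction_theory} (see \Cref{table:EAM}). A minimal $\A$-algebra $(A,m^A)$ together with a compatible minimal $\A$-bimodule $(M,m^M)$ over it is the same datum as a morphism of DG operads $\Astr[A\ltimes M]\colon\A\to\E{A,M}$ whose underlying multiplication is $m_2^{A\ltimes M}=m_2$; this is because the operations $m_n^{A\ltimes M}=m_n^A+m_n^M$, $n\geq2$, satisfy the $\A$-equations in $\HCE{A}{M}$ exactly when $(A,m^A)$ is a minimal $\A$-algebra and $(M,m^M)$ is a compatible minimal $\A$-bimodule, and the compatibility with $A$ is encoded by the commutativity of the triangle over $\E{A,M}\twoheadrightarrow\E{A}$. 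Applying \Cref{Kadeishvili_operads_A-infinity} with $g=\Astr[A\ltimes M]$ yields that $\Astr[A\ltimes M]$ is homotopic, in $\Map{\A}{\E{A,M}}$, to the special point $f\colon\A\to\Ass\xrightarrow{m_2}\E{A,M}$, which is precisely the $\A$-algebra–bimodule structure with all higher operations vanishing, i.e.\ the pair $((A,0),(M,0))$.

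Finally, by \Cref{def:gauge_iso_pairs}, a path between two points of $\Map{\A}{\E{A,M}}$ with the same underlying pair of graded vector spaces $(A,M)$ and identity linear part is exactly a gauge $\A$-isomorphism of pairs; since both $(A,m^A)$-with-$(M,m^M)$ and the formal pair $(A,0)$-with-$(M,0)$ have underlying DG vector spaces $A$ and $M$ with vanishing differential, the homotopy produced above is a gauge $\A$-isomorphism $\bigl((A,m^A),(M,m^M)\bigr)\leadsto\bigl((A,0),(M,0)\bigr)$, as claimed.

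I do not anticipate a genuine obstacle here: the theorem is essentially a specialisation of \Cref{Kadeishvili_operads_A-infinity}, and all the required identifications (the multiplication on $\os\E{A,M}$, the identification of $\OC{\os\E{A,M}}$ with the bimodule Hochschild complex, and the operadic description of minimal $\A$-algebra–bimodule pairs) have already been established in \Cref{sec:hochschild} and \Cref{simultaneous_obstruction_theory}. The only point requiring a modicum of care is making sure the degree conventions line up—that the relevant nonvanishing range $\OH[n+2][-n]{\os\O}$ matches $\HHE[n+2][-n]{A}{M}$ under the bigrading of \Cref{operad_complex}—but this is exactly the content of \Cref{ex:OCosEV,def:bimodule_cohomologies}, so it is routine.
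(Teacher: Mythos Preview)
Your proposal is correct and follows exactly the paper's approach: the paper's proof is the one-liner ``Take $\O=\E{A,M}$ in \Cref{Kadeishvili_operads_A-infinity}, see also \Cref{simultaneous_obstruction_theory}.'' Your write-up simply unpacks this specialisation in detail, using the same identifications from \Cref{def:bimodule_cohomologies}, \Cref{multiplication_linear_endomorphism_operad}, and \Cref{def:gauge_iso_pairs}.
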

\begin{proof}
  Take $\O=\E{A,M}$ in \Cref{Kadeishvili_operads_A-infinity}, see also \Cref{simultaneous_obstruction_theory}.
\end{proof}

The following result corresponds to \Cref{thm:Kadeishvili_algebras_bimodules} in
the introduction.

\begin{theorem}\label{Kadeishvili_simultaneous}
  Let $A$ be a graded algebra, $M$ a graded $A$-bimodule, and suppose that the
  bimodule Hochschild cohomology of the pair $(A,M)$ vanishes in the following
  range:
  \[
    \RelBimHH[n+2]<-n>{A}{M}=0,\qquad n\geq 1.
  \]
  Then, every pair $(B,N)$ consisting of a DG algebra $B$ such that $\H{B}\cong
  A$ as graded algebras and a DG $B$-bimodule $N$ such that $\H{N}\cong M$ as
  graded $A$-bimodules is quasi-isomorphic to the pair $(A,M)$, that is the
  pairs $(B,N)$ and $(A,M)$ are weakly equivalent in the model category
  $\Bimod*{\Ass}$.
\end{theorem}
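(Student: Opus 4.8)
The plan is to deduce \Cref{Kadeishvili_simultaneous} from the `$\A$-algebraic' version \Cref{Kadeishvili_simultaneous_A-infinity} by passing through minimal models, exactly as Kadeishvili's theorem (\Cref{Kadeishvili_algebras}) was deduced from \Cref{Kadeishvili_algebras_A-infinity}. The starting observation is that the vanishing hypothesis $\RelBimHH[n+2]<-n>{A}{M}=0$ for $n\geq1$ is, by \Cref{def:bimodule_cohomologies} and the identification $\RelBimHC{A}{M}=\HCE{A}{M}$, precisely the hypothesis $\HHE[n+2][-n]{A}{M}=0$ for $n\geq1$ of \Cref{Kadeishvili_simultaneous_A-infinity}. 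Here one should be a touch careful about the indexing conventions: the bimodule Hochschild complex in the introduction is written as $\RelBimHC[n]<r>{A}{M}=\HC[n][r]{A}\oplus\BimHC[n-1]<r>{M}$, which matches the square-zero extension description in \Cref{connecting_morphism}, so the two vanishing ranges genuinely coincide; I would insert one sentence making this identification explicit.

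Next I would set up the minimal models. Since $B$ is a DG algebra with $\H{B}\cong A$ as graded algebras, and $N$ is a DG $B$-bimodule with $\H{N}\cong M$ as graded $\H{B}$-bimodules, the pair $(B,N)$ is an object of $\Bimod*{\Ass}$ (equivalently, of $\Bimod*{\A}$ after transporting along $\A\to\Ass$, which is a weak equivalence of DG operads, so by \Cref{bimodule_quillen_equivalence} the two categories have equivalent homotopy categories). By \Cref{def:minimal_model_simultaneous} applied to $\O=\Ass$, $\O_\infty=\A$, the pair $(B,N)$ admits a minimal model: a minimal $\A$-algebra structure $(\H{B},m^{\H B})$ on $\H{B}$ together with a compatible minimal $\A$-bimodule structure $(\H{N},m^{\H N})$ on $\H{N}$. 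Using the fixed graded-algebra isomorphism $\H{B}\cong A$ and graded-bimodule isomorphism $\H{N}\cong M$, we may transport this along these isomorphisms and regard it as a minimal $\A$-algebra structure $(A,m^A)$ and a compatible minimal $\A$-bimodule structure $(M,m^M)$; note that $m_2^A$ is then the given graded multiplication of $A$ and $m_2^M$ the given bimodule action of $M$, since the binary operations of a minimal model recover the cohomology algebra and bimodule structures. Likewise, the formal pair $(A,M)$—with $A$ and $M$ viewed as DG objects with zero differential—has the trivial minimal model $(A,0)$, $(M,0)$.

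Now I would invoke \Cref{Kadeishvili_simultaneous_A-infinity}: under the vanishing hypothesis, the pair $\bigl((A,m^A),(M,m^M)\bigr)$ is gauge $\A$-isomorphic to the formal pair $\bigl((A,0),(M,0)\bigr)$. By \Cref{minimal_models_quasi_isomorphic_algebras-bimodules} (with $\O=\Ass$, $\O_\infty=\A$), two pairs in $\Bimod*{\Ass}$ are quasi-isomorphic—i.e.\ weakly equivalent in $\Bimod*{\Ass}$—if and only if their minimal models are gauge $\A$-isomorphic. Applying this direction to $(B,N)$ (whose minimal model is the pair just described) and to $(A,M)$ (whose minimal model is the trivial one), the gauge $\A$-isomorphism furnished by \Cref{Kadeishvili_simultaneous_A-infinity} shows that $(B,N)$ and $(A,M)$ are weakly equivalent in $\Bimod*{\Ass}$, which is exactly the conclusion.

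The main obstacle, such as it is, is bookkeeping rather than a genuine difficulty: one must make sure that `compatible minimal model of a DG-algebra/DG-bimodule pair' in the sense of \Cref{def:minimal_model_simultaneous} really does have the feature that its arity-$2$ part reproduces the given graded algebra $A$ and graded bimodule $M$ (up to the chosen isomorphisms), so that \Cref{Kadeishvili_simultaneous_A-infinity}—whose statement is about a fixed graded algebra $A$ and fixed $A$-bimodule $M$—applies verbatim. This is exactly the content of \Cref{rmk:minimal_model_simultaneous}, which presents minimal models via homotopy transfer and makes transparent that the transferred binary operations are the cup product on cohomology and the induced bimodule action; I would cite it for this point. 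One should also remark, as in the discussion following \Cref{Kadeishvili_algebras_A-infinity}, that the passage between unital and non-unital $\Ass$-bimodules is harmless here by \Cref{quasi-iso_unit_algebras_bimodules}, so that the statement may equivalently be read in the unital setting if desired.
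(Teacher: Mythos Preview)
Your proposal is correct and follows essentially the same approach as the paper's own proof, which is the one-liner ``Immediate from \Cref{Kadeishvili_simultaneous_A-infinity}, \Cref{minimal_models_quasi_isomorphic_algebras-bimodules} for $\O=\Ass$ and \Cref{quasi-iso_unit_algebras_bimodules}.'' You have simply unpacked these three ingredients with more care about the bookkeeping (identification of the two vanishing hypotheses, transport along the isomorphisms $\H{B}\cong A$ and $\H{N}\cong M$, and the arity-$2$ part of the minimal model recovering the graded structures).
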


\begin{proof}
  Immediate from \Cref{Kadeishvili_simultaneous_A-infinity}, \Cref{minimal_models_quasi_isomorphic_algebras-bimodules} for $\O=\Ass$ and \Cref{quasi-iso_unit_algebras_bimodules}.
\end{proof}

\subsubsection{Fibre-wise formality}

The following result is a fibre-wise version of \Cref{Kadeishvili_operads_A-infinity}.

\begin{theorem}\label{Kadeishvili_fibre-wise_A-infinity}
	Let $\O$ be a graded operad with a multiplication $m_2$ in $\os\O$ and $\I\subset\O$ an associative operadic ideal. Denote the canonical projection by $q\colon\O\twoheadrightarrow\O/\I$. Suppose that the cohomology of the operadic ideal complex vanishes in the following range:
	\[\IH[n+1][-n]{\os\I}[\os\O]=0,\qquad n\geq1.\]
	Then, every map $g\colon \A\to\O$ with underlying multiplication $m_2^g=m_2$ such that $q(m_n^g)=0$ for $n\geq 3$ 
  is fibre-wise homotopic to the map $f\colon \A\to\O$ defined by the multiplication $m_2^f=m_2$
  and $m_n^f=0$ for $n\geq 3$, i.e.~they are homotopic through a homotopy $\A\to\O$ which composes to the trivial homotopy $\A\to\O\twoheadrightarrow\O/\I$, see \Cref{fibre-wise_homotopy}.
\end{theorem}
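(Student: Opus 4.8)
The plan is to mimic the proof of \Cref{Kadeishvili_operads_A-infinity}, working in the fibre-wise tower of \Cref{fibre-wise_obstruction_theory} rather than in the tower of mapping spaces of \Cref{operad_obstruction_theory}. Set $h\colon\A\to\O/\I$ to be the map with $m_2^h=q(m_2)$ and $m_n^h=0$ for $n\geq3$. Since $q(m_n^f)=0=q(m_n^g)$ for $n\geq3$ while $m_2^f=m_2=m_2^g$, both $f$ and $g$ are lifts of $h$ along $q$, hence points of the homotopy fibre
\[
  \Str{\A}{h}{\I}\simeq\operatorname{holim}_{s\geq0}\Str{\A[s+2]}{h}{\I}
\]
of the tower of Kan fibrations $\cdots\twoheadrightarrow\Str{\A[s+2]}{h}{\I}\twoheadrightarrow\cdots\twoheadrightarrow\Str{\A[2]}{h}{\I}$, which I regard as based at the restrictions of $f$ (see \Cref{fibre-wise_obstruction_theory}\eqref{Bousfield-Kan_fibre-wise}). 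To prove that $f$ and $g$ are fibre-wise homotopic is to prove $[g]=[f]$ in $\pi_0\Str{\A}{h}{\I}$, and I would obtain this from the Milnor short exact sequence of pointed sets \cite[Theorem~IX.3.1]{BK72}
\[
  *\rightarrow{\lim_s}^1\,\pi_1\Str{\A[s+2]}{h}{\I}\rightarrow\pi_0\Str{\A}{h}{\I}\rightarrow\lim_s\pi_0\Str{\A[s+2]}{h}{\I}\rightarrow*
\]
by checking that the image of $[g]$ in the limit is that of the base point and that the $\lim^1$ term is trivial.

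For the first assertion I would show, by induction on $s\geq0$, that the restrictions of $f$ and $g$ to $\A[s+2]$ are fibre-wise homotopic. The case $s=0$ is immediate, since both restrictions to $\A[2]$ are the map determined by the common multiplication $m_2$. For the inductive step, assume $s\geq1$ and that the restrictions of $f$ and $g$ to $\A[s+1]$ are fibre-wise homotopic. Then \Cref{fibre-wise_obstruction_theory}\eqref{lower-diagonal_fibre-wise}, applied to the spectral sequence attached to $f\colon\A\to\O$ with $r=s+1$ (the constraint $0\leq s\leq\min\{r-1,k-r+1\}$ reads $s\leq s$ because $k=\infty$), identifies $g|_{\A[s+2]}$ with an element of the pointed set $\BK[s+1][s][s]$: indeed $g|_{\A[s+2]}$ extends to $\A[s+r+1]$ since $g$ is defined on all of $\A$, it satisfies $q(g|_{\A[s+2]})=h|_{\A[s+2]}$, and its restriction to $\A[s+1]$ is fibre-wise homotopic to that of $f$. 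By \Cref{fibre-wise_obstruction_theory}\eqref{second_page_fibre-wise} and the hypothesis, $\BK[2][s][s]=\IH[s+1][-s]{\os\I}[\os\O]=0$ for $s>0$, hence $\BK[r][s][s]=0$ for all $r\geq2$ and $s>0$; in particular $\BK[s+1][s][s]=0$, forcing $g|_{\A[s+2]}$ to agree up to fibre-wise homotopy with the base point $f|_{\A[s+2]}$. This completes the induction, and hence $[g]$ and $[f]$ have the same image in $\lim_s\pi_0\Str{\A[s+2]}{h}{\I}$.

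For the second assertion I would invoke the Complete Convergence Lemma \cite[Section~IX.5.4]{BK72} with $i=1$, exactly as in the proof of \Cref{Kadeishvili_operads_A-infinity}. Since $\BK[r][s][s]=0$ for all $r\geq2$ and $s>0$, the outgoing differential $\ssd{r}\colon\BK[r][s][s+1]\to\BK[r][s+r][s+r]$ vanishes for $r\geq2$, while the incoming differential vanishes once $r>s$ (its source lies outside the right half-plane); hence, for each $s\geq0$, the tower $\{\BK[r][s][s+1]\}_r$ stabilises for $r>\max\{2,s\}$ and $\lim^1_r\BK[r][s][s+1]=0$, which is the input required by the lemma. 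It follows that the $\lim^1$ term of the Milnor sequence collapses to a point, so $[g]=[f]$ in $\pi_0\Str{\A}{h}{\I}$, that is, $f$ and $g$ are fibre-wise homotopic.

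I do not anticipate a genuine obstacle: the substantive content is already packaged in \Cref{fibre-wise_obstruction_theory}, and the argument deviates from that of \Cref{Kadeishvili_operads_A-infinity} only by replacing ``homotopic'' with ``fibre-wise homotopic'', the tower $\Map{\A[s+2]}{\O}$ with $\Str{\A[s+2]}{h}{\I}$, and the cohomology $\OH{\os\O}$ with $\IH{\os\I}[\os\O]$. The only points that require a careful line are the identification of $\Str{\A}{h}{\I}$ with the homotopy limit of the displayed tower (so that \cite[Theorem~IX.3.1]{BK72} and \cite[Section~IX.5.4]{BK72} apply), the verification that $f$ and $g$ are genuinely points of that homotopy fibre over one and the same map $h$, and the index bookkeeping in \Cref{fibre-wise_obstruction_theory}\eqref{lower-diagonal_fibre-wise}; the square-zero structure of the ideal complex plays no role here, as the argument uses only the vanishing of the diagonal terms $\BK[r][s][s]$.
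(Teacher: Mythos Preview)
Your proposal is correct and takes essentially the same approach as the paper: the paper's proof consists of the single sentence ``The proof of \Cref{Kadeishvili_operads_A-infinity} carries over, replacing the spectral sequence in \Cref{operad_obstruction_theory} by that in \Cref{fibre-wise_obstruction_theory},'' and you have written out in full exactly that carry-over, with the correct substitutions and index bookkeeping.
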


\begin{proof}
  The proof of \Cref{Kadeishvili_operads_A-infinity} carries over, replacing the
  spectral sequence in \Cref{operad_obstruction_theory} by that in
  \Cref{fibre-wise_obstruction_theory}.
\end{proof}

\subsubsection{Graded bimodules}

We now establish the following theorem for $\A$-bimodules over a \emph{fixed}
minimal $\A$-algebra. Notice that this is not an intrinsic formality result; it
rather states that, under certain assumptions, a  minimal $\A$-bimodules is
determined by their underlying graded bimodule, but the latter need not be an
$\A$-bimodule in general.

\begin{theorem}\label{Kadeishvili_bimodules_A-infinity}
	Let $(A,m^A)$ be a minimal $\A$-algebra and $(M,m^M)$ a
  minimal $\A$-bimodule over it. Suppose that the vector spaces of
  self-extensions of $M$ vanish in the following range:
	\[\Ext[n+1][-n]{A^e}{M}{M}=0,\qquad n\geq1.\]
	Then, any other minimal $\A$-bimodule $(M,\bar{m}^M)$ over $(A,m^A)$ with the same underlying
  graded $A$-bimodule as the above, $\bar{m}_2^M=m_2^M$, is gauge $\A$-isomorphic to $(M,m^M)$.
\end{theorem}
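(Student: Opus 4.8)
The plan is to run the proof of \Cref{Kadeishvili_operads_A-infinity} with the fibre-wise spectral sequence of \Cref{fibre-wise_obstruction_theory} in place of the one of \Cref{operad_obstruction_theory}. (Since the present statement allows $(A,m^A)$ to be non-formal, it is not a direct consequence of \Cref{Kadeishvili_fibre-wise_A-infinity}, whose base point has vanishing higher operations; nonetheless the same method applies.) Concretely, I would apply \Cref{fibre-wise_obstruction_theory} to the graded operad $\O=\E{A,M}$, its associative operadic ideal $\I=\E*{A,M}$ with quotient $\O/\I=\E{A}$, and the map $h=m^A\colon\A\to\E{A}$ recording the given minimal $\A$-algebra structure $(A,m^A)$; see \Cref{bimodule_obstruction_theory} and \Cref{table:EAM-fibrewise}. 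As base point I would take the map $f\colon\A\to\E{A,M}$ corresponding to the pair $\bigl((A,m^A),(M,m^M)\bigr)$, so that $qf=h$ for the canonical projection $q\colon\E{A,M}\twoheadrightarrow\E{A}$. The competing minimal $\A$-bimodule $(M,\bar m^M)$ over $(A,m^A)$ corresponds to a map $g\colon\A\to\E{A,M}$ with $qg=h=qf$ and, since $\bar m_2^M=m_2^M$, with $m_2^g=m_2^f$. By the definition of gauge $\A$-isomorphism of $\A$-bimodules over a fixed $\A$-algebra (a path in $\Str{\A}{h}{\I}$, that is a fibre-wise homotopy in the sense of \Cref{fibre-wise_homotopy}), the theorem is precisely the assertion that $f$ and $g$ are fibre-wise homotopic.

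The key input is the second page of the spectral sequence of \Cref{fibre-wise_obstruction_theory} based at $f$. By \Cref{table:EAM-fibrewise} and \Cref{fibre-wise_obstruction_theory}\eqref{second_page_fibre-wise} one has $\BK[2][s][t]=\Ext[s+1][-t]{A^e}{M}{M}$ for $s>0$, so the vanishing hypothesis gives $\BK[2][s][s]=\Ext[s+1][-s]{A^e}{M}{M}=0$ for every $s\geq 1$, and hence $\BK[r][s][s]=0$ for all $r\geq 2$ and $s\geq 1$, later pages being subquotients of $\BK[2]$. (The arity shift built into \Cref{def:op-Ext} is why the relevant vanishing range is $\Ext[n+1][-n]{A^e}{M}{M}$ rather than $\HH[n+2][-n]{A}$ as in the algebra case.) Regarding $\Str{\A}{h}{\I}$ as the homotopy limit of the tower of Kan fibrations $\cdots\twoheadrightarrow\Str{\A[s+2]}{h}{\I}\twoheadrightarrow\cdots\twoheadrightarrow\Str{\A[2]}{h}{\I}$ based at the restrictions of $f$, I would then argue with the Milnor short exact sequence of pointed sets \cite[Theorem~IX.3.1]{BK72} exactly as in \Cref{Kadeishvili_operads_A-infinity}. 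First, an induction on $s$ shows that the restrictions of $f$ and $g$ to $\A[s+2]$ are fibre-wise homotopic: the case $s=0$ is immediate because $m_2^g=m_2^f$ and $qg=h=qf$, and the inductive step follows from $\BK[s+1][s][s]=0$ ($s\geq 1$) together with \Cref{fibre-wise_obstruction_theory}\eqref{lower-diagonal_fibre-wise}, using that $g$, being defined on all of $\A$, extends to every $\A[s+r+1]$. This places $g$ in the pointed kernel of $\pi_0\Str{\A}{h}{\I}\to\lim_s\pi_0\Str{\A[s+2]}{h}{\I}$. Second, the $\lim^1$-term of the Milnor sequence collapses by the Complete Convergence Lemma \cite[Section~IX.5.4]{BK72} with $i=1$: for each $s$ the term $\BK[r][s][s+1]$ stabilises once $r>\max\{2,s\}$, since its outgoing differential lands in $\BK[r][s+r][s+r]=0$ and its incoming differential issues from $\BK[r][s-r][s+2-r]$, which vanishes for $r>s$ by the half-plane constraints of \Cref{operad_obstruction_theory} (inherited by the fibre-wise spectral sequence). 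Therefore $\pi_0\Str{\A}{h}{\I}\hookrightarrow\lim_s\pi_0\Str{\A[s+2]}{h}{\I}$, so $f$ and $g$ represent the same class and are fibre-wise homotopic, as required.

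I do not expect a genuine obstacle beyond careful bookkeeping: that $f$ and $g$ correspond to the pairs $\bigl((A,m^A),(M,m^M)\bigr)$ and $\bigl((A,m^A),(M,\bar m^M)\bigr)$ under the dictionary of \Cref{table:EAM-fibrewise}; that a fibre-wise homotopy in $\Str{\A}{h}{\I}$ is the same datum as a gauge $\A$-isomorphism of $\A$-bimodules over the fixed $(A,m^A)$; and that the arity shift is tracked so that the hypothesis is invoked in bidegrees $(s+1,-s)$. All the substantive work is already done in \Cref{fibre-wise_obstruction_theory}: its analogues of items \eqref{truncated_ss}--\eqref{ss_cohomology} of \Cref{operad_obstruction_theory}, together with items \eqref{lower-diagonal_fibre-wise} and \eqref{second_page_fibre-wise} of \Cref{fibre-wise_obstruction_theory}, are exactly what let the proof of \Cref{Kadeishvili_operads_A-infinity} carry over verbatim to this fibre-wise context.
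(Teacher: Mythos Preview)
Your proposal is correct and follows the same approach as the paper: apply the fibre-wise spectral sequence of \Cref{fibre-wise_obstruction_theory} with $\O=\E{A,M}$, $\I=\E*{A,M}$ and run the Milnor/Complete Convergence argument of \Cref{Kadeishvili_operads_A-infinity}. Your observation that \Cref{Kadeishvili_fibre-wise_A-infinity} as stated has trivial base $h$ (forcing $(A,m^A)$ formal) is well taken; the paper's one-line proof implicitly relies on the fact that the proof of \Cref{Kadeishvili_fibre-wise_A-infinity} works verbatim for an arbitrary base point $f$ over any $h$, and you have made this explicit.
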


\begin{proof}
  Take $\O=\E{A,M}$ and $\I=\E*{A,M}$ in \Cref{Kadeishvili_fibre-wise_A-infinity}, see also \Cref{bimodule_obstruction_theory}.
\end{proof}

The following intrinsic formality result for graded bimodules over a \emph{fixed}
graded algebra corresponds to \Cref{Kadeishvili_bimodules} in the
introduction.

\begin{theorem}\label{Kadeishvili_bimodules-main-text}
	Let $A$ be a graded algebra and $M$ a graded $A$-bimodule. Suppose that the
  vector spaces of self-extensions of $M$ vanish in the following
  range:
  \[
    \Ext[n+1][-n]{A^e}{M}{M}=0,\qquad n\geq1.
  \]
	Then, every DG $A$-bimodule $N$ such that $\dgH{N}\cong M$ as graded
  $A$-bimodules is formal, that is $N$ quasi-isomorphic to $M$ where the latter
  is viewed as a DG $A$-bimodule with vanishing differential.
\end{theorem}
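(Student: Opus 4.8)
The plan is to deduce \Cref{Kadeishvili_bimodules-main-text} from \Cref{Kadeishvili_bimodules_A-infinity} in essentially the same way that \Cref{Kadeishvili_algebras} was deduced from \Cref{Kadeishvili_algebras_A-infinity}, but working in the category of bimodules over a \emph{fixed} algebra rather than in the category of algebras. First I would reduce to the minimal-model picture: let $N$ be a DG $A$-bimodule with $\dgH{N}\cong M$ as graded $A$-bimodules. Regarding $A$ as an $\Ass$-algebra, choose a minimal model $(A,m^A)=(A,0)$ of $A$ (since $A$ already has vanishing differential, the trivial $\A$-structure works as its minimal model), and let $(M,m^M)$ be a minimal $\A$-bimodule over $(A,0)$ obtained as a minimal model of $N$ compatible with this minimal model of $A$, in the sense of \Cref{def:minimal_model_simultaneous} and \Cref{rmk:Str}(3); its underlying graded bimodule is $\dgH{N}\cong M$ with $m_2^M$ the given $A$-bimodule structure. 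On the other hand, $M$ viewed as a DG $A$-bimodule with zero differential is already minimal, so the trivial minimal $\A$-bimodule $(M,\bar m^M)$ with $\bar m^M_n=0$ for $n\geq 3$ is a minimal model of $M$, again compatible with $(A,0)$ and with the same underlying graded bimodule $\bar m_2^M=m_2^M$.

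The hypothesis $\Ext[n+1][-n]{A^e}{M}{M}=0$ for $n\geq 1$ is exactly the hypothesis of \Cref{Kadeishvili_bimodules_A-infinity} applied to the minimal $\A$-algebra $(A,0)$ and the two minimal $\A$-bimodules $(M,m^M)$ and $(M,0)$ over it. That theorem then yields a gauge $\A$-isomorphism $(M,m^M)\leadsto (M,0)$ of $\A$-bimodules over the fixed $\A$-algebra $(A,0)$ (a path in the space $\Str{\A}{A}{M}$ of \Cref{space_of_bimodule_structures}, fixing the $\A$-algebra structure on $A$). Next I would upgrade this gauge isomorphism of minimal models into a genuine quasi-isomorphism of DG $A$-bimodules. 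For this I invoke \Cref{minimal_models_quasi_isomorphic_bimodules}: the operad $\O=\Ass$ with $\O_\infty=\A$ is such that, because $\A$ is excellent (\Cref{example_excellent_operads}) and hence the hypotheses of that proposition are met, two $\Ass$-$A$-bimodules are quasi-isomorphic in $\Bimod*{\Ass,A}$ if and only if their minimal models compatible with a common minimal model of $A$ are gauge $\A$-isomorphic. Applying this to $N$ and to $M$ (the latter as a DG $A$-bimodule with zero differential), whose chosen minimal models are precisely $(M,m^M)$ and $(M,0)$, we conclude that $N$ and $M$ are quasi-isomorphic as $\Ass$-$A$-bimodules, i.e.\ as (possibly non-unital) DG $A$-bimodules.

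The final step is to pass from non-unital to unital bimodules. Here $A$ is a unital DG algebra and $N$, $M$ are unital $A$-bimodules (an $A$-bimodule in the classical sense is unital; the graded bimodule $M=\dgH N$ and the DG bimodule $N$ are unital, and one may arrange $N$ to be a unital $A$-bimodule from the start, or replace it by a quasi-isomorphic unital one). By \Cref{quasi-iso_unit_bimodules}, $M$ and $N$ are quasi-isomorphic in $\Bimod*{\uAss,A}$ if and only if they are quasi-isomorphic in $\Bimod*{\Ass,A}$, which we have just established. Hence $N$ is quasi-isomorphic to $M$ as a unital DG $A$-bimodule, i.e.\ $N$ is formal. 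I expect the main obstacle to be bookkeeping rather than any genuine difficulty: one must check carefully that \Cref{minimal_models_quasi_isomorphic_bimodules} applies with $\O=\Ass$ (using excellence of $\A$, not cofibrancy of $A$, since $A$ need not be cofibrant as an $\Ass$-algebra), that the minimal models chosen for $N$ and for the formal bimodule $M$ are compatible with \emph{one and the same} minimal model $(A,0)$ of $A$, and that the unitality issues are handled exactly as in the algebra case via \Cref{quasi-iso_unit_bimodules}; no new estimates or constructions beyond those already in the excerpt are needed.
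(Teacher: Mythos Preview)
Your proposal is correct and follows exactly the paper's approach: invoke \Cref{Kadeishvili_bimodules_A-infinity}, then \Cref{minimal_models_quasi_isomorphic_bimodules} for $\O=\Ass$, and finally \Cref{quasi-iso_unit_bimodules}. One small slip: the hypothesis of \Cref{minimal_models_quasi_isomorphic_bimodules} requires that $\O$ (here $\Ass$) be excellent, not $\O_\infty=\A$; fortunately $\Ass$ is excellent by \Cref{example_excellent_operads}, so the argument still goes through.
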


\begin{proof}
  Immediate from \Cref{Kadeishvili_bimodules_A-infinity}, \Cref{minimal_models_quasi_isomorphic_bimodules} for $\O=\Ass$ and \Cref{quasi-iso_unit_bimodules}.
\end{proof}

\subsection{Almost-formality results}

We now establish a generalisation of \Cref{Kadeishvili_operads_A-infinity}.

\begin{definition}\label{def:Massey_operad}
  We make the following definitions.
  \begin{enumerate}
	\item A \emph{Massey (graded) operad} is a graded operad $\O$ equipped with a
    multiplication $m_2\in\O<2>^1$ (\Cref{operad_multiplication}) and an operad cohomology class
    \[m_3\in\OH[3][-1]{\os\O},\]
    called \emph{universal Massey product}, satisfying $\Sq[m_3]=0$.
  \item The \emph{Massey operad complex} of a Massey operad is the bigraded (cochain) complex
    \begin{align*}
      \OC[s]{\os\O,m_3}&=\OH[s]{\os\O}&s&\geq 2\\
      \OC[s]{\os\O,m_3}&=0&s&<2,
    \end{align*}
    with bidegree $(2,-1)$ differential given by
    \[d\colon \OH[s][t]{\os\O}\longrightarrow\OH[s+2][t-1]{\os\O},\qquad d(x)=[m_3,x],\]
    except if $\cchar{\kk}=2$ and $(s,t)=(2,-1)$, in which case the differential is given by
    \[d\colon \OH[2][-1]{\os\O}\longrightarrow\OH[4][-2]{\os\O},\qquad
      d(x)=x^2+[m_3,x],\]
    compare with \Cref{operad_obstruction_theory}\eqref{second_differential}.
    The bigraded cohomology of this complex,
    \[\OH{\os\O,m_3}\coloneqq\H[\bullet,*]{\OC{\os\O,m_3}},\]
    is the \emph{Massey operad cohomology} of the Massey operad $\O$.
  \end{enumerate}
\end{definition}

\begin{remark}\label{rem:Massey_operad}
	The Massey operad complex is indeed a complex since, using the equations in \Cref{def:Gerstenhaber_square},
	\[d^2(x)=[m_3,[m_3,x]]=[\Sq[m_3],x]=[0,x]=0.\]
	We must also consider the special casen when $\cchar{\kk}=2$ and $x$ has bidegree $(s,t)=(2,-1)$. Then, using also the Gerstenhaber relation and graded commutativity,
	\[d^2(x)=[m_3,x^2+[m_3,x]]=[m_3,x^2]=2x[m_3,x]=0.\]
\end{remark}

\begin{remark}
	If $\cchar{\kk}\neq 2$, the Massey operad complex is a complex of bigraded vector spaces, and so are the Massey operad cohomology and the cocycles. In $\cchar{\kk}=2$, everything consists of vector spaces except possibly the cohomology in bidegrees $(2,-1)$, and $(4,-2)$ and the cocycles in bidegree $(2,-1)$.
\end{remark}

\begin{remark}\label{rmk:A-O_canonical_Massey_operad}
Let $f\colon \A[k+2]\to\O$ be a map of DG operads with $k\geq 3$. It follows
from \Cref{operad_obstruction_theory}\eqref{first_obstruction} that $\O$ is a
Massey operad with multiplication
\[
  m_2^f\in\O<2>^0=\os\O<2>^1
\]
and universal Massey product
\[
  \Hclass{m_3^f}\in\OH[3][-1]{\os\O};
\]
the third page of the (truncated) spectral sequence in \Cref{operad_obstruction_theory}\eqref{truncated_ss} satisfies 
	\[\BK[3][s][t]=\OH[s+2][-1]{\os\O,\Hclass{m_3^f}},\qquad s\geq 1,\qquad t\in\ZZ,\]
	where we also use \Cref{operad_obstruction_theory}\eqref{second_differential}.
\end{remark}

The following result is a vast generalisation of \cite[Theorem~B]{JKM22}.

\begin{theorem}\label{secondary_Kadeishvili_operads_A-infinity_uniqueness}
	Let $\O$ be a graded operad and $f\colon\A\to\O$ a morphism of DG operads, so
  that $\O$ is equipped with the multiplication $m_2^f$ and the universal Massey
  product $\Hclass{m_3^f}$ (\Cref{rmk:A-O_canonical_Massey_operad}). Suppose that the corresponding Massey operad
  cohomology vanishes in the following range:
	\[\OH[n+2][-n]{\os\O,\Hclass{m_3}}=0,\qquad n>1.\]
	Then, every map $g\colon \A\to\O$ inducing the same multiplication
  $m_2^g=m_2^f$ on $\os\O$ and the same universal Massey product,
  \[
    \Hclass{m_3^g}=\Hclass{m_3^f}\in\OH[3][-1]{\os\O},
  \]
  is homotopic to $f$.
\end{theorem}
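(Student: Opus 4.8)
The plan is to adapt the proof of \Cref{Kadeishvili_operads_A-infinity} verbatim, replacing the second-page vanishing input by a third-page vanishing input and running the same Milnor/Bousfield--Kan convergence argument one page later. The key observation is that the hypothesis $m_2^g=m_2^f$ and $\Hclass{m_3^g}=\Hclass{m_3^f}$ guarantees that the restrictions of $f$ and $g$ to $\A[4]$ are already homotopic: indeed $m_2^g=m_2^f$ forces agreement on $\A[2]$, and then \Cref{operad_obstruction_theory}\eqref{lower-diagonal_second_page} identifies the obstruction to matching on $\A[4]$ (after $\A[2]$) with the difference $\Hclass{m_3^g}-\Hclass{m_3^f}\in\BK[2][1][1]=\OH[3][-1]{\os\O}$, which vanishes by assumption. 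So the induction will start at $s=2$ rather than $s=0$.

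First I would set up, exactly as in the proof of \Cref{Kadeishvili_operads_A-infinity}, the canonical tower of Kan fibrations $\cdots\twoheadrightarrow\Map{\A[s+2]}{\O}\twoheadrightarrow\cdots\twoheadrightarrow\Map{\A[2]}{\O}$ based at the restrictions of $f$, together with the Milnor short exact sequence of pointed sets from \cite[Theorem~IX.3.1]{BK72}. I would then consider the spectral sequence of \Cref{operad_obstruction_theory} associated to $f\colon\A\to\O$, and use \Cref{rmk:A-O_canonical_Massey_operad} to identify its third page in the relevant diagonal: $\BK[3][s][s]=\OH[s+2][-s]{\os\O,\Hclass{m_3^f}}$ for $s\geq1$. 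The hypothesis $\OH[n+2][-n]{\os\O,\Hclass{m_3}}=0$ for $n>1$ says precisely that $\BK[3][s][s]=0$ for $s\geq2$, hence $\BK[r][s][s]=0$ for all $r\geq3$ and $s\geq2$.

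Next I would prove by induction on $s\geq2$ that the restrictions of $f$ and $g$ to $\A[s+2]$ are homotopic. The base case $s=2$ is the observation above using \Cref{operad_obstruction_theory}\eqref{lower-diagonal_second_page} (combined with $m_2^f=m_2^g$). For the inductive step, assuming the restrictions agree on $\A[s+1]$ with $s\geq3$, I would invoke \Cref{operad_obstruction_theory}\eqref{lower-diagonal} with the page $r=s+1\geq4$: the relevant obstruction lies in $\BK[s+1][s][s]=0$, so the restrictions extend to agree on $\A[s+2]$. This shows $g$ lies in the pointed kernel of $\pi_0\Map{\A}{\O}\to\lim_s\pi_0\Map{\A[s+2]}{\O}$. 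Finally, to kill the $\lim^1$ term I would apply the Complete Convergence Lemma \cite[Section~IX.5.4]{BK72} with $i=1$: since $\BK[r][s][s]=0$ for $r\geq3$ and $s\geq2$, for each $s$ the group $\BK[r][s][s+1]$ stabilises for $r$ large (here one uses that incoming and outgoing differentials at $\BK[r][s][s+1]$ land in terms on the diagonal or its neighbour, which are eventually zero), giving $\lim^1_r\BK[r][s][s+1]=0$ and hence $g\simeq f$.

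The main obstacle I anticipate is bookkeeping at the low-degree fringe: the terms $\BK[r][s][s]$ for $s=0,1$ are not covered by the hypothesis (which only asserts vanishing for $n>1$, i.e.\ $s\geq2$), and $\BK[2][0][0]$, $\BK[2][1][1]$ are only a pointed set resp.\ a vector space, so the induction genuinely must be primed by hand at $s=0$ (trivial, from $m_2^f=m_2^g$) and $s=1$ (from the equality of universal Massey products via \Cref{operad_obstruction_theory}\eqref{lower-diagonal_second_page}) before the uniform argument on pages $r\geq3$ takes over. One must also check the $\cchar{\kk}=2$ subtlety: the differential $\ssd{2}$ out of $\BK[2][1][1]$ and the definition of $\BK[3][s][t]$ involve the Gerstenhaber square, but since $|m_3|=2$ is even the square-term in \Cref{operad_obstruction_theory}\eqref{second_differential} only affects the bottom row $t=-1$, which is consistent with the Massey operad complex of \Cref{def:Massey_operad}, so \Cref{rmk:A-O_canonical_Massey_operad} applies uniformly and no extra case analysis is needed in the convergence step.
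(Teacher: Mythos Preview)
Your proposal is correct and follows exactly the paper's own argument: identify $\BK[3][s][s]$ with the Massey operad cohomology via \Cref{rmk:A-O_canonical_Massey_operad}, prime the induction at $s=0$ and $s=1$ using $m_2^g=m_2^f$ and \Cref{operad_obstruction_theory}\eqref{lower-diagonal_second_page}, then invoke \Cref{operad_obstruction_theory}\eqref{lower-diagonal} together with $\BK[s+1][s][s]=0$ for $s\geq 2$, and finish with the $\lim^1$ step as in \Cref{Kadeishvili_operads_A-infinity}. One small slip: \Cref{operad_obstruction_theory}\eqref{lower-diagonal_second_page} only yields that the restrictions of $f$ and $g$ to $\A[3]$ are homotopic (the case $s=1$), not to $\A[4]$ as you write in the opening paragraph---your own ``obstacle'' paragraph gets this right, and the case $s=2$ is then the first instance of the uniform inductive step using $\BK[3][2][2]=0$.
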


\begin{proof}
  The argument is entirely analogous to the one used to prove
  \Cref{Kadeishvili_operads_A-infinity}, using also the spectral sequence
  \Cref{operad_obstruction_theory} associated to the map $f\colon\A\to\O$. It
  suffices to observe that, by assumption and using \Cref{operad_obstruction_theory}\eqref{second_differential},
  \[
    \BK[3][s][s]=\OH[s+2][-s]{\os\O,\Hclass{m_3^f}}=0,\qquad s>1,
  \]
  and hence also
  \[
    \BK[r][s][s]=0,\qquad r\geq3,\quad s>1.
  \]
  We prove by induction on $s\geq 0$ that the restrictions of $f$ and $g$ to
  $\A[s+2]$ are homotopic. For $s=0$ this corresponds to the fact that the
  restrictions of $f$ and $g$ to $\A[2]$ are equal since $m_2^f=m_2^g$ by
  assumption; the case $s=1$ follows from
  \Cref{operad_obstruction_theory}\eqref{lower-diagonal_second_page}. Assuming
  the result for $s-1$, the case $s$ follows from
  \Cref{operad_obstruction_theory}\eqref{lower-diagonal} since
  $\BK[s+1][s][s]=0$. The rest is exactly as in the proof of
  \Cref{Kadeishvili_operads_A-infinity}.
\end{proof}

\begin{remark}
  \label{rmk:Massey_implies_Kadeishvili}
  Notice that \Cref{Kadeishvili_operads_A-infinity} is a direct consequence of
  \Cref{secondary_Kadeishvili_operads_A-infinity_uniqueness}: Under the
  assumptions of \Cref{Kadeishvili_operads_A-infinity}, we have
  \[
    \Hclass{m_3^f}\in\OH[3][-1]{\os\O}=0
  \]
  and therefore
  \[
    \OH[n+2][-n]{\os\O,\Hclass{m_3^f}}=\OH[n+2][-n]{\os\O,0}=\OH[n+2][-n]{\os\O}=0,\qquad n>1.
  \]
  The conclusion in \Cref{Kadeishvili_operads_A-infinity} therefore follows
  by applying \Cref{secondary_Kadeishvili_operads_A-infinity_uniqueness}
  (compare with the proof of \cite[Corollary~5.2.8]{JKM22}).
\end{remark}

The following existence result is of independent interest (compare with~\cite[Theorem~5.1.2]{JKM22}).

\begin{theorem}\label{secondary_Kadeishvili_operads_A-infinity_existence}
	Let $\O$ be a graded operad and $f\colon\A[5]\to\O$ a morphism of DG operads
  with multiplication $m_2^f$ and universal Massey product $\Hclass{m_3^f}$ (\Cref{rmk:A-O_canonical_Massey_operad}).
  Suppose that the corresponding Massey operad cohomology vanishes in the
  following range:
	\[\OH[n+3][-n]{\os\O,\Hclass{m_3}}=0,\qquad n>1.\]
	Then, there exists a map $g\colon \A\to\O$ inducing the same multiplication
  $m_2^g=m_2^f$ on $\os\O$ and the same universal Massey product as $f$,
  \[
    \Hclass{m_3^g}=\Hclass{m_3^f}\in\OH[3][-1]{\os\O}.
  \]
\end{theorem}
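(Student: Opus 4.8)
The plan is to mimic the strategy used in the proof of \Cref{secondary_Kadeishvili_operads_A-infinity_uniqueness}, but now building the map $g\colon\A\to\O$ by successive extension rather than proving that two given maps become homotopic. The key tool is the obstruction theory of \Cref{operad_obstruction_theory} applied to the given map $f\colon\A[5]\to\O$ (which already pins down the multiplication $m_2^f$ and the universal Massey product $\Hclass{m_3^f}$, hence the Massey operad structure on $\O$ via \Cref{rmk:A-O_canonical_Massey_operad}). I would argue by induction on $k\geq 3$ that $f$ extends to a map $f_k\colon\A[k+2]\to\O$ whose restriction to $\A[4]$ agrees (up to homotopy) with that of $f$, so that in particular $m_2^{f_k}=m_2^f$ and $\Hclass{m_3^{f_k}}=\Hclass{m_3^f}$; passing to the homotopy limit $\Map{\A}{\O}=\operatorname{holim}_k\Map{\A[k+2]}{\O}$ then produces the desired $g$, provided the relevant $\lim^1$ term vanishes.

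The inductive step is exactly where \Cref{operad_obstruction_theory}\eqref{obstructions} enters: given $f_k\colon\A[k+2]\to\O$, the obstruction to extending it to $\A[k+3]$ while keeping the restriction to $\A[\ell+2]$ fixed lives in $\BK[k+1-\ell][k+1][k]$ for $\lceil\frac{k-1}{2}\rceil\le\ell\le k$. Taking $\ell$ as small as possible, it suffices to know that the terms $\BK[r][k+1][k]$ vanish for all $r\geq 3$ (the page $r=2$ is not available here since the primary obstruction need not vanish, but it is exactly the vanishing hypothesis on higher pages that we are given). Using \Cref{operad_obstruction_theory}\eqref{second_differential} and \Cref{rmk:A-O_canonical_Massey_operad}, the third page is $\BK[3][s][t]=\OH[s+2][-1]{\os\O,\Hclass{m_3^f}}$ along the relevant diagonal, and our assumption $\OH[n+3][-n]{\os\O,\Hclass{m_3}}=0$ for $n>1$ gives precisely $\BK[3][k+1][k]=\OH[k+3][-k]{\os\O,\Hclass{m_3^f}}=0$ for $k>1$; since all later pages are subquotients, $\BK[r][k+1][k]=0$ for all $r\geq 3$. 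For the base of the induction ($k=3$, i.e.\ extending $\A[5]\to\A[6]$) one should check that the obstruction with $\ell=k-1=2$ is the relevant page-$3$ term, which again vanishes by hypothesis; the cases $k=1,2$ do not arise since we start from $\A[5]$.

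The $\lim^1$-vanishing needed to conclude from the Milnor sequence of \cite[Theorem~IX.3.1]{BK72} is handled as in the proof of \Cref{Kadeishvili_operads_A-infinity}: one invokes the Complete Convergence Lemma \cite[Section~IX.5.4]{BK72}, and for this it is enough that for each $s$ the terms $\BK[r][s][s+1]$ stabilise as $r\to\infty$, which follows once $\BK[r][s][s]=0$ for $r$ large (here this holds for $r\geq 3$ and $s>1$, by the hypothesis combined with \Cref{operad_obstruction_theory}\eqref{second_page}; the low cases $s\le 1$ are finite-dimensional stabilisation issues handled as in the quoted proof). The main obstacle I anticipate is bookkeeping the interaction between the two independent choices we are allowed at each stage—which lower-arity operations to keep fixed (the parameter $\ell$) versus which to modify—so that the obstructions consistently land on the single diagonal $(s,t)=(k+1,k)$ and are killed by the given vanishing range; this is precisely the kind of indexing care already exercised in \cite{Mur20} and in \Cref{operad_obstruction_theory}, so it should go through, but it is the step where an off-by-one error is most likely to creep in.
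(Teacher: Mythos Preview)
Your inductive extension idea is right, but as written it does not produce a point of $\Map{\A}{\O}$. Item \eqref{obstructions} of \Cref{operad_obstruction_theory} only guarantees that the extension $f_{k+1}\colon\A[k+3]\to\O$ agrees with $f_k$ on $\A[\ell+2]$ for some $\ell<k$; in particular $[f_{k+1}|_{\A[k+2]}]$ need not equal $[f_k]$ in $\pi_0\Map{\A[k+2]}{\O}$, so your sequence $(f_k)$ is \emph{not} a point of $\lim_k\pi_0\Map{\A[k+2]}{\O}$ and the Milnor sequence gives you nothing to lift. Your $\lim^1$ argument does not repair this and is in any case based on the wrong vanishing: the hypothesis $\OH[n+3][-n]{\os\O,\Hclass{m_3^f}}=0$ translates via \Cref{rmk:A-O_canonical_Massey_operad} to $\BK[3][n+1][n]=0$, one step \emph{below} the diagonal, whereas your claim ``$\BK[r][s][s]=0$ for $r\geq 3$'' concerns the diagonal itself and is simply not implied. (There is also an off-by-one at the base: for $k=3$ the page-$3$ obstruction corresponds to $\ell=k-2=1$, not $\ell=k-1=2$.)

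The paper avoids both problems with an ``extend-then-restrict'' trick that yields a \emph{strictly} compatible tower, so no $\lim^1$ argument is needed. One constructs maps $h_s\colon\A[s+3]\to\O$ such that each $h_s$ extends to $\A[s+4]$ and $h_s|_{\A[s+1]}=h_{s-1}|_{\A[s+1]}$. Inductively, pick an extension $\tilde h_s\colon\A[s+3]\to\O$ of $h_{s-1}$ and apply \Cref{operad_obstruction_theory}\eqref{obstructions} with $k=s+1$, $\ell=s-1$: the obstruction lies in $\BK[3][s+2][s+1]$, which vanishes by hypothesis, so there exists $\tilde h_{s+1}\colon\A[s+4]\to\O$ agreeing with $\tilde h_s$ on $\A[s+1]$; set $h_s:=\tilde h_{s+1}|_{\A[s+3]}$. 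Then $g_s:=h_s|_{\A[s+2]}$ satisfies $g_s|_{\A[s+1]}=g_{s-1}$ on the nose, hence assembles to a point $g\in\Map{\A}{\O}$ with $g|_{\A[3]}=f|_{\A[3]}$, giving $m_2^g=m_2^f$ and $\Hclass{m_3^g}=\Hclass{m_3^f}$.
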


\begin{proof}
	We show by induction the existence of maps \[h_s\colon\A[s+3]\to\O,\qquad s\geq 0,\] such that:
	\begin{itemize}
    \item $h_0$ and $h_1$ are the restrictions of $f$ to $\A[3]$ and $\A[4]$, respectively.
		\item Each $h_s$ extends to $\A[s+4]$.
		\item The restrictions of $h_s$ and $h_{s-1}$ to $\A[s+1]$ coincide:
      \[
        \begin{tikzcd}
          \A[s+1]\rar[hook]\dar[hook]\ar[phantom]{dr}[description]{=}&\A[s+3]\dar{h_s}\\
          \A[s+2]\rar[swap]{h_{s-1}}&\O
        \end{tikzcd}
      \]
	\end{itemize}
	There is nothing to prove for $s=0,1$. Let $s\geq 2$ and suppose that we have
  proved the claim up to $s-1$. Let $\tilde{h}_s\colon\A[s+3]\to\O$ be an
  extension of $h_{s-1}\colon\A[s+2]\to\O$, whose existence is part of the
  claim for $s-1$. We consider the truncated spectral sequence in
  \Cref{operad_obstruction_theory} associated to $\tilde{h}_s$. By
  \Cref{operad_obstruction_theory}\eqref{second_differential} and
  \Cref{rmk:A-O_canonical_Massey_operad}, the vanishing hypothesis yields
  \[
    \BK[3][s+1][s]=\OH[s+3][-s]{\os\O,\Hclass{m_3^f}}=0,\qquad s>1.
  \]
  By \Cref{operad_obstruction_theory}\eqref{obstructions} for $k=s+1$ and
  $l=s-1$, since $s\geq 2$, there exists a map
  $\tilde{h}_{s+1}\colon\A[s+4]\to\O$ whose restriction to $\A[s+1]$ coincides
  with the restriction of $\tilde{h}_s$, that is with $h_{s-1}$. The claim for
  $s$ follows by defining $h_s\colon\A[s+3]\to\O$ as the restriction of $\tilde{h}_{s+1}$.
	
	Consider the sequence of maps $g_s\colon\A[s+2]\to\O$, $s\geq 0$, defined as
  the restrictions of $h_s\colon\A[s+3]\to\O$ to $\A[s+2]$. In this case, $g_1$
  is the restriction of $f$ to $\A[3]$ and, for $s\geq1$, the restriction of
  $g_s$ to $\A[s+1]$ is $g_{s-1}$. Hence, the above sequence yields a point $g\colon \A\to\O$ of the homotopy limit
  \[
    \Map{\A}{\O}\simeq\operatorname{holim}_{s\geq0}\Map{\A[s+2]}{\O},
  \]
  whose restriction to $\A[s+2]$ is $g_s$
  for all $s\geq 0$ (compare with~\cite[Section~IX.3.1]{BK72}).
	By \Cref{operad_obstruction_theory}\eqref{lower-diagonal_second_page}, the map
  $g$ induces the same multiplication, $m_2^g=m_2^f$, and the same universal
  Massey product as $f$,
  \[
    \Hclass{m_3^g}=\Hclass{m_3^f}\in\OH[3][-1]{\os\O}.
  \]
  This finishes the proof.
\end{proof}

\begin{remark}
  The properties of the map $g$ in \Cref{secondary_Kadeishvili_operads_A-infinity_existence} are equivalent to saying that 
  the restrictions of the maps $f$ and $g$ to $\A[3]$ are homotopic,
  see \Cref{operad_obstruction_theory}\eqref{lower-diagonal_second_page}.
\end{remark}

\subsubsection{Graded algebras}

\begin{definition}\label{def:Massey_algebra}
  We make the following definitions.
  \begin{enumerate}
  \item
    A \emph{Massey (graded) algebra} is a pair $(A,m_3)$ consisting of a graded algebra $A$ and a Hochschild cohomology class
    \[m_3\in\HH[3][-1]{A},\]
    called \emph{universal Massey product}, satisfying $\Sq[m_3]=0$.
	\item The \emph{Hochschild-Massey complex} of a Massey algebra is the bigraded
    (cochain) complex
    \begin{align*}
      \HMC[s]{A}[m_3]&=\HH[s]{A}&s&\geq 2,\\
      \HMC[s]{A}[m_3]&=0&s&<2,
    \end{align*}
    with the bidegree $(2,-1)$ differential given by
    \[d\colon \HH[s][t]{A}\longrightarrow\HH[s+2][t-1]{A},\qquad d(x)=[m_3,x],\]
    except if $\cchar{\kk}=2$ and $(s,t)=(2,-1)$, in which case the differential is given by
    \[d\colon \HH[2][-1]{A}\longrightarrow\HH[4][-2]{A},\qquad d(x)=x^2+[m_3,x].\]
    The bigraded cohomology of this complex,
    \[\HMH{A}[m_3]\coloneqq\H[\bullet,*]{\HMC{A}[m_3]},\]
    is called \emph{Hochschild--Massey cohomology} of the Massey algebra $(A,m_3)$.
  \end{enumerate}
\end{definition}

A Massey algebra structure on a graded vector space $A$ is the same as a Massey
operad structure on its endomorphism operad $\E{A}$. Hence,
\Cref{secondary_Kadeishvili_operads_A-infinity_uniqueness,secondary_Kadeishvili_operads_A-infinity_existence}
have the following immediate consequences.

\begin{theorem}\label{secondary_Kadeishvili_algebras_A-infinity_uniqueness}
	Let $(A,m^A)$ be a minimal $\A$-algebra. Suppose that the Hochschild--Massey cohomology of
  the pair $(A,\Hclass{m_3^A})$ vanishes in the following range:
	\[\HMH[n+2]<-n>{A}[\Hclass{m^A_3}]=0,\qquad n>1.\]
	Then, every minimal $\A$-algebra $(A,\bar{m}^A)$ with the same underlying
  graded algebra, $\bar{m}_2^A=m_2^A$, and the same universal Massey product,
  \[
    \Hclass{\bar{m}^A_3}=\Hclass{m^A_3},
  \]
  is gauge $\A$-isomorphic to $(A,m^A)$.
\end{theorem}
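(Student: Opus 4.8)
The plan is to deduce \Cref{secondary_Kadeishvili_algebras_A-infinity_uniqueness} directly from \Cref{secondary_Kadeishvili_operads_A-infinity_uniqueness} by specialising the graded operad $\O$ to the endomorphism operad $\E{A}$ of the graded vector space underlying $A$. First I would note that, by \Cref{cohomologies}, the operad complex $\OC{\os\E{A}}$ is precisely the Hochschild cochain complex $\HC{A}$, and hence the operad cohomology $\OH{\os\E{A}}$ is the Hochschild cohomology $\HH{A}$; in particular the Gerstenhaber-algebra structures coincide. Consequently a Massey operad structure on $\E{A}$ (a multiplication $m_2$ together with a class $m_3 \in \OH[3][-1]{\os\E{A}}$ with $\Sq[m_3]=0$) is exactly the same datum as a Massey algebra structure on $A$ in the sense of \Cref{def:Massey_algebra}, and the Massey operad complex $\OC{\os\E{A},m_3}$ coincides, bidegree by bidegree and together with its bidegree $(2,-1)$ differential (including the characteristic-$2$ exceptional formula), with the Hochschild--Massey complex $\HMC{A}[m_3]$. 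Therefore $\OH{\os\E{A},\Hclass{m_3^A}} = \HMH{A}[\Hclass{m^A_3}]$ as bigraded vector spaces.

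Next I would translate the hypotheses and the $\A$-algebra data through this identification, exactly along the lines of \Cref{algebra_obstruction_theory} and \Cref{table:EA}. A minimal $\A$-algebra structure $(A,m^A)$ on the graded vector space $A$ is the same as a morphism of DG operads $m^A \colon \A \to \E{A}$ whose induced multiplication $m_2^{m^A}$ is the given product $m_2^A$ on $A$; its universal Massey product $\Hclass{m_3^A} \in \OH[3][-1]{\os\E{A}} = \HH[3][-1]{A}$ (\Cref{def:UMP-operad}) agrees with the universal Massey product of the minimal $\A$-algebra in the sense of \Cref{algebra_obstruction_theory}. The vanishing hypothesis $\HMH[n+2]<-n>{A}[\Hclass{m^A_3}] = 0$ for $n > 1$ is then literally the hypothesis $\OH[n+2][-n]{\os\E{A},\Hclass{m_3}} = 0$ for $n>1$ required by \Cref{secondary_Kadeishvili_operads_A-infinity_uniqueness} applied to $\O = \E{A}$ and $f = m^A$. (Note that $\Sq[\Hclass{m_3^A}]=0$ holds automatically by the $\A$-equations, as recalled in the introduction, so the datum indeed defines a Massey operad.)

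Finally, I would invoke \Cref{secondary_Kadeishvili_operads_A-infinity_uniqueness}: any second morphism $g = \bar m^A \colon \A \to \E{A}$ with the same induced multiplication $m_2^{\bar m^A} = m_2^A$ and the same universal Massey product $\Hclass{\bar m_3^A} = \Hclass{m_3^A} \in \HH[3][-1]{A}$ is homotopic to $m^A$ in $\Map{\A}{\E{A}}$. By \Cref{def:gauge_O_infty_isomorphism_algebras} (with $\O_\infty = \O = \A$, which is legitimate since $\A$ is cofibrant, see \Cref{example_excellent_operads}), a homotopy between these two points in $\Map{\A}{\E{A}}$ is precisely a gauge $\A$-isomorphism $(A,\bar m^A) \leadsto (A, m^A)$. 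This is exactly the asserted conclusion. I do not anticipate a genuine obstacle here; the only point requiring a little care is bookkeeping the sign conventions so that the exceptional characteristic-$2$ differential in \Cref{def:Massey_algebra} matches the one appearing in \Cref{operad_obstruction_theory}\eqref{second_differential} under the identification of \Cref{cohomologies}, but this is exactly the compatibility already used implicitly in \Cref{rmk:A-O_canonical_Massey_operad} and in the proof of \Cref{secondary_Kadeishvili_operads_A-infinity_uniqueness}.
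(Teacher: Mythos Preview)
Your proposal is correct and follows exactly the same approach as the paper: the paper's proof consists of the single sentence ``Take $\O=\E{A}$ in \Cref{secondary_Kadeishvili_operads_A-infinity_uniqueness},'' and you have simply unpacked this specialisation in detail, verifying the relevant identifications from \Cref{cohomologies}, \Cref{def:Massey_algebra}, and \Cref{def:gauge_O_infty_isomorphism_algebras}.
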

\begin{proof}
  Take $\O=\E{A}$ in \Cref{secondary_Kadeishvili_operads_A-infinity_uniqueness}.
\end{proof}

The following result corresponds to \Cref{thm:B} in the introduction. Below, the
by the universal Massey product of a DG algebra we mean the universal Massey
product of any of its minimal models (that this is well defined follows
from \Cref{rmk:UMP-gauge_invariant} and
\Cref{minimal_models_quasi_isomorphic_algebras}).

\begin{theorem}
  \label{thm:B-main_text}
  Let $A$ be a DG algebra. Choose a minimal model $(\H{A},\Astr)$ of $A$ and
  suppose that the Hochschild--Massey cohomology of the Massey algebra
  $(\H{A},\Hclass{\Astr<3>[A]})$ vanishes in the following
  range:\footnote{Notice the strict inequality.}
  \[
    \HMH[n+2]<-n>{\H{A}}[\Hclass{\Astr<3>}],\qquad n>1.
  \]
  Then, every DG algebra $B$ such that $\H{B}\cong\H{A}$ as graded algebras and
  whose universal Massey product satisfies
  \[
    \Hclass{\Astr<3>[B]}=\Hclass{\Astr<3>[A]}\in\HH[3][-1]{\H{A}}
  \]
  is quasi-isomorphic to $A$.
\end{theorem}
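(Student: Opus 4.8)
The plan is to deduce the statement from \Cref{secondary_Kadeishvili_algebras_A-infinity_uniqueness} by replacing $A$ and $B$ with minimal $\A$-algebra structures on one and the same graded vector space $\H{A}$. First I would fix the minimal model $(\H{A},\Astr[A])$ of $A$ provided in the hypothesis, an isomorphism $\phi\colon\H{B}\xrightarrow{\ \cong\ }\H{A}$ of graded algebras witnessing $\H{B}\cong\H{A}$, and a minimal model $(\H{B},\Astr[B])$ of $B$. Conjugating along $\phi$ turns the operad map $\A\to\E{\H{B}}$ underlying $(\H{B},\Astr[B])$ into an operad map $\A\to\E{\H{A}}$, that is into a minimal $\A$-algebra structure $(\H{A},\bar{m}^A)$ on $\H{A}$. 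Since $\phi$ is a morphism of graded algebras, its binary operation $\bar{m}^A_2$ equals the product $m^A_2$ of $\H{A}$ (the binary operation of $\Astr[A]$), and its universal Massey product is $\phi_*\Hclass{\Astr<3>[B]}$. Because the universal Massey product of a DG algebra is that of any of its minimal models (well defined by \Cref{rmk:UMP-gauge_invariant} and \Cref{minimal_models_quasi_isomorphic_algebras}), the hypothesis $\Hclass{\Astr<3>[B]}=\Hclass{\Astr<3>[A]}\in\HH[3][-1]{\H{A}}$ says precisely that $\Hclass{\bar{m}^A_3}=\Hclass{\Astr<3>[A]}$ in $\HH[3][-1]{\H{A}}$.

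Now the two minimal $\A$-algebras $(\H{A},\Astr[A])$ and $(\H{A},\bar{m}^A)$ have the same underlying graded algebra $\H{A}$ and the same universal Massey product $\Hclass{\Astr<3>[A]}$. Hence the vanishing assumption on the Hochschild--Massey cohomology of the Massey algebra $(\H{A},\Hclass{\Astr<3>[A]})$ lets me apply \Cref{secondary_Kadeishvili_algebras_A-infinity_uniqueness} and conclude that $(\H{A},\bar{m}^A)$ is gauge $\A$-isomorphic to $(\H{A},\Astr[A])$.

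It remains to transport this gauge $\A$-isomorphism back to a quasi-isomorphism of DG algebras. A gauge $\A$-isomorphism of minimal models is, by definition, a path between the corresponding points of $\Map{\Ass}{\E{\H{A}}}$, so $A$ and $B$ have gauge $\A$-isomorphic minimal models; applying \Cref{minimal_models_quasi_isomorphic_algebras} with $\O=\Ass$ and $\O_{\infty}=\A$ gives that $A$ and $B$ are quasi-isomorphic as (possibly non-unital) DG algebras, and if one wants quasi-isomorphism of \emph{unital} DG algebras one invokes in addition \Cref{quasi-iso_unit_algebras} (cf.\ \Cref{rmk:unitality_and_minimal_models}). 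This last step is exactly parallel to the proofs of \Cref{Kadeishvili_simultaneous} and \Cref{Kadeishvili_bimodules-main-text}.

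The genuine analytic content is packaged in \Cref{secondary_Kadeishvili_algebras_A-infinity_uniqueness}, which rests on the obstruction-theoretic spectral sequence of \Cref{operad_obstruction_theory}; since that result is already in hand, the remaining work is purely organisational, and I do not expect a serious obstacle. The only points requiring care are the bookkeeping around minimal models: one must verify that the pushforward of a minimal $\A$-model of $B$ along $\phi$ is again minimal with binary operation the product of $\H{A}$, and that the identification of universal Massey products being used is the one under which the hypothesis transports $\Hclass{\Astr<3>[B]}$ into $\HH[3][-1]{\H{A}}$. Once these are settled, the conclusion is immediate.
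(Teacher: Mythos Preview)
Your proposal is correct and follows essentially the same route as the paper: deduce the result from \Cref{secondary_Kadeishvili_algebras_A-infinity_uniqueness}, then pass from gauge $\A$-isomorphic minimal models to quasi-isomorphic DG algebras via \Cref{minimal_models_quasi_isomorphic_algebras} for $\O=\Ass$ together with \Cref{quasi-iso_unit_algebras}. The paper's proof is a one-line reference to these three results; you have simply unpacked the bookkeeping around transporting the minimal model of $B$ along the graded-algebra isomorphism $\phi$, which the paper leaves implicit.
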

\begin{proof}
  Immediate from \Cref{secondary_Kadeishvili_algebras_A-infinity_uniqueness}
  with $\O=\E{A}$, \Cref{minimal_models_quasi_isomorphic_algebras} for $\O=\Ass$ and \Cref{quasi-iso_unit_algebras}.
\end{proof}

The following existence result is of independent interest.

\begin{theorem}\label{secondary_Kadeishvili_algebras_A-infinity_existence}
	Let $(A,m_3)$ be a Massey algebra. Suppose that the Hochschild--Massey
  cohomology vanishes in the following range:
	\[\HMH[n+3]<-n>{A}[m_3]=0,\qquad n>1.\]
	Then, there exists a minimal $\A$-algebra $(A,m^A)$ with $\Hclass{m^A_3}=m_3$.
\end{theorem}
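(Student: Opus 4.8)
The plan is to reduce the statement to \Cref{secondary_Kadeishvili_operads_A-infinity_existence} applied to the endomorphism operad $\O=\E{A}$, exactly as \Cref{secondary_Kadeishvili_algebras_A-infinity_uniqueness} was deduced from \Cref{secondary_Kadeishvili_operads_A-infinity_uniqueness}. Since \Cref{secondary_Kadeishvili_operads_A-infinity_existence} takes as input a morphism $f\colon\A[5]\to\O$ rather than an abstract Massey structure, the first step is to realise the given data $(A,m_3)$ by a minimal $\A[5]$-algebra structure on $A$.

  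First I would pick a cocycle representative $\mu_3\in\HZ[3][-1]{A}$ of the class $m_3$. For the candidate operations $(m_2^A,\mu_3,\mu_4,\mu_5)$ on $A$, with $m_2^A$ the product, the minimal $\A[5]$-algebra equations reduce to $d(\mu_3)=0$ (which holds by construction) and $d(\mu_4)+\mu_3\{\mu_3\}=0$, while the top operation $\mu_5$ is unconstrained and may be taken to vanish. By the definition of a Massey algebra, $[\mu_3\{\mu_3\}]=\Sq[m_3]=0$, so the cocycle $\mu_3\{\mu_3\}\in\HZ[5][-2]{A}$ is a coboundary and a cochain $\mu_4\in\HC[4][-2]{A}$ with $d(\mu_4)=-\mu_3\{\mu_3\}$ exists. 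Hence $(m_2^A,\mu_3,\mu_4,0)$ defines a minimal $\A[5]$-algebra structure on $A$, equivalently a morphism of DG operads $f\colon\A[5]\to\E{A}$ whose induced multiplication on $\os\E{A}$ is that of $A$ and whose universal Massey product is $\Hclass{m^f_3}=[\mu_3]=m_3\in\HH[3][-1]{A}=\OH[3][-1]{\os\E{A}}$, see \Cref{def:UMP-operad,rmk:A-O_canonical_Massey_operad}.

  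Next I would invoke \Cref{secondary_Kadeishvili_operads_A-infinity_existence} for this $f$. By the parallel \Cref{def:Massey_operad,def:Massey_algebra} and the identifications $\OC{\os\E{A}}=\HC{A}$ and $\OH{\os\E{A}}=\HH{A}$ of \Cref{cohomologies}, the Massey operad cohomology of $\E{A}$ equipped with $m_2^A$ and $m_3$ is precisely the Hochschild--Massey cohomology $\HMH{A}[m_3]$; hence the hypothesis $\HMH[n+3]<-n>{A}[m_3]=0$ for $n>1$ is exactly the vanishing $\OH[n+3][-n]{\os\E{A},\Hclass{m^f_3}}=0$ needed to apply that theorem. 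It produces a morphism $g\colon\A\to\E{A}$, i.e.\ a minimal $\A$-algebra structure $m^A$ on $A$ (with $m^A_2$ the given product), inducing the same universal Massey product $\Hclass{m^A_3}=\Hclass{m^f_3}=m_3$, which is the asserted conclusion.

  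The genuine obstacle is the first step: constructing the $\A[5]$-structure is where the quadratic constraint $\Sq[m_3]=0$ carried by a Massey algebra is essential --- without it one could not even begin, neither here nor in the obstruction-theoretic induction behind \Cref{secondary_Kadeishvili_operads_A-infinity_existence}. Everything after that is the bookkeeping translation of \Cref{secondary_Kadeishvili_operads_A-infinity_existence} into the language of minimal $\A$-algebras and Hochschild cohomology, as in \Cref{algebra_obstruction_theory}.
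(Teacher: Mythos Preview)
Your proof is correct and follows the paper's approach: specialise \Cref{secondary_Kadeishvili_operads_A-infinity_existence} to $\O=\E{A}$. The paper's proof is the single line ``Take $\O=\E{A}$'', leaving implicit the construction of the input map $f\colon\A[5]\to\E{A}$ from the Massey algebra data; you correctly identify this as the one non-formal step and carry it out (choosing a representative $\mu_3$ and using $\Sq[m_3]=0$ to find $\mu_4$), so your write-up is a fleshed-out version of the same argument rather than a different one.
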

\begin{proof}
  Take $\O=\E{A}$ in \Cref{secondary_Kadeishvili_operads_A-infinity_existence}.
\end{proof}

\subsubsection{Graded algebra-bimodule pairs}

\begin{definition}\label{def:Massey_bimodule}
  Let $A$ be a graded algebra. We make the following definitions.
  \begin{enumerate}
  \item A \emph{Massey (graded) $A$-bimodule} is a pair $(M,m_3)$ consisting of
    an $A$-bimodule $M$ and a bimodule Hochschild cohomology class
    \[m_3\in\HHE[3][-1]{A}{M},\]
    called \emph{bimodule universal Massey product}, satisfying $\Sq[m_3]=0$.
	\item The \emph{bimodule Hochschild complex} of a Massey $A$-bimodule is the
    bigraded (cochain) complex
	\[\AlgBimHMC*!A![s]{M}[m_3]=\HHE[s]{A}{M},\quad s\geq 2;\qquad \AlgBimHMC*!A![s]{M}[m_3]=0,\quad s<2;\]
	with the bidegree $(2,-1)$ differential given by
	\[d\colon \HHE[s][t]{A}{M}\longrightarrow\HHE[s+2][t-1]{A}{M},\qquad d(x)=[m^{M},x],\]
	except if $\cchar{\kk}=2$ and $(s,t)=(2,-1)$, in which case the differential is given by
	\[d\colon \HHE[2][-1]{A}{M}\longrightarrow\HHE[4][-2]{A}{M},\qquad d(x)=x^2+[m^{M},x].\]
	The bigraded cohomology of this complex,
  \[\AlgBimHMH*!A!{M}[m_3]\coloneqq\H[\bullet,*]{\AlgBimHMC*!A!{M}[m_3]},\]
  is called \emph{bimodule Hochschild--Massey cohomology} of the triple
  $(A,M,m_3)$.
  \end{enumerate}
\end{definition}

\begin{remark}
Let $A$ be a graded algebra and $(M,m_3)$ a Massey $A$-bimodule. Recall the map
$p_*\colon\HHE{A}{M}\to\HH{A}$ in \eqref{long_exact_sequence_Hochschild}. The pair $(A,p_*(m_3))$ is a Massey algebra.
\end{remark}

Given graded vector spaces $A,M$, a Massey operad structure on $\E{A,M}$ is the
same as a graded algebra structure in $A$ and a Massey $A$-bimodule structure on $M$. Therefore, \Cref{secondary_Kadeishvili_operads_A-infinity_uniqueness,secondary_Kadeishvili_operads_A-infinity_existence} have the following immediate consequences.

\begin{theorem}\label{secondary_Kadeishvili_simultaneous_A-infinity_uniqueness}
	Let $(A,m^A)$ be a minimal $\A$-algebra and $(M,m^M)$ a minimal
  $\A$-bimodule over it. Suppose that the bimodule Hochschild--Massey cohomology
  vanishes in the following range:
	\[\AlgBimHMH*!A![n+2]<-n>{M}[\Hclass{m_3^A+m_3^M}]=0,\qquad n>1.\]
	Then, every other pair given by a minimal $\A$-algebra $(A,\bar{m}^A)$  with the same
  underlying graded algebra, $\bar{m}_2^A=m_2^A$, and a
  minimal $\A$-bimodule $(M,\bar{m}^M)$ over it with the same
  underlying graded $A$-bimodule, $\bar{m}_2^M=m_2^M$, and the same bimodule
  universal Massey product,
  \[
    \Hclass{\bar{m}^A_3+\bar{m}^M_3}=\Hclass{m^A_3+m^M_3}\in\HHE[3][-1]{A}{M},
  \]
  is gauge $\A$-isomorphic to the pair formed by $(A,m^A)$ and $(M,m^M)$.
\end{theorem}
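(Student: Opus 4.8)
The plan is to deduce the statement directly from the general operadic uniqueness result \Cref{secondary_Kadeishvili_operads_A-infinity_uniqueness} by specialising it to the graded operad $\O=\E{A,M}$, the linear endomorphism operad of the underlying graded vector spaces of $A$ and $M$ (see the discussion in \Cref{simultaneous_obstruction_theory} and \Cref{table:EAM}).

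First I would spell out the translation. The datum of the minimal $\A$-algebra $(A,m^A)$ together with the compatible minimal $\A$-bimodule $(M,m^M)$ over it is the same as a morphism of DG operads $f\colon\A\to\E{A,M}$, namely the one with operations $m_n^f\coloneqq m_n^A+m_n^M$, whose composite with the canonical projection $\E{A,M}\twoheadrightarrow\E{A}$ is the $\A$-algebra structure on $A$. Under this identification the operad complex $\OC{\os\E{A,M}}$ is the bimodule Hochschild complex $\HCE{A}{M}$, so $\OH{\os\E{A,M}}=\HHE{A}{M}$; the multiplication $m_2^f$ on $\os\E{A,M}$ is the product of $A$ together with the left and right module structure maps of $M$ (\Cref{multiplication_linear_endomorphism_operad}); and, by \Cref{rmk:A-O_canonical_Massey_operad}, the universal Massey product $\Hclass{m_3^f}\in\OH[3][-1]{\os\E{A,M}}$ is precisely the bimodule universal Massey product $\Hclass{m_3^A+m_3^M}\in\HHE[3][-1]{A}{M}$. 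Comparing \Cref{def:Massey_operad} with \Cref{def:Massey_bimodule}, the Massey operad complex of $\E{A,M}$ attached to $f$ coincides with the bimodule Hochschild complex of the Massey $A$-bimodule $(M,\Hclass{m_3^A+m_3^M})$; in particular the vanishing hypothesis of the present theorem is literally the hypothesis of \Cref{secondary_Kadeishvili_operads_A-infinity_uniqueness} for $\O=\E{A,M}$ (including the characteristic $2$ proviso, which is built into both definitions).

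Next I would run the deduction. Let $g\colon\A\to\E{A,M}$ be the operad morphism attached to the second pair $((A,\bar m^A),(M,\bar m^M))$. The hypotheses $\bar m_2^A=m_2^A$ and $\bar m_2^M=m_2^M$ say exactly that $m_2^g=m_2^f$, while $\Hclass{\bar m_3^A+\bar m_3^M}=\Hclass{m_3^A+m_3^M}$ says that $\Hclass{m_3^g}=\Hclass{m_3^f}$. Hence \Cref{secondary_Kadeishvili_operads_A-infinity_uniqueness} applies and yields that $f$ and $g$ are homotopic as maps in $\Operads$. By \Cref{def:gauge_iso_pairs}, with $\O_\infty=\O=\A$ (legitimate since $\A$ is cofibrant), a homotopy between the structure maps $f,g\colon\A\to\E{A,M}$ is exactly a gauge $\A$-isomorphism of pairs $((A,\bar m^A),(M,\bar m^M))\leadsto((A,m^A),(M,m^M))$, which is the assertion of the theorem.

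The genuine content is entirely in the ingredients invoked: the construction of the truncated fringed spectral sequence, the identification of its third page with the Massey operad cohomology, and the convergence argument (via the Milnor sequence and the Complete Convergence Lemma) are all packaged into \Cref{operad_obstruction_theory} and \Cref{secondary_Kadeishvili_operads_A-infinity_uniqueness}, so nothing new has to be proved here. The one point deserving a moment of care is making sure the bookkeeping is right: that \emph{same underlying graded algebra and graded bimodule} is the correct translation of $m_2^g=m_2^f$ — which holds because $m_2^{A\ltimes M}=m_2^A+m_2^M$ decomposes the multiplication on $\os\E{A,M}$ into its $A$- and $M$-components — and that gauge $\A$-isomorphism of pairs in the sense used in the statement agrees with homotopy of the corresponding operad maps, which is exactly \Cref{def:gauge_iso_pairs}.
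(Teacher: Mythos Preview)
Your proposal is correct and takes exactly the same approach as the paper: the paper's proof is the single sentence ``Take $\O=\E{A,M}$ in \Cref{secondary_Kadeishvili_operads_A-infinity_uniqueness}.'' Your write-up simply unpacks this specialisation in detail, and the bookkeeping you spell out (the dictionary in \Cref{table:EAM}, the identification of gauge isomorphism with homotopy of operad maps via \Cref{def:gauge_iso_pairs}) is accurate.
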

\begin{proof}
  Take $\O=\E{A,M}$ in \Cref{secondary_Kadeishvili_operads_A-infinity_uniqueness}.
\end{proof}

The following result corresponds to
\Cref{secondary_Kadeishvili_simultaneous_DG_uniqueness} in the introduction. Below, the
by the bimodule universal Massey product of a DG bimodule we mean the bimodule universal Massey
product of any of its minimal models (that this is well defined follows
from \Cref{rmk:UMP-gauge_invariant} and
\Cref{minimal_models_quasi_isomorphic_algebras-bimodules}).

\begin{theorem}\label{secondary_Kadeishvili_simultaneous_DG_uniqueness-main_text}
  Let $A$ be a DG algebra and $M$ a DG $A$-bimodule. Choose a minimal model
  $(\H{A},\Astr[A])$ of $A$ and a compatible minimal model $(\H{M},\Astr[M])$ of
  $M$. Let
  \[
    \Astr<3>[A\ltimes M]\coloneqq\Astr<3>[A]+\Astr<3>[M]\in\RelBimHH[3]<-1>{\H{A}}{\H{M}}
  \]
  and suppose that the bimodule
  Hochschild--Massey cohomology of the Massey $\H{A}$-bimodule
  \[
    (\H{M},\Hclass{\Astr<3>[A\ltimes M]})
  \]
  vanishes in the following range:
  \[
    \AlgBimHMH!\H{A}![n+2]<-n>{\H{M}}[3][A\ltimes M]=0,\qquad n>1.
  \]
  Then, every pair $(B,N)$ consisting of
  \begin{itemize}
  \item a DG algebra $B$ such that $\H{B}\cong\H{A}$ as graded algebras and
  \item a DG $B$-bimodule $N$ such that $\H{N}\cong\H{M}$ as graded
    $\H{A}$-bimodules and
  \item whose bimodule universal
    Massey product satisfies
    \[
      \Hclass{\Astr<3>[B\ltimes N]}=\Hclass{\Astr<3>[A\ltimes
        M]}\in\RelBimHH[3]<-1>{\H{A}}{\H{M}}
    \]
  \end{itemize}
  is quasi-isomorphic to the pair $(A,M)$.
\end{theorem}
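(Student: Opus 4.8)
The plan is to translate the statement into a uniqueness question about minimal $\A$-structures and then invoke the purely algebraic result \Cref{secondary_Kadeishvili_simultaneous_A-infinity_uniqueness}, in complete analogy with the way \Cref{thm:B-main_text} is deduced from \Cref{secondary_Kadeishvili_algebras_A-infinity_uniqueness}. First I would keep the given minimal model $((\H{A},\Astr[A]),(\H{M},\Astr[M]))$ of the pair $(A,M)$ fixed and choose \emph{any} minimal model of the pair $(B,N)$; recall that, by the remark preceding the theorem (which rests on \Cref{rmk:UMP-gauge_invariant} and \Cref{minimal_models_quasi_isomorphic_algebras-bimodules}), the bimodule universal Massey product of $N$ is that of this chosen minimal model, independently of the choice. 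Transporting that minimal model along the fixed graded isomorphisms $\H{B}\cong\H{A}$ and $\H{N}\cong\H{M}$ produces a minimal $\A$-algebra $(\H{A},\bar{m}^A)$ together with a compatible minimal $\A$-bimodule $(\H{M},\bar{m}^M)$ in the sense of \Cref{def:minimal_model_simultaneous}. Because these isomorphisms are isomorphisms of graded algebras, resp.\ of graded $\H{A}$-bimodules, the binary operations match on the nose, $\bar{m}_2^A=\Astr<2>[A]$ and $\bar{m}_2^M=\Astr<2>[M]$; and since passage to cohomology classes is natural, the bimodule universal Massey product $\Hclass{\bar{m}_3^A+\bar{m}_3^M}$ is exactly $\Hclass{\Astr<3>[B\ltimes N]}$ read inside $\RelBimHH[3]<-1>{\H{A}}{\H{M}}$ via the chosen isomorphisms, which by hypothesis equals $\Hclass{\Astr<3>[A\ltimes M]}=\Hclass{\Astr<3>[A]+\Astr<3>[M]}$.

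At this point the hypotheses of \Cref{secondary_Kadeishvili_simultaneous_A-infinity_uniqueness}, applied to the graded algebra $\H{A}$ and the graded $\H{A}$-bimodule $\H{M}$, are met: the vanishing condition there is precisely the standing assumption of the present theorem, namely the vanishing of the bimodule Hochschild--Massey cohomology of the triple $(\H{A},\H{M},\Hclass{\Astr<3>[A\ltimes M]})$ in the stated range, and the two pairs of minimal $\A$-structures share the same underlying graded algebra $\H{A}$, the same underlying graded $\H{A}$-bimodule $\H{M}$, and the same bimodule universal Massey product. That theorem then yields a gauge $\A$-isomorphism $((\H{A},\bar{m}^A),(\H{M},\bar{m}^M))\leadsto((\H{A},\Astr[A]),(\H{M},\Astr[M]))$. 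Next, \Cref{minimal_models_quasi_isomorphic_algebras-bimodules} for $\O=\Ass$ (so that $\O_\infty=\A$) converts this gauge $\A$-isomorphism of minimal models into a weak equivalence between $(B,N)$ and $(A,M)$ in $\Bimod*{\Ass}$. Finally, since $A$, $B$ are unital DG algebras and $M$, $N$ are unital bimodules over them, \Cref{quasi-iso_unit_algebras_bimodules} promotes this to a quasi-isomorphism in $\Bimod*{\uAss}$, which is the asserted quasi-isomorphism of the pairs $(B,N)$ and $(A,M)$.

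None of the individual steps is genuinely hard: the substantive work has already been done in \Cref{sec:obstructions}, in the construction of the (truncated, fringed) spectral sequence of \Cref{operad_obstruction_theory} and its consequence \Cref{secondary_Kadeishvili_operads_A-infinity_uniqueness}, and in \Cref{subsec:spaces_of_algebras_and_bimodules}. The one place deserving care—and the main (mild) obstacle—is the bookkeeping around the transport along the graded isomorphisms: one must check that a minimal model of $N$ compatible with a minimal model of $B$ transports to a minimal $\A$-bimodule over $(\H{A},\bar{m}^A)$ that is compatible in the sense of \Cref{def:minimal_model_simultaneous}, and that the equality of bimodule universal Massey products assumed in the statement is exactly the identity of classes in $\RelBimHH[3]<-1>{\H{A}}{\H{M}}$ needed to feed into \Cref{secondary_Kadeishvili_simultaneous_A-infinity_uniqueness}; all of this is routine naturality of the constructions of \Cref{sec:operads_cohomology}.
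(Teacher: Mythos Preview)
Your proposal is correct and follows the same route as the paper: the paper's proof reads ``Immediate from \Cref{secondary_Kadeishvili_simultaneous_A-infinity_uniqueness}, \Cref{minimal_models_quasi_isomorphic_algebras-bimodules} for $\O=\Ass$ and \Cref{quasi-iso_unit_algebras_bimodules},'' which is precisely your strategy of passing to minimal models, applying the $\A$-level uniqueness theorem, and then translating back via the classifying-space results. Your additional elaboration on the transport along the graded isomorphisms and the bookkeeping of the universal Massey products is accurate and simply unpacks what the paper leaves implicit.
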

\begin{proof}
  Immediate from \Cref{secondary_Kadeishvili_simultaneous_A-infinity_uniqueness}, \Cref{minimal_models_quasi_isomorphic_algebras-bimodules} for $\O=\Ass$ and \Cref{quasi-iso_unit_algebras_bimodules}.
\end{proof}

The following existence result is of independent interest.

\begin{theorem}\label{secondary_Kadeishvili_simultaneous_A-infinity_existence}
	Let $A$ be a graded algebra and $(M,m^M)$ be a Massey $A$-bimodule. Suppose
  that the bimodule Hochschild--Massey bimodule cohomology vanishes in the
  following range:
	\[\AlgBimHMH*!A![n+3]<-n>{M}[\Hclass{m_3^A+m_3^M}]=0,\qquad n>1.\]
	Then, there exists a minimal $\A$-algebra $(A,m^A)$ and a minimal
  $\A$-bimodule $(M,m^M)$ over it with
  \[
    \Hclass{m^A_3+m^M_3}=m^M\in\HHE[3][-1]{A}{M}.
  \]
\end{theorem}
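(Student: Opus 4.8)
The plan is to specialise the general operadic existence result to the linear endomorphism operad of the pair. First I would observe, exactly as in the paragraph preceding \Cref{secondary_Kadeishvili_simultaneous_A-infinity_uniqueness}, that for graded vector spaces $A$ and $M$, a Massey operad structure on $\E{A,M}$ is precisely the datum of a graded algebra structure $m_2^A$ on $A$, a compatible graded $A$-bimodule structure on $M$, and a class $m_3\in\HHE[3][-1]{A}{M}=\OH[3][-1]{\os\E{A,M}}$ with $\Sq[m_3]=0$ (using \Cref{multiplication_linear_endomorphism_operad} and \Cref{def:bimodule_cohomologies}). The hypothesis supplies all of this: the graded algebra $A$, the graded $A$-bimodule structure underlying $M$, and the bimodule universal Massey product $m^M\in\HHE[3][-1]{A}{M}$ of the Massey $A$-bimodule $(M,m^M)$, which by \Cref{def:Massey_bimodule} satisfies $\Sq[m^M]=0$.

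Next I would identify the relevant Massey operad cohomology with the bimodule Hochschild--Massey cohomology. By construction, the Massey operad complex $\OC{\os\E{A,M},\Hclass{m^M}}$ of \Cref{def:Massey_operad} coincides term-by-term and differential-by-differential with the bimodule Hochschild complex $\AlgBimHMC*!A!{M}[\Hclass{m^M}]$ of \Cref{def:Massey_bimodule} (both are $\HHE[s]{A}{M}$ for $s\geq 2$, zero below, with the bidegree $(2,-1)$ differential $x\mapsto[\Hclass{m^M},x]$, and the same exceptional formula in characteristic $2$), and hence their cohomologies agree:
\[
  \OH[n+3][-n]{\os\E{A,M},\Hclass{m^M}}\cong\AlgBimHMH*!A![n+3]<-n>{M}[\Hclass{m_3^A+m_3^M}].
\]
Thus the vanishing hypothesis $\AlgBimHMH*!A![n+3]<-n>{M}[\Hclass{m_3^A+m_3^M}]=0$ for $n>1$ is exactly the vanishing hypothesis $\OH[n+3][-n]{\os\O,\Hclass{m_3}}=0$ ($n>1$) required by \Cref{secondary_Kadeishvili_operads_A-infinity_existence} with $\O=\E{A,M}$.

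There remains the mild input needed to even apply \Cref{secondary_Kadeishvili_operads_A-infinity_existence}: that the Massey operad structure arises from a morphism $f\colon\A[5]\to\E{A,M}$, so that its multiplication and universal Massey product are $m_2^f$ and $\Hclass{m_3^f}$ in the sense of \Cref{rmk:A-O_canonical_Massey_operad}. This is the only point requiring a small argument, and it is handled exactly as in the algebra case via \Cref{secondary_Kadeishvili_operads_A-infinity_existence}'s own mechanism, or by noting that any Massey operad structure on an endomorphism-type operad extends to an $\A[5]$-structure realising the prescribed $m_3$ because the obstruction to extending an $\A[4]$-structure compatible with $m_2$ and a chosen cocycle representative of $m_3$ to an $\A[5]$-structure is $\Sq[m_3]=0$ in $\OH[5][-2]{\os\E{A,M}}$ (\Cref{operad_obstruction_theory}\eqref{first_obstruction}); alternatively one invokes \Cref{secondary_Kadeishvili_operads_A-infinity_existence} in the form where the input is merely a Massey operad, as is done for $\O=\E{A}$ in \Cref{secondary_Kadeishvili_algebras_A-infinity_existence}. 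Given the $\A$-morphism $g\colon\A\to\E{A,M}$ produced by \Cref{secondary_Kadeishvili_operads_A-infinity_existence}, its components decompose as $m_n^g=m_n^A+m_n^M$ with $(A,m^A)$ a minimal $\A$-algebra and $(M,m^M)$ a compatible minimal $\A$-bimodule over it (by the equivalence recalled in \Cref{simultaneous_obstruction_theory}), and $g$ induces the prescribed multiplication and universal Massey product, i.e.~$\Hclass{m_3^A+m_3^M}=\Hclass{m^M}\in\HHE[3][-1]{A}{M}$, which is the assertion. I expect the only genuine obstacle to be cleanly phrasing the $\A[5]$-lifting step; everything else is a routine translation through \Cref{simultaneous_obstruction_theory,table:EAM} and the identification of complexes above.
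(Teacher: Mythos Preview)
Your approach is correct and essentially identical to the paper's: the paper's proof is the single line ``Take $\O=\E{A,M}$ in \Cref{secondary_Kadeishvili_operads_A-infinity_existence}.'' You have unpacked this specialisation carefully, and in particular you correctly identify and resolve the $\A[5]$-lifting step (producing $f\colon\A[5]\to\E{A,M}$ from the Massey bimodule datum via \Cref{operad_obstruction_theory}\eqref{first_obstruction}), which the paper leaves entirely implicit both here and in the parallel \Cref{secondary_Kadeishvili_algebras_A-infinity_existence}.
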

\begin{proof}
  Take $\O=\E{A,M}$ in \Cref{secondary_Kadeishvili_operads_A-infinity_existence}.
\end{proof}

\subsubsection{Fibre-wise almost-formality}

\begin{definition}
  Let $\O$ be a Massey graded operad with universal Massey product
  $m_3\in\OH[3][-1]{\os\O}$ and let $\I\subset\O$ be an associative operadic ideal. The \emph{Massey operadic ideal complex} is the bigraded (cochain) complex
  \begin{align*}
    \IC[s]{\os\I,m_3}[\os\O]&=\IH[s]{\os\I}[\os\O]&s&\geq 2,\\
    \IC[s]{\os\I,m_3}[\os\O]&
    =0&s&<2,
  \end{align*}
  with the bidegree $(2,-1)$ differential given by
  \[d\colon \IH[s][t]{\os\I}[\os\O]\longrightarrow\IH[s+2][t-1]{\os\I}[\os\O],\qquad d(x)=[m_3,x],\]
  compare with \Cref{fibre-wise_obstruction_theory}\eqref{second_differential_fibre-wise}.
  The bigraded cohomology of this complex,
  \[\IH{\os\I,m_3}[\os\O]\coloneqq\H[\bullet,*]{\IC{\os\I,m_3}[\os\O]},\]
  is called \emph{Massey operadic ideal cohomology}.
\end{definition}

\begin{theorem}\label{secondary_Kadeishvili_ideals_A-infinity_uniqueness}
	Let $\O$ be a graded operad and $f\colon\A\to\O$ a morphism of DG operads, so
  that $\O$ is equipped with the multiplication $m_2^f$ and the universal Massey
  product $\Hclass{m_3^f}$ (\Cref{rmk:A-O_canonical_Massey_operad}). 
  Let $\I\subset\O$ be an associative operadic ideal and $q\colon\O\twoheadrightarrow\O/\I$ the canonical projection.
  Suppose that the corresponding Massey operadic ideal
  cohomology vanishes in the following range:
	\[\IH[n+1][-n]{\os\I,\Hclass{m_3^f}}[\os\O]=0,\qquad n>1.\]
	Then, every map $g\colon \A\to\O$ inducing the same multiplication
  $m_2^g=m_2^f$ on $\os\O$ and such that 
  \[
    \Hclass{m_3^g-m_3^f}\in\IH[2][-1]{\os\I}[\os\O],
  \]
  and $qf=qg$ 
  is fibre-wise homotopic to $f$, i.e.~$f$ and $g$ are homotopic through a homotopy $\A\to\O$ which composes to the trivial homotopy
  $\A\to\O\twoheadrightarrow\O/\I$.
\end{theorem}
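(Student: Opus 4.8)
The plan is to follow the proof of \Cref{secondary_Kadeishvili_operads_A-infinity_uniqueness} essentially verbatim, replacing the spectral sequence of \Cref{operad_obstruction_theory} attached to $f\colon\A\to\O$ by the \emph{fibre-wise} spectral sequence of \Cref{fibre-wise_obstruction_theory} attached to the pair $(f,h)$, where $h\coloneqq qf\colon\A\to\O/\I$. Since $qg=qf=h$, both $f$ and $g$ are points of the same homotopy fibre
\[
  \Str{\A}{h}{\I}\simeq\operatorname{holim}_{s\geq0}\Str{\A[s+2]}{h}{\I},
\]
and a fibre-wise homotopy between them is exactly a path in this space. So it suffices to show that $f$ and $g$ lie in the same connected component of $\Str{\A}{h}{\I}$. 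By the Milnor short exact sequence of pointed sets \cite[Theorem~IX.3.1]{BK72} for the tower $\{\Str{\A[s+2]}{h}{\I}\}_s$ based at the restrictions of $f$, namely $\ast\to\lim^1_s\pi_1\Str{\A[s+2]}{h}{\I}\to\pi_0\Str{\A}{h}{\I}\to\lim_s\pi_0\Str{\A[s+2]}{h}{\I}\to\ast$, this reduces to: \textbf{(a)} the restrictions of $f$ and $g$ to $\A[s+2]$ are fibre-wise homotopic for every $s\geq0$, and \textbf{(b)} the term $\lim^1_s\pi_1\Str{\A[s+2]}{h}{\I}$ collapses to a point.

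For (a) I would induct on $s$. The case $s=0$ is the assumption $m_2^g=m_2^f$. For $s=1$, \Cref{fibre-wise_obstruction_theory}\eqref{lower-diagonal_second_page_fibre-wise} identifies via a pointed bijection the fibre-wise homotopy class of $g|_{\A[3]}$ with the class $\Hclass{m^g_3-m^f_3}\in\BK[2][1][1]=\IH[2][-1]{\os\I}[\os\O]$, which by hypothesis is the base point; hence $f|_{\A[3]}$ and $g|_{\A[3]}$ are fibre-wise homotopic. For $s\geq2$, assume the restrictions of $f$ and $g$ to $\A[s+1]$ are fibre-wise homotopic. Using \Cref{fibre-wise_obstruction_theory}\eqref{second_page_fibre-wise} and \eqref{second_differential_fibre-wise} one obtains $\BK[3][s][t]=\IH[s+1][-t]{\os\I,\Hclass{m^f_3}}[\os\O]$, so the vanishing hypothesis with $n=s>1$ yields $\BK[3][s][s]=0$, and therefore $\BK[r][s][s]=0$ for all $r\geq3$. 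Taking $r=s+1$ in \Cref{fibre-wise_obstruction_theory}\eqref{lower-diagonal_fibre-wise} (which applies since $0\leq s\leq r-1$ and $k=\infty$), the pointed set of fibre-wise homotopy classes of maps $\A[s+2]\to\O$ that extend to $\A[2s+2]$, restrict fibre-wise to $f|_{\A[s+1]}$ and project to $h|_{\A[s+2]}$ is trivial; since $g|_{\A[s+2]}$ belongs to that set by the inductive hypothesis (and $g$ extends to all of $\A$, and $qg=h$), it equals $f|_{\A[s+2]}$ up to fibre-wise homotopy, completing the induction.

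For (b) I would argue exactly as at the end of the proof of \Cref{Kadeishvili_operads_A-infinity}: the Complete Convergence Lemma \cite[Section~IX.5.4]{BK72} with $i=1$ applies once each term $\BK[r][s][s+1]$ stabilises, which holds because its outgoing differential $\ssd{r}$ lands in the diagonal term $\BK[r][s+r][s+r]$, which vanishes for $r\geq3$ by the computation above, while its incoming differential originates from a term lying outside the range of definition of the spectral sequence as soon as $r>s$; hence $\lim^1_r\BK[r][s][s+1]=0$ for every $s$, so the $\lim^1$-term in the Milnor sequence is a point. Combining (a) and (b) gives that $f$ and $g$ lie in the same component of $\Str{\A}{h}{\I}$, i.e.~they are fibre-wise homotopic.

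The main obstacle is purely bookkeeping: one must carefully track the arity/degree shift in the operadic ideal complex, $\IC[p][q]{\os\I}[\os\O]=\os\I<p+1>^{p+q}$, so that the cochain $m^g_3-m^f_3$ --- which lies in $\I$ precisely because $qf=qg$ --- is read as a cocycle in $\IC[2][-1]{\os\I}[\os\O]$, and so that the vanishing range $\IH[n+1][-n]{\os\I,\Hclass{m^f_3}}[\os\O]=0$ for $n>1$ matches the diagonal terms $\BK[3][s][s]$ with $s=n$. It is also worth remarking --- and this is what makes the fibre-wise statement formally slightly simpler than \Cref{secondary_Kadeishvili_operads_A-infinity_uniqueness} --- that the $\ssd{2}$-differential in \Cref{fibre-wise_obstruction_theory}\eqref{second_differential_fibre-wise} carries no quadratic Gerstenhaber-square correction (present in \Cref{operad_obstruction_theory}\eqref{second_differential}) because $\shift{\IC{\os\I}[\os\O]}[-1]\subset\OC{\os\O}$ is a square-zero ideal; accordingly the Massey operadic ideal complex is a genuine complex of vector spaces and no hypothesis of the form $\Sq[m_3]=0$ is required here.
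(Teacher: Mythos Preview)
Your proposal is correct and follows exactly the approach the paper takes: the paper's proof is the single sentence ``The proof is identical to that of \Cref{secondary_Kadeishvili_operads_A-infinity_uniqueness}, replacing the references to \Cref{operad_obstruction_theory} with \Cref{fibre-wise_obstruction_theory},'' and you have carried this out in full detail, including the Milnor sequence argument and the Complete Convergence Lemma for the $\lim^1$ term. Your closing observations about the bidegree shift and the absence of the quadratic correction in $\ssd{2}$ (because the ideal is square-zero) are also spot on and match remarks the paper makes in the proof of \Cref{fibre-wise_obstruction_theory}.
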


\begin{proof}
  The proof is identical to that of
  \Cref{secondary_Kadeishvili_operads_A-infinity_uniqueness}, replacing the references to \Cref{operad_obstruction_theory} with \Cref{fibre-wise_obstruction_theory}.
\end{proof}

\subsubsection{Graded bimodules}

\begin{definition}
	Let $A$ be a graded algebra and $(M,m_3)$ a Massey $A$-bimodule. The
  \emph{Massey bimodule complex} is the bigraded (cochain) complex
	\begin{align*}
    \BimHMC*!A![s]{M}[m_3]&=\Ext[s]{A^e}{M}{M}&s&\geq 1,\\
    \BimHMC*!A![s]{M}[m_3]&=0&s&<1,
  \end{align*}
	with the bidegree $(2,-1)$ differential given by
	\[d\colon \Ext[s][t]{A^e}{M}{M}\longrightarrow\Ext[s+2][t-1]{A^e}{M}{M},\qquad d(x)=[m_3,x].\]
	The bigraded cohomology of this complex,
  \[
    \BimHMH*!A!{M}[m_3][]\coloneqq\H[\bullet,*]{\BimHMC*!A!{M}[m_3]},
  \]
  is the \emph{Massey bimodule cohomology} of the pair $(M,m_3)$.
\end{definition}

\begin{theorem}\label{secondary_Kadeishvili_bimodules_A-infinity}
	Let $(A,m^A)$ be a minimal $\A$-algebra and $(M,m^M)$ a minimal
  $\A$-bimodule over it. Suppose that Massey bimodule cohomology vanishes in the
  following range:
	\[\BimHMH*!A![n+1]<-n>{M}[\Hclass{m_3^A+m_3^M}]=0,\qquad n>1.\]
	Then, every minimal $\A$-bimodule $(M,\bar{m}^M)$ over $(A,m^A)$ with the
  same underlying graded $A$-bimodule, $\bar{m}_2^M=m_2^M$, and such that
  \[
    \Hclass{\bar{m}^M_3-m^M_3}=0\in\Ext[2][-1]{A^e}{M}{M}
  \]
	is gauge $\A$-isomorphic to $(M,m^M)$.
\end{theorem}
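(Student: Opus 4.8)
The plan is to obtain the theorem as a special case of the fibre-wise almost-formality result \Cref{secondary_Kadeishvili_ideals_A-infinity_uniqueness}, applied to the linear endomorphism operad $\O=\E{A,M}$ together with its canonical associative operadic ideal $\I=\E*{A,M}$ and quotient $\O/\I=\E{A}$ --- exactly the way \Cref{Kadeishvili_bimodules_A-infinity} was deduced from \Cref{Kadeishvili_fibre-wise_A-infinity}, now in the ``secondary'' (Massey) setting. First I would set up the dictionary of \Cref{bimodule_obstruction_theory} (see \Cref{table:EAM-fibrewise}): the fixed minimal $\A$-algebra $(A,m^A)$ is a map $h\colon\A\to\O/\I=\E{A}$; the minimal $\A$-bimodule $(M,m^M)$ over it is a point $f\colon\A\to\O$ of $\Str{\A}{h}{\I}$ with $m^f_n=m^A_n+m^M_n$; and $(M,\bar{m}^M)$ is a further point $g\colon\A\to\O$ lying over the \emph{same} $h$, so that the hypothesis $qf=qg$ of \Cref{secondary_Kadeishvili_ideals_A-infinity_uniqueness} is automatic.

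Next I would translate the hypotheses and conclusion through this dictionary. By \Cref{def:bimodule_cohomologies} and \Cref{table:EAM-fibrewise} one has $\IC{\os\I}[\os\O]\cong\BC{A^e}{M}$ and hence $\IH[p][q]{\os\I}[\os\O]\cong\Ext[p][q]{A^e}{M}{M}$; moreover, by \Cref{fibre-wise_obstruction_theory}\eqref{second_differential_fibre-wise} the relevant second differential is the Gerstenhaber bracket with the universal Massey product $\Hclass{m^f_3}=\Hclass{m^A_3+m^M_3}\in\HHE[3][-1]{A}{M}$. Consequently the Massey operadic ideal cohomology $\IH{\os\I,\Hclass{m^f_3}}[\os\O]$ is precisely the Massey bimodule cohomology of $(M,\Hclass{m^A_3+m^M_3})$, so the vanishing hypothesis $\IH[n+1][-n]{\os\I,\Hclass{m^f_3}}[\os\O]=0$ for $n>1$ is exactly $\BimHMH*!A![n+1]<-n>{M}[\Hclass{m_3^A+m_3^M}]=0$ for $n>1$. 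On the other side, $m_2^g=m_2^f$ becomes $\bar{m}_2^M=m_2^M$ (same underlying graded $A$-bimodule), the requirement that the class of $m^g_3-m^f_3$ vanish in $\IH[2][-1]{\os\I}[\os\O]$ becomes $\Hclass{\bar{m}^M_3-m^M_3}=0\in\Ext[2][-1]{A^e}{M}{M}$, and ``fibre-wise homotopic to $f$'' becomes ``gauge $\A$-isomorphic to $(M,m^M)$ over $(A,m^A)$'' (this is the definition of gauge $\O_\infty$-isomorphism of bimodules via $\Str{\O_\infty}{A}{W}$ with $\O_\infty=\O=\A$). This is precisely the assertion to be proved.

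I expect no real obstacle beyond bookkeeping: all the substantive content --- the construction and convergence of the Bousfield--Kan fringed spectral sequence of the tower $\{\Str{\A[k+2]}{h}{\I}\}_{k}$, the identification of its low pages, and the fact that its page-three diagonal is the Massey operadic ideal cohomology --- is already packaged in \Cref{fibre-wise_obstruction_theory} and \Cref{secondary_Kadeishvili_ideals_A-infinity_uniqueness}. The one point I would check carefully is the arity/degree matching between the operadic ideal complex (with its arity shift $p\mapsto p+1$) and the bimodule complex, so that the range ``$n>1$'' in the vanishing hypothesis lands correctly and so that the $s=1$ step of the induction hidden inside \Cref{secondary_Kadeishvili_ideals_A-infinity_uniqueness} is exactly the one controlled by $\Hclass{\bar{m}^M_3-m^M_3}=0$ via \Cref{fibre-wise_obstruction_theory}\eqref{lower-diagonal_second_page_fibre-wise}. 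If one preferred a self-contained argument, one would simply rerun the proof of \Cref{Kadeishvili_operads_A-infinity}/\Cref{secondary_Kadeishvili_operads_A-infinity_uniqueness} verbatim for that tower, based at $f$: the hypothesis kills $\BK[3][s][s]=\BimHMH*!A![s+1]<-s>{M}[\Hclass{m_3^A+m_3^M}]$ for $s>1$, hence $\BK[r][s][s]=0$ for $r\geq3$ and $s>1$; one inducts on $s$ (base $s=0$ from $\bar{m}_2^M=m_2^M$, case $s=1$ from \Cref{fibre-wise_obstruction_theory}\eqref{lower-diagonal_second_page_fibre-wise}, inductive step from \Cref{fibre-wise_obstruction_theory}\eqref{lower-diagonal_fibre-wise}) to see that $f$ and $g$ agree on every $\A[s+2]$ up to fibre-wise homotopy; and finally the $\lim^1$-term collapses by the Complete Convergence Lemma~\cite[Section~IX.5.4]{BK72}.
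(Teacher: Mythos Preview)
Your proposal is correct and follows precisely the paper's approach: the paper's proof is the single line ``Take $\O=\E{A,M}$ and $\I=\E*{A,M}$ in \Cref{secondary_Kadeishvili_ideals_A-infinity_uniqueness},'' and your dictionary and degree-matching checks simply spell out why that specialisation yields the statement.
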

\begin{proof}
  Take $\O=\E{A,M}$ and $\I=\E*{A,M}$ in \Cref{secondary_Kadeishvili_ideals_A-infinity_uniqueness}.
\end{proof}

\begin{remark}
	It follows from the existence of the long exact sequence
  \eqref{long_exact_sequence_Hochschild} and \Cref{connecting_morphism} that the condition
  \[
    \Hclass{\bar{m}^M_3-m^M_3}=0\in\Ext[2][-1]{A^e}{M}{M}
  \]
  in \Cref{secondary_Kadeishvili_bimodules_A-infinity} is equivalent to the
  condition
  \[
    \Hclass{m^A_3+\bar{m}^M_3}=\Hclass{m^A_3+m^M_3}\in\HHE[3][-1]{A}{M}
  \]
  whenever $\id*[M]\cdot x=x\cdot\id*[M]$ for all $x\in \HH[2][-1]{A}$. This
  happens, for instance, if $\Ext{A^e}{M}{M}$ is a symmetric $\HH{A}$-bimodule,
  see \Cref{cor:symmetric_bimodule}.
\end{remark}

The following result corresponds to \Cref{thm:B_bimodules} in the introduction.

\begin{theorem}\label{thm:B_bimodules-main_text}
  Let $A$ be a DG algebra and $M$ a DG $A$-bimodule. Choose a minimal model
  $(\H{A},\Astr)$ of $A$, a compatible minimal model $(\H{M},\Astr[M])$
  of $M$ over $(\H{A},\Astr)$, and suppose that the Massey bimodule
  cohomology of the pair $(\H{M},\Hclass{\Astr<3>[A\ltimes M]})$ vanishes in the
  following range:
  \[
    \BimHMH!\H{A}![n+1]<-n>{\H{M}}[3][A\ltimes M]=0,\qquad n>1.
  \]
  Then, every DG $A$-bimodule $N$ such that $\H{N}\cong\H{M}$ as graded
  $\H{A}$-bimodules and such that
  \[
    \Hclass{\Astr<3>[M]-\Astr<3>[N]}=0\in
    \BimHMH!\H{A}![2]<-1>{\H{M}}[3][A\ltimes M]
  \]
  is quasi-isomorphic to $M$.
\end{theorem}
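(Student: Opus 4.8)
The plan is to deduce \Cref{thm:B_bimodules-main_text} from \Cref{secondary_Kadeishvili_bimodules_A-infinity} in exactly the same way that \Cref{thm:B-main_text} was deduced from \Cref{secondary_Kadeishvili_algebras_A-infinity_uniqueness}: namely, by first transferring the hypotheses and conclusion to the level of minimal $\A$-bimodules over a fixed minimal $\A$-algebra, then invoking the appropriate ``minimal models detect quasi-isomorphism'' statement, and finally passing between unital and non-unital structures. First I would choose, alongside the given minimal models $(\H{A},\Astr[A])$ and $(\H{M},\Astr[M])$, a compatible minimal model $(\H{N},\Astr[N])$ of $N$ over $(\H{A},\Astr[A])$. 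Since by assumption $\H{N}\cong\H{M}$ as graded $\H{A}$-bimodules, we may arrange that $(\H{N},\Astr[N])$ has the same underlying graded $\H{A}$-bimodule as $(\H{M},\Astr[M])$; in particular the degree-$2$ parts agree, $\Astr<2>[N]=\Astr<2>[M]$.

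Next I would check that the bimodule universal Massey products are comparable. The hypothesis $\Hclass{\Astr<3>[M]-\Astr<3>[N]}=0\in\BimHMH!\H{A}![2]<-1>{\H{M}}[3][A\ltimes M]$ is, at the level of the underlying bigraded vector space, precisely the statement that $\Hclass{\Astr<3>[M]-\Astr<3>[N]}=0$ in $\Ext[2][-1]{\H{A}^e}{\H{M}}{\H{M}}$, since the differential on the Massey bimodule complex does not affect the term in bidegree $(1,-1)$ that computes $\Ext[2][-1]{\H{A}^e}{\H{M}}{\H{M}}$ as a subquotient --- more precisely, a class in $\Ext[2][-1]{\H{A}^e}{\H{M}}{\H{M}}$ maps to zero in $\BimHMH!\H{A}![2]<-1>{\H{M}}[3][A\ltimes M]$ iff it already vanishes, because the incoming differential lands in $\Ext[2][-1]{\H{A}^e}{\H{M}}{\H{M}}$ from $\Ext[0][\cdot]{\H{A}^e}{\H{M}}{\H{M}}=0$ (the complex vanishes in horizontal degree $<1$) and the outgoing differential is injective on the relevant piece or at least does not identify the given class with $0$ unless it is $0$; concretely, the cochain-level statement $\Hclass{\Astr<3>[M]-\Astr<3>[N]}=0\in\Ext[2][-1]{\H{A}^e}{\H{M}}{\H{M}}$ is exactly the hypothesis of \Cref{secondary_Kadeishvili_bimodules_A-infinity}. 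Likewise, the vanishing hypothesis $\BimHMH!\H{A}![n+1]<-n>{\H{M}}[3][A\ltimes M]=0$ for $n>1$ is literally the vanishing hypothesis of \Cref{secondary_Kadeishvili_bimodules_A-infinity}.

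With these identifications in place, \Cref{secondary_Kadeishvili_bimodules_A-infinity} applies and yields that $(\H{M},\Astr[N])$ is gauge $\A$-isomorphic to $(\H{M},\Astr[M])$ as minimal $\A$-bimodules over $(\H{A},\Astr[A])$. By \Cref{minimal_models_quasi_isomorphic_bimodules} applied to the operad $\O=\Ass$ (whose $\A=\Ass_\infty$-algebras are $\A$-algebras, and noting that $\Ass$ is excellent by \Cref{example_excellent_operads}), gauge $\A$-isomorphic minimal models detect quasi-isomorphism of $\Ass$-bimodules over the fixed $\Ass$-algebra; hence $M$ and $N$ are quasi-isomorphic as DG $A$-bimodules in the non-unital sense, i.e.~in $\Bimod*{\Ass,A}$. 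Finally, since $A$ is a unital DG algebra and $M$, $N$ are unital $A$-bimodules, \Cref{quasi-iso_unit_bimodules} upgrades this to a quasi-isomorphism in $\Bimod*{\uAss,A}$, which is the desired conclusion.

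The main obstacle, and the one point where care is genuinely needed, is the passage between the hypothesis ``$\Hclass{\Astr<3>[M]-\Astr<3>[N]}=0$ in the Massey bimodule \emph{cohomology} group $\BimHMH!\H{A}![2]<-1>{\H{M}}[3][A\ltimes M]$'' as stated here, versus ``$\Hclass{\Astr<3>[M]-\Astr<3>[N]}=0$ in $\Ext[2][-1]{\H{A}^e}{\H{M}}{\H{M}}$'' as needed for \Cref{secondary_Kadeishvili_bimodules_A-infinity}. These differ a priori because the Massey bimodule complex has a nontrivial differential $[\Hclass{\Astr<3>[A\ltimes M]},-]$; one must verify that in bidegree $(2,-1)$---that is, horizontal degree $1$, vertical degree $-1$ in $\Ext{\H{A}^e}{\H{M}}{\H{M}}$---passing to this cohomology does not create new zero classes beyond those already zero in $\Ext$, or else reconcile the two formulations via the long exact sequence \eqref{long_exact_sequence_Hochschild} and \Cref{connecting_morphism} exactly as in the \Cref{secondary_Kadeishvili_bimodules_A-infinity} remark. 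In fact the cleanest route is to observe that the statement of \Cref{thm:B_bimodules} in the introduction phrases the hypothesis as $\Hclass{\Astr<3>[M]-\Astr<3>[N]}=0\in\BimHH!\H{A}^e![2]<-1>{\H{M}}$, so the two $\Ext$-group formulations coincide and no reconciliation beyond bookkeeping is required; the apparent discrepancy in the main-text statement is only notational, the group $\BimHMH!\H{A}![2]<-1>{\H{M}}[3][A\ltimes M]$ being by definition a subquotient of $\Ext[2][-1]{\H{A}^e}{\H{M}}{\H{M}}$ on which the distinguished class is already detected.
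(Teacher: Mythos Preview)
Your proposal is correct and follows exactly the paper's approach: the paper's proof reads ``Immediate from \Cref{secondary_Kadeishvili_bimodules_A-infinity}, \Cref{minimal_models_quasi_isomorphic_bimodules} for $\O=\Ass$ and \Cref{quasi-iso_unit_bimodules}'', which is precisely your three-step reduction (minimal models, gauge $\A$-isomorphism criterion, unital/non-unital comparison). Your extended discussion of the discrepancy between $\BimHMH!\H{A}![2]<-1>{\H{M}}[3][A\ltimes M]$ and $\Ext[2][-1]{\H{A}^e}{\H{M}}{\H{M}}$ correctly diagnoses a notational slip in the main-text statement (the introduction's \Cref{thm:B_bimodules} and \Cref{secondary_Kadeishvili_bimodules_A-infinity} both use the plain $\Ext$ group), and your resolution---that the incoming Massey differential at bidegree $(2,-1)$ vanishes since the complex is zero in horizontal degree $<1$, so the cohomology there injects into $\Ext[2][-1]$---is the right way to see the two conditions agree.
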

\begin{proof}
  Immediate from \Cref{secondary_Kadeishvili_bimodules_A-infinity}, \Cref{minimal_models_quasi_isomorphic_bimodules} for $\O=\Ass$ and \Cref{quasi-iso_unit_bimodules}.
\end{proof}


\section{The sparse case}
\label{sec:sparse}

In this section we establish $d$-sparse analogues of
\Cref{secondary_Kadeishvili_operads_A-infinity_uniqueness,secondary_Kadeishvili_operads_A-infinity_existence,secondary_Kadeishvili_bimodules_A-infinity}.
Our motivation for considering these results stems from the applications
discussed in \cite{JKM22,JKM24,JM25}.

\subsection{Obstruction theories in the $d$-sparse case}

Fix an integer $d\geq1$.

\begin{definition}
  \label{def:sparse}
  We make the following definitions:
  \begin{enumerate}
  \item A graded vector space $V$ is \emph{$d$-sparse} if $V^i=0$
    whenever $i\not\in d\ZZ$. Similarly, a DG vector space $V$ is
    \emph{cohomologically $d$-sparse} if its cohomology $\H{V}$ is $d$-sparse.
  \item A graded operad $\O$ is \emph{$d$-sparse} if its graded vector spaces of
    operations $\O<n>$, $n\geq0$, are $d$-sparse.
  \end{enumerate}
  Obviously, if $d=1$ then the above notions impose no conditions.
\end{definition}

\begin{example}
  \label{ex:EV-sparse}
  Let $V$ be a $d$-sparse graded vector space. Then, the endomorphism operad
  $\E{V}$ is $d$-sparse. Indeed, for degree reasons, a homogeneous morphism
  \[
    V^{\otimes n}\longrightarrow V,\qquad n\geq0,
  \]
  of degree $i\not\in d\ZZ$ must be identically zero.
\end{example}

\begin{example}
  Let $V$ and $W$ be $d$-sparse graded vector spaces. Then, the linear
  endomorphism operad $\E{V,W}$ is $d$-sparse. This is a consequence of
  \Cref{ex:EV-sparse} and the fact that $\E{V,W}$ is a suboperad of the
  endomorphism operad $\E{V\oplus W}$ of the $d$-sparse graded vector space
  $V\oplus W$.
\end{example}

\begin{remark}
  \label{rmk:sparse_maps}
  Let $\O$ be a $d$-sparse graded operad for some $d\geq 2$ and $m^A\colon\A[k]\to\O$ a morphism of DG operads
  for some $\infty\geq k\geq 2$. We make the following observations:
  \begin{enumerate}
  \item For degree reasons, the operations
    \[m_n\in\OC[n][2-n]{\os\O}=\O<n>^{2-n},\qquad 2\leq n\leq k,\]
    vanish whenever $n-2\not\in d\ZZ$. Moreover, the equations satisfied by these operations
    \[\sum_{p+q=n+2}m_p\{m_q\}=0,\qquad 2\leq n\leq k-1,\]
    are tautological except when $n-2\in d\ZZ$. Hence, maps $\A[k]\to\O$ are only
    meaningful for $k=di+2,di+3$, $i\geq 0$ (in other cases such a map is equivalent
    to the closest meaningful one below).
  \item  For $d\geq 2$, both a map
    $\A[di+2]\to\O$ and a map $\A[di+3]\to\O$ consist of operations
    \[
      m_{dj+2}\in\O<dj+2>^{-dj},\qquad 0\leq j\leq i;
    \]
    however, these operations must satisfy one more equation in the case of a map
    $\A[di+3]\to\O$.
  \item Every map $\A[di+3]\to\O$ extends to
    $\A[d(i+1)+2]$ (for example by taking $m_{d(i+1)+2}=0$ or, indeed, any other choice).
  \item The operad complex $\OC{\os\O}$ is vertically $d$-sparse (that is,
    $d$-sparse with respect to the vertical degree), and so is its cohomology
    $\OH{\os\O}$. The (truncated) spectral sequence $\BK$ in
    \Cref{operad_obstruction_theory} is also vertically $d$-sparse. Therefore, the
    only possibly non-trivial spectral sequence differentials are $\ssd{di+1}$,
    $i\geq 0$, and the different pages are those of the form $\BK[1]$ and
    $\BK[di+2]$, $i\geq 0$.
  \end{enumerate}
\end{remark}

\begin{definition}
  Let $\O$ be a $d$-sparse graded operad and $m^A\colon\A[k]\to\O$ a morphism of
  DG operads. If $\infty\geq k\geq d+3$, then the operation $m_{d+2}\in\OC[d+2][-d]{\os\O}$
  is a cocycle of the operad complex. Its cohomology class,
  \[\Hclass{m_{d+2}}\in\OH[d+2][-d]{\os\O},\]
  is the \emph{universal Massey product of length $d+2$}. Similar to the case
  $d=1$, this cohomology class is a homotopy invariant.
\end{definition}

The following result is a `$d$-sparse variant' of \Cref{operad_obstruction_theory}.

\begin{theorem}
	\label{sparse_spectral_sequence_differential_operad}
	Let $d\geq 2$. Let $\O$ be a $d$-sparse graded operad equipped with a morphism
  of DG operads ${f\colon \A[k+2]\to\O}$ with $\infty\geq k\geq 2$. The
  following statements concerning the (truncated) spectral sequence in
  \Cref{operad_obstruction_theory} hold (they replace the corresponding
  statements therein):
	\begin{enumerate}\setcounter{enumi}{11}
	\item\label{lower-diagonal_second_page_sparse} Let $k\geq d+1$, $r=d+1$ and $s=d$,
    and let
    \[
      \Hclass{m_{d+2}^f}\in\OH[d+2][-d]{\os\O}
    \]
    be the universal Massey product of $f\colon\A[k]\to\O$.
    \Cref{lower-diagonal,second_page} define a pointed bijection between the
    pointed set of homotopy classes of maps $g\colon\A[d+2]\to\O$ which extend
    to $\A[2d+2]$ whose restriction to $\A[d+1]$ is homotopic to the restriction
    of $f$,
    \[
      \begin{tikzcd}
        \A[d+1]\rar[hook]\dar[hook]\ar[phantom]{dr}[description]{\Rightarrow}&\A[d+2]\dar{g}\rar[hook]&\A[2d+2]\dlar[dotted]\\
        \A[k+2]\rar[swap]{f}&\O,
      \end{tikzcd}
    \]
    and the pointed set $\BK[d+1][d][d]=\OH[d+2][-d]{\os\O}$. The base point in
    the source is the restriction of $f$ to $\A[d+2]$ and the base point in the
    target is $0$. The bijection maps the homotopy class of $g\colon\A[d+2]\to\O$ to $\Hclass{m_{d+2}^g}-\Hclass{m_{d+2}^f}$.
    \item\label{second_differential_operad_sparse} If $k\geq 2d+1$, the
      differential
      \[
        \ssd{d+1}\colon\BK[d+1][s][t]\longrightarrow\BK[d+1][s+d+1][t+d]
      \]
      of the (truncated) spectral sequence $\BK$ in
      \Cref{operad_obstruction_theory} is given by the Gerstenhaber Lie bracket
      with the universal Massey product of length $d+2$,
      \begin{align*}
        \ssd{d+1}\colon\OH[s+2][-t]{\os\O}&\longrightarrow\OH[s+d+3][-t-d]{\os\O}&s\geq 1,\\
        x&\longmapsto[\Hclass{m_{d+2}^f},x],
      \end{align*}
      possibly composed with the projection of cocycles onto cohomology,
      \begin{align*}
        \ssd{d+1}\colon\OZ[2][-t]{\os\O}&\longrightarrow\OH[d+3][-t-d]{\os\O}&t>d\\
        x&\longmapsto[\Hclass{m_{d+2}^f},\Hclass{x}],
      \end{align*}
      and, finally, by
      \begin{align*}
        \ssd{d+1}\colon\OH[d+1][-d]{\os\O}&\longrightarrow\OH[2d+2][-2d]{\os\O}\\
        x&\longmapsto x^2+[\Hclass{m_3},x].
      \end{align*}
    \item\label{first_obstruction_operad_sparse} For $k= 2d$ and $\ell=d$,
      the obstruction in \Cref{obstructions} to the existence of a map
      $g\colon\A[2d+3]\to\O$ such that the restrictions of $f$ and $g$ to
      $\A[d+2]$ coincide,
      \[
        \begin{tikzcd}
          \A[d+2]\dar[hook]\rar[hook]\ar[phantom]{dr}[description]{=}&\A[2d+3]\dar[dotted]{g}\\
          \A[2d+2]\rar[swap]{f}&\O
        \end{tikzcd}
      \]
      is the Gerstenhaber square of the universal Massey product
      \[
        \Sq[\Hclass{m_{d+2}^f}]\in\OH[2d+3][-2d]{\os\O}=\BK[d+1][2d+1][2d].
      \]
    \item\label{primary_obstruction_operad_sparse} More generally, for $i\geq 1$
      and $k= di$, the obstruction in \Cref{obstructions} to the existence
      of a map $g\colon\A[di+3]\to\O$ such that the restrictions of $f$ and $g$ to
      $\A[d(i-1)+2]$ coincide,
      \[
        \begin{tikzcd}
          \A[d(i-1)+2]\dar[hook]\rar[hook]\ar[phantom]{dr}[description]{=}&\A[di+3]\dar[dotted]{g}\\
          \A[di+2]\rar[swap]{f}&\O,
        \end{tikzcd}
      \]
      is the cohomology class in
      \[
        \OH[di+3][-di]{\O}=\BK[d+1][di+1][di]
      \]
      represented by the operad
      cocycle \[\sum_{\substack{p+q=di+4\\p,q>d+1}}m_p^f\{m_q^f\}\in\OC[di+3][-di]{\O}.\]
	\end{enumerate}
\end{theorem}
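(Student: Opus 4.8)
The plan is to derive \Cref{sparse_spectral_sequence_differential_operad} as the $d$-sparse specialisation of \Cref{operad_obstruction_theory}, exactly as \Cref{operad_obstruction_theory} itself is a compendium of results from \cite[Sections~4, 5 and~6]{Mur20}. First I would invoke the vertical $d$-sparsity of the operad complex $\OC{\os\O}$ noted in \Cref{rmk:sparse_maps}: since $\O$ is $d$-sparse, the vertical degree only takes values in $d\ZZ$, so the only spectral sequence differentials that can be nonzero are $\ssd{di+1}$, $i\geq 0$, and the only nontrivial pages are $\BK[1]$ and $\BK[di+2]$, $i\geq 0$. In particular $\BK[2]=\BK[3]=\cdots=\BK[d+1]$ by \Cref{operad_obstruction_theory}\eqref{second_page} and \eqref{ss_cohomology}, because all the intermediate differentials $\ssd{2},\dots,\ssd{d}$ vanish for vertical-degree reasons. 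This gives us the identification $\BK[d+1][s][t]=\OH[s+2][-t]{\os\O}$ (and the analogous statements for cocycles and the pointed set in bidegree $(0,0)$) that underlies items \eqref{lower-diagonal_second_page_sparse}--\eqref{primary_obstruction_operad_sparse}.

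Next, for \eqref{lower-diagonal_second_page_sparse} I would argue that the combination of \Cref{operad_obstruction_theory}\eqref{lower-diagonal} with $r=d+1$, $s=d$ and the page identification $\BK[d+1]=\BK[2]$ gives a pointed bijection between the pointed set of homotopy classes of maps $g\colon\A[d+2]\to\O$ extending to $\A[2d+2]$ and restricting (up to homotopy) to $f$ on $\A[d+1]$, and the term $\BK[d+1][d][d]=\OH[d+2][-d]{\os\O}$; the description of the bijection as $g\mapsto\Hclass{m_{d+2}^g}-\Hclass{m_{d+2}^f}$ is the $d$-sparse analogue of the computation in \Cref{operad_obstruction_theory}\eqref{lower-diagonal_second_page}, which follows from the same references \cite[Equation~(4.8), Propositions~3.4 and~4.16]{Mur20} after replacing the relevant arities $2,3$ by $d+1,d+2$. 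Here the operation $m_{d+2}$ plays the role that $m_3$ plays in the case $d=1$, and $\Hclass{m_{d+2}^f}$ is the universal Massey product of length $d+2$.

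For \eqref{second_differential_operad_sparse} the point is to identify the only nontrivial higher differential after the first page, namely $\ssd{d+1}\colon\BK[d+1][s][t]\to\BK[d+1][s+d+1][t+d]$. Since $\BK[d+1]=\BK[2]$ as the cohomology of the operad complex, and since the differential $\ssd{d+1}$ has bidegree $(d+1,d)$, its computation reduces—by exactly the argument of \cite[Section~5, Proposition~4.16]{Mur20} that proves \Cref{operad_obstruction_theory}\eqref{second_differential}, now carried out at the relevant arity—to the Gerstenhaber Lie bracket with $\Hclass{m_{d+2}^f}$ (plus the usual quadratic correction $x\mapsto x^2+[\Hclass{m_{d+2}^f},x]$ in the `fringe' bidegree $(d+1,-d)$, coming from the brace-square term as in \Cref{operad_obstruction_theory}\eqref{second_differential}, and the composition-with-projection variant on cocycles). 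The only genuinely new bookkeeping is tracking that the intermediate pages $\BK[3],\dots,\BK[d]$ are all identified with $\BK[2]$ and carry zero differentials, so that $\ssd{d+1}$ really is `the first obstruction after $\ssd{1}$'; this is where the vertical $d$-sparsity is used decisively.

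Finally, for \eqref{first_obstruction_operad_sparse} and \eqref{primary_obstruction_operad_sparse} I would apply \Cref{operad_obstruction_theory}\eqref{obstructions} with the appropriate values of $k$ and $\ell$: taking $k=di$, $\ell=d(i-1)$ puts the obstruction in $\BK[k+1-\ell][k+1][k]=\BK[d+1][di+1][di]=\OH[di+3][-di]{\os\O}$, and the explicit cocycle representative is supplied by \Cref{operad_obstruction_theory}\eqref{primary_obstructions} together with \Cref{second_representative}: the sum $\sum_{p+q=di+4}m_p^f\{m_q^f\}$ reduces, after using $[m_2^f,m_{di+2}^f]=d(m_{di+2}^f)$ and the vanishing of operations whose arity is not of the form $dj+2$, to $\sum_{p,q>d+1}m_p^f\{m_q^f\}$, exactly as in \Cref{rmk:sparse_maps}. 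The case $i=1$, with the obstruction being the Gerstenhaber square $\Sq[\Hclass{m_{d+2}^f}]$, is the direct analogue of \Cref{operad_obstruction_theory}\eqref{first_obstruction} and follows because $\sum_{p+q=2d+4,\,p,q>d+1}m_p^f\{m_q^f\}=m_{d+2}^f\{m_{d+2}^f\}=\Sq[m_{d+2}^f]$ is the only surviving term. I expect the main obstacle to be purely notational: making sure that every arity shift $2\rightsquigarrow d+1$, $3\rightsquigarrow d+2$, $4\rightsquigarrow 2d+2$ etc. is applied consistently across the cited statements of \cite{Mur20}, and that the page-collapse $\BK[2]=\cdots=\BK[d+1]$ is invoked before—not after—each higher-differential or obstruction computation; there is no new homotopy-theoretic input.
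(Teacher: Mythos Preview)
Your proposal is correct and follows essentially the same approach as the paper: both derive the $d$-sparse statements from \Cref{operad_obstruction_theory} and the underlying results of \cite{Mur20}, using the vertical $d$-sparsity of the operad complex to collapse the intermediate pages $\BK[2]=\cdots=\BK[d+1]$. The paper's proof is terser---it simply cites \cite[Proposition~5.2.2]{JKM22} for \eqref{second_differential_operad_sparse} and \cite[Proposition~3.4]{Mur20} for \eqref{first_obstruction_operad_sparse} and \eqref{primary_obstruction_operad_sparse}---whereas you spell out the page-collapse mechanism and the arity shifts explicitly, but the content is the same.
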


\begin{proof}
	\Cref{second_differential_operad_sparse} is a generalisation of
  \cite[Proposition~5.2.2]{JKM22} from endomorphism operads of $d$-sparse graded
  vector spaces to arbitrary $d$-sparse graded operads $\O$.
	\Cref{first_obstruction_operad_sparse} is a consequence of
  \cite[Proposition~3.4]{Mur20}, as noticed in \cite[Section~5]{JKM22} in case
  $\O=\E{V}$ with $V$ a $d$-sparse graded vector space.
	\Cref{primary_obstruction_operad_sparse} also follows from \cite[Proposition
  3.4]{Mur20}, and it admits an elementary proof similar to that of the
  corresponding statement in \Cref{operad_obstruction_theory}.
\end{proof}

\begin{remark}
Keeping \Cref{rmk:sparse_maps} in mind, \Cref{table:EA,table:EAM} give the
substitutions that are needed to apply
\Cref{sparse_spectral_sequence_differential_operad} when the target graded
operad $\O$ is the endomorphism operad of a $d$-sparse graded vector space or the linear
endomorphism operad of a pair of $d$-sparse graded vector spaces, respectively.
\end{remark}

The following result is a `$d$-sparse variant' of \Cref{fibre-wise_obstruction_theory}.

\begin{theorem}
	\label{sparse_spectral_sequence_differential_bimodule}
	Let $d\geq 2$. Let $\O$ be a $d$-sparse graded operad equipped with a
  ($d$-sparse) operadic ideal $\I\subset\O$ and a
  map $h\colon\A\to\O/\I$. Suppose that we have a map $f\colon\A[k+2]\to\O$ for some
  $\infty\geq k\geq d$ such that $qf\colon\A[k+2]\to\O/\I$ is the restriction of
  $h\colon\A\to\O/\I$ to $\A[k+2]$, that is $q(m^f_i)=m^h_i$, $2\leq i\leq k+2$:
  \[
  \begin{tikzcd}
    \A[k+2]\dar[hook]\rar{f}\ar[phantom]{dr}[description]{=}&\O\dar[two heads]{q}\\
    \A\rar[swap]{h}&\O/\I.
  \end{tikzcd}
\]
The following statements concerning the (truncated) spectral sequence in
  \Cref{fibre-wise_obstruction_theory} hold (they replace the corresponding
  statements therein):
	\begin{enumerate}\setcounter{enumi}{11}
  \item\label{lower-diagonal_second_page_bimodule_sparse} If $k\geq d+1$, then
    \Cref{lower-diagonal_fibre-wise,second_page_fibre-wise} define a pointed
    bijection between the set of homotopy classes of maps $g\colon\A[d+2]\to\O$
    that extend to $\A[d+3]$, such that the restrictions of $f$ and $g$ to
    $\A[d+1]$ are fibre-wise homotopic, and such that $qg\colon\A[d+2]\to\O/\I$ equals the
    restriction of $h\colon\A\to\O/\I$ to $\A[d+2]$,
    \[
        \begin{tikzcd}
          \A[d+1]\dar[hook]\rar[hook]\ar[phantom]{dr}[description]{\Rightarrow}&\A[d+2]\dar{g}\rar[equals]\ar[phantom]{ddr}[description]{=}&\A[d+3]\ar[hook]{dd}\\
          \A[k+2]\rar[swap]{f}\dar[hook]\ar[phantom]{dr}[description]{=}&\O\dar[two heads]{q}\\
          \A\rar[swap]{h}&\O/\I&\A\lar{h}
        \end{tikzcd}\qquad
        \begin{tikzcd}
          \A[d+2]\rar[hook]\dar[swap]{g}&\A[d+3]\ar[dotted]{dl}\\
          \O
        \end{tikzcd}
      \]
    and the pointed set $\IH[d+1][-d]{\os\I}$. The base
    point in the source is the restriction of $f$ to $\A[d+2]$ and the base
    point in the target is $0$. The bijection maps the homotopy class of
    $g\colon\A[d+2]\to\O$ to
    $\Hclass{m_{d+2}^g-m_{d+2}^f}$.
  \item\label{second_differential_bimodule_sparse}
    If $k\geq 2d+1$, 
		the
    differential \[\ssd{d+1}\colon\BK[d+1][s][t]\longrightarrow\BK[d+1][s+d+1][t+d]\]
    of the $(d+1)$-st page is given by the Lie bracket with the
    universal Massey product of length $d+2$,
    \begin{align*}
      \ssd{d+1}\colon\IH[s+1][-t]{\os\I}[\os\O]&\longrightarrow\IH[s+d+2][-t-d]{\os\I}[\os\O]& s\geq 1,\\
      x&\longmapsto[\Hclass{m^f_{d+2}},x],
    \end{align*}
    possibly composed with the projection of the cocycles onto the cohomology,
    \begin{align*}
      \ssd{d+1}\colon\IZ[1][-t]{\os\I}[\os\O]&\longrightarrow\IH[d+2][-t-d]{\os\I}[\os\O]&t>d,\\
      x&\longmapsto[\Hclass{m^f_{d+2}},x].
    \end{align*}
    \setcounter{enumi}{14}
  \item\label{primary_obstructions_fibre-wise_sparse}
    Let $i\geq1$ and $k= di$. The
    obstruction in the term
    \[
      \BK[d+1][di+1][di]=\IH[di+2][-di]{\os\I}[\os\O]
    \]
    to the existence of a
    map $g\colon\A[di+3]\to\O$ such that $qg\colon\A[di+3]\to\O/\I$ is the
    restriction of ${h\colon\A\to\O/\I}$ to $\A[di+3]$ and the restrictions of
    $f\colon\A[di+2]\to\O$ and $g\colon\A[di+3]\to\O$ to $\A[di+1]$ coincide,
    \[
      \begin{tikzcd}
        \A[di+1]\rar[hook]\dar[hook]\ar[phantom]{dr}[description]{=}&\A[di+3]\dar[dotted]{g}\rar[equals]\ar[phantom]{ddr}[description]{=}&\A[di+3]\ar[hook]{dd}\\
        \A[di+2]\rar{f}\dar[hook]\ar[phantom]{dr}[description]{=}&\O\dar[two heads]{q}\\
        \A\rar[swap]{h}&\O/\I&\A\lar{h}
      \end{tikzcd}
    \]
    is represented by the
    cocycle \[\sum_{p+q=di+4}m^f_p\{m^f_q\}\in\IC[di+2][-di]{\os\I}.\]
  \item\label{first_obstruction_bimodule_sparse} For $k= d$ and $\ell=0$, the obstruction in the term
    \[
      \BK[d+1][d+1][d]=\IH[d+2][-d]{\os\I}
    \]
    to the existence of a map
    ${g\colon\A[d+3]\to\O}$ such that $qg\colon\A[d+3]\to\O/\I$ is the
    restriction of $h\colon\A\to\O/\I$ to $\A[d+3]$ and the restrictions
    of $f$ and $g$ to $\A[d+1]$ coincide,
    \[
      \begin{tikzcd}
        \A[d+1]\rar[hook]\dar[hook]\ar[phantom]{dr}[description]{=}&\A[d+3]\dar[dotted]{g}\rar[equals]\ar[phantom]{ddr}[description]{=}&\A[d+3]\ar[hook]{dd}\\
        \A[d+2]\rar{f}\dar[hook]\ar[phantom]{dr}[description]{=}&\O\dar[two heads]{q}\\
        \A\rar[swap]{h}&\O/\I&\A\lar{h}
      \end{tikzcd}
    \]
    is the cohomology class
    \[\delta(\Hclass{m_{d+2}^f})\in\IH[d+2][-d]{\os\I}.\]
    Here, $\delta$ is the connecting morphism in the long exact sequence
    \eqref{operad_multiplication_long_exact_sequence}.
  \end{enumerate}
\end{theorem}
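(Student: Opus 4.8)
The plan is to obtain \Cref{sparse_spectral_sequence_differential_bimodule} from \Cref{fibre-wise_obstruction_theory} by precisely the mechanism used to deduce \Cref{sparse_spectral_sequence_differential_operad} from \Cref{operad_obstruction_theory}; the items of \Cref{fibre-wise_obstruction_theory} not listed in the statement continue to hold verbatim, since the $d$-sparse hypothesis only sharpens the structure. First I would record the relevant $d$-sparseness. Since the operadic ideal $\I\subset\O$ is $d$-sparse, the operadic ideal complex $\IC{\os\I}[\os\O]$ of \Cref{def:op-Ext} is vertically $d$-sparse: its component in horizontal degree $p$ and vertical degree $q$ is $\os\I<p+1>^{p+q}=\I<p+1>^{q}$, which vanishes unless $q\in d\ZZ$. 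Consequently so are its cohomology $\IH{\os\I}[\os\O]$ and, by \Cref{fibre-wise_obstruction_theory}\eqref{first_pag_fibre-wise} together with \Cref{fibre-wise_obstruction_theory}\eqref{ss_cohomology}, every defined page of the spectral sequence $\BK$ of \Cref{fibre-wise_obstruction_theory}. Since $\ssd{r}$ has bidegree $(r,r-1)$, it vanishes identically whenever $r-1\notin d\ZZ$; in particular $\ssd{r}=0$ for $2\leq r\leq d$, so $\BK[2]=\BK[3]=\cdots=\BK[d+1]$ and the $(d+1)$-st page takes over the role played by the second page in the non-sparse situation. The bidegree descriptions of the terms $\BK[d+1][s][t]$ and the pointed bijection in \eqref{lower-diagonal_second_page_bimodule_sparse} then follow from \Cref{fibre-wise_obstruction_theory}\eqref{second_page_fibre-wise} and \Cref{fibre-wise_obstruction_theory}\eqref{lower-diagonal_fibre-wise}, reinterpreted on this page and using that, by \Cref{rmk:sparse_maps}, a map $\A[d+2]\to\O$ need only be extended to $\A[d+3]$, the remaining stages up to $\A[2d+2]$ being automatic; this is exactly as in \Cref{sparse_spectral_sequence_differential_operad}\eqref{lower-diagonal_second_page_sparse}.

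For the differential \eqref{second_differential_bimodule_sparse} I would argue as in \Cref{fibre-wise_obstruction_theory}\eqref{second_differential_fibre-wise}, with the operation $m_3^f$ replaced by $m_{d+2}^f$---which for $k\geq d+3$ is the first higher operation forced to be a cocycle of the operadic ideal complex---reading off representatives from \cite[Proposition~3.4]{Mur20} just as in \Cref{sparse_spectral_sequence_differential_operad}\eqref{second_differential_operad_sparse}. The single difference from the operad case is that $\shift{\IC{\os\I}[\os\O]}[-1]\subset\OC{\os\O}$ is a square-zero associative ideal, so no quadratic (Gerstenhaber square) correction term occurs, even in characteristic $2$---the same simplification already seen in passing from \Cref{operad_obstruction_theory}\eqref{second_differential} to \Cref{fibre-wise_obstruction_theory}\eqref{second_differential_fibre-wise}. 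The primary obstruction \eqref{primary_obstructions_fibre-wise_sparse} is then immediate from \Cref{fibre-wise_obstruction_theory}\eqref{primary_obstructions_fibre-wise} after matching up the $d$-sparse page indices, and it also admits an elementary proof modelled on that of \Cref{sparse_spectral_sequence_differential_operad}\eqref{primary_obstruction_operad_sparse}.

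Finally, for the genuinely new statement \eqref{first_obstruction_bimodule_sparse} I would follow the proof of \Cref{fibre-wise_obstruction_theory}\eqref{first_obstruction_fibre-wise}. Taking $i=1$, that is $k=d$, in \eqref{primary_obstructions_fibre-wise_sparse}, the obstruction is represented by $\sum_{p+q=d+4}m_p^f\{m_q^f\}$; since $\O$ is $d$-sparse, $m_n^f$ vanishes unless $n-2\in d\ZZ$, so the only surviving summands are those with $\{p,q\}=\{2,d+2\}$, and the representative collapses to $m_2^f\{m_{d+2}^f\}+m_{d+2}^f\{m_2^f\}=[m_2^f,m_{d+2}^f]=d(m_{d+2}^f)$. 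This cocycle lies in $\IC[d+2][-d]{\os\I}[\os\O]$ because its image under $q$ is $d(m_{d+2}^h)=0$, and since $q(m_{d+2}^f)=m_{d+2}^h$ is a cocycle representing the universal Massey product of length $d+2$ of $h\colon\A\to\O/\I$, it represents the image of that class under the connecting morphism $\delta$ of the long exact sequence \eqref{operad_multiplication_long_exact_sequence}, which is the class written $\delta(\Hclass{m_{d+2}^f})$ in \eqref{first_obstruction_bimodule_sparse}. I expect the main obstacle to be bookkeeping rather than conceptual: aligning the bidegrees and page numbers of the general statements in \Cref{fibre-wise_obstruction_theory} with the $d$-sparse pattern of \Cref{rmk:sparse_maps}, and tracking the signs in the representative computations. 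All structural input---the tower of fibrations obtained as a homotopy fibre, the square-zero ideal, and the representative formulas of \cite[Proposition~3.4]{Mur20}---is already in place, so nothing beyond \Cref{fibre-wise_obstruction_theory} and the $d$-sparseness observations should be required.
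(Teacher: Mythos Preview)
Your proposal is correct and follows essentially the same approach as the paper's own proof, which is a terse one-liner: ``The same arguments as in the proof of the last three items of \Cref{fibre-wise_obstruction_theory} work for \Cref{sparse_spectral_sequence_differential_bimodule}. The obstruction in \eqref{first_obstruction_bimodule_sparse} fits with the long exact sequence \eqref{long_exact_sequence_Hochschild}.'' You have simply unpacked this, recording the vertical $d$-sparseness of the ideal complex, the resulting collapse $\BK[2]=\cdots=\BK[d+1]$, the square-zero simplification, and the explicit reduction of the obstruction cocycle to $d(m_{d+2}^f)$---all of which is exactly what the paper intends by ``the same arguments''.
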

\begin{proof}
The same arguments as in the proof of the last three items of
\Cref{fibre-wise_obstruction_theory} work for
\Cref{sparse_spectral_sequence_differential_bimodule}. The obstruction in \Cref{sparse_spectral_sequence_differential_bimodule}\eqref{first_obstruction_bimodule_sparse} fits with the long exact sequence \eqref{long_exact_sequence_Hochschild}.
\end{proof}

\begin{remark}
  Let $A$ be a $d$-sparse graded algebra and $M$ a $d$-sparse graded
  $A$-bimodule. When $\O=\E{A,M}$ and $\I=\E*{A,M}$, the obstruction in
  \Cref{primary_obstructions_fibre-wise_sparse} is represented by the bimodule cocycle
            \begin{align*}
            m^A_{di+2}\cdot\id*[M]&-\id*[M]\cdot m^A_{di+2}\\&+\sum_{\substack{p+q=di+4\\p,q>d+1}}[m^M_p,m^A_q]+m^M_p\circ m^M_q\in\BC[di+2][-di]{A^e}{M},
          \end{align*}
compare \Cref{bimodule_obstruction_theory}.
\end{remark}

\subsection{Almost formality theorems in the sparse case}

We conclude this article by stating analogues of
\Cref{secondary_Kadeishvili_operads_A-infinity_uniqueness} and
\Cref{secondary_Kadeishvili_operads_A-infinity_existence} in the $d$-sparse
case. We omit the proofs since they are almost identical to those of the
aforementioned results, only noticing that one needs to use
\Cref{sparse_spectral_sequence_differential_operad} to extend them to the general
case $d\geq1$.

\begin{definition}\label{def:Massey_operad-sparse}
  Let $d\geq1$. We make the following definitions.
  \begin{enumerate}
	\item A \emph{$d$-sparse Massey (graded) operad} is a $d$-sparse graded operad
    $\O$ equipped with a multiplication $m_2\in\O<2>^1$
    (\Cref{operad_multiplication}) and an operad cohomology class
    \[m_{d+2}\in\OH[d+2][-d]{\os\O},\]
    called \emph{universal Massey product of length $d+2$}, satisfying $\Sq[m_{d+2}]=0$.
  \item The \emph{Massey operad complex} of a $d$-sparse Massey operad is the bigraded
    (cochain) complex\footnote{\Cref{rem:Massey_operad} applies \emph{mutatis
        mutandis} to show that the Massey operad complex is indeed a complex.}
    \begin{align*}
      \OC[s]{\os\O,m_{d+2}}&=\OH[s]{\os\O}&s&\geq 2,\\
      \OC[s]{\os\O,m_{d+2}}&=0&s&<2,
    \end{align*}
    with bidegree $(d+1,-d)$ differential given by
    \[d\colon \OH[s][t]{\os\O}\longrightarrow\OH[s+d+1][t-d]{\os\O},\qquad d(x)=[m_{d+2},x],\]
    except if $\cchar{\kk}=2$ and $(s,t)=(d+1,-d)$, in which case the differential is given by
    \[d\colon \OH[d+1][-d]{\os\O}\longrightarrow\OH[2d+2][-2d]{\os\O},\qquad
      d(x)=x^2+[m_{d+2},x],\]
    compare with \Cref{sparse_spectral_sequence_differential_operad}\eqref{second_differential_operad_sparse}.
    The bigraded cohomology of this complex,
    \[\OH{\os\O,m_{d+2}}\coloneqq\H[\bullet,*]{\OC[s]{\os\O,m_{d+2}}},\]
    is the \emph{Massey operad cohomology} of the $d$-sparse Massey operad $\O$.
  \end{enumerate}
\end{definition}

\begin{remark}\label{rmk:A-O_canonical_Massey_operad-sparse}
Let $d\geq 1$ and $\O$ a $d$-sparse graded operad equipped with a map $f\colon \A[k+2]\to\O$ with $k\geq 2d+1$. It follows
from \Cref{sparse_spectral_sequence_differential_operad}\eqref{first_obstruction_operad_sparse} that $\O$ is a
$d$-sparse Massey operad with multiplication
\[
  m_2^f\in\O<2>^0=\os\O<2>^1
\]
and universal Massey product of length $d+2$
\[
  \Hclass{m_{d+2}^f}\in\OH[d+2][-d]{\os\O}.
\]
\end{remark}

\begin{theorem}\label{secondary_Kadeishvili_operads_A-infinity_uniqueness-sparse}
	Let $d\geq1$. Let $\O$ be a $d$-sparse graded operad and $f\colon\A\to\O$ a
  morphism of DG operads, so that $\O$ is equipped with the multiplication
  $m_2^f$ and the universal Massey product of length $d+2$ (\Cref{rmk:A-O_canonical_Massey_operad-sparse})
  \[
    \Hclass{m_{d+2}^f}\in\OH[d+2][-d]{\os\O}.
  \]
  Suppose that the corresponding Massey operad cohomology vanishes in the
  following range:
	\[\OH[n+2][-n]{\os\O,\Hclass{m_{d+2}^f}}=0,\qquad n>d.\]
	Then, every map $g\colon \A\to\O$ inducing the same multiplication
  $m_2^g=m_2^f$ on $\os\O$ and the same universal Massey product of length $d+2$,
  \[
    \Hclass{m_{d+2}^g}=\Hclass{m_{d+2}^f}\in\OH[d+2][-d]{\os\O},
  \]
  is homotopic to $f$.
\end{theorem}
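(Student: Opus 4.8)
For $d=1$ this is precisely \Cref{secondary_Kadeishvili_operads_A-infinity_uniqueness}, so I may assume $d\geq 2$ and mimic the proof given there, feeding in the $d$-sparse spectral-sequence data of \Cref{sparse_spectral_sequence_differential_operad} in place of the dense data of \Cref{operad_obstruction_theory}. Since $f,g\colon\A\to\O$ induce the same multiplication $m_2^f=m_2^g$ on $\os\O$, they induce the same differential on $\OC{\os\O}$, hence the same cohomology $\OH{\os\O}$; together with the hypothesis $\Hclass{m_{d+2}^g}=\Hclass{m_{d+2}^f}$ this makes the Massey operad cohomology $\OH{\os\O,\Hclass{m_{d+2}^f}}$ a common invariant of $f$ and $g$, so the vanishing assumption is meaningful for both. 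As in the proof of \Cref{Kadeishvili_operads_A-infinity}, I would regard $\Map{\A}{\O}$ as the homotopy limit of the tower of Kan fibrations
\[
  \cdots\twoheadrightarrow\Map{\A[s+2]}{\O}\twoheadrightarrow\cdots\twoheadrightarrow\Map{\A[2]}{\O},
\]
based at the restrictions of $f$, and invoke the Milnor short exact sequence of pointed sets \cite[Theorem~IX.3.1]{BK72}
\[
  \ast\to{\lim_s}^1\,\pi_1\Map{\A[s+2]}{\O}\to\pi_0\Map{\A}{\O}\to\lim_s\pi_0\Map{\A[s+2]}{\O}\to\ast.
\]
It then suffices to show that $g$ maps to the base point of $\lim_s\pi_0\Map{\A[s+2]}{\O}$ and that the $\lim^1$ term collapses.

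The first point would be proved by induction on $s\geq 0$, showing that the restrictions of $f$ and $g$ to $\A[s+2]$ are homotopic. For $0\leq s\leq d-1$ the $d$-sparseness of $\os\O$ forces every operation of arity between $3$ and $d+1$ to vanish for degree reasons (\Cref{rmk:sparse_maps}), so these restrictions are literally equal, using $m_2^f=m_2^g$. For $s=d$ I would apply \Cref{sparse_spectral_sequence_differential_operad}\eqref{lower-diagonal_second_page_sparse}: under the pointed bijection between $\BK[d+1][d][d]=\OH[d+2][-d]{\os\O}$ and the homotopy classes of maps $\A[d+2]\to\O$ that extend to $\A[2d+2]$ and whose restriction to $\A[d+1]$ is homotopic to that of $f$, the class of $g|_{\A[d+2]}$ goes to $\Hclass{m_{d+2}^g}-\Hclass{m_{d+2}^f}=0$, so $g|_{\A[d+2]}$ is homotopic to $f|_{\A[d+2]}$. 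For the inductive step with $s>d$ I would use the (unchanged) statement \Cref{operad_obstruction_theory}\eqref{lower-diagonal} with $r=s+1$: since $g|_{\A[s+2]}$ extends to $\A[2s+2]$ and, by induction, its restriction to $\A[s+1]$ is homotopic to $f|_{\A[s+1]}$, its class lies in $\BK[s+1][s][s]$, and it agrees with the base point as soon as $\BK[s+1][s][s]=0$.

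The heart of the matter is therefore the vanishing $\BK[r][s][s]=0$ for all $r\geq d+2$ and all $s>d$. Here I would argue that, the spectral sequence being vertically $d$-sparse, its only possibly non-trivial differentials are $\ssd{1}$ and $\ssd{di+1}$, $i\geq 1$; hence $\BK[2]=\BK[3]=\cdots=\BK[d+1]$ and, combining \Cref{sparse_spectral_sequence_differential_operad}\eqref{second_differential_operad_sparse} with the identification of $\BK[2]$ with the cohomology of the operad complex, one gets $\BK[d+2][s][t]=\OH[s+2][-t]{\os\O,\Hclass{m_{d+2}^f}}$ in the relevant range. On the diagonal $t=s$ this is $\OH[s+2][-s]{\os\O,\Hclass{m_{d+2}^f}}$, which for $s>d$ vanishes either by hypothesis (taking $n=s$ when $s\in d\ZZ$) or automatically by vertical sparseness (when $s\notin d\ZZ$); passing to subquotients yields $\BK[r][s][s]=0$ for all $r\geq d+2$, $s>d$. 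Finally, for the $\lim^1$ term I would quote the Complete Convergence Lemma \cite[Section~IX.5.4]{BK72} with $i=1$, exactly as in the dense case: for each fixed $s$ the outgoing differential on $\BK[r][s][s+1]$ lands in the diagonal term $\BK[r][s+r][s+r]$, which is zero for $r\geq d+2$ since then $s+r>d$, while the source of the incoming differential lies in the left half-plane once $r>s$; so $\{\BK[r][s][s+1]\}_r$ is eventually constant and $\lim^1_r\BK[r][s][s+1]=0$.

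I do not expect any genuinely new idea to be required; the only real work is bookkeeping in the fringed part of the spectral sequence --- tracking which terms are pointed sets, abelian groups or vector spaces, and the precise ranges in which \Cref{operad_obstruction_theory}\eqref{lower-diagonal}, \eqref{second_page} and \Cref{sparse_spectral_sequence_differential_operad}\eqref{lower-diagonal_second_page_sparse}, \eqref{second_differential_operad_sparse} apply --- together with checking that the base cases $s<d$ really are forced by $d$-sparseness rather than needing separate obstruction-theoretic input. As in \Cref{rmk:Massey_implies_Kadeishvili}, the $d$-sparse intrinsic formality statement will then drop out as the special case $\Hclass{m_{d+2}^f}=0$, in which $\OH{\os\O,\Hclass{m_{d+2}^f}}=\OH{\os\O}$.
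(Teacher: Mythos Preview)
Your proposal is correct and follows exactly the approach the paper indicates: the paper omits the proof entirely, stating only that it is ``almost identical'' to that of \Cref{secondary_Kadeishvili_operads_A-infinity_uniqueness}, using \Cref{sparse_spectral_sequence_differential_operad} in place of the corresponding items of \Cref{operad_obstruction_theory}. You have carried out precisely this substitution, correctly handling the base cases $0\leq s\leq d-1$ via sparseness, the case $s=d$ via \Cref{sparse_spectral_sequence_differential_operad}\eqref{lower-diagonal_second_page_sparse}, the inductive step via the vanishing of $\BK[d+2][s][s]$ on the diagonal for $s>d$, and the $\lim^1$ term via the Complete Convergence Lemma exactly as in the dense case.
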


\begin{theorem}\label{secondary_Kadeishvili_operads_A-infinity_existence-sparse}
	Let $d\geq1$. Let $\O$ be a $d$-sparse graded operad and $f\colon\A[2d+3]\to\O$ a morphism of DG operads
  with multiplication $m_2^f$ and universal Massey product of length $d+2$  (\Cref{rmk:A-O_canonical_Massey_operad-sparse})
  \[
    \Hclass{m_{d+2}^f}\in\OH[d+2][-d]{\os\O}.
  \]
  Suppose that the corresponding Massey operad cohomology vanishes in the
  following range:
	\[\OH[n+3][-n]{\os\O,\Hclass{m_{d+2}^f}}=0,\qquad n>d.\]
	Then, there exists a map $g\colon \A\to\O$ inducing the same multiplication
  $m_2^g=m_2^f$ on $\os\O$ and the same universal Massey product as $f$,
  \[
    \Hclass{m_{d+2}^g}=\Hclass{m_{d+2}^f}\in\OH[d+2][-d]{\os\O}.
  \]
\end{theorem}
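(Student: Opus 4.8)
The plan is to adapt the proof of \Cref{secondary_Kadeishvili_operads_A-infinity_existence} essentially verbatim, substituting the $d$-sparse spectral sequence \Cref{sparse_spectral_sequence_differential_operad} for \Cref{operad_obstruction_theory} throughout and keeping track of the index substitutions prescribed by \Cref{rmk:sparse_maps}. As there, one regards $\Map{\A}{\O}$ as the homotopy limit of the canonical tower of Kan fibrations based at the restrictions of $f$; by $d$-sparsity only the truncations $\A[di+2]$ and $\A[di+3]$ are meaningful, the passage $\A[di+3]\hookrightarrow\A[d(i+1)+2]$ imposes no equations, and the subtower of meaningful truncations is cofinal. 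By \Cref{rmk:A-O_canonical_Massey_operad-sparse}, $\O$ is a $d$-sparse Massey operad with multiplication $m_2^f$ and universal Massey product of length $d+2$ equal to $\Hclass{m_{d+2}^f}$, and by \Cref{sparse_spectral_sequence_differential_operad}\eqref{second_differential_operad_sparse} the relevant later pages of the spectral sequence of any map $\A[k+2]\to\O$ inducing these data are computed by the Massey operad complex of $(\O,\Hclass{m_{d+2}^f})$.

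First I would construct, by induction on $i$, morphisms of DG operads $h_i\colon\A[di+3]\to\O$ such that $h_0$ and $h_1$ are the restrictions of $f\colon\A[2d+3]\to\O$ (this is exactly where the hypothesis that $f$ be defined up to arity $2d+3$ is used), each $h_i$ extends to $\A[d(i+1)+3]$, and $h_i$ and $h_{i-1}$ agree on all but the top meaningful operation of $h_{i-1}$. For the inductive step one picks an extension $\tilde h_i\colon\A[di+3]\to\O$ of $h_{i-1}$ (provided by the extension clause of the claim at stage $i-1$), forms the truncated spectral sequence of \Cref{sparse_spectral_sequence_differential_operad} attached to $\tilde h_i$, and notes that the term governing the relevant extension problem is a Massey operad cohomology group of the form $\OH[n+3][-n]{\os\O,\Hclass{m_{d+2}^f}}$ with $n$ a positive multiple of $d$; once $i\geq 2$ one has $n>d$, so this group vanishes by hypothesis, whence \Cref{operad_obstruction_theory}\eqref{obstructions} (read through \Cref{sparse_spectral_sequence_differential_operad}, with the obstruction cocycle made explicit in \Cref{sparse_spectral_sequence_differential_operad}\eqref{primary_obstruction_operad_sparse}) furnishes $h_i$. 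The base stages $i\leq1$ are immediate from $f$, and one checks, as for $d=1$, that the characteristic-two correction term in $\ssd{d+1}$ lies on a diagonal the argument never meets.

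Next I would assemble the (appropriate restrictions of the) $h_i$ into a strictly compatible system along the cofinal subtower of meaningful truncations, so that it is realised by a point $g\colon\A\to\O$ of $\Map{\A}{\O}\simeq\operatorname{holim}_{k\geq 2}\Map{\A[k]}{\O}$ restricting to those partial structures (cf.\ \cite[Section~IX.3.1]{BK72}; only the existence of some realising point is needed, so the $\lim^1$ term plays no role). Finally, restricting $g$ to $\A[d+2]$ and invoking the pointed bijection of \Cref{sparse_spectral_sequence_differential_operad}\eqref{lower-diagonal_second_page_sparse} between homotopy classes of such partial structures and $\BK[d+1][d][d]=\OH[d+2][-d]{\os\O}$ — under which a class is sent to its universal Massey product of length $d+2$ minus $\Hclass{m_{d+2}^f}$ — one reads off that $m_2^g=m_2^f$ on $\os\O$ and $\Hclass{m_{d+2}^g}=\Hclass{m_{d+2}^f}$ in $\OH[d+2][-d]{\os\O}$, which is the conclusion.

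The step I expect to be the main obstacle is the index bookkeeping of the second paragraph: one must pin down, under the $d$-sparse dictionary of \Cref{rmk:sparse_maps}, which truncation to keep fixed at each stage (so that the argument makes genuine progress up the tower while modifying only the top meaningful operation of the previous structure), and verify that the spectral-sequence entry carrying the resulting obstruction is \emph{precisely} a group $\OH[n+3][-n]{\os\O,\Hclass{m_{d+2}^f}}$ with $n\in d\ZZ$ and $n>d$, so that the vanishing range of the hypothesis is exactly consumed. Once the translations of \Cref{sparse_spectral_sequence_differential_operad}\eqref{lower-diagonal_second_page_sparse}, \eqref{second_differential_operad_sparse} and \eqref{primary_obstruction_operad_sparse} are carried out with care, everything else is a routine transcription of the argument for \Cref{secondary_Kadeishvili_operads_A-infinity_existence}.
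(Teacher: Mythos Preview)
Your proposal is correct and takes essentially the same approach as the paper: the paper in fact omits the proof entirely, stating only that it is ``almost identical'' to that of \Cref{secondary_Kadeishvili_operads_A-infinity_existence} after replacing \Cref{operad_obstruction_theory} with \Cref{sparse_spectral_sequence_differential_operad}. Your sketch is therefore more detailed than what the paper provides, and the index bookkeeping you flag as the main obstacle is exactly the substitution the authors leave implicit.
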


\begin{definition}\label{def:Massey_algebra_sparse}
  Ler $d\geq1$. 
  We make the following definitions.
  \begin{enumerate}
  \item
    A \emph{$d$-sparse Massey (graded) algebra} is a pair $(A,m_{d+2})$ consisting of a $d$-sparse graded algebra $A$ and a Hochschild cohomology class
    \[m_{d+2}\in\HH[d+2][-d]{A},\]
    called \emph{universal Massey product of length $d+2$}, satisfying $\Sq[m_{d+2}]=0$.
	\item The \emph{Hochschild-Massey complex} of a $d$-sparse Massey algebra is the bigraded
    (cochain) complex
    \begin{align*}
      \HMC[s]{A}[m_{d+2}]&=\HH[s]{A}&s&\geq 2,\\
      \HMC[s]{A}[m_{d+2}]&=0&s&<2,
    \end{align*}
    with the bidegree $(d+1,-d)$ differential given by
    \[d\colon \HH[s][t]{A}\longrightarrow\HH[s+d+1][t-d]{A},\qquad d(x)=[m_{d+2},x],\]
    except if $\cchar{\kk}=2$ and $(s,t)=(d+1,-d)$, in which case the differential is given by
    \[d\colon \HH[d+1][-d]{A}\longrightarrow\HH[2d+2][-2d]{A},\qquad d(x)=x^2+[m_{d+2},x].\]
    The bigraded cohomology of this complex,
    \[\HMH{A}[m_{d+2}]\coloneqq\H[\bullet,*]{\HMC{A}[m_{d+2}]},\]
    is called \emph{Hochschild--Massey cohomology} of the $d$-sparse Massey algebra $(A,m_{d+2})$.
  \end{enumerate}
\end{definition}

\Cref{secondary_Kadeishvili_operads_A-infinity_uniqueness-sparse}
has the following immediate consequence, which generalises \Cref{thm:B-main_text}.

\begin{theorem}
  \label{thm:B-main_text_sparse}
  Let $A$ be a cohomologically $d$-sparse DG algebra for some $d\geq1$. Choose a minimal model $(\H{A},\Astr)$ of $A$ and
  suppose that the Hochschild--Massey cohomology of the $d$-sparse Massey algebra
  $(\H{A},\Hclass{\Astr<d+2>[A]})$ vanishes in the following
  range:\footnote{Notice the strict inequality.}
  \[
    \HMH[n+2]<-n>{\H{A}}[\Hclass{\Astr<d+2>[A]}],\qquad n>d.
  \]
  Then, every DG algebra $B$ such that $\H{B}\cong\H{A}$ as graded algebras and
  whose universal Massey product of length $d+2$ satisfies
  \[
    \Hclass{\Astr<d+2>[B]}=\Hclass{\Astr<d+2>[A]}\in\HH[d+2][-d]{\H{A}}
  \]
  is quasi-isomorphic to $A$.
\end{theorem}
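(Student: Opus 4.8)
The plan is to deduce the theorem from the operadic uniqueness result \Cref{secondary_Kadeishvili_operads_A-infinity_uniqueness-sparse} applied to the endomorphism operad $\E{\H{A}}$, following verbatim the template by which \Cref{thm:B-main_text} was obtained from \Cref{secondary_Kadeishvili_operads_A-infinity_uniqueness} in the case $d=1$. First I would fix the given minimal model $(\H{A},\Astr[A])$ of $A$ and, after choosing an isomorphism $\H{B}\cong\H{A}$ of graded algebras, transport a minimal model of $B$ along it to obtain a minimal $\A$-algebra structure $(\H{A},\Astr[B])$ with the same underlying graded algebra as $\Astr[A]$, so in particular $\Astr<2>[B]=\Astr<2>[A]$. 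Recall that the class $\Hclass{\Astr<d+2>[B]}$ appearing in the hypothesis does not depend on these choices, by the gauge invariance of the universal Massey product of length $d+2$ together with \Cref{minimal_models_quasi_isomorphic_algebras}, so that the equality $\Hclass{\Astr<d+2>[B]}=\Hclass{\Astr<d+2>[A]}$ is meaningful. Since $\H{A}$ is $d$-sparse, the operad $\E{\H{A}}$ is a $d$-sparse graded operad by \Cref{ex:EV-sparse}, and under the identification $\HH{\H{A}}=\OH{\os\E{\H{A}}}$ of \Cref{cohomologies} a $d$-sparse Massey algebra structure on $\H{A}$ (\Cref{def:Massey_algebra_sparse}) is precisely the datum of a $d$-sparse Massey operad structure on $\E{\H{A}}$; the structures induced here are those coming from $\Astr[A]$ via \Cref{rmk:A-O_canonical_Massey_operad-sparse}, with universal Massey product of length $d+2$ equal to $\Hclass{\Astr<d+2>[A]}$.

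Next I would invoke \Cref{secondary_Kadeishvili_operads_A-infinity_uniqueness-sparse} with $\O=\E{\H{A}}$, taking $f=\Astr[A]$ and $g=\Astr[B]$. The vanishing hypothesis $\HMH[n+2]<-n>{\H{A}}[\Hclass{\Astr<d+2>[A]}]=0$ for $n>d$ is exactly the hypothesis $\OH[n+2][-n]{\os\E{\H{A}},\Hclass{m_{d+2}^f}}=0$ for $n>d$ required there; by construction $f$ and $g$ induce the same multiplication on $\os\E{\H{A}}$, and by hypothesis the same universal Massey product of length $d+2$. The conclusion is that $f$ and $g$ are homotopic as maps $\A\to\E{\H{A}}$, that is, the minimal $\A$-algebra structures $(\H{A},\Astr[A])$ and $(\H{A},\Astr[B])$ are gauge $\A$-isomorphic.

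Finally I would translate back to DG algebras. Gauge $\A$-isomorphic minimal models, via \Cref{minimal_models_quasi_isomorphic_algebras} applied with $\O=\Ass$ and $\O_\infty=\A$, imply that $A$ and $B$ are quasi-isomorphic as (possibly non-unital) DG algebras; since both are unital, \Cref{quasi-iso_unit_algebras} then yields a quasi-isomorphism of unital DG algebras, which is the assertion.

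Essentially all of the above is bookkeeping on top of the already-established \Cref{sparse_spectral_sequence_differential_operad,secondary_Kadeishvili_operads_A-infinity_uniqueness-sparse}. The step I expect to demand the most care is checking that the identification $\HH{\H{A}}=\OH{\os\E{\H{A}}}$ carries the Hochschild--Massey complex $\HMC{\H{A}}[\Hclass{\Astr<d+2>[A]}]$ of \Cref{def:Massey_algebra_sparse}, with its bidegree $(d+1,-d)$ differential (and the $\cchar{\kk}=2$ correction term), isomorphically onto the Massey operad complex $\OC{\os\E{\H{A}},\Hclass{m_{d+2}^f}}$ of \Cref{def:Massey_operad-sparse}, so that the two vanishing ranges literally coincide; here the $d$-sparsity of $\H{A}$ and \Cref{rmk:sparse_maps} are what guarantee that $m_{d+2}$ is the first relevant higher operation and that all the operadic data live in the expected arities and vertical degrees. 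Once this is in place there is nothing further to do.
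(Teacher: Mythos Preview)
Your proposal is correct and follows essentially the same approach as the paper: pass to minimal models via \Cref{minimal_models_quasi_isomorphic_algebras} for $\O=\Ass$ (together with \Cref{quasi-iso_unit_algebras} for unitality), and then apply \Cref{secondary_Kadeishvili_operads_A-infinity_uniqueness-sparse} with $\O=\E{\H{A}}$. The paper compresses this into two sentences, while you spell out the bookkeeping more carefully; your additional remark that the identification $\HH{\H{A}}=\OH{\os\E{\H{A}}}$ matches the two Massey complexes is correct and immediate from the definitions.
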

\begin{proof}
  In view of \Cref{minimal_models_quasi_isomorphic_algebras} for $\O=\Ass$ and \Cref{quasi-iso_unit_algebras} we can
  prove the theorem by passing to minimal models, in which case the claim
  follows from \Cref{secondary_Kadeishvili_operads_A-infinity_uniqueness-sparse}
  taking $\O=\E{\H{A}}$.
\end{proof}

\begin{definition}\label{def:Massey_operad_sparse}
  Let $A$ be a $d$-sparse graded algebra. We make the following definitions.
  \begin{enumerate}
  \item A \emph{$d$-sparse Massey (graded) $A$-bimodule} is a pair $(M,m_{d+2})$ consisting of
    an $A$-bimodule $M$ and a bimodule Hochschild cohomology class
    \[m_{d+2}\in\HHE[d+2][-d]{A}{M},\]
    called \emph{bimodule universal Massey product of length $d+2$}, satisfying $\Sq[m_{d+2}]=0$.
	\item The \emph{bimodule Hochschild complex} of a $d$-sparse Massey $A$-bimodule is the
    bigraded (cochain) complex
  \begin{align*}
    \AlgBimHMC*!A![s]{M}[m_{d+2}] &= \HHE[s]{A}{M}, && s \geq 2, \\
    \AlgBimHMC*!A![s]{M}[m_{d+2}] &= 0, && s < 2.
  \end{align*}
	with the bidegree $(d+1,-d)$ differential given by
	\begin{align*}d\colon \HHE[s][t]{A}{M}&\longrightarrow\HHE[s+2][t-1]{A}{M}\\ x&\longmapsto[m_{d+2},x],\end{align*}
	except if $\cchar{\kk}=2$ and $(s,t)=(d+1,-d)$, in which case the differential is given by
	\begin{align*}d\colon \HHE[d+1][-d]{A}{M}&\longrightarrow\HHE[2d+2][-2d]{A}{M}\\x&\longmapsto x^2+[m_{d+2},x].\end{align*}
	The bigraded cohomology of this complex,
  \[\AlgBimHMH*!A!{M}[m_{d+2}]\coloneqq\H[\bullet,*]{\AlgBimHMC*!A!{M}[m_{d+2}]},\]
  is called \emph{bimodule Hochschild--Massey cohomology} of the $d$-sparse Massey $A$-bimodule.
  \end{enumerate}
\end{definition}

The following result us another straightforward consequence of \Cref{secondary_Kadeishvili_operads_A-infinity_uniqueness-sparse}. It extends \Cref{secondary_Kadeishvili_simultaneous_DG_uniqueness-main_text}

\begin{theorem}\label{secondary_Kadeishvili_simultaneous_DG_uniqueness-sparse}
  Let $d\geq1$. Let $A$ be a cohomologically $d$-sparse DG algebra and $M$ a cohomologically $d$-sparse DG $A$-bimodule. Choose a minimal model
  $(\H{A},\Astr[A])$ of $A$ and a compatible minimal model $(\H{M},\Astr[M])$ of
  $M$. Suppose that the bimodule Hochschild--Massey cohomology of the $d$-sparse Massey $\H{A}$-bimodule $(\H{M},\Hclass{\Astr<d+2>[A\ltimes M]})$, where $\Astr<d+2>[A\ltimes M]\coloneqq\Astr<d+2>[A]+\Astr<d+2>[M]$, vanishes in the following range:
  \[
    \AlgBimHMH!\H{A}![n+2]<-n>{\H{M}}[d+2][A\ltimes M]=0,\qquad n>d.
  \]
  Then, every pair $(B,N)$ consisting of
  \begin{itemize}
  \item a DG algebra $B$ such that $\H{B}\cong\H{A}$ as graded algebras and
  \item a DG $B$-bimodule $N$ such that $\H{N}\cong\H{M}$ as graded
    $\H{A}$-bimodules and
  \item whose bimodule universal
    Massey product of length $d+2$ satisfies
    \[
      \Hclass{\Astr<d+2>[B\ltimes N]}=\Hclass{\Astr<d+2>[A\ltimes
        M]}\in\RelBimHH[d+2]<-d>{\H{A}}{\H{M}}
    \]
  \end{itemize}
  is quasi-isomorphic to the pair $(A,M)$.
\end{theorem}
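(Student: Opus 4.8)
The plan is to reduce the statement to minimal models and then invoke the $d$-sparse operadic uniqueness theorem, exactly as in the case $d=1$ (\Cref{secondary_Kadeishvili_simultaneous_DG_uniqueness-main_text}). First I would use \Cref{minimal_models_quasi_isomorphic_algebras-bimodules} for $\O=\Ass$, together with \Cref{quasi-iso_unit_algebras_bimodules}, to rephrase the conclusion: the pairs $(B,N)$ and $(A,M)$ are quasi-isomorphic in $\Bimod*{\Ass}$ (equivalently, as pairs of unital DG algebras and unital bimodules) if and only if their minimal models are gauge $\A$-isomorphic. Since a minimal model of $(B,N)$ compatible with a chosen minimal model of $\H{A}$ is, by \Cref{simultaneous_obstruction_theory}, the same datum as a morphism of DG operads $g\colon\A\to\E{\H{A},\H{M}}$, and likewise $(\H{A},\Astr[A])$, $(\H{M},\Astr[M])$ corresponds to a morphism $f\colon\A\to\E{\H{A},\H{M}}$, it suffices to prove that $f$ and $g$ are homotopic.

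Next I would verify the operadic translation of all the hypotheses. Because $\H{A}$ and $\H{M}$ are $d$-sparse, the linear endomorphism operad $\E{\H{A},\H{M}}$ is $d$-sparse, being a suboperad of $\E{\H{A}\oplus\H{M}}$. By \Cref{rmk:A-O_canonical_Massey_operad-sparse}, the map $f$ makes $\E{\H{A},\H{M}}$ into a $d$-sparse Massey operad whose multiplication is the graded bimodule structure of $\H{M}$ over $\H{A}$ and whose universal Massey product of length $d+2$ is $\Hclass{\Astr<d+2>[A\ltimes M]}$. Under the identification $\OC{\os\E{\H{A},\H{M}}}=\HCE{\H{A}}{\H{M}}$ of \Cref{def:bimodule_cohomologies}, the Massey operad complex of \Cref{def:Massey_operad-sparse} coincides with the bimodule Hochschild complex of the $d$-sparse Massey $\H{A}$-bimodule $(\H{M},\Hclass{\Astr<d+2>[A\ltimes M]})$, so the vanishing hypothesis $\AlgBimHMH!\H{A}![n+2]<-n>{\H{M}}[d+2][A\ltimes M]=0$ for $n>d$ is precisely the hypothesis $\OH[n+2][-n]{\os\O,\Hclass{m_{d+2}^f}}=0$ for $n>d$ of \Cref{secondary_Kadeishvili_operads_A-infinity_uniqueness-sparse} applied to $\O=\E{\H{A},\H{M}}$.

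Then I would run the comparison of the two $\A$-structures. The isomorphisms $\H{B}\cong\H{A}$ of graded algebras and $\H{N}\cong\H{M}$ of graded $\H{A}$-bimodules show that $g$ induces the same multiplication $m_2^g=m_2^f$ on $\os\E{\H{A},\H{M}}$; and by hypothesis $\Hclass{\Astr<d+2>[B\ltimes N]}=\Hclass{\Astr<d+2>[A\ltimes M]}$ in $\RelBimHH[d+2]<-d>{\H{A}}{\H{M}}$, so $g$ has the same universal Massey product of length $d+2$ as $f$. (Here one uses, as for $d=1$, that this class is a gauge invariant by \Cref{rmk:UMP-gauge_invariant} and hence, via \Cref{minimal_models_quasi_isomorphic_algebras-bimodules}, independent of the chosen minimal model; for $d\geq2$ one also uses that all intermediate operations vanish for arity reasons, cf.\ \Cref{rmk:sparse_maps}, so that the restriction of $g$ to $\A[d+2]$ up to homotopy is determined by the graded data together with this single class.) \Cref{secondary_Kadeishvili_operads_A-infinity_uniqueness-sparse} now gives that $f$ and $g$ are homotopic, which is what we needed.

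The main obstacle I expect is the careful bookkeeping of the $d$-sparse indexing rather than any new idea: one must check that the vanishing range $n>d$ is exactly what is needed so that, in the induction underlying \Cref{secondary_Kadeishvili_operads_A-infinity_uniqueness-sparse}, the diagonal terms $\BK[d+1][s][s]=\OH[s+2][-s]{\os\E{\H{A},\H{M}},\Hclass{m_{d+2}^f}}$ vanish for $s>d$ (whence $\BK[r][s][s]=0$ for all $r\geq d+1$ and $s>d$), the base steps $s=0,\dots,d-1$ being forced by $m_2^f=m_2^g$ and arity-sparsity and the step $s=d$ being supplied by \Cref{sparse_spectral_sequence_differential_operad}\eqref{lower-diagonal_second_page_sparse}; and that the $\lim^1$-term in the associated Milnor sequence collapses, exactly as in \Cref{Kadeishvili_operads_A-infinity}. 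A minor secondary point is confirming that the unitality reductions of \Cref{quasi-iso_unit_algebras_bimodules} are unaffected by the $d$-sparse assumption, which is clear since those results only concern the homotopy types of the classification spaces and are insensitive to sparsity.
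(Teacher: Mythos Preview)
Your proposal is correct and follows essentially the same approach as the paper's proof: reduce to minimal models via \Cref{minimal_models_quasi_isomorphic_algebras-bimodules} for $\O=\Ass$ together with \Cref{quasi-iso_unit_algebras_bimodules}, then apply \Cref{secondary_Kadeishvili_operads_A-infinity_uniqueness-sparse} with $\O=\E{\H{A},\H{M}}$. Your additional commentary on the $d$-sparse indexing and the internals of the spectral-sequence argument is accurate but unnecessary here, since those points are already absorbed into the proof of \Cref{secondary_Kadeishvili_operads_A-infinity_uniqueness-sparse}.
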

\begin{proof}
  In view of \Cref{minimal_models_quasi_isomorphic_algebras-bimodules} for $\O=\Ass$ and \Cref{quasi-iso_unit_algebras_bimodules} we can
  prove the theorem by passing to minimal models, in which case the claim
  follows from \Cref{secondary_Kadeishvili_operads_A-infinity_uniqueness-sparse}
  taking $\O=\E{\H{A},\H{M}}$.
\end{proof}

We also state an analogue of
\Cref{secondary_Kadeishvili_ideals_A-infinity_uniqueness} in the $d$-sparse
case. The proof, also omitted in this case, is almost identical to that of the
aforementioned result, using
\Cref{sparse_spectral_sequence_differential_bimodule} to extend it to the general
case $d\geq1$.

\begin{definition}
  Let $\O$ be a $d$-sparse Massey graded operad with universal Massey product
  $m_{d+2}\in\OH[d+2][-d]{\os\O}$ and let $\I\subset\O$ be an associative operadic ideal. The \emph{$d$-sparse Massey operadic ideal complex} is the bigraded (cochain) complex
  \begin{align*}
    \IC[s]{\os\I,m_{d+2}}[\os\O]&=\IH[s]{\os\I}[\os\O]&s&\geq 2,\\
    \IC[s]{\os\I,m_{d+2}}[\os\O]&
    =0&s&<2,
  \end{align*}
  with the bidegree $(d+1,-d)$ differential given by
  \[d\colon \IH[s][t]{\os\I}[\os\O]\longrightarrow\IH[s+d+1][t-d]{\os\I}[\os\O],\qquad d(x)=[m_{d+2},x],\]
  compare with \Cref{sparse_spectral_sequence_differential_bimodule}\eqref{second_differential_bimodule_sparse}.
  The bigraded cohomology of this complex,
  \[\IH{\os\I,m_{d+2}}[\os\O]\coloneqq\H[\bullet,*]{\IC{\os\I,m_{d+2}}[\os\O]},\]
  is called \emph{Massey operadic ideal cohomology} of the associative operadic ideal $\I$ of the $d$-sparse Massey graded operad $\O$.
\end{definition}

\begin{theorem}\label{secondary_Kadeishvili_ideals_A-infinity_uniqueness_sparse}
	Let $\O$ be a $d$-sparse graded operad and $f\colon\A\to\O$ a morphism of DG operads, so
  that $\O$ is equipped with the multiplication $m_2^f$ and the universal Massey
  product of length $d+2$, $\Hclass{m_{d+2}^f}$ (\Cref{rmk:A-O_canonical_Massey_operad-sparse}). 
  Let $\I\subset\O$ be an associative operadic ideal and $q\colon\O\twoheadrightarrow\O/\I$ the canonical projection.
  Suppose that the corresponding <Massey operadic ideal
  cohomology vanishes in the following range:
	\[\IH[n+1][-n]{\os\I,\Hclass{m_{d+2}^f}}[\os\O]=0,\qquad n>d.\]
	Then, every map $g\colon \A\to\O$ inducing the same multiplication
  $m_2^g=m_2^f$ on $\os\O$ and such that 
  \[
    \Hclass{m_{d+2}^g-m_{d+2}^f}\in\IH[2][-1]{\os\I}[\os\O],
  \]
  and $qf=qg$ 
  is fibre-wise homotopic to $f$, i.e.~$f$ and $g$ are homotopic through a homotopy $\A\to\O$ which composes to the trivial homotopy
  $\A\to\O\twoheadrightarrow\O/\I$.
\end{theorem}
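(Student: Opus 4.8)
The plan is to run, \emph{mutatis mutandis}, the argument used to prove \Cref{secondary_Kadeishvili_operads_A-infinity_uniqueness}, systematically replacing the (truncated) spectral sequence of \Cref{operad_obstruction_theory} by the $d$-sparse fibre-wise spectral sequence of \Cref{sparse_spectral_sequence_differential_bimodule} (which refines the one in \Cref{fibre-wise_obstruction_theory}), both associated to the map $f\colon\A\to\O$. Since $qf=qg=h$, both $f$ and $g$ are vertices of the homotopy fibre $\Str{\A}{h}{\I}$ of the Kan fibration $\Map{\A}{\O}\twoheadrightarrow\Map{\A}{\O/\I}$ over $h$, which we identify with $\operatorname{holim}_s\Str{\A[s+2]}{h}{\I}$, the homotopy limit of the tower of Kan fibrations
\[
  \cdots\twoheadrightarrow\Str{\A[n+1]}{h}{\I}\twoheadrightarrow\Str{\A[n]}{h}{\I}\twoheadrightarrow\cdots\twoheadrightarrow\Str{\A[2]}{h}{\I}
\]
based at the restrictions of $f$. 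As in the cited proof, I would invoke \cite[Theorem~IX.3.1]{BK72} to obtain the Milnor short exact sequence of pointed sets
\[
  *\to{\lim_s}^1\,\pi_1\Str{\A[s+2]}{h}{\I}\to\pi_0\Str{\A}{h}{\I}\to\lim_s\pi_0\Str{\A[s+2]}{h}{\I}\to *
\]
and reduce the statement to showing that $f$ and $g$ define the same class in $\pi_0\Str{\A}{h}{\I}$, i.e.\ that they are fibre-wise homotopic.

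First I would prove, by induction on $s\geq0$, that the restrictions of $f$ and $g$ to $\A[s+2]$ are fibre-wise homotopic, so that $f$ and $g$ have the same image in $\lim_s\pi_0\Str{\A[s+2]}{h}{\I}$. The case $s=0$ is the assumption $m_2^f=m_2^g$; when $s\notin d\ZZ$ (in particular for $0<s<d$) the diagonal term $\BK[r][s][s]$ vanishes for every $r$ by the vertical $d$-sparseness of the spectral sequence (\Cref{rmk:sparse_maps}), so the inductive step is immediate from \Cref{fibre-wise_obstruction_theory}\eqref{lower-diagonal_fibre-wise}. The case $s=d$ I would handle using \Cref{sparse_spectral_sequence_differential_bimodule}\eqref{lower-diagonal_second_page_bimodule_sparse}: the pointed bijection there carries the class of $g|_{\A[d+2]}$ to $\Hclass{m_{d+2}^g-m_{d+2}^f}\in\BK[d+1][d][d]=\IH[d+1][-d]{\os\I}[\os\O]$, which vanishes by the hypothesis on $m_{d+2}^g-m_{d+2}^f$, so $g|_{\A[d+2]}$ is fibre-wise homotopic to $f|_{\A[d+2]}$. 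For $s=di$ with $i\geq 2$ I would use \Cref{sparse_spectral_sequence_differential_bimodule}\eqref{second_differential_bimodule_sparse}: the only possibly non-trivial differential linking the coinciding pages $\BK[2]=\dots=\BK[d+1]$ to the next stage is $\ssd{d+1}=[\Hclass{m_{d+2}^f},-]$, whence $\BK[d+1][s][s]=\IH[s+1][-s]{\os\I,\Hclass{m_{d+2}^f}}[\os\O]$, which is $0$ for $s>d$ by the vanishing hypothesis; consequently $\BK[r][s][s]=0$ for all $r\geq d+1$ (later pages being subquotients), so in particular $\BK[s+1][s][s]=0$ since $s=di\geq2d\geq d+1$. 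Since $g|_{\A[s+2]}$ extends to all of $\A$, hence to $\A[2s+2]$, and restricts on $\A[s+1]$ to a map fibre-wise homotopic to $f$ by the inductive hypothesis, \Cref{fibre-wise_obstruction_theory}\eqref{lower-diagonal_fibre-wise} identifies its class with the base point $f|_{\A[s+2]}$, completing the induction.

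Next I would show that the $\lim^1$-term of the Milnor sequence collapses. As in the proof of \Cref{Kadeishvili_operads_A-infinity}, this will follow from the Complete Convergence Lemma \cite[Section~IX.5.4]{BK72} with $i=1$, once one checks that for every $s$ the groups $\BK[r][s][s+1]$ stabilise as $r\to\infty$: such a term vanishes unless $s+1\in d\ZZ$, its incoming $\ssd{r}$-differential has source of negative horizontal degree as soon as $r>s$, and its outgoing $\ssd{r}$-differential lands in the diagonal term $\BK[r][s+r][s+r]$, which is $0$ once $r$ is large enough that $s+r>d$ (again by the dichotomy that a diagonal term vanishes either because its horizontal coordinate is not a multiple of $d$ or because it exceeds $d$). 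Hence $\lim^1_r\BK[r][s][s+1]=0$ for all $s$, the Milnor sequence forces the map $\pi_0\Str{\A}{h}{\I}\to\lim_s\pi_0\Str{\A[s+2]}{h}{\I}$ to be injective, and the previous paragraph then finishes the argument.

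The hard part will be bookkeeping rather than anything conceptual: one must make sure the identifications $\BK[2][s][t]=\IH[s+1][-t]{\os\I}[\os\O]$ and $\BK[d+1][s][s]=\IH[s+1][-s]{\os\I,\Hclass{m_{d+2}^f}}[\os\O]$ behave correctly near the truncation edge $s+1=2$ of the $d$-sparse Massey operadic ideal complex, and that the meaningful range of maps $\A[di+2],\A[di+3]\to\O$ of \Cref{rmk:sparse_maps} is used consistently, so that the induction on $s$ only ever calls on the spectral sequence at diagonal spots actually controlled by the vanishing hypothesis --- with the single remaining spot $s=d$ dealt with separately via the hypothesis on $m_{d+2}^g-m_{d+2}^f$. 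The one-step arity shift and the bidegree $(d+1,-d)$ of the differentials must be tracked with care, but this is exactly as in the proofs of \Cref{secondary_Kadeishvili_operads_A-infinity_uniqueness} and \Cref{secondary_Kadeishvili_ideals_A-infinity_uniqueness}.
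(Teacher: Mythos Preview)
Your approach is correct and is precisely the one the paper intends: its proof is the single sentence ``identical to that of \Cref{secondary_Kadeishvili_operads_A-infinity_uniqueness}, replacing the references to \Cref{operad_obstruction_theory} with \Cref{fibre-wise_obstruction_theory}'', and you have faithfully unpacked that in the $d$-sparse setting using the Milnor sequence and the diagonal vanishing argument from \Cref{Kadeishvili_operads_A-infinity}.

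One small indexing slip to fix: where you write $\BK[d+1][s][s]=\IH[s+1][-s]{\os\I,\Hclass{m_{d+2}^f}}[\os\O]$, the Massey operadic ideal cohomology is the cohomology of the differential $\ssd{d+1}$ and hence is $\BK[d+2][s][s]$, not $\BK[d+1][s][s]$ (the latter still equals the plain cohomology $\IH[s+1][-s]{\os\I}[\os\O]$, exactly as you correctly use at $s=d$). This does not affect your conclusion, since for $s=di$ with $i\geq 2$ one has $s+1\geq 2d+1\geq d+2$, so $\BK[s+1][s][s]=0$ still follows from $\BK[r][s][s]=0$ for $r\geq d+2$.
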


\begin{proof}
  The proof is identical to that of
  \Cref{secondary_Kadeishvili_operads_A-infinity_uniqueness}, replacing the references to \Cref{operad_obstruction_theory} with \Cref{fibre-wise_obstruction_theory}.
\end{proof}

\begin{definition}
  Let $d\geq1$. Let $A$ be a $d$-sparse graded algebra and $(M,m_{d+2})$ a $d$-sparse
  Massey $A$-bimodule. The
  \emph{Massey bimodule complex} is the bigraded (cochain) complex
	\begin{align*}
    \BimHMC*!A![s]{M}[m_{d+2}]&=\Ext[s]{A^e}{M}{M}&s&\geq 1,\\
    \BimHMC*!A![s]{M}[m_{d+2}]&=0&s&<1,
  \end{align*}
	with the bidegree $(d+1,-d)$ differential given by
	\[d\colon \Ext[s][t]{A^e}{M}{M}\longrightarrow\Ext[s+d+1][t-d]{A^e}{M}{M},\qquad d(x)=[m_{d+2},x].\]
	The bigraded cohomology of this complex,
  \[
    \BimHMH*!A!{M}[m_{d+2}][]\coloneqq\H[\bullet,*]{\BimHMC*!A!{M}[m_{d+2}]},
  \]
  is the \emph{Massey bimodule cohomology} of the $d$-sparse
  Massey $A$-bimodule $(M,m_{d+2})$.
\end{definition}

We also record the following variant of \Cref{thm:B_bimodules-main_text}.

\begin{theorem}\label{thm:B_bimodules-sparse}
  Let $d\geq1$. Let $A$ be a cohomologically $d$-sparse DG algebra and $M$ a
  cohomologically $d$-sparse DG $A$-bimodule. Choose a minimal model
  $(\H{A},\Astr)$ of $A$, a compatible minimal model $(\H{M},\Astr[M])$
  of $M$, and suppose that the Massey bimodule
  cohomology of the $d$-sparse Massey $\H{A}$-bimodule $(\H{M},\Hclass{\Astr<d+2>[A\ltimes M]})$ vanishes in the
  following range:
  \[
    \BimHMH!\H{A}![d+1]<-n>{\H{M}}[d+2][A\ltimes M]=0,\qquad n>d.
  \]
  Then, every DG $A$-bimodule $N$ such that $\H{N}\cong\H{M}$ as graded
  $\H{A}$-bimodules and such that
  \[
    \Hclass{\Astr<d+2>[M]-\Astr<d+2>[N]}=0\in\BimHH!\H{A}^e![d+1]<-d>{\H{M}}
  \]
  is quasi-isomorphic to $M$.
\end{theorem}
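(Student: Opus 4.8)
The plan is to run the proof of \Cref{thm:B_bimodules-main_text} essentially verbatim, replacing its appeal to \Cref{secondary_Kadeishvili_bimodules_A-infinity} by an appeal to the $d$-sparse fibre-wise uniqueness theorem \Cref{secondary_Kadeishvili_ideals_A-infinity_uniqueness_sparse}, specialised to a linear endomorphism operad exactly as \Cref{secondary_Kadeishvili_bimodules_A-infinity} is obtained from \Cref{secondary_Kadeishvili_ideals_A-infinity_uniqueness}.

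First I would reduce to minimal models. Since the DG operad $\Ass$ is excellent (\Cref{example_excellent_operads}), \Cref{minimal_models_quasi_isomorphic_bimodules} for $\O=\Ass$ together with \Cref{quasi-iso_unit_bimodules} reduces the theorem to the following assertion: with respect to the fixed minimal model $(\H{A},\Astr)$ of $A$, the chosen compatible minimal model $(\H{M},\Astr[M])$ of $M$ and a compatible minimal model of $N$ over $(\H{A},\Astr)$ are gauge $\A$-isomorphic. Using the isomorphism $\H{N}\cong\H{M}$ of graded $\H{A}$-bimodules, I would arrange the minimal model of $N$ to have underlying graded $\H{A}$-bimodule equal to $\H{M}$; write $(\H{M},\Astr[N])$ for it, so that the two minimal $\A$-bimodules share their underlying graded bimodule structure.

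Next I would transport the problem into the fibre-wise obstruction theory of \Cref{sec:fiber-wise} along the dictionary of \Cref{bimodule_obstruction_theory}: set $\O\coloneqq\E{\H{A},\H{M}}$, $\I\coloneqq\E*{\H{A},\H{M}}$, and take $h\colon\A\to\O/\I=\E{\H{A}}$ to be the structure map of $(\H{A},\Astr)$, with canonical projection $q\colon\O\twoheadrightarrow\O/\I$. Since $A$ and $M$ are cohomologically $d$-sparse, $\H{A}$ and $\H{M}$ are $d$-sparse graded vector spaces, so $\O$ is a $d$-sparse graded operad by \Cref{ex:EV-sparse} and the inclusion $\E{\H{A},\H{M}}\subset\E{\H{A}\oplus\H{M}}$, while $\I$ is an associative operadic ideal (\Cref{sec:operads_cohomology}). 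The minimal $\A$-bimodules $(\H{M},\Astr[M])$ and $(\H{M},\Astr[N])$ then correspond to points $f,g\in\Str{\A}{h}{\I}$ with $qf=qg=h$, and the universal Massey product of length $d+2$ of $f$ is $\Hclass{\Astr<d+2>[A\ltimes M]}$. Under \Cref{table:EAM-fibrewise} the Massey operadic ideal cohomology $\IH{\os\I,\Hclass{m_{d+2}^f}}[\os\O]$ is identified, bidegree by bidegree, with the Massey bimodule cohomology of the $d$-sparse Massey $\H{A}$-bimodule $(\H{M},\Hclass{\Astr<d+2>[A\ltimes M]})$, so the hypothesis $\BimHMH!\H{A}![d+1]<-n>{\H{M}}[d+2][A\ltimes M]=0$ for $n>d$ becomes precisely the vanishing range $\IH[n+1][-n]{\os\I,\Hclass{m_{d+2}^f}}[\os\O]=0$ for $n>d$ required by \Cref{secondary_Kadeishvili_ideals_A-infinity_uniqueness_sparse}. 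Moreover, because $qf=qg$, the difference of the arity-$(d+2)$ operations of $f$ and $g$ lies in the ideal part $\IC[d+1][-d]{\os\I}[\os\O]=\BC[d+1][-d]{\H{A}^e}{\H{M}}$, and its class in $\IH[d+1][-d]{\os\I}[\os\O]=\BimHH!\H{A}^e![d+1]<-d>{\H{M}}$ coincides (up to sign) with $\Hclass{\Astr<d+2>[M]-\Astr<d+2>[N]}$, which vanishes by assumption; this is the datum \Cref{secondary_Kadeishvili_ideals_A-infinity_uniqueness_sparse} needs at the stage governed by \Cref{sparse_spectral_sequence_differential_bimodule}\eqref{lower-diagonal_second_page_bimodule_sparse}. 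Applying that theorem yields that $f$ and $g$ are fibre-wise homotopic, i.e.\ $(\H{M},\Astr[M])$ and $(\H{M},\Astr[N])$ are gauge $\A$-isomorphic over $(\H{A},\Astr)$; combined with the reduction of the previous paragraph, $N$ is quasi-isomorphic to $M$ as a DG $A$-bimodule.

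The homotopy-theoretic core being black-boxed, the step I expect to be the main obstacle is the bidegree bookkeeping underlying the two translations above. One must confirm that the bidegree-$(d+1,-d)$ differential of the Massey bimodule complex matches the $\ssd{d+1}$-differential computed in \Cref{sparse_spectral_sequence_differential_bimodule}\eqref{second_differential_bimodule_sparse}; that the arity-$(d+2)$ component of a minimal $\A$-bimodule over $(\H{A},\Astr)$ is a cocycle in the bidegree relevant to the universal Massey product of length $d+2$ of $f$ (using \Cref{rmk:sparse_maps} and the $d$-sparsity of $\O$ to see it vanishes outside the meaningful arities and is closed); and that the identification $\IH[d+1][-d]{\os\I}[\os\O]=\BimHH!\H{A}^e![d+1]<-d>{\H{M}}$ carries $\Hclass{m_{d+2}^f-m_{d+2}^g}$ to $\Hclass{\Astr<d+2>[M]-\Astr<d+2>[N]}$. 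All of this is routine in view of the dictionaries set up in \Cref{bimodule_obstruction_theory} and \Cref{sec:sparse}, but the relevant sign and shift conventions must be tracked with care.
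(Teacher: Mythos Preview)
Your proposal is correct and follows exactly the same route as the paper's proof: reduce to minimal models via \Cref{minimal_models_quasi_isomorphic_bimodules} for $\O=\Ass$ and \Cref{quasi-iso_unit_bimodules}, then apply \Cref{secondary_Kadeishvili_ideals_A-infinity_uniqueness_sparse} with $\O=\E{\H{A},\H{M}}$ and $\I=\E*{\H{A},\H{M}}$. Your version is simply more explicit than the paper's terse two-line argument, spelling out the dictionary of \Cref{bimodule_obstruction_theory} and the $d$-sparsity verification that the paper leaves to the reader.
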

\begin{proof}
  In view of \Cref{minimal_models_quasi_isomorphic_bimodules} for $\O=\Ass$ and \Cref{quasi-iso_unit_bimodules} we can
  prove the theorem by passing to minimal models, in which case the claim
  follows from \Cref{secondary_Kadeishvili_ideals_A-infinity_uniqueness_sparse}
  taking $\O=\E{\H{A},\H{M}}$ and $\I=\E*{\H{A},\H{M}}$.
\end{proof}











\printbibliography
\end{document}